\def\BB{{\mathcal B}}
\def\EE{{\mathcal E}}
\def\LL{{\mathcal L}}
\def\NN{{\mathcal N}}
\def\PP{{\mathcal P}}
\def\QQ{{\mathcal Q}}
\def\UU{{\mathcal U}}
\def\VV{{\mathcal V}}
\def\bVV{{\boldsymbol{\mathcal V}}}
\def\bUU{{\boldsymbol{\mathcal U}}}
\def\WW{{\mathcal W}}
\def\XX{{\mathcal X}}
\def\YY{{\mathcal Y}}
\def\ZZ{{\mathcal Z}}
\def\M{{\mathbb M}}
\newcommand{\ds}{\displaystyle}
\newtheorem{thm}{Theorem}[section]
\newtheorem{lem}[thm]{Lemma}
\newtheorem{cor}[thm]{Corollary}
\newtheorem{defi}[thm]{Definition}
\newtheorem{prop}[thm]{Proposition}
\newtheorem{rk}[thm]{Remark}
\newtheorem{nota}[thm]{Notation}
\renewcommand{\theequation}{\arabic{section}.\arabic{equation}}
\newcommand{\beqn}{\begin{equation}}
\newcommand{\eeqn}{\end{equation}}
\newcommand{\bear}{\begin{eqnarray}}
\newcommand{\eear}{\end{eqnarray}}
\newcommand{\bean}{\begin{eqnarray*}}
\newcommand{\eean}{\end{eqnarray*}}
\newcommand{\rr}{{\mathbb{R}}}
\newcommand{\rd}{{\rr^3}}
\newcommand{\R}{{\mathbb{R}}}
\newcommand{\nn}{{\mathbb{N}}}
\newcommand{\e}{\varepsilon}
\newcommand{\en}{{\varepsilon_n}}
\newcommand{\eps}{\varepsilon}
\newcommand{\vip}{\vskip.2cm}
\newcommand{\indiq}{\hbox{\rm 1}{\hskip -2.8 pt}\hbox{\rm I}}
\newcommand{\E}{\mathbb{E}}
\newcommand{\intot}{\int_0^t }
\newcommand{\intrd}{\int_{\rr^3}}
\newcommand{\cM}{{\M}}
\newcommand{\cL}{{\mathcal L}}
\newcommand{\cP}{{\mathcal P}}
\newcommand{\cS}{{\mathcal S}}
\newcommand{\cK}{{\mathcal K}}
\newcommand{\cB}{{\mathcal B}}
\newcommand{\cF}{{\mathcal F}}
\newcommand{\cI}{{{\mathcal I}}}
\newcommand{\tg}{{{\tilde \beta}}}
\newcommand{\tX}{{{\tilde X}}}
\newcommand{\tY}{{{\tilde Y}}}
\newcommand{\bX}{{{\hat X}}}
\newcommand{\bY}{{{\hat Y}}}
\newcommand{\Psym}{{\bf P}_{\! sym}}
\newcommand{\lnm}{\left< \left<}
\newcommand{\rnm}{\right> \right>}
\DeclareMathOperator{\diver}{div}
\DeclareMathOperator{\Tr}{Tr}
\newcommand{\Black}{\color{black}}
\newcommand{\poubelle}[1]{}
\begin{document}

\title[On the Landau equation]
{Propagation of chaos for the Landau equation with moderately soft potentials}

\author{Nicolas Fournier}
\author{Maxime Hauray}

\address{N. Fournier: Laboratoire de Probabilit\'es et Mod\`eles al\'eatoires, UMR 7599, UPMC, Case 188,
4 pl. Jussieu, F-75252 Paris Cedex 5, France.}

\email{nicolas.fournier@upmc.fr}

\address{M. Hauray: Institut de Math\'ematiques de Marseille UMR 7353, Universit\'e d'Aix-Marseille, 39 rue F. Joliot Curie, F-13453 Marseille Cedex 13.}

\email{maxime.hauray@univ-amu.fr}

\subjclass[2010]{82C40, 60K35, 65C05}

\keywords{Landau equation, Uniqueness, Stochastic particle systems, Propagation of Chaos, 
Fisher information, Entropy dissipation.}

\begin{abstract}
We consider the 3D Landau equation for moderately soft potentials ($\gamma\in(-2,0)$ with the usual notation)
as well as a stochastic system of $N$ particles approximating it.
We first establish some strong/weak stability estimates for the Landau equation, which are satisfying only 
when $\gamma \in [-1,0)$.
We next prove, under some appropriate conditions on the initial data, the so-called 
propagation of molecular chaos, i.e. that the empirical measure of the particle system
converges to the unique solution of the Landau equation. 
The main difficulty
is the presence of a singularity in the equation.
When $\gamma \in (-1,0)$, the strong-weak uniqueness estimate allows us to use a coupling argument and to obtain
a rate of convergence.
When $\gamma \in (-2,-1]$, we use the classical martingale method introduced by McKean. 
To control the singularity, we have to take advantage of the regularity provided by the entropy dissipation.
Unfortunately, this dissipation is too weak for some (very rare) aligned configurations.
We thus introduce a perturbed system with an additional noise, show the propagation 
of chaos for that perturbed system and finally prove that the additional noise is almost never used in the limit.
\end{abstract}

\maketitle

%%%%%%%%%%%%%%%%%%%%%%%%%%%%%%%%%%%%%%%%%%%%%%%%%%%%%%%%%%%%%%%%%%%%%%%%%%%%%%%%%%%%%%%%
%
%
%
%
%
%
%
%
%%%%%%%%%%%%%%%%%%%%%%%%%%%%%%%%%%%%%%%%%%%%%%%%%%%%%%%%%%%%%%%%%%%%%%%%%%%%%%%%%%%%%

\section{Introduction and main results}
\setcounter{equation}{0}
\label{sec:intro}

\subsection{The Landau equation}

The 3D homogeneous Landau equation for moderately soft potentials writes
\begin{align} \label{HL3D}
\partial_t f_t(v) = \frac 1 2 
\diver_v \Big( \int_{\rr^3} a(v-v_*)[ f_t(v_*) \nabla f_t(v) - f_t(v) \nabla f_t(v_*)
]\,dv_* \Big),
\end{align}
where the initial distribution $f_0 : \R^3 \to \R$ is given. The unknown $f_t:\R^3\mapsto\R$ stands 
for the velocity-distribution in a plasma. The matrix $a:\R^3\mapsto 
\cM_{3 \times 3}(\rr)$ is symmetric nonnegative and given by 
\[
a(v) = |v|^{2 + \gamma} \Bigl( I - \frac{v \otimes v}{|v|^2} \Bigr)
\]
for some $\gamma \in (-2,0)$. We will also use the notation
\[
b(v)=\diver a(v)=-2 |v|^\gamma v.
\]

This equation, with $\gamma=-3$, replaces the Boltzmann equation when particles are subjected to a Coulomb
interaction. It was derived by Landau in 1936. Physically, only the case $\gamma=-3$ is really interesting:
it is explained in~\cite{bps} that the case $\gamma=-3$ is the only one that you can obtain from a 
particle system in a suitable {\it weak coupling} limit, even if the interaction potential has a finite range 
(a fact that was already discovered by Bogolyubov).

When $\gamma \in (-3,1]$, the Landau equation can be seen as an approximation of the corresponding Boltzmann 
equation in the asymptotic of {\it grazing collisions}.
There is a huge literature on that model. See Villani \cite{v:nc} for the existence theory when 
$\gamma \in [-3,1]$ and links with the Boltzmann equation, Arsen'ev-Peskov \cite{ap} for a local existence
result when $\gamma=-3$, see \cite{fou} for a local uniqueness result when $\gamma=-3$, \cite{fg} 
for a global well-posedness result when $\gamma \in (-2,0)$. Well-posedness, regularity and large-time
behavior are studied when $\gamma \in (0,1]$ by Desvillettes-Villani \cite{dv,dv2}. A probabilistic
interpretation was introduced by Funaki in \cite{f}. We also refer to the book of Villani \cite{v:h}
for a long review on kinetic models, including this one.
Let us only mention a few important properties: the Landau equation preserves mass, 
momentum, kinetic energy and dissipates entropy.

\subsection{Some notation}

We denote by $\cP(\R^3)$, the set of probability measures on $\R^3$.
When $f\in\cP(\R^3)$ has a density, we also denote by $f\in L^1(\rd)$ this density.
For $q>0$, $\cP_q(\R^3)$ stands for the set of all $f\in\cP(\R^3)$ such that $m_q(f)=\intrd |v|^q f(dv)<\infty$.

\vip
For $f\in\cP(\R^3)$, we introduce the notation
\begin{gather}\label{bfaf}
b(f,v):=\intrd b(v-v_*)f(dv_*),\quad a(f,v):=\intrd a(v-v_*)f(dv_*), \quad 
\sigma(f,v):=\Big(a(f,v)\Big)^{\frac12}.
\end{gather}
Observe that $a(f,v)$ is symmetric nonnegative (since $a(v)$ is for all $v \in \R^3)$, so that it indeed 
admits a unique nonnegative 
symmetric square root. The Landau equation \eqref{HL3D} can be rewritten
\begin{align}\label{alt}
\partial_t f_t(v) = \frac 1 2 \diver_v \bigl(a(f_t,v)\nabla_v f_t(v) - b(f_t,v) f_t(v)\bigr).
\end{align}

We will denote by $H$ the entropy functional: for $f\in \cP_q(\R^3)$, for some $q >0$ we define 
$H(f)=\intrd f(v) \log f(v) dv \in  (-\infty, + \infty]$ if $f$ has a density and  $H(f)=\infty$ else. 
Under the moment assumption, that entropy is always well defined, see for instance~\cite[Lemma 3.1]{hm}.

\vip
We shall also use similar notations for probability distributions of systems of $N$ particles. 
Precisely, for $N \geq 1$, $\cP((\rd)^N)$ stands for the set of probability measures on $(\rd)^N$.
When $F^N \in\cP((\rd)^N)$ has a density, we also denote by $F^N \in L^1((\rd)^N)$ this density.
We write $\cP_{sym}((\rd)^N)$ for the set of all exchangeable elements of $\cP((\rd)^N)$.
For $F^N \in \cP_{sym}((\rd)^{N})$, we introduce $m_q(F^N):= \int_{(\rd)^N} |v_1|^q F^N(dv_1,\ldots,dv_N)$
and define $\cP_q((\rd)^N)=\{ F^N \in \cP_{sym}((\rd)^N) \; : \; m_q(F^N)<\infty\}$.
Finally, we introduce the entropy  of $F^N \in \cP_q((\rd)^N)$ for some $q>0$ by setting 
\[
H(F^N) := 
\begin{cases}
N^{-1} \int_{(\rd)^N} F^N(v_1,\ldots,v_N)\log F^N(v_1,\ldots,v_N) dv_1\ldots dv_N & \text{if $F^N$ has a density},\\
\ds + \infty & \text{otherwise}.
\end{cases}
\]

\vip
We will use the MKW (Monge-Kantorovich-Wasserstein) distance, see Villani 
\cite{v:t} for many details: for $f,g\in\cP_2(\rd)$,
$$
W_2(f,g)= \inf \Big\{ \Big(\int_{\rd\times\rd} |v-w|^2 R(dv,dw)\Big)^{1/2} \; : \; R \in \Pi(f,g)   \Big\},
$$
where $\Pi(f,g) = \bigl\{    R \in \cP(\rr^3\times\rr^3) \; : \; R  \text{ has marginals }  
f \text{ and } g \bigr\}$.

\vip
We now recall what is usually called {\it propagation of molecular chaos}.

\begin{defi}\label{dfc}
Let $\YY$ be a random variable taking values in a Polish space $E$ with law $g$ 
and, for each $N\geq 2$,
a family $(\YY_1^N,\dots,\YY^N_N)$ of exchangeable $E$-valued random variables.
The sequence $(\YY_1^N,\dots,\YY^N_N)$ is said to
be $\YY$-chaotic (or $g$-chaotic) if one of the three equivalent conditions is satisfied:

\vip

(i) $(\YY^N_1,\YY^N_2)$ goes in law to $g\otimes g$
as $N\to\infty$;

\vip

(ii) for all $j \ge 1$, $(\YY^N_1, \dots, \YY^N_j)$ goes in law to $g^{\otimes j}$
as $N\to\infty$;

\vip

(iii) $N^{-1} \sum_1^N \delta_{\YY_i^N}$ goes in probability to $g$ as $N\to\infty$.
\end{defi}

We refer for instance to Sznitman \cite{SSF} for the equivalence of the three conditions 
and to~\cite{hm} for quantitative versions of that equivalence. Propagation of chaos holds for a particle system 
(towards the solution $(f_t)_{t\geq 0}$ of its limit equation) if starting with initial data $f_0$-chaotic, 
the particles are $f_t$-chaotic for all times $t$.
And \emph{trajectorial} propagation of chaos holds when the trajectories of the particles are $Y$-chaotic, 
for a suitable process $Y$ associated to the limit equation. \Black

\vip

We will use the generic notation $C$ for all positive constant appearing in the sequel. 
When needed, we will indicate in subscript the quantities on which it depends.

%%%%%%%%%%%%%%%%%%%%%%%%%%%%%%%%%%%%%%%%%%%%%%%%%%%%
\subsection{Well-posedness and strong/weak stability}

We first recall that for 
$\alpha \in (-3,0)$ and $f\in \cP(\R^3)\cap L^p(\R^3)$ with $p>3/(3+\alpha)$, it holds that
\begin{align}\label{weakb2}
\sup_{v\in\R^3} \intrd |v-v_*|^\alpha f(v_*)dv_* \leq 1+ C_{\alpha,p}\|f\|_{L^p}.
\end{align}
This is easily checked: write that $\intrd |v-v_*|^\alpha f(v_*)dv_* \leq 1+ \int_{v_*\in B(v,1)} |v-v_*|^\alpha 
f(v_*)dv_*$ and use the H\"older inequality.

\vip

We next define the (classical) notion of weak solutions we use.

\begin{defi} Let $\gamma \in (-2,0)$. We say that $f$ is a weak solution to \eqref{HL3D} if it 
satisfies:

\vip

(i) $f \in L^\infty_{loc}([0,\infty),\cP_2(\rd))$,

\vip

(ii) if $\gamma \in (-2,-1)$, $f \in L^1_{loc}([0,\infty),L^p(\rd))$ for some $p>3/(4+\gamma)$,

\vip

(iii) for all $\varphi\in C^2_b(\rr^3)$, all $t\geq 0$, 
\begin{align}\label{wl}
\intrd \varphi(v)f_t(dv) = \intrd \varphi(v)f_0(dv) + \intot \intrd \intrd L\varphi(v,v_*) 
f_s(dv)f_s(dv_*) ds,
\end{align}
where
$$
L\varphi(v,v_*):= \frac 1 2 \sum_{k,l=1}^3 a_{kl}(v-v_*)\partial^2_{kl}\varphi(v)+ \sum_{k=1}^3
b_{k}(v-v_*)\partial_{k}\varphi(v).
$$
\end{defi}

Remark that every term is well-defined in \eqref{wl} under our assumptions on $f$ and $\varphi$, since
\begin{equation} \label{weakb1}
|L\varphi(v,v_*)|\leq C_\varphi (|v-v_*|^{\gamma+1}+|v-v_*|^{\gamma+2}).
\end{equation}
If $\gamma \in [-1,0)$, we have $|L\varphi(v,v_*)| \leq C_\varphi(1+|v|^2+|v_*|^2)$ so that condition (i) 
is sufficient, 
while if $\gamma \in (-2,-1)$, we have $|L\varphi(v,v_*)| \leq C_\varphi(1+|v|^2+|v_*|^2+|v-v_*|^{\gamma+1})$,
so that conditions (i) and (ii) are enough: use \eqref{weakb2} with $\alpha=\gamma+1$.

\vip

For $\gamma \in (-2,0)$ we set 
\beqn \label{def:pq}
q(\gamma):=\frac{\gamma^2}{2+\gamma}, \quad
p_1(\gamma) := \frac3{3+\gamma} \quad \text{and, for }  q > q(\gamma), \quad
p_2(\gamma,q) := \frac{3q-3\gamma}{q-3\gamma}.
\eeqn
It can be checked that $1 < p_1(\gamma)<  p_2(\gamma,q) <3$. 
Let us recall the well-posedness result of \cite{fg}.

\begin{thm}[Corollary 1.4 in  \cite{fg}]  \label{thfg}
Let $\gamma \in (-2,0)$ and $f_0 \in \cP_2(\rr^3)$ satisfy $H(f_0)<\infty$ and $m_q(f_0)<\infty$
for some $q >q(\gamma)$.
Then \eqref{HL3D} has a unique weak solution 
$f \in L^\infty_{loc}([0,\infty),\cP_2(\rr^3))\cap  L^1_{loc}([0,\infty),L^p(\rr^3))$ for some $p>p_1(\gamma)$.
Moreover, this unique solution satisfies 
$m_2(f_t)=m_2(f_0)$ and $H(f_t)\leq H(f_0)$ for all $t\geq 0$ and 
belongs to $L^1_{loc}([0,\infty),L^p(\rr^3))$ for all $p \in (p_1(\gamma), p_2(\gamma,q))$.
\end{thm}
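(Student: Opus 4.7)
Since the statement is recalled from \cite{fg}, I would follow the strategy developed there, which breaks naturally into existence by regularization, an $L^p$-improvement step driven by entropy dissipation, and a weak/strong uniqueness argument.

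For existence, I would regularize $a$ (by convolving with a mollifier and adding $\e I$) to obtain a uniformly parabolic smooth equation, for which classical parabolic theory produces a smooth nonnegative solution $f^\e$ with conserved mass. Testing against $|v|^2$ and $|v|^q$ gives conservation of kinetic energy and propagation of $m_q(f_0)$ uniformly in $\e$, and differentiating $\int f^\e \log f^\e$ together with the nonnegativity of the dissipation yields $H(f^\e_t) \le H(f_0)$. These bounds provide tightness in $\cP_2(\rd)$; passing to the limit in \eqref{wl} is routine once the cross terms of order $|v-v_*|^{\gamma+1}$ are tamed by \eqref{weakb2} with $\alpha=\gamma+1$.

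The key technical step is to upgrade any such limit solution to $L^1_{loc}([0,\infty), L^p(\rd))$ for some $p > p_1(\gamma)$. Starting from the entropy dissipation inequality $\int_0^t D(f_s)\,ds \le H(f_0)-H(f_t)$, one shows in the spirit of Desvillettes--Villani that $D(f)$ dominates a weighted Fisher information of the form $\int (1+|v|)^\gamma |\nabla\sqrt{f}|^2$, the projection appearing in $a$ being compensated using the moments. A weighted Gagliardo--Nirenberg interpolation balanced against $m_q(f_t)$ then yields $f \in L^p$ for every $p \in (p_1(\gamma), p_2(\gamma,q))$; the upper exponent $p_2(\gamma,q)$ is exactly what comes out of optimizing this interpolation against the weight $|v|^\gamma$.

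For uniqueness, I would take two solutions $f,g$ starting from the same initial data and set up a weak/strong stability argument: construct an optimal $W_2$-coupling at each time and estimate $W_2^2(f_t,g_t)$ via the probabilistic representation associated with \eqref{alt}. The main obstacle, and what forces the assumption $p > p_1(\gamma)$, is the singularity of the coefficients at $v=v_*$: the dangerous contributions lead to integrals of the form $\int\!\int |v-v_*|^\gamma \mu(dv,dv_*)$, which must be controlled by applying \eqref{weakb2} with $\alpha=\gamma$ to one of the $L^p$-marginals, after which a Gronwall loop closes to give $W_2(f_t,g_t) = 0$.
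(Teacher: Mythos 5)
This theorem is stated in the paper as a citation (``Corollary 1.4 in \cite{fg}'') and is not proved here; the paper simply recalls it as an ingredient. So there is no ``paper's own proof'' to compare against. That said, the paper does provide, in the introductory subsection on entropy dissipation, an informal sketch of exactly the $L^p$-improvement mechanism you describe (entropy dissipation $\Rightarrow$ weighted Fisher information via Lemma~\ref{ellip1} $\Rightarrow$ weighted Gagliardo--Nirenberg via Lemma~\ref{lem:FishInteg1}), and your uniqueness plan is essentially the weak/strong coupling argument that the paper develops in detail in its Theorem~\ref{wp}, where it in fact \emph{improves} the [fg] uniqueness class. Your reconstruction is therefore consistent with both [fg] and the machinery this paper builds.

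A couple of points you may wish to tighten if you were to carry this out fully. For $\gamma\in(-2,-1]$ the [fg]-style uniqueness places the $L^1_{loc}L^p$ hypothesis on \emph{both} solutions, not only on the ``strong'' one: the singular term $|v-v_*|^\gamma$ appearing after the It\^o/$W_2$ step has to be integrated against both marginals, and \eqref{weakb2} applied to one marginal alone handles only the milder $|v-v_*|^{\gamma+1}$ drift contribution; the full $|v-v_*|^\gamma$ second-order contribution needs $L^p$ regularity on both. (This is precisely what Theorem~\ref{wp}(ii) records, and why for $\gamma\in(-1,0)$ the statement is cleaner.) Also, in the regularization step you should make explicit that the $\eps I$-diffusion preserves the moment bounds only up to a linear-in-time term that vanishes with $\eps$, so the $m_q$ bound is locally uniform rather than propagated exactly; this is standard but worth noting since the subsequent interpolation uses $m_q(f_t)$ quantitatively.
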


We first state some weak/strong stability estimates and improve the above uniqueness result.

\begin{thm}\label{wp}
Let $\gamma \in (-2,0)$ and $f_0 \in \cP_2(\rr^3)$ satisfy also $H(f_0)<\infty$ and $m_q(f_0)<\infty$ 
for some $q >q(\gamma)$. Let $f$ be the unique weak solution of \eqref{HL3D} built in Theorem \ref{thfg}.

\vip

(i) Assume that $\gamma \in (-1,0]$. For any other weak solution
$g \in L^\infty_{loc}([0,\infty),\cP_2(\rr^3))$ to \eqref{HL3D} starting from $g_0 \in \cP_2(\rr^3)$, any 
$p \in (p_1(\gamma), p_2(\gamma,q))$, any $t\geq 0$,
\[
W_2^2(f_t,g_t) \leq W_2^2(f_0,g_0)\exp\Big(C_{\gamma,p} \intot (1+\|f_s\|_{L^p}) ds\Big).
\]
We thus have uniqueness for \eqref{HL3D} in the class
of all measure solutions in $L^\infty_{loc}([0,\infty),\cP_2(\rr^3))$.

\vip

(ii) Assume that $\gamma \in (-2,-1]$. For any 
$p \in (p_1(\gamma), p_2(\gamma,q))$, any
\begin{equation}\label{rr}
r> \frac{2p-3}{(3+\gamma)(p-1)-1}
\end{equation}
and any other weak solution 
$g \in L^\infty_{loc}([0,\infty),\cP_2(\rr^3))\cap L^1_{loc}([0,\infty),L^r(\rr^3))$ to \eqref{HL3D},
\[
W_2^2(f_t,g_t) \leq W_2^2(f_0,g_0) \exp\Big(C_{\gamma,p,r} \intot (1+\|f_s\|_{L^p}+\|g_s\|_{L^r}) ds\Big).
\]
In particular, for any $r>[3(q+ \gamma)]/[5q + 2 \gamma q + 3 \gamma]$, 
we have uniqueness for \eqref{HL3D} in the class
$L^\infty_{loc}([0,\infty),\cP_2(\rr^3)) \cap L_{loc}^1([0,\infty), L^r(\rr^3))$
(it suffices to let $p\uparrow p_2(\gamma,q)$ in \eqref{rr}).
\end{thm}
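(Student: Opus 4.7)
The plan is to establish both stability estimates via a probabilistic coupling in the spirit of Funaki--Tanaka: realize $f$ and $g$ as marginals of McKean--Vlasov SDEs driven by carefully coupled white noises, and then apply It\^o's formula on $|X_t - Y_t|^2$ and conclude by Gr\"onwall.

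First, given any weak solution $h$ of \eqref{HL3D} with enough integrability---automatic for $f$ by Theorem \ref{thfg}, and precisely the role of the extra $L^r$ hypothesis on $g$ in case (ii)---I would build on a sufficiently rich probability space a process $Z^h$ with $\mathcal{L}(Z_t^h) = h_t$, satisfying
\[
Z_t^h = Z_0^h + \int_0^t \!\int_{\rd} \sigma(Z_s^h - v)\, M^h(ds, dv) + \int_0^t \!\int_{\rd} b(Z_s^h - v)\, h_s(dv)\, ds,
\]
where $M^h$ is a white noise on $[0,\infty)\times\rd$ with intensity $ds\, h_s(dv)$ and $\sigma(v) = |v|^{\gamma/2+1}(I - v\otimes v/|v|^2)$ is the symmetric nonnegative square root of $a(v)$. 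To couple the two resulting processes, I pick for each $s$ an optimal transport plan $R_s \in \Pi(f_s, g_s)$ realizing $W_2^2(f_s, g_s)$, introduce a single white noise $M(ds, dv, dw)$ on $[0,\infty)\times\rd\times\rd$ with intensity $ds\, R_s(dv, dw)$, and drive
\[
X_t = X_0 + \int_0^t \!\int \sigma(X_s - v)\, M(ds, dv, dw) + \int_0^t \!\int b(X_s - v)\, f_s(dv)\, ds,
\]
together with the analogous equation for $Y$ using $w$ and $g_s$, starting from an optimal $W_2$-coupling $(X_0, Y_0)$ of $(f_0, g_0)$. By construction $\mathcal{L}(X_t) = f_t$ and $\mathcal{L}(Y_t) = g_t$, so $W_2^2(f_t, g_t) \leq \E |X_t - Y_t|^2$. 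It\^o's formula and expectation then give
\[
\E |X_t - Y_t|^2 = \E |X_0 - Y_0|^2 + \int_0^t \E \!\int \Psi(X_s, Y_s, v, w)\, R_s(dv, dw)\, ds,
\]
where $\Psi(x, y, v, w) := 2(x-y)\cdot(b(x-v) - b(y-w)) + \|\sigma(x-v) - \sigma(y-w)\|_{HS}^2$.

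The main obstacle is the pointwise algebraic estimate on $\Psi$, which I expect to take the form
\[
\Psi(x, y, v, w) \leq C_\gamma\, \bigl(|x-y|^2 + |v-w|^2\bigr)\, \Phi(x-v, y-w),
\]
with $\Phi$ built from singular factors $|x-v|^\gamma$ and/or $|y-w|^\gamma$. The delicate piece is the difference of matrix square roots: via the identity $\|\sigma(u) - \sigma(u')\|_{HS}^2 = \Tr(a(u) + a(u') - 2\sigma(u)\sigma(u'))$ and the explicit form of $\sigma$, one reduces matters to comparing the scalar $|\cdot|^{\gamma/2+1}$ and the projectors $I - v \otimes v/|v|^2$. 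This is where the two regimes diverge: for $\gamma \in (-1,0]$ one should obtain a clean additive bound $\Phi \lesssim |x-v|^\gamma + |y-w|^\gamma$, whereas for $\gamma \in (-2,-1]$ the stronger singularity of $b$ and of $\sigma$ near $0$ forces a product structure that can only be absorbed if both measures have some $L^p$/$L^r$ regularity.

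Finally, I would integrate the pointwise estimate against $R_s$, use $\int |v-w|^2 R_s(dv, dw) = W_2^2(f_s, g_s)$, and control singular factors of the form $\intrd |x-v|^\gamma f_s(dv)$ by \eqref{weakb2} (with $\alpha = \gamma$), which gives them $\leq 1 + C_{\gamma,p} \|f_s\|_{L^p}$ as soon as $p > p_1(\gamma)$. In case (i), only this single-sided control is needed, and Theorem \ref{thfg} delivers $f \in L^1_{loc}(L^p)$ for all $p \in (p_1(\gamma), p_2(\gamma, q))$. In case (ii), the product structure of $\Phi$ calls for a H\"older interpolation combining $\|f_s\|_{L^p}$ and $\|g_s\|_{L^r}$, and optimizing the exponents so that the resulting product is locally integrable in time produces exactly the lower bound \eqref{rr} on $r$. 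Gr\"onwall then yields the exponential stability estimates, and the uniqueness statements follow by taking $f_0 = g_0$.
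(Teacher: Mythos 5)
There is a genuine gap, and it sits precisely at the step you gloss over as ``the main obstacle.'' The key algebraic identity in the paper's proof (Lemma \ref{fondam}) writes
$\Psi(x,y,v,w) = \Delta_1(x,v,y,w) + \Delta_2(x,v,y,w)$
with $\Delta_1(x,v,y,w)=((x-y)+(v-w))\cdot(b(x-v)-b(y-w))$ \emph{antisymmetric} under the swap $(x,y)\leftrightarrow(v,w)$, and uses that antisymmetry to kill $\Delta_1$ entirely after integrating. The remainder $\Delta_2$ has the much milder kernel $K(|x-v|,|y-w|)=(\min)^{1+\gamma}/(\max)$, which for $\gamma\in(-1,0)$ is $\le |x-v|^\gamma$, allowing you to charge the singularity \emph{only} to $f$. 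This is the whole mechanism making case (i) a strong/weak estimate: without killing $\Delta_1$, the best pointwise bound via Lemma \ref{lem:Lip} is $|\Delta_1|\lesssim(|x-y|+|v-w|)^2(|x-v|^\gamma+|y-w|^\gamma)$, and the $|y-w|^\gamma$ factor, once integrated against the $g_s$-marginal, requires $g_s\in L^p$ with $p>p_1(\gamma)$. That destroys the strong/weak structure of (i) and also gives a strictly worse condition than \eqref{rr} in (ii), because the Young-inequality split of $K$ is unavailable without the symmetrization.

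The symmetrization requires that the \emph{inner} measure (the one your white noise intensity integrates over) and the \emph{outer} measure (the joint law of the coupled pair) coincide, so that $(x,y)\leftrightarrow(v,w)$ is an honest exchange of two copies drawn from the same plan. In your construction they do not: the intensity is a pre-chosen \emph{optimal} transport plan $R_s\in\Pi(f_s,g_s)$, while the outer expectation is over $\LL(X_s,Y_s)$, which is also in $\Pi(f_s,g_s)$ but is a different element of it in general. Consequently your It\^o formula yields $\E\!\int \Psi(X_s,Y_s,v,w)\,R_s(dv,dw)$ with two mismatched couplings and the antisymmetric piece $\Delta_1$ does not vanish. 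Your hoped-for pointwise bound $\Phi\lesssim|x-v|^\gamma+|y-w|^\gamma$ is in fact the \emph{worst} available estimate, not the right one: even for $\gamma\in(-1,0]$ it forces $\|g_s\|_{L^p}$ into the Gr\"onwall constant and misses (i) entirely. Your diagnosis of where (i) and (ii) diverge is therefore not correct: the additive versus product structure is not the dividing line; in both regimes the symmetrization is needed, and the only difference is that for $\gamma\in(-1,0)$ the kernel $K$ is already $\le |x-v|^\gamma$ while for $\gamma\in(-2,-1]$ one must split $K$ by Young's inequality and pay with the weaker $L^r$ hypothesis on $g$.

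The paper avoids the circularity you would otherwise face (choosing the intensity equal to the joint law of the process it drives, a nonlinear fixed point) by not using a white noise at all. Instead it drives both processes with the \emph{same} Brownian motion but inserts a state-dependent orthogonal matrix $U(a(g^\eps,\ZZ),a(f^\eps,\VV))$ \`a la Givens--Shortt into the diffusion of one of them, so that $U\,dB$ is still a Brownian motion. The quadratic-variation term that appears is then $\|\sigma(g,\ZZ)-\sigma(f,\VV)U\|^2$, which Lemma \ref{tictactoc} bounds by $\int\|\sigma(z-z_*)-\sigma(v-v_*)\|^2 R^\eps(dz_*,dv_*)$ where $R^\eps$ is a mollification of $R_t=\LL(\ZZ(t),\VV(t))$ --- i.e.\ the same joint law that the outer expectation is taken over, which is exactly what makes the antisymmetry of $\Delta_1$ usable. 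To repair your argument you would need either this rotation trick, or to set up the white-noise coupling self-consistently with intensity $\LL(X_s,Y_s)$ (a McKean--Vlasov construction in the coupling itself), and in either case then carry out the $\Delta=\Delta_1+\Delta_2$ split, Lemma \ref{tictactoc}-type comparison, and the $\eps$-regularization to make the formal integration-by-symmetry licit.
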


When $\gamma \in (-1,0)$, 
we thus prove the uniqueness in the class
of all measure solutions in $L^\infty_{loc}([0,\infty),\cP_2(\rr^3))$.
This is quite satisfying and interesting for the well-posedness theory, 
but there is another important consequence:
we will be able to apply (up to some fluctuations) the stability result
to the empirical measure of an associated particle system. Of course, 
such an empirical measure has no chance to belong to some $L^p$ space with $p>1$,
but we will use an approximated version of this stability principle involving discrete $L^p$
norms.

\vip

When $\gamma \in (-2,-1]$, the strong/weak estimate is of course less satisfying, 
since we do not manage to completely get rid of the
regularity assumptions on $g$. The uniqueness we deduce is slightly better than that stated in
\cite[Corollary 1.4]{fg}, but the stability result cannot be applied
to the empirical measure of a particle system.

%%%%%%%%%%%%%%%%%%%%%%%%%%%%%%%%%%%%%%%%%%%%%%%%%%%%
\subsection{Entropy dissipation and a priori bounds.}

The fact that {\it smooth} solutions to the Landau equation~\eqref{HL3D} belongs to 
$L^1_{loc}([0,\infty),L^p(\rd))$ is a consequence of the entropy dissipation. We sketch here the 
argument for the sake of completeness and also because we will use, in the proof of Theorem~\ref{mr},
a similar strategy to obtain some regularity estimate on the particle system. 
Precisely, the entropy dissipation for a solution $f_t$ of the Landau equation~\eqref{HL3D} writes
\[
\frac d {dt} H(f_t) = - \intrd\intrd  \bigl( \nabla \log f_t(v) - \nabla \log f_t(v_*) \bigr)^*
a(v-v_*) \bigl( \nabla \log f_t(v) - \nabla \log f_t(v_*) \bigr) 
 \, f_t(dv) f_t(dv_*)
\]
(where the superscript $*$ stand for the transposition), which is certainly non-negative thanks to the 
non-negativity of $a$. Using the drift and diffusion introduced in~\eqref{bfaf}, the dissipation of 
entropy may be rewritten
\[
\frac d {dt} H(f_t) = - \intrd  \nabla \log f_t(v) a(f_t,v) \nabla \log f_t(v)   \, f_t(dv)
- \intrd \diver b(f_t,v) \, f_t(dv).
\]
Then since the entropy of $f_t$ is decreasing and its second moment constant, Lemma~\ref{ellip1}
(taken from~\cite[Proposition 4]{dv}) shows that the first term in the r.h.s. controls a weighted Fisher 
information $I_\gamma(f_t) := \intrd |\nabla \log f_t|^2 (1+ |v|)^\gamma \,f_t(dv)$. This in turn allows us to control 
the $L^p$-norm of $f$ (for some values of $p$), 
thanks to Lemma~\ref{lem:FishInteg1}, provided we have sufficiently many moments.
Finally, a $L^p$-norm of $f_t$ with $p$ large enough is sufficient to bound $\| \diver b(f_t) \|_\infty$.
All in all, if $f_0$ has a finite entropy and moment of order $q$,
the entropy dissipation leads, for any $p \in \bigl(p_1(\gamma), p_2(\gamma,q) \bigr)$, 
to
\[
\int_0^T \| f_t\|_{L^p} \,dt \le C_{\gamma,p,q,T}.
\]

%%%%%%%%%%%%%%%%%%%%%%%%%%%%%%%
\subsection{The particle system}
We now consider the following toy model: we have $N$ particles characterized by their velocities $\VV_i^N$,
solving the following system of $\rr^3$-valued SDEs
\begin{equation}\label{ps}
\forall i=1,\ldots,N,\quad  
\VV^N_i(t) =\VV^N_i(0)+  \intot b\bigl(\tilde\mu^N_s,\VV^N_i(s) \bigr)ds 
+  \intot \sigma\bigl(\tilde\mu^N_s,\VV^N_i(s) \bigr) d\BB_i(s),
\end{equation}
with the notation \eqref{bfaf}.
Here $(\BB_i(t))_{i=1,\dots,N,t\geq 0}$ is an independent family of $3D$ standard Brownian motions
independent of $(\VV^N_i(0))_{i=1,\dots,N}$ and finally, for some $\eta_N\in(0,1)$,
\begin{equation*}
\tilde \mu^N_t =\mu^N_t \star \phi_{\eta_N} \quad \hbox{where} \quad \mu^N_t= \frac 1 N \sum_1^N \delta_{\VV^N_i(t)} 
\quad \hbox{and} \quad \phi_\eta(x) = \frac{3}{4 \pi \eta^3}\indiq_{\{|x|<\eta\}}.
\end{equation*}

We could probably also study the same system without the smoothing by convolution with $\phi_{\eta_N}$.
But without this smoothing, the particle system is not clearly well-posed. Since the paper is already 
technical enough, we decided not to study this (possibly difficult) issue.
However, this is not really a limitation, {\it since in all the results below, we allow $\eta_N$
to tend to $0$ as fast as one wants.}

\begin{prop}\label{wpps}
For any $N\geq 1$, any initial condition $(\VV^N_i(0))_{i=1,\dots,N}$, any $\eta_N \in (0,1)$, \eqref{ps} has a 
pathwise unique strong solution $(\VV^N_i(t))_{i=1,\dots,N, t\geq 0}$.
\end{prop}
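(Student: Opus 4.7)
The plan is to view \eqref{ps} as a standard SDE on $(\R^3)^N$ for the state $\bVV = (\VV^N_1,\dots,\VV^N_N)$ and to check that its coefficients are globally Lipschitz, so that classical It\^o theory then yields pathwise existence and uniqueness of a strong solution. The crucial point is that convolution with $\phi_{\eta_N}$, a bounded and compactly supported kernel, fully regularizes the singular functions $a$ and $b$.

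\vip

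Writing $\tilde\mu^N_t(v) = \frac{1}{N}\sum_{j=1}^N \phi_{\eta_N}(v-\VV^N_j(t))$, one has
\begin{equation*}
b(\tilde\mu^N_t,\VV^N_i) = \frac 1 N \sum_{j=1}^N \tilde b(\VV^N_i - \VV^N_j), \qquad a(\tilde\mu^N_t,\VV^N_i) = \frac 1 N \sum_{j=1}^N \tilde a(\VV^N_i - \VV^N_j),
\end{equation*}
with $\tilde b := b \star \phi_{\eta_N}$ and $\tilde a := a \star \phi_{\eta_N}$. Differentiating the explicit formulas for $a$ and $b$ yields pointwise bounds of the form $|\nabla b(v)|,\ |\nabla^2 a(v)| \le C|v|^\gamma$; for $\gamma \in (-2,0)$ these are bounded at infinity (the exponent $\gamma$ being negative) and locally integrable at the origin (since $\gamma > -3$). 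Since $\phi_{\eta_N}\in L^\infty$ has compact support, it follows that $\tilde b$ is $C^1$ with globally bounded gradient and $\tilde a$ is $C^2$ with globally bounded Hessian.

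\vip

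Consequently the drift $B : \bVV \mapsto (b(\tilde\mu^N,\VV^N_i))_{i=1,\dots,N}$ is globally Lipschitz on $(\R^3)^N$, and the block-diagonal matrix $A(\bVV)$ with blocks $a(\tilde\mu^N,\VV^N_i)$ is symmetric non-negative and $C^2$ with globally bounded Hessian. By the classical Stroock--Varadhan/Freidlin lemma, the square root $\Sigma(\bVV) := A(\bVV)^{1/2}$ is then itself globally Lipschitz, with Lipschitz constant controlled by $\sqrt{\|\nabla^2 A\|_\infty}$. With $B$ and $\Sigma$ both globally Lipschitz (and hence of linear growth), standard It\^o theory provides the claimed pathwise unique strong solution.

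\vip

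The only potentially delicate step is this Lipschitz property of $\Sigma$: $a(v)$ has a zero eigenvalue in the direction of $v$, and this degeneracy generically persists after convolution with $\phi_{\eta_N}$, so $A(\bVV)$ need not be uniformly positive definite and one cannot simply appeal to the smoothness of the square-root map on the cone of \emph{positive} matrices. The Stroock--Varadhan/Freidlin lemma is designed for exactly this degenerate setting; its proof rests on the scalar inequality $|\nabla f|^2 \le 2\|\nabla^2 f\|_\infty\, f$ valid for any non-negative $C^2$ function $f$, which allows one to bound $\nabla A^{1/2}$ with no positivity assumption on $A$ itself.
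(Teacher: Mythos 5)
Your proof is correct, and it takes a genuinely different route from the paper's. The paper establishes only that $\sigma_i$ is \emph{locally} Lipschitz (Lemma \ref{rough}-(i) gives $\|\sigma_\eta(y)-\sigma_\eta(z)\|\le C_\eta(1+|y|+|z|)|y-z|$) together with linear growth, and it obtains this by first proving that $a_\eta$ is uniformly elliptic: applying Lemma \ref{ellip1} to $f=\phi_\eta$ (a probability density with finite entropy and second moment) gives $\|(a_\eta(y))^{-1}\|\le C_\eta(1+|y|)^{|\gamma|}$, and then the square-root perturbation inequality $\|A^{1/2}-B^{1/2}\|\le C(\|A^{-1}\|\wedge\|B^{-1}\|)^{1/2}\|A-B\|$ of Lemma \ref{math} transfers the Lipschitz estimate from $a_\eta$ to $\sigma_\eta$. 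You instead observe that the singular second derivative $D^2 a(v)=O(|v|^\gamma)$, being locally integrable since $\gamma>-3$, is averaged to a \emph{globally bounded} quantity after convolution with the bounded compactly supported kernel $\phi_{\eta_N}$, and you invoke the Stroock--Varadhan square-root lemma $\|D(A^{1/2})\|_\infty\le C\|D^2 A\|_\infty^{1/2}$ (which the paper itself uses elsewhere, in Lemma \ref{regest}-(v)) to conclude that $\sigma_i$ is \emph{globally} Lipschitz. Your route yields the stronger conclusion, avoids appealing to the ellipticity of $a_\eta$ (a nontrivial fact in its own right, even though true here), and treats drift and diffusion uniformly via boundedness of $Db\star\phi_{\eta_N}$ and $D^2a\star\phi_{\eta_N}$; the paper's route stays within the ellipticity framework it develops for the rest of the article and requires only local Lipschitz control, which is already sufficient for the classical well-posedness theory once linear growth is verified.
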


The main topic of this paper is to show that, provided $\eta_N\to 0$ and 
under suitable conditions on $(\VV^N_i(0))_{i=1,\dots,N}$,
the empirical measure $\mu^N_t$ converges, as $N\to \infty$, 
to the weak solution $f_t$ of \eqref{HL3D} built in Theorem \ref{thfg}.
When $\gamma=0$, the coefficients $a,b$ are smooth and such a convergence has been proved by
Fontbona-Gu\'erin-M\'el\'eard \cite{fgm}, Carrapatoso \cite{c}, see also \cite{fou1}.
In a work in preparation \cite{bfg}, Bolley-Guillin-Fournier obtain some results
when $\gamma\in (0,1)$.
In \cite{c} and \cite{bfg}, a slightly more natural particle system, which a.s. preserves momentum
and kinetic energy, is considered. We have not been able to study this system with the present
technique, although the difference of its behavior seems very light.

\vip

When $\gamma<0$, the situation is rather more complicated, due to the singularity of the coefficients
$a,b$. To our knowledge, there are no available results in that context, except the one of 
Miot-Pulvirenti-Saffirio \cite{mps}, where a partial result has been obtained when $\gamma=-3$:
they prove the convergence to the Landau hierarchy, which unfortunately does not allow one to conclude.
Here we propose two methods.

\vip

When $\gamma \in (-1,0)$, we handle a direct computation, mimicking the strong/weak stability study, and
we get a result with a rate of convergence. We have to use a blob approximation of the empirical
measure, in the spirit of \cite{HaurayJabin}.

\vip

When $\gamma \in (-2,0)$, we use that the dissipation of entropy of the particle system
provides some regularity enough to control the singularity. By this way, we obtain a convergence result,
without rate, by using a tightness/uniqueness principle. We follow here \cite{fhm1}, where we studied
a similar problem for the 2D Navier-Stokes equation. However, a major difficulty appears: the 
entropy dissipation is actually not sufficient, due to the lack of ellipticity of the matrix $a$.
The diffusion coefficients in \eqref{ps} may degenerate for almost aligned configurations.
We thus have to show such configurations almost never appear.

%%%%%%%%%%%%%%%%%%%%%%%%%%%%%%%%%%%%%%%%%%
\subsection{A convergence result with rate for $\gamma \in (-1,0)$}

In that case the singularity is not too large, and we can use in spirit our
strong/weak uniqueness result to study the propagation of chaos.

\begin{thm}\label{mrrate}
Assume that $\gamma \in (-1,0)$ and let $f_0 \in \cP_q(\rd)$ for some $q \ge 8$ with a finite entropy.
Consider $f$ the unique weak solution to \eqref{HL3D} defined in Theorem \ref{thfg}.
For each $N\geq 2$, consider an exchangeable family $(\VV^N_i(0))_{i=1,\dots,N}$ with 
$\sup_{N\geq 2} \E[|\VV^N_1(0)|^4]<\infty$
and the corresponding unique solution  $(\VV^N_i(t))_{i=1,\dots,N, t\geq 0}$ to
\eqref{ps} with some $\eta_N \in (0,N^{-1/3})$. Denote by $\mu^N_t=N^{-1} \sum_1^N \delta_{\VV^N_i(t)}$ 
the associated empirical measure. Then for $\alpha = (1 - 6/q)(2+2\gamma)/3$, for all $T>0$,
\[
\sup_{[0,T]} \E \bigl[W_2^2(\mu^N_t,f_t)\bigr] \leq C_{T,q} \Bigl(N^{-\alpha}+N^{-1/2} 
+ \E \bigl[W_2^2(\mu^N_0,f_0)\bigr] \Bigr).
\]
In particular, the propagation of molecular chaos holds.
\end{thm}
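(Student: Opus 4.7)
The strategy is a synchronous coupling argument that mimics the strong/weak stability proof of Theorem \ref{wp}(i), with the blob regularization device of \cite{HaurayJabin} used to substitute an $L^p$-bound on the smoothed empirical measure $\tilde\mu^N_t$ for the $L^p$-assumption on $g$. On the same probability space as $(\BB_i)_{i \geq 1}$, I build $N$ coupled copies $(\WW^N_i)_{i=1,\dots,N}$ of the nonlinear McKean--Vlasov SDE
\[
\WW^N_i(t) = \WW^N_i(0) + \int_0^t b(f_s, \WW^N_i(s))\,ds + \int_0^t \sigma(f_s, \WW^N_i(s))\,d\BB_i(s),
\]
driven by the same Brownian motions, i.i.d.\ with $\WW^N_i(t)\sim f_t$, and with the initial pairs $(\VV^N_i(0), \WW^N_i(0))$ realizing, exchangeably in $i$, the $W_2$-optimal coupling between the law of $\VV^N_i(0)$ and $f_0$. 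Well-posedness of this nonlinear SDE follows from Theorem \ref{thfg} and classical arguments. Setting $\nu^N_t := N^{-1}\sum_i \delta_{\WW^N_i(t)}$, the triangle inequality $\E[W_2^2(\mu^N_t, f_t)] \leq 2\E[W_2^2(\mu^N_t, \nu^N_t)] + 2\E[W_2^2(\nu^N_t, f_t)]$ reduces the problem to bounding these two quantities; the second, being the fluctuation of an empirical measure of $N$ i.i.d.\ $f_t$-samples, is handled by Fournier--Guillin type concentration estimates, which produce the $N^{-\alpha}+N^{-1/2}$ contribution (using $m_q(f_t)\leq m_q(f_0)$).

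The first term is controlled by coupling: with $D^N_i(t) := \VV^N_i(t) - \WW^N_i(t)$, the obvious coupling gives $W_2^2(\mu^N_t, \nu^N_t) \leq N^{-1}\sum_i |D^N_i(t)|^2$, and by exchangeability the analysis reduces to controlling $I^N(t) := \E[|D^N_1(t)|^2]$. Applying It\^o's formula to $|D^N_1|^2$ yields drift and trace-of-diffusion contributions that are each decomposed, via the intermediate measure $\tilde\nu^N_t := \nu^N_t \star \phi_{\eta_N}$, as a sum of three pieces:
\begin{enumerate}[(a)]
\item a stability piece $b(\tilde\mu^N_t, \VV^N_1) - b(\tilde\mu^N_t, \WW^N_1)$, estimated through the H\"older-$(1+\gamma)$ regularity of $b$ together with the bound \eqref{weakb2} applied to $\tilde\mu^N_t$, producing the Gronwall factor $1+\|\tilde\mu^N_t\|_{L^p}$;
\item a discrete-interaction piece $b(\tilde\mu^N_t, \WW^N_1) - b(\tilde\nu^N_t, \WW^N_1)$, bounded in terms of $N^{-1}\sum_i |D^N_i(t)|^2$ via the smoothness of the regularized kernel $b \star \phi_{\eta_N}$;
\item a fluctuation piece $b(\tilde\nu^N_t, \WW^N_1) - b(f_t, \WW^N_1)$, which further splits, via $f_t \star \phi_{\eta_N}$, into an i.i.d.\ empirical fluctuation of Fournier--Guillin type and a pure smoothing error that vanishes thanks to the $L^p$-regularity of $f_t$ from Theorem \ref{thfg} and to the choice $\eta_N \leq N^{-1/3}$.
\end{enumerate}
An analogous decomposition is carried out for $\sigma$, using its H\"older regularity together with elementary matrix square-root inequalities, since we only need an $L^2$-type bound on the trace $\Tr[(\sigma(\tilde\mu^N_t,\VV^N_1)-\sigma(f_t,\WW^N_1))^2]$.

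Combining the three pieces (and the analogous ones for $\sigma$) produces a Gronwall inequality of the form
\[
I^N(t) \leq I^N(0) + C_T\bigl(N^{-\alpha}+N^{-1/2}\bigr) + C\int_0^t (1+\|f_s\|_{L^p})\,I^N(s)\,ds,
\]
where $I^N(0) \leq \E[W_2^2(\mu^N_0, f_0)]$ by the optimal initial coupling, and where $s \mapsto \|f_s\|_{L^p}$ is locally integrable thanks to the entropy-dissipation bound of Theorem \ref{thfg}. Gronwall's lemma then concludes. The principal obstacle is the uniform-in-$N$ propagation of a usable $L^p$-bound on the smoothed particle density $\tilde\mu^N_t$ required in piece (a): this demands a moment-propagation estimate on the particle system (for which the hypothesis $\sup_N\E[|\VV^N_1(0)|^4]<\infty$ is essentially tight) together with the observation that convolution by $\phi_{\eta_N}$ does not enlarge moments and produces an $L^p$-norm whose scaling with $\eta_N$ must be reconciled, via $\eta_N \leq N^{-1/3}$, with the Fournier--Guillin rate. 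The assumption $q \geq 8$ and the precise value $\alpha=(1-6/q)(2+2\gamma)/3$ then encode the interplay between the H\"older exponent $1+\gamma$ of the coefficients and the number of moments needed by Fournier--Guillin to reach the announced $W_2^2$-rate.
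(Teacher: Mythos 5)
Your proposal correctly identifies the overall architecture (a coupling with $N$ i.i.d.\ copies of the nonlinear process, with optimal initial coupling and a Fournier--Guillin-type bound on $W_2^2(\nu^N_t, f_t)$), but it misses the two devices that actually make the argument close, and this leads it to confront an obstacle that the paper's proof is precisely designed to avoid.

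First, the \emph{synchronous} coupling driven by the same $\BB_i$ is not what the paper uses, and it would not give the needed estimate. The coupled copies $\WW^N_i$ are driven by the rotated noise $U\bigl(a(\bar\mu^N_s,\VV^N_i),a(\bar\nu^N_s,\WW^N_i)\bigr)\,d\BB_i$, where $U$ is the Givens--Shortt orthogonal matrix of \eqref{dfU}. The whole point, isolated in Lemma \ref{tictactoc}, is that this rotation turns the diffusion contribution $\|\sigma(\mu,z)-\sigma(\nu,v)U(\cdot)\|^2$ into the \emph{per-pair} bound $\int\|\sigma(z-z_*)-\sigma(v-v_*)\|^2 R^\e(dz_*,dv_*)$. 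Without it, $\sigma(\mu,z) = \bigl(\int a(z-z_*)\mu(dz_*)\bigr)^{1/2}$ offers no such linear-in-$R$ structure. Your proposal treats the diffusion term via ``H\"older regularity of $\sigma$ together with elementary matrix square-root inequalities,'' which is the route the paper explicitly rejects at the beginning of Section~\ref{finreg}.

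Second, once one has the per-pair structure, the key cancellation is the antisymmetric decomposition of Lemma \ref{fondam}: $\Delta = \Delta_1 + \Delta_2$ with $\Delta_1$ vanishing upon symmetric integration against $R^\e\otimes R$, and $\Delta_2$ bounded by $4(|v-w|^2+|v_*-w_*|^2)K(|v-v_*|,|w-w_*|)$. Your decomposition (a)--(c) bypasses this and, as a result, in piece (a) you must Lipschitz-estimate $v\mapsto b(\tilde\mu^N_t,v)$, which by Lemma~\ref{regest}-(iv) costs a factor $\|\tilde\mu^N_t\|_{L^p}$. You correctly identify this as ``the principal obstacle,'' but the fix you sketch --- moment propagation plus the observation that convolution does not enlarge moments --- does not produce an $L^p$-bound at all: moment bounds and $L^p$-bounds are of a different nature, and no such $L^p$ control of the smoothed \emph{interacting} empirical measure is available here. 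The paper's symmetrized bound circumvents this entirely: because $K(x,y)\le\min\{x^\gamma,y^\gamma\}$ when $\gamma\in(-1,0)$, one can park the singularity on whichever marginal one prefers, and the paper parks it on the $\nu^N_t$-side (see the proof of Proposition~\ref{crucru}-(i) and the application in Proposition~\ref{groscalcul}, Step 2). This is manageable because $\nu^N_t$ is the empirical measure of i.i.d.\ $f_t$-samples, for which Proposition~\ref{controletauN} establishes a probabilistic $\|\bar\nu^N_t\|_{L^2}$ bound (with the stopping time $\tau_N$) via a counting/concentration argument that has no analogue for $\tilde\mu^N_t$. So the $L^p$-factor the Gronwall argument needs is on the coupled i.i.d.\ side, not the particle side, and your proposal as stated would stall exactly at the point you flag.
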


Recall, see \cite{fgui}, that the best general rate we can hope for the expected squared $2$-Wasserstein 
distance between an empirical measure of some i.i.d. $\rd$-valued random variables and their common law is in 
$N^{-1/2}$. Hence for $\gamma \in (-1/4,0)$, and when the particles are initially i.i.d. and $f_0$-distributed, 
the rate of convergence is optimal if $f_0$ has a finite moment of sufficiently high order. 
Of course, it is likely that $N^{-1/2}$ is the true rate for all values of $\gamma$, while our rate deteriorates 
considerably when approaching $\gamma = -1$. 
However, we are quite satisfied, since to our knowledge, there are very few quantitative results
of propagation of chaos for systems with a singular interaction.

%%%%%%%%%%%%%%%%%%%%%%%%
\subsection{Trajectories} Our second method will prove a slightly stronger result
(although without rate): the convergence at the level of trajectories.
We thus need to introduce the stochastic paths associated to the Landau equation.
These paths will furthermore be used to prove the strong/weak estimates.

\vip

Given a Brownian motion $(\BB(t))_{t\geq 0}$, independent of an initial condition $\VV(0)$ with law $f_0$, 
we are interested in a continuous adapted $\rr^3$-valued process $(\VV(t))_{t\geq 0}$ solution to
\begin{align}\label{nsde}
\VV(t)=\VV(0) + \intot b(f_s,\VV(s))ds + \intot \sigma(f_s,\VV(s))d\BB(s),
\end{align}
where $f_t \in \cP(\rr^3)$ is the law of $\VV(t)$ and using notation \eqref{bfaf}.

\vip

The process $(\VV(t))_{t\geq 0}$ represents the time evolution of the velocity of a {\it typical}
particle in a plasma whose velocity distribution solves the Landau equation. Such a probabilistic
interpretation (in the case of the Boltzmann equation) was initiated by Tanaka \cite{t}. See
Funaki \cite{f} for the case of the Landau equation.
The following results of existence and uniqueness are
proved in \cite{fg}, but with another formulation involving a white noise.
We will shortly prove them again, since we need to extend them to another nonlinear
SDE that we will introduce later.

\begin{prop}\label{nsdewp}
Let $\gamma \in (-2,0)$ and 
$f_0 \in \cP_2(\rr^3)$ satisfy also $H(f_0)<\infty$ and $m_q(f_0)<\infty$ for some $q>q(\gamma)$, 
recall \eqref{def:pq}. 

\vip
(i) There exists a pathwise unique
continuous adapted solution $(\VV(t))_{t\geq 0}$ to~\eqref{nsde}
such that $f=(f_t)_{t\geq 0} \in L^\infty_{loc}([0,\infty),\cP_2(\rd))\cap 
L^1_{loc}([0,\infty),L^p(\rr^3))$ for some $p\in (p_1(\gamma),p_2(\gamma,q))$.

\vip
(ii) Furthermore, $f$ is the weak solution to the Landau equation~\eqref{HL3D} 
given by Theorem~\ref{thfg}.
\end{prop}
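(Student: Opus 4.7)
This is a standard setup for nonlinear McKean--Vlasov SDEs whose limit PDE is well-posed: we decouple the nonlinearity by treating the Landau weak solution $f$ of Theorem~\ref{thfg} as a known datum, so that \eqref{nsde} becomes a linear SDE with coefficients frozen along $f$. Existence then follows from a superposition principle applied to $f$, while pathwise uniqueness comes from combining the PDE uniqueness of Theorem~\ref{thfg} with standard estimates for that linear SDE.

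\textbf{Existence and identification.} Let $f$ be the unique weak solution from Theorem~\ref{thfg}, $f\in L^1_{loc}([0,\infty),L^p(\rd))$ for some $p\in(p_1(\gamma),p_2(\gamma,q))$, and set $\tilde b_s(v):=b(f_s,v)$, $\tilde a_s(v):=a(f_s,v)$, $\tilde\sigma_s(v):=\tilde a_s(v)^{1/2}$. Using $|b(w)|\le 2|w|^{\gamma+1}$ and $|a(w)|\le|w|^{\gamma+2}$, together with \eqref{weakb2} (allowed because $p>p_1(\gamma)>3/(4+\gamma)$), one readily checks that
\[
|\tilde b_s(v)|\le C(1+\|f_s\|_{L^p})(1+|v|),\qquad 0\le\tilde a_s(v)\le C(1+|v|^2),
\]
both integrable in $s$ on compact intervals. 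The Landau equation \eqref{alt} now reads exactly as the \emph{linear} Fokker--Planck equation with coefficients $\tilde b,\tilde a$, and the Figalli--Trevisan superposition principle produces a filtered probability space carrying a Brownian motion $\BB$ and a continuous adapted process $(\VV(t))_{t\ge 0}$ with $\mathrm{Law}(\VV(t))=f_t$ satisfying
\[
\VV(t)=\VV(0)+\int_0^t \tilde b_s(\VV(s))\,ds+\int_0^t \tilde\sigma_s(\VV(s))\,d\BB(s).
\]
Re-reading this with $f_s=\mathrm{Law}(\VV(s))$ gives a solution of \eqref{nsde}, and since $f$ is the weak Landau solution of Theorem~\ref{thfg}, assertion (ii) holds by construction.

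\textbf{Pathwise uniqueness and main obstacle.} Given two solutions $(\VV),(\tilde\VV)$ of \eqref{nsde} on the same stochastic basis, with the same $\VV(0)$ and $\BB$, and with marginal flows $f,\tilde f$ in $L^\infty_{loc}(\cP_2)\cap L^1_{loc}(L^p)$, It\^o's formula applied to $\varphi\in C^2_b(\rd)$ shows that $f$ and $\tilde f$ both solve \eqref{wl}; by Theorem~\ref{thfg} they coincide, so both processes solve the \emph{same linear} SDE with coefficients $\tilde b,\tilde\sigma$. Differentiating under the integral and applying \eqref{weakb2} to the singularities $\nabla b(w)\sim|w|^\gamma$ and $\nabla a(w)\sim|w|^{\gamma+1}$ shows that $\tilde b_s$ is globally Lipschitz and $\tilde a_s$ locally Lipschitz in $v$, with $s$-integrable Lipschitz constants controlled by $C(1+\|f_s\|_{L^p})$; Lemma~\ref{ellip1} further supplies a weighted lower bound $\tilde a_s(v)\ge c(1+|v|)^\gamma I$ depending only on mass, energy and entropy of $f_s$, so that the square root $\tilde\sigma_s$ is locally Lipschitz in $v$. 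Pathwise uniqueness then follows from a localized Gronwall estimate for $\E[|\VV(t)-\tilde\VV(t)|^2\wedge R^2]$, closed as $R\to\infty$ via tail moment bounds. The main technical obstacle is precisely in this last step: the ellipticity constant of $\tilde a_s$ degenerates through the weight $(1+|v|)^\gamma$ as $|v|\to\infty$, so $\tilde\sigma_s$ is only locally Lipschitz, and the moment hypothesis $q>q(\gamma)$ is exactly what generates the tail bounds that push the localization to infinity.
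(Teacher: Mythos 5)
Your proof is essentially correct and follows the same overall skeleton as the paper's (freeze $f$ from Theorem~\ref{thfg}, use a Figalli-type superposition result, establish pathwise uniqueness via Gr\"onwall), but you deviate at one technical point in a way that creates extra work for yourself.

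The paper's key trick, which you miss, is Lemma~\ref{regest}-(v): $\sigma(f,\cdot)=a(f,\cdot)^{1/2}$ is \emph{globally} Lipschitz with constant $C_p(1+\|f\|_{L^p})^{1/2}$, obtained via the Stroock--Varadhan inequality $\|D(A^{1/2})\|_\infty\le C\|D^2A\|_\infty^{1/2}$ applied to $A(\cdot)=a(f,\cdot)$, which only needs $\|D^2a(f,\cdot)\|_\infty\le C(1+\|f\|_{L^p})$ (true since $\|D^2a(w)\|\sim|w|^\gamma$ and $p>p_1(\gamma)$). This requires no ellipticity at all. You instead derive the Lipschitz bound for $\sigma$ from the lower bound $a(f_s,v)\ge c(1+|v|)^\gamma I$ of Lemma~\ref{ellip1} combined with Lemma~\ref{math}; that gives only a \emph{local} Lipschitz constant blowing up in $|v|$, which then forces the localization/stopping-time argument you sketch. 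That argument can be made to work (stop at $\tau_R=\inf\{t:|\VV(t)|\vee|\tilde\VV(t)|>R\}$, get $\VV(\cdot\wedge\tau_R)=\tilde\VV(\cdot\wedge\tau_R)$ by a Gr\"onwall with $R$-dependent constant, and let $R\to\infty$ using path continuity), but it is strictly more delicate, and the ``main technical obstacle'' you flag is in fact an artifact of this choice rather than an intrinsic difficulty of the problem. Also note a small slip: $\tilde a_s$ is globally, not just locally, Lipschitz (since $\nabla a(w)\sim|w|^{\gamma+1}$ and $p>3/(4+\gamma)$ gives a uniform bound via~\eqref{weakb2}); the local-Lipschitz issue arises only after taking the matrix square root.

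The second structural difference: you apply the superposition principle directly to $f$ to manufacture a weak solution, then rely (implicitly) on Yamada--Watanabe to upgrade to a strong one. The paper instead first proves strong existence and pathwise uniqueness for the linear SDE by classical Lipschitz arguments (Step~1), invokes superposition (Proposition~\ref{pfi}) only to obtain uniqueness of the \emph{linear} Fokker--Planck equation (Step~2), and then identifies the marginals of the strong solution with $f$ (Step~3). Both routes are valid; yours reaches existence in one stroke but leaves the Yamada--Watanabe step unstated, while the paper's route is more explicit and self-contained given that it already needs Proposition~\ref{pfi} for the stability arguments of Section~\ref{finreg}.
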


%%%%%%%%%%%%%%%%%%%%%%%%%%%%%%%%%%%%%%%%%%%%%%
\subsection{A convergence result without rate}

We will assume the following hypothesis on the initial conditions of \eqref{ps} and \eqref{HL3D}.

\begin{equation}\label{condichaos}
\left\{\begin{array}{l} 
\ds \text{(i) } f_0 \in \cP_2(\rr^3) \cap \cP_q(\rr^3) \hbox{ for some } q>q(\gamma)
\text{ and } H(f_0)<\infty; \\[1em]
\ds  \text{(ii) } \text{the sequence } (\VV^N_1(0),\dots,\VV^N_N(0)) 
\text{ with law $F^N_0$ is exchangeable and $f_0$-chaotic;}\\[.5em]
\ds \text{(iii) } \sup_{N\geq 2} \E[|\VV^N_1(0)|^2+|\VV^N_1(0)|^q] <\infty \quad \text{and }  
\sup_{N\geq 2} H(F^N_0)  < \infty.
\end{array}\right.
\end{equation}

All these conditions hold true if $f_0$ satisfies point (i) and if $(\VV^N_1(0),\dots,\VV^N_N(0))$
are i.i.d. and $f_0$-distributed. 

\begin{thm}\label{mr} Assume that $\gamma\in(-2,0)$.
Consider $f_0\in\cP(\rr^3)$ and, for each $N\geq 2$, a family $(\VV_i^N(0))_{i=1,\dots,N}$
of $\rr^3$-valued random variables. Assume \eqref{condichaos}. 
For each $N\geq 2$, consider the unique solution
$(\VV^N_i(t))_{i=1,\dots,N,t\geq 0}$ to \eqref{ps} with some $\eta_N \in (0,1)$. Let also 
$(f_t)_{t\geq 0}$ be the unique weak solution to \eqref{HL3D} given by Theorem \ref{thfg} and $(\VV(t))_{t\ge 0}$
the unique solution to \eqref{nsde} (see Proposition \ref{nsdewp}). Then, as soon as 
$\lim_{N \to \infty} \eta_N =0$,
the sequence $((\VV^N_i(t))_{t\ge 0})_{i = 1 \ldots,N}$ is $(\VV(t))_{t\ge 0}$-chaotic.
In particular, if we set
\begin{equation*}
 \mu^N_t:= \frac1N \sum_{i=1}^N \delta_{\VV_i^N(t)},
\end{equation*}
then $(\mu^N_t)_{t\geq 0}$  goes in probability, in $C([0,\infty),\cP(\rr^3))$, to $(f_t)_{t\geq 0}$.
\end{thm}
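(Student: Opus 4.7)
The plan is to follow the classical McKean martingale route: establish tightness of the particle system, characterize any limit point as a solution of the nonlinear martingale problem associated with \eqref{nsde}, and invoke the uniqueness part of Proposition \ref{nsdewp} to conclude that $(\mu^N_t)_{t\ge 0}$ converges in probability to $(f_t)_{t\ge 0}$ and the $i$-th trajectory converges in law to $\VV$. Trajectorial chaoticity then follows from the exchangeability hypothesis and (the proof of) Definition \ref{dfc}, e.g.\ via Sznitman's characterization applied to the path space $C([0,\infty),\rr^3)$.

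The crucial preliminary input is a regularity estimate on the (blob-smoothed) empirical measure: I would compute, by It\^o's formula applied to $F^N_t \log F^N_t$, the entropy dissipation of the particle system, mimicking the continuous identity recalled in the introduction. The ellipticity estimate (Lemma \ref{ellip1} of Desvillettes--Villani) turns the dissipation into a bound on a weighted Fisher information, which via Lemma \ref{lem:FishInteg1} and a bootstrap using moment propagation produces, for every $p \in (p_1(\gamma),p_2(\gamma,q))$, a uniform in $N$ bound $\int_0^T \E\|\tilde\mu^N_t\|_{L^p}\,dt \le C_{T,p,q}$ as soon as \eqref{condichaos} holds. Combined with \eqref{weakb2}, this controls the singular integrals $b(\tilde\mu^N_s,v)$ and the quadratic variation of the martingale part, and a Kolmogorov/Aldous criterion on the SDE \eqref{ps} yields tightness of the law of $(\VV^N_1(\cdot))_{N\ge 2}$ in $C([0,\infty),\rr^3)$, hence by exchangeability of the laws of $\mu^N$ on $\cP(C([0,\infty),\rr^3))$.

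The main obstacle, as flagged in the abstract, is that the matrix $a(v-v_*)$ is only of rank two, so $a(f,v)$ can degenerate on quasi-aligned configurations, and Lemma \ref{ellip1} only gives the expected ellipticity when $f$ is not too concentrated on a line through $v$. To bypass this, I would introduce an auxiliary system $\VV^{N,\e}_i$ obtained from \eqref{ps} by adding an independent Brownian perturbation of variance $\e I$, so that $a(\tilde\mu^N_s,v)+\e I \ge \e I$. The entropy dissipation argument works unconditionally for the perturbed system and gives $L^p$ bounds that are uniform in $(N,\e)$. Tightness and passage to the limit in the martingale problem
\[
M^\varphi_t := \varphi(\VV^{N,\e}_1(t))-\varphi(\VV^{N,\e}_1(0))-\int_0^t\!\!\intrd L\varphi(\VV^{N,\e}_1(s),v_*)\,\tilde\mu^{N,\e}_s(dv_*)\,ds
\]
for $\varphi\in C^2_b(\rr^3)$ can then be carried out by first letting $N\to\infty$ at fixed $\e>0$; the singular integrand is controlled using the $L^p$ bound and \eqref{weakb1}--\eqref{weakb2}.

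The delicate last step is to remove the perturbation. One must show that any limit point (as $N\to\infty$ then $\e\to 0$) of the law of $\mu^{N,\e}$ is concentrated on the distribution of the unperturbed nonlinear SDE \eqref{nsde}; equivalently, the $\e$-dependent extra drift/diffusion contribution in the martingale problem is negligible in the double limit. This is where the statement that ``the additional noise is almost never used in the limit'' appears, and I expect this to be the hardest technical step: it requires quantitative control on the set of aligned particle configurations, presumably via the Fisher-information bound (which, averaged in time, forbids $\mu^N$ from concentrating too much). Once this is done, Proposition \ref{nsdewp}(i)--(ii) identifies the unique limit as the law of $\VV$, all subsequences converge to the same deterministic limit $f$, and convergence in probability of $\mu^N$ to $f$ in $C([0,\infty),\cP(\rr^3))$ together with the trajectorial chaoticity follow. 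The assumption $\eta_N\to 0$ enters only at the very end to identify $\tilde\mu^{N,\e}_s$ and $\mu^{N,\e}_s$ in the limit.
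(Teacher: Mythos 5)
Your overall blueprint (tightness, martingale problem, identification via entropy dissipation, then removal of an auxiliary perturbation) matches the paper's philosophy, but the specific perturbation you propose does not work, and the gap is precisely where you predicted the difficulty would lie.

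The concrete problem is the claim that adding an $\e I$ diffusion yields ``$L^p$ bounds that are uniform in $(N,\e)$.'' With $a(\tilde\mu^N_s,v)+\e I \ge \e I$, the entropy dissipation only gives $\e\int_0^T I(G^N_t)\,dt \le C$, i.e.\ a Fisher information bound of order $1/\e$, which blows up as $\e\to 0$. To get a bound on the \emph{weighted} Fisher information $I_\gamma$ uniformly in $\e$, you need the matrix $a(\tilde\mu^N_s,v)$ itself to be bounded below by $\kappa(1+|v|)^\gamma I$, and this is exactly what can fail for nearly-aligned empirical configurations. So either you keep $\e>0$ fixed and never control the singularity, or you let $\e\to 0$ and lose the regularity estimate. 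Compounding this, at fixed $\e$ the martingale-problem limit as $N\to\infty$ identifies a perturbed PDE $\partial_t g=\frac12\diver(a(g)\nabla g-b(g)g)+\frac{\e}{2}\Delta g$, not the Landau equation, so the double limit $N\to\infty$, $\e\to 0$ also requires a stability theory in $\e$ for the Landau equation that is not provided by Theorem \ref{wp}.

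The paper avoids the double limit entirely by making the perturbation configuration-dependent rather than a blanket $\e I$. It first proves a geometric ellipticity result (Lemma \ref{ellip2}): if a probability measure puts mass $\ge \kappa$ on three balls whose centers satisfy a quantitative non-alignment condition, then $a(f,v)\ge \kappa'(1+|v|)^\gamma I$. Using the finite entropy and second moment of $f_t$, Lemmas \ref{ellip3}--\ref{ellip4} produce, on each short time interval, three such reference points around which $f_s$ has mass $\ge\kappa_0$. The perturbation coefficient $c_l(\mu)$ is then built so that it is $0$ when $\mu$ has enough mass near those reference points, and $\ge 1$ otherwise. The crucial dichotomy (Proposition \ref{adnoiseprop}-(iv)) is that $a_i^N+(c_l^N)^2 I \ge \kappa_1 (1+|v_i|)^\gamma I$ with a constant $\kappa_1$ independent of $N$: either the configuration is good, and $a$ alone is elliptic, or it is bad, and $c_l\ge 1$. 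This gives the $N$-uniform bound on $\int_0^T I_\gamma(G^N_t)\,dt$ that the $\e I$ version cannot deliver. Moreover, $c_l(f_t)=0$ for all $t\in[0,T]$ by construction, so the perturbed nonlinear SDE has \emph{the same} solution as \eqref{nsde}. Once propagation of chaos for the perturbed system is proved (by the martingale method you outline), the empirical measure is close to $f_t$, hence satisfies the mass condition, hence the perturbation is off with probability tending to one (Proposition \ref{pegalpp}), and the two particle systems coincide on $[0,T]$ with high probability. The heavy lifting is the geometric non-alignment construction, which is absent from your proposal and is what actually makes the perturbation removable.
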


%%%%%%%%%%%
\subsection{Comments}

Propagation of chaos was initiated by Kac \cite{Kac1956} as a step to the derivation
of the Boltzmann equation. Since then, many models have been studied. For non singular interactions,
things are more or less well-understood
and there has even been recently some important progress to get uniform in time propagation
of chaos, see Mischler-Mouhot \cite{MMKacProg}, to which we refer for many references 
including the important works of Sznitman \cite{S1,SSF}, M\'el\'eard \cite{m}, see also 
Mischler-Mouhot-Wennberg
\cite{MMW} and \cite{hm}. As already mentioned, most of the results concerning the Landau equation
\cite{fgm,fou1,c} concern the case where $\gamma=0$ (or, at least, where the singularity is removed).

\vip

The case of a singular interaction is much more complicated and there are only very few works.
Osada \cite{o1,o2} has obtained some remarkable results concerning the convergence of the vortex model
to the Navier-Stokes equation (in dimension $2$, with a divergence free interaction in $1/|x|$), improved 
recently by the authors and Mischler~\cite{fhm1}. 
In dimension one, Cepa and Lepingle have also studied the (very singular) Dyson model~\cite{cl}.
We shall also mention the case of a deterministic particle system (with position and velocity) in singular
interaction studied by the second author and Jabin~\cite{HaurayJabin}.
Here, the first quantitative method is in some sense inspired from~\cite{HaurayJabin}. The second one 
(without rate) relies on the entropy dissipation technique introduced in~\cite{fhm1}.

\vip

Finally, let us mention again the work in preparation \cite{bfg}, which treats a similar problem
when $\gamma \in [0,1)$, with more satisfying results. Some of the techniques used here are common 
with \cite{bfg}. In particular, the introductory part of Section \ref{finreg} is reproduced from it.

%%%%%%%%%%%%%%
\subsection{Plan of the paper} 
In the next section, we prove several regularity estimates concerning the coefficients
$a$ and $b$, we check Proposition~\ref{wpps} (well-posedness of the particle system)
and  Proposition~\ref{nsdewp} (well-posedness of the nonlinear SDE).
Section~\ref{finreg} is devoted to the proof of Theorem~\ref{wp} (strong/weak stability estimates).
We prove the (uniform in $N$) propagation of moments for the particle system
in Section~\ref{secmom}. The proof of Theorem~\ref{mrrate} (propagation of chaos with rate when 
$\gamma \in (-1,0)$) is given in Section~\ref{secrate}.
We next study precisely, in Section~\ref{secellip},
the ellipticity of $a(\mu,v)$ when $\mu$ is an empirical measure.
We give the proof of Theorem~\ref{mr} (propagation of chaos without rate when $\gamma \in (-2,0)$)
in Section~\ref{sec:per}.
In a first appendix, we extend a coupling result of \cite{hm}.
In a second appendix, we generalize slightly a result of Figalli \cite{Fig} on the equivalence between 
PDEs and SDEs, that we use several times.
Finally, a third appendix about entropy and weighted Fisher information lies at the end of the paper.

%%%%%%%%%%%%%%%%%%
\subsection{Final notation}\label{fin}

We recall that $a(v)=|v|^\gamma(|v|^2 I - v\otimes v)$ and that $b(v)=-2|v|^\gamma v$.
We introduce $\sigma(v)=(a(v))^{1/2}=|v|^{\gamma/2-1}(|v|^2 I - v\otimes v)$.

\vip

For $\eta \in (0,1)$, we recall that $\phi_\eta(x)=(3/(4 \pi \eta^3))\indiq_{\{|x|<\eta\}}$.
We introduce $a_\eta=a\star \phi_\eta$, $b_\eta=b\star \phi_\eta$ and $\sigma_\eta= a_\eta^{1/2}$,
and we define, for $f \in \cP(\rd)$ and $v\in \rd$, 
\[
a_\eta(f,v)=\intrd a_\eta(v-w)f(dw), \quad
b_\eta(f,v)=\intrd b_\eta(v-w)f(dw), \quad \text{and} \quad \sigma_\eta(f,v)=\bigl(a_\eta(f,v) \bigr)^{1/2}.
\]
Remark that it is very similar to the formula~\eqref{bfaf} which corresponds to the case $\eta=0$ with 
the convention that $a_0=a$ and $b_0=b$. 

\vip
For $f \in \cP(\rd)$, we put $f^\eta=f\star \phi_\eta$. We observe that
$a(f^\eta,v)=a_\eta(f,v)$, $b(f^\eta,v)=b_\eta(f,v)$ and $\sigma(f^\eta,v)=\sigma_\eta(f,v)$.

\vip

We will use the standard notation $x \land y= \min\{x,y\}$ and $x \lor y = \max\{x,y\}$ for $x,y \in \R$.

\vip

Finally, for $M,N$ two $3\times 3$ matrices, we set $\lnm M,N \rnm=\Tr MN^*$
and $\|M\|^2=\Tr MM^*$.

%%%%%%%%%%%%%%%%%%%%%%%%%%%%%%%%%%%%%%%%%%%%%%%%%%%%%%%%%%%%%%%%%%%%%%%%%%%%%%%%%%%%%%%%
%
%
%
%
%
%
%
%
%%%%%%%%%%%%%%%%%%%%%%%%%%%%%%%%%%%%%%%%%%%%%%%%%%%%%%%%%%%%%%%%%%%%%%%%%%%%%%%%%%%%%

\section{First regularity estimates and well-posedness of the nonlinear SDE}\label{secreg}
\setcounter{equation}{0}

\subsection{Ellipticity}
We recall the ellipticity estimate of Desvillettes-Villani \cite[Proposition 4]{dv}.
It is stated when $\gamma>0$ but the proof only uses that $\gamma+2>0$. A more general result will 
occupy Section \ref{secellip}.

\begin{lem}\label{ellip1}
Let $\gamma \in (-2,0)$.
For all $f\in\cP_2(\rr^3)$ satisfying $H(f) < + \infty$, 
there exists a constant $\kappa_0>0$ depending only on $\gamma$, $H(f)$ and  $m_2(f)$ 
such that for all $v\in\rd$,
$$
\inf_{|\xi|=1}\xi^* a(f,v) \xi \geq \kappa_0(1+|v|)^\gamma.
$$
\end{lem}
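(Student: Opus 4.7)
My plan is to adapt the Desvillettes--Villani argument, which uses two ingredients: the second moment of $f$ (to prevent escape to infinity) and the entropy bound (to prevent concentration on lines). The starting point is the pointwise identity
\[
\xi^* a(f,v)\xi = \intrd |v-v_*|^\gamma\, |(v-v_*)\wedge \xi|^2\, f(dv_*),
\]
where $|(v-v_*)\wedge \xi|^2$ equals the squared distance $d(v_*,\ell)^2$ from $v_*$ to the line $\ell := v + \R \xi$ (recall $|\xi|=1$). The goal is to find a set $E\subset\rd$ of $f$-mass at least $1/2$, depending on $(v,\xi)$ but of size uniformly controlled, on which the two factors are bounded below by $(1+|v|)^\gamma$ and by a positive constant respectively.

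First, by Markov's inequality applied with $m_2(f)$, I choose $R_0\ge 1$ depending only on $m_2(f)$ such that $\int_{|v_*|\le R_0} f(dv_*) \ge 3/4$. On the ball $\{|v_*|\le R_0\}$ one has $|v-v_*|\le |v|+R_0 \le R_0(1+|v|)$, so (since $\gamma<0$)
\[
|v-v_*|^\gamma \ge R_0^\gamma\,(1+|v|)^\gamma.
\]
Next, I invoke the standard entropy concentration bound, obtained from a Young-type inequality against the Gaussian reference density: there is a universal constant $C$ such that for every Borel set $A$ of Lebesgue measure $|A|<1$,
\[
\int_A f(v_*)\,dv_* \;\le\; \frac{H(f)+ m_2(f)/2 + C}{\log(1/|A|)}.
\]
I apply this to the slab $A_r := \{v_*:\,d(v_*,\ell)<r\}\cap\{|v_*|\le R_0\}$, whose Lebesgue measure is at most $2\pi r^2 R_0$ regardless of where $\ell$ sits. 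Choosing $r_0>0$ small enough (depending only on $\gamma$, $H(f)$, $m_2(f)$), I get $\int_{A_{r_0}} f \le 1/4$.

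Putting the two estimates together, the set
$E := \{v_*\,:\,|v_*|\le R_0,\ d(v_*,\ell)\ge r_0\}$
satisfies $\int_E f \ge 1/2$, and on $E$ both factors are controlled. Hence
\[
\xi^* a(f,v)\xi \;\ge\; \tfrac12\, R_0^\gamma\, r_0^2\, (1+|v|)^\gamma,
\]
which gives the announced inequality with $\kappa_0 = R_0^\gamma r_0^2/2$ depending only on $\gamma$, $H(f)$, $m_2(f)$; note that nothing in the argument depends on $(v,\xi)$, so the bound is uniform.

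The only delicate point is the entropy step: it is crucial that the Lebesgue bound $2\pi r^2 R_0$ on the cylindrical slab is independent of the particular line $\ell = v + \R\xi$, so that a single threshold $r_0$ works for all $v\in\rd$ and all unit vectors $\xi$. Everything else is a routine combination of Markov and the standard entropy inequality.
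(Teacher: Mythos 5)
Your proof is correct. The identity $\xi^* a(f,v)\xi = \intrd |v-v_*|^\gamma\,d(v_*,\ell)^2\,f(dv_*)$ is right, the observation that the cylinder--ball intersection has Lebesgue measure $\le 2\pi r^2 R_0$ \emph{uniformly} in $v$ and $\xi$ is precisely the point that makes the argument work, and the entropy concentration bound you invoke is exactly the paper's Lemma~C.1~(ii). Combining it with Markov on the second moment and taking $r_0$ small gives the claim with $\kappa_0 = \tfrac12 R_0^\gamma r_0^2$, which indeed depends only on $\gamma$, $H(f)$, $m_2(f)$.

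The paper does not prove the lemma directly; it cites Desvillettes--Villani and instead proves a more elaborate pair of statements (Lemmas~6.2 and~6.3) from which it follows. That route decouples the two uses of entropy: Lemma~6.3 uses the entropy/moment bounds once to produce three balls of fixed radius, at three fixed centers satisfying a quantitative non-alignment condition, each carrying $f$-mass $\ge\kappa$; Lemma~6.2 is a purely geometric argument (a cone emanating from $v$ in direction $\xi$ cannot meet all three balls if they are non-aligned) which requires no regularity of $f$ at all. That decoupling is essential elsewhere in the paper because Lemma~6.2 has to be applied to \emph{empirical} measures, whose entropy is infinite, so the entropy can only be used on the limiting density to locate the three balls. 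Your direct cylinder argument is shorter and cleaner when one only wants Lemma~2.1 for a measure with finite entropy, but it would not survive the passage to empirical measures, which is the reason the paper packages the estimate the way it does. In short: same two ingredients (moment confines mass, entropy forbids concentration near a line), a genuinely different and more economical organization of them, with the trade-off that the paper's version is reusable in the singular-measure setting.
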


%%%%%%%%%%%%%%%%%%%%%%%%%%%%%%%%%%%%%%%%%
\subsection{Rough regularity estimates and well-posedness of the particle system}

We will frequently use the following lemma stated in \cite[Lemma 11]{fou1} (with $C=1$ but with another norm).

\begin{lem}\label{math}
There is a constant $C>0$ such that for any pair of nonnegative symmetric $3\times 3$ matrices $A,B$, 
$$
||A^{1/2}-B^{1/2}||\leq C||A-B||^{1/2} \quad \text{and} \quad 
||A^{1/2}-B^{1/2}||\leq C(||A^{-1}||\land||B^{-1}||)^{1/2}||A-B||.
$$
\end{lem}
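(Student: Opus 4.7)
The two inequalities express, in the Hilbert--Schmidt norm $\|M\|^{2}=\Tr(MM^{*})$, classical facts about the matrix square root; the plan is to reduce each to a single analytic ingredient, then exploit the equivalence of matrix norms in dimension~$3$.

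\emph{Step 1 (H\"older estimate).} The plan is to invoke the Powers--Stormer inequality: for any nonnegative symmetric matrices $A,B$ in finite dimension,
\[
\Tr\bigl[(A^{1/2}-B^{1/2})^{2}\bigr] \leq \Tr|A-B|.
\]
In dimension~$3$, Cauchy--Schwarz applied to the eigenvalues of $A-B$ gives $\Tr|A-B|\leq \sqrt{3}\,\|A-B\|$, while the left-hand side is precisely $\|A^{1/2}-B^{1/2}\|^{2}$. Combining the two yields the first inequality with $C=3^{1/4}$. If $A$ or $B$ is singular one first applies the bound to $A+\varepsilon I$ and $B+\varepsilon I$, then lets $\varepsilon\downarrow0$ using continuity of the square root on the cone of nonnegative symmetric matrices.

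\emph{Step 2 (Lipschitz estimate in the invertible case).} Setting $X=A^{1/2}$ and $Y=B^{1/2}$, the relation $X^{2}-Y^{2}=A-B$ rewrites as the Sylvester identity $X(X-Y)+(X-Y)Y=A-B$. The plan is to right-multiply by $X-Y$, take the trace, and use cyclicity to obtain
\[
\Tr\bigl[(X+Y)(X-Y)^{2}\bigr] \;=\; \Tr\bigl[(A-B)(X-Y)\bigr] \;\leq\; \|A-B\|\,\|X-Y\|,
\]
the last step being Cauchy--Schwarz for the HS inner product. Since $Y\ge 0$, $X+Y\ge \lambda_{\min}(X)\,I$, and the elementary bound $\Tr(PQ)\ge \lambda_{\min}(P)\Tr(Q)$ for symmetric nonnegative $P,Q$ (obtained by writing $\Tr(PQ)=\Tr(Q^{1/2}PQ^{1/2})$) applied with $P=X+Y$, $Q=(X-Y)^{2}$ gives
\[
\Tr\bigl[(X+Y)(X-Y)^{2}\bigr] \;\geq\; \lambda_{\min}(X)\,\|X-Y\|^{2}.
\]
Finally $\lambda_{\min}(X)=\lambda_{\min}(A)^{1/2}=\|A^{-1}\|_{\mathrm{op}}^{-1/2}\ge\|A^{-1}\|^{-1/2}$ since the operator norm is dominated by the HS norm. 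Combining these estimates gives $\|X-Y\|\le\|A^{-1}\|^{1/2}\|A-B\|$, and the fully symmetric role of $A$ and $B$ produces the analogue with $\|B^{-1}\|$, hence the minimum.

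\emph{Main difficulty.} Once Powers--Stormer and the Sylvester identity are in hand, everything is routine algebra. The one point requiring mild attention is that the $\|A^{-1}\|$ appearing in the statement is the HS norm rather than the operator norm; the ordering of norms fortunately goes the right way, since $\|\cdot\|_{\mathrm{op}}\le\|\cdot\|$, so passing from the natural operator-norm lower bound on $\lambda_{\min}(X)$ to the weaker HS-norm lower bound only worsens the inequality in the allowed direction.
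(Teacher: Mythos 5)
The paper does not prove this lemma at all; it simply cites \cite[Lemma 11]{fou1}, where the result is stated with $C=1$ in a different matrix norm, and then invokes equivalence of norms in dimension~$3$. Your argument is therefore a self-contained substitute, and it is correct. For the H\"older-$1/2$ estimate, Powers--St{\o}rmer gives $\Tr\bigl[(A^{1/2}-B^{1/2})^2\bigr]\le \Tr|A-B|$, and Cauchy--Schwarz on the three eigenvalues converts the trace norm to the Hilbert--Schmidt norm, yielding $C=3^{1/4}$. For the Lipschitz estimate, the Sylvester identity $X(X-Y)+(X-Y)Y=A-B$ with $X=A^{1/2}$, $Y=B^{1/2}$, together with the trace sandwich
$$\lambda_{\min}(X)\,\|X-Y\|^2 \;\le\; \Tr\bigl[(X+Y)(X-Y)^2\bigr] \;\le\; \|A-B\|\,\|X-Y\|,$$
gives $\|X-Y\|\le \|A^{-1}\|_{\mathrm{op}}^{1/2}\|A-B\|$, and symmetrizing in $A,B$ plus the correct direction of $\|\cdot\|_{\mathrm{op}}\le\|\cdot\|$ produces the stated bound with $C=1$. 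The only point you leave implicit is that when both $A$ and $B$ are singular the right-hand side of the second inequality should be read as $+\infty$, making it vacuous; that is harmless and worth a word.
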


We next collect some rough regularity estimates on $a$, $b$ and $\sigma$.

\begin{lem} \label{lem:Lip}
For any $\gamma \in (-2,0)$, there is a constant $C$
such that for all $v,w\in\rr^3$,
\begin{align*}
|b(v) - b(w) | & \le C\, |v-w| \bigl( |v|^\gamma + |w|^\gamma \bigr), \\
\|a(v) - a(w) \| & \le C\, |v-w| \bigl( |v|^{1+\gamma} + |w|^{1+\gamma} \bigr), \\
\|\sigma(v) - \sigma(w) \| & \le C \, |v-w| \bigl( |v|^{\gamma/2} + |w|^{\gamma/2} \bigr).
\end{align*}
\end{lem}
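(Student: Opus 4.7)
The plan is a case-split according to whether $|v-w|$ is small or large compared to $|v|\lor|w|$. First I would establish the pointwise gradient bounds, on $\R^3\setminus\{0\}$,
\[
\|\nabla b(v)\| \le C|v|^\gamma, \qquad \|\nabla a(v)\| \le C|v|^{\gamma+1}, \qquad \|\nabla \sigma(v)\| \le C|v|^{\gamma/2},
\]
with $C$ depending only on $\gamma$. These follow from differentiating the explicit formulas $b(v)=-2|v|^\gamma v$, $a(v)=|v|^{\gamma+2}I - |v|^\gamma v\otimes v$, and $\sigma(v)=|v|^{\gamma/2-1}(|v|^2 I - v\otimes v)$ (from \S\ref{fin}), using $\nabla|v|^\alpha = \alpha|v|^{\alpha-2}v$ and the product rule; every resulting term has size at most the claimed one.

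Assuming next $|v|\ge|w|>0$, I would split into two cases. When $|v-w| \le |v|/2$, every $z$ on the segment $[w,v]$ satisfies $|z| \ge |v|/2$, so integrating the gradient bound along this segment yields
\[
|b(v)-b(w)| \le |v-w|\sup_{z\in[w,v]}\|\nabla b(z)\| \le C|v-w|(|v|/2)^\gamma \le 2^{-\gamma}C|v-w|\,|v|^\gamma,
\]
and since $|v|^\gamma \le |v|^\gamma + |w|^\gamma$ the estimate for $b$ follows; the same argument (with $\gamma$ replaced by $\gamma+1$ or $\gamma/2$) handles $a$ and $\sigma$. When on the other hand $|v-w|>|v|/2$, one has $|v|,|w|\le 2|v-w|$, and using the crude bounds $|b(v)|= 2|v|^{\gamma+1}$, $\|a(v)\|\le C|v|^{\gamma+2}$, $\|\sigma(v)\|\le C|v|^{\gamma/2+1}$ coming from the explicit forms, we simply write
\[
|b(v)-b(w)| \le 2|v|\cdot|v|^\gamma + 2|w|\cdot|w|^\gamma \le 4|v-w|\bigl(|v|^\gamma+|w|^\gamma\bigr),
\]
and the identical manipulations $|v|^{\gamma+2}=|v|\cdot|v|^{\gamma+1}$, $|v|^{\gamma/2+1}=|v|\cdot|v|^{\gamma/2}$ give the bounds for $a$ and $\sigma$.

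The boundary case $w=0$ (or $v=0$) falls into the second case and is covered by the same single-argument bounds. The only thing worth flagging is that for $\sigma$ one cannot invoke Lemma~\ref{math} directly, since $a(v)$ is always singular (it annihilates $v$) so the second inequality there is unavailable; going through the explicit form of $\sigma$ is both necessary and simpler. Once the case split is in place, the whole lemma is routine calculus.
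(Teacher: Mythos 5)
Your proof is correct, and it reaches the same gradient bounds $\|\nabla b\|\le C|v|^\gamma$, $\|\nabla a\|\le C|v|^{\gamma+1}$, $\|\nabla\sigma\|\le C|v|^{\gamma/2}$ as the paper, but it handles the subsequent integration differently. The paper avoids the case split entirely by replacing the straight segment from $w$ to $v$ with an arc of a circle lying in the crown $\{\min(|v|,|w|)\le |z|\le\max(|v|,|w|)\}$, of length at most $\pi|v-w|/2$; along such a path $|z|^{\gamma/2}$ (respectively $|z|^{\gamma+1}$, $|z|^\gamma$) is controlled by $\max(|v|^{\gamma/2},|w|^{\gamma/2})$ uniformly, and the estimate drops out in one line. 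You instead split on whether $|v-w|\le |v|/2$ or not: in the first case the straight segment stays in $\{|z|\ge |v|/2\}$ and you integrate the gradient bound along it; in the second case $|v|,|w|\lesssim|v-w|$ and you just bound $|b(v)-b(w)|$ by $|b(v)|+|b(w)|$ and factor off one power of $|v|$ or $|w|$. Both routes are elementary and valid; the paper's buys a single unified argument, yours is perhaps the more standard ``near/far'' pattern and has the minor virtue of not needing the geometric claim about arc length. One small caveat you glossed over: when $\gamma\in(-1,0)$ the exponent $1+\gamma$ in the gradient bound for $a$ is positive, so in your first case you should use the upper bound $|z|\le \tfrac32|v|$ on the segment rather than the lower bound $|z|\ge|v|/2$; this is harmless but should be said. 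Your closing remark that Lemma~\ref{math} cannot be used directly for $\sigma$ because $a(v)$ is singular is apt and is exactly why the paper, like you, works with the explicit formula for $\sigma$.
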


\begin{proof}
The inequality concerning $b$ is proved in \cite[Remark 2.2]{fg}.
Since by definition $\sigma(v) = |v|^{\gamma /2-1} ( |v|^2 I - v \otimes v)$, we see that $\sigma$ is a 
$C^1$ function outside the origin and that $\| D \sigma(v)\| \le C |v|^{\gamma/2}$. 
To go from $v$ to $w$, it is always possible to find a path $S:[0,1] \rightarrow \rr^3$, of length 
smaller than $\pi |v-w|/2$ and that always remains in the crown $\{ z \in\rr^3 \; : \; \min\{|v|,|w|\}
\le |z| \le \max\{|v|,|w|\} \}$. For instance, some arc of a circle will do the job. Then, the claimed 
inequality follows from
\[
\|\sigma(v) - \sigma(w) \| \le \int_0^1 \|D\sigma(S(t))\| |S'(t)| dt 
\le C\,  |v-w|  \max\{|v|^{\gamma/2},|w|^{\gamma/2}\}.
\]
The inequality concerning $a$ is proved similarly, using that $\| D a(v)\| \le C |v|^{1+\gamma}$.
This proof may also be adapted to $b$, using that $\| D b(v)\| \le C |v|^{\gamma}$.
\end{proof}

We next study $a_\eta$, $b_\eta$ and $\sigma_\eta$.

\begin{lem}\label{rough} Let $\gamma \in (-2,0)$.

\vip

(i) 
For each $\eta \in (0,1)$, there is $C_{\eta}$
such that for  any $y,z \in \rd$, $||(a_\eta(y))^{-1}|| \leq C_\eta(1+|y|)^{|\gamma|}$ and
\begin{eqnarray*}
&|b_\eta(y)-b_\eta(z)| \leq C_\eta |y-z|, \\
&||\sigma_\eta(y) - \sigma_\eta(z)|| +||a_\eta(y) - a_\eta(z)|| 
\leq C_\eta (1+|y|+|z|)|y-z|.
\end{eqnarray*}

(ii) There is $C>0$ such that for all $\eta \in (0,1)$, all $v\in \rd$,
$$
||a_\eta (v) - a(v)||\leq C \eta^2 (\eta+|v|)^{\gamma}
\quad\hbox{and}\quad
|b_\eta (v) - b(v)|\leq 
\left\{\begin{array}{ll}C \eta (\eta+|v|)^{\gamma} 
& \hbox{if $\gamma \in (-1,0)$,}\\
C \min\{\eta,|v|\} |v|^\gamma & \hbox{if $\gamma \in (-2,-1]$.}
\end{array}\right.
$$

(iii) There is $C>0$ such that for all $\eta \in (0,1)$, all $f \in \cP(\rd)$, all $v\in\rd$,
$$
||a_\eta(f,v)-a(f,v)||\leq C \eta^{2+\gamma} \quad \hbox{and, if $\gamma\in(-1,0)$,}\quad
|b_\eta(f,v)-b(f,v)|\leq C \eta^{1+\gamma}.
$$
\end{lem}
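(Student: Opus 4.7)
The plan is to treat the three assertions separately, in the order they appear, since the Lipschitz estimates in (i) will use the ellipticity (also stated in (i)) combined with Lemma~\ref{math}, while parts (ii) and (iii) are pure convolution estimates that only require the scaling of $a$ and $b$.

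For part (i), after the change of variables $u = y-w$, I would write $\xi^* a_\eta(y)\xi = (3/(4\pi\eta^3))\int_{B(y,\eta)}|u|^\gamma(|u|^2|\xi|^2-(u\cdot\xi)^2)du$ and split on the sign of $|y|-2\eta$. When $|y|\geq 2\eta$, the factor $|u|^\gamma$ is comparable to $|y|^\gamma$ on the whole ball, so after decomposing $u=y+s$ with $|s|\leq\eta$ and $\xi$ arbitrary, the remaining quadratic form in $s$ has its average (over the ball of radius $\eta$) of order $\eta^5$; dividing by $\eta^3$ yields $\xi^*a_\eta(y)\xi\geq c\eta^2|y|^\gamma$, and hence $c_\eta(1+|y|)^\gamma$ for $|y|\geq 2\eta$. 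For $|y|\leq 2\eta$, the direct computation $a_\eta(0)=c_\gamma\eta^{2+\gamma}I$ together with continuity of $y\mapsto a_\eta(y)$ on a compact set gives a uniform positive lower bound. The Lipschitz estimates on $b_\eta$ and $a_\eta$ follow from Lemma~\ref{lem:Lip} applied inside the convolution: we must control $\int(|y-w|^\gamma+|z-w|^\gamma)\phi_\eta(w)dw$ and $\int(|y-w|^{1+\gamma}+|z-w|^{1+\gamma})\phi_\eta(w)dw$, each by splitting according to whether $|y|$ (resp.\ $|z|$) exceeds $2\eta$ and using $\|\phi_\eta\|_\infty=C\eta^{-3}$ on the small-argument piece. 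The bound for $\sigma_\eta$ is deduced from the second estimate of Lemma~\ref{math}:
\[
\|\sigma_\eta(y)-\sigma_\eta(z)\|\leq C\bigl(\|a_\eta(y)^{-1}\|\wedge\|a_\eta(z)^{-1}\|\bigr)^{1/2}\|a_\eta(y)-a_\eta(z)\|,
\]
after which we substitute the lower bound on the inverse and the Lipschitz bound on $a_\eta$.

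For part (ii), I exploit the evenness of $\phi_\eta$: $\int w\,\phi_\eta(w)\,dw=0$, so the second-order Taylor remainder of $a$ (resp.\ $b$) can be used when the expansion is meaningful. Using $\|D^2 a(\xi)\|\leq C|\xi|^\gamma$ for $\xi\neq 0$, I split again at $|v|=2\eta$: in the regime $|v|\geq 2\eta$, all $|v-\theta w|$ with $|w|\leq\eta$ stay above $|v|/2$, so Taylor yields $\|a_\eta(v)-a(v)\|\leq C\eta^2|v|^\gamma$; in the regime $|v|\leq 2\eta$, I bound each term directly by its modulus of $(2+\gamma)$-homogeneity, giving $C\eta^{2+\gamma}$. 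The two cases combine into the claimed $C\eta^2(\eta+|v|)^\gamma$. For $b$ with $\gamma\in(-1,0)$, the same splitting goes through since $|D^2b(\xi)|\leq C|\xi|^{\gamma-1}$ gives $C\eta^2|v|^{\gamma-1}\leq C\eta|v|^\gamma$ on $\{|v|\geq2\eta\}$ and the direct bound $C\eta^{1+\gamma}$ on $\{|v|\leq2\eta\}$. The bound with $\gamma\in(-2,-1]$ is more delicate because the direct size $|b(v)|=2|v|^{1+\gamma}$ blows up at $0$: for $|v|\geq\eta$ I still get $C\eta|v|^\gamma$ via Taylor, but for $|v|<\eta$ I use $|b(v)|+|b_\eta(v)|\leq C|v|^{1+\gamma}$ (exploiting $1+\gamma<0$ so that $\eta^{1+\gamma}\leq|v|^{1+\gamma}$), which is exactly $C|v|\cdot|v|^\gamma$; the two regimes merge into $C\min(\eta,|v|)|v|^\gamma$.

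For part (iii), the averaging against $f$ improves the estimates substantially: writing $a_\eta(f,v)-a(f,v)=\int\bigl(a_\eta(v-w)-a(v-w)\bigr)f(dw)$ and using (ii), the integrand is bounded by $C\eta^2(\eta+|v-w|)^\gamma\leq C\eta^2\cdot\eta^\gamma=C\eta^{2+\gamma}$ since $\gamma<0$; the same strategy works for $b$ when $\gamma\in(-1,0)$. The main obstacle is the ellipticity lower bound in part (i): one has to control the degeneracy of $a(v)$ in the direction of $v$, which is overcome precisely because the convolution mixes slightly different singular directions, but the computation requires careful bookkeeping between the two regimes $|y|\lessgtr 2\eta$ and uniform control in $\xi$; once that is secured, the rest is routine convolution estimates and Taylor expansions.
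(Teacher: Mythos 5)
Your proposal follows the paper's overall structure for the most part (the split at $|v|=2\eta$, Taylor for $a$ after using the symmetry of $\phi_\eta$, direct scaling near the origin, and integrating (ii) against $f$ for (iii)), but it departs from the paper on two sub-arguments, and one of these departures contains a gap.

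\emph{Ellipticity in (i).} The paper does not compute anything here: it observes that $a_\eta(y)=a(\phi_\eta,y)$ and that $\phi_\eta$ is a probability with finite entropy and finite second moment, so Lemma~\ref{ellip1} directly gives $\xi^*a_\eta(y)\xi\geq c_\eta(1+|y|)^\gamma|\xi|^2$. Your direct computation is a valid alternative, and your $|y|\geq 2\eta$ case is indeed correct: dropping the cross term (which vanishes by symmetry) and the nonnegative $|p_{\xi^\perp}(y)|^2$ term gives $(4\pi\eta^3/3)^{-1}\int_{B(0,\eta)}|p_{\xi^\perp}(y+s)|^2\,ds\geq c\eta^2$, and $|y+s|^\gamma\geq c|y|^\gamma$ on that regime. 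But the way you dispose of $|y|\leq 2\eta$ is too quick: continuity plus compactness yields a uniform lower bound only after one knows $a_\eta(y)$ is positive definite at every such $y$, and you do not argue this. A cleaner fix that avoids compactness altogether is to note that on $|y|\leq 2\eta$ one still has $|y+s|\leq 3\eta$, hence $|y+s|^\gamma\geq(3\eta)^\gamma$, and the same moment computation then gives $\xi^*a_\eta(y)\xi\geq c\,\eta^{\gamma+2}\geq C_\eta(1+|y|)^\gamma$ directly.

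\emph{The bound on $b_\eta-b$ in (ii).} The paper uses the first-order Lipschitz estimate of Lemma~\ref{lem:Lip} on the far regime, $|b(v)-b(v-w)|\leq C|w|(|v|^\gamma+|v-w|^\gamma)$, which after convolving and using $|v-w|\geq|v|/2$ gives $C\eta|v|^\gamma$ for $|v|\geq 2\eta$. You instead use a second-order Taylor expansion based on $\|D^2b\|\lesssim|\cdot|^{\gamma-1}$. This works for $\gamma\in(-1,0)$ because you split at $2\eta$, where $\sup_{B(v,\eta)}\|D^2b\|\leq C|v|^{\gamma-1}$. But for $\gamma\in(-2,-1]$ you split at $\eta$, and on $|v|\in[\eta,2\eta)$ the ball $B(v,\eta)$ comes arbitrarily close to the origin, so that $\sup_{B(v,\eta)}\|D^2b\|\sim(|v|-\eta)^{\gamma-1}$ blows up as $|v|\to\eta^+$; the Taylor remainder is \emph{not} $O(\eta^2|v|^{\gamma-1})$ there, and your claimed $C\eta|v|^\gamma$ does not follow. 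The fix is to split at $2\eta$ as in the other case: the direct estimate $|b_\eta(v)-b(v)|\leq C\eta^{1+\gamma}$ is valid for all $|v|\leq 2\eta$ and does give $C\min\{\eta,|v|\}|v|^\gamma$ there (for $|v|\leq\eta$ use $\eta^{1+\gamma}\leq|v|^{1+\gamma}$, and for $|v|\in(\eta,2\eta]$ use $\eta^\gamma\leq(|v|/\eta)^{-\gamma}|v|^\gamma\leq|v|^\gamma$). Alternatively, adopt the paper's first-order Lipschitz route, which is insensitive to the near-singularity issue.

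The remaining arguments — the Lipschitz bounds on $b_\eta$, $a_\eta$, $\sigma_\eta$ in (i) via Lemma~\ref{lem:Lip} and Lemma~\ref{math}, the Taylor/direct split for $a$ in (ii), and the reduction of (iii) to (ii) via $(\eta+|x|)^\gamma\leq\eta^\gamma$ — coincide with the paper's.
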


\begin{proof}
We first check point (i). First, it holds that $a_\eta(y)=a(\phi_\eta,y)$
when using notation \eqref{bfaf}. But $\phi_\eta$
obviously belongs to $\cP_2(\rd)$ and has a finite entropy. Hence Lemma \ref{ellip1} tells us that
for all $\xi \in \rd$, all $y\in\rd$, $\xi^* a_\eta(y)\xi \geq c_\eta (1+|y|)^\gamma |\xi|^2$,
which implies that $||(a_\eta(y))^{-1}|| \leq C_\eta(1+|y|)^{|\gamma|}$. 

\vip

We next use Lemma \ref{lem:Lip}
to get 
$$
|b_\eta(y)-b_\eta(z)| \leq C \eta^{-3} \int_{|u|<\eta}|y-z|(|y+u|^\gamma+|z+u|^\gamma) \,du
\leq C_\eta|y-z|,
$$
as well as
$$
||a_\eta(y)-a_\eta(z)|| \leq C \eta^{-3} \int_{|u|<\eta}|y-z|(|y+u|^{\gamma+1}+|z+u|^{\gamma+1}) \,du,
$$
which is easily bounded by $C_\eta |y-z|$ if $\gamma\in (-2,-1)$ and by 
$C_\eta |y-z|(1+|y|^{\gamma+1}+|z|^{\gamma+1})$ if $\gamma\in [-1,0)$.
Using finally point Lemma \ref{math} and the estimate on $||(a_\eta(y))^{-1}||$, we conclude that
$||(a_\eta(y))^{1/2}-(a_\eta(z))^{1/2}|| \leq C_\eta h(|y|,|z|)|y-z|$,
where $h(|y|,|z|)= \min\{(1+|y|)^{|\gamma|/2},(1+|z|)^{|\gamma|/2}\}$ if $\gamma\in (-2,-1)$ and 
$h(|y|,|z|)= \min\{(1+|y|)^{|\gamma|/2},(1+|z|)^{|\gamma|/2}\}(1+|y|+|z|)^{1+\gamma}$ if $\gamma\in [-1,0)$.
In any case, $h(|y|,|z|) \leq C (1+|y|+|z|)$, which ends the proof of point (i).

\vip

Concerning point (ii), we first study $b$ and separate two cases.
If $|v|\leq 2\eta$, then we have
$|b(v)|=2|v|^{1+\gamma}$ and $|b_\eta(v)|\leq C |v|^{1+\gamma} + C \eta^{1+\gamma}$.
Hence $|b(v)-b_\eta(v)|\leq C |v|^{1+\gamma} +C \eta^{1+\gamma}$,
which is smaller than  $C \eta^{1+\gamma} \leq C \eta(\eta+|v|)^\gamma$ if 
$\gamma \in (-1,0]$ and than $C |v|^{1+\gamma}\leq \min\{\eta,|v|\} |v|^\gamma$
if $\gamma \in (-2,-1]$.
If now $|v|>2\eta$,
we use that $|b(y)-b(z)| \leq C |y-z|(|y|^\gamma+|z|^\gamma)$ by Lemma \ref{lem:Lip}, so that 
$|b(v)-b_\eta(v)|\leq C \eta \intrd (|v|^\gamma+|v-u|^\gamma) \phi_\eta(u)du \leq C \eta |v|^\gamma$, which is bounded
by $C\eta(\eta+|v|)^{\gamma}$ if 
$\gamma \in (-1,0]$ and by $C \min\{\eta,|v|\} |v|^\gamma$
if $\gamma \in (-2,-1]$.

\vip

We now study $||a(v)-a_\eta(v)||$. If $|v|\leq 2\eta$, we immediately get that
$||a(v)-a_\eta(v)||\leq ||a(v)||+||a_\eta(v)||\leq C \eta^{2+\gamma} \leq C \eta^2(\eta+|v|)^\gamma$.
If now $|v| \geq 2\eta$, we use a Taylor expansion: write, for $|u|<\eta$, that
$a(v-u)=a(v)-u\nabla a(v) + \zeta(u,v)$, where $||\zeta(u,v)|| \leq |u|^2 \sup_{B(v,\eta)}||D^2 a||
\leq C \eta^2 |v|^\gamma$.
Consequently, since $\phi_\eta$ is symmetric,
$$
||a(v)-a_\eta(v)||=\Big|\Big| \int_{|u|\leq \eta} (a(v)-a(v-u)) \phi_\eta(u)du \Big|\Big|= 
\Big|\Big| \int_{|u|\leq \eta}  \zeta(u,v) \phi_\eta(u)du \Big|\Big| \leq C \eta^2 |v|^\gamma,
$$
which is controlled by $C \eta^2(\eta+|v|)^\gamma$ as desired.

\vip

We finally have to prove (iii). If $\gamma \in (-1,0)$, we have  
$|b_\eta(x)-b(x)|\leq C \eta^{1+\gamma}$ by (ii), whence 
$$
|b_\eta(f,v)-b(f,v)|=\Big|\intrd [b_\eta(v-w)-b(v-w)]f(dw) \Big| \leq C\eta^{1+\gamma}.
$$
Now for any value of $\gamma \in (-2,0)$, we see from point (ii) that $||a_\eta(x)-a(x)||\leq C \eta^{2+\gamma}$,
so that we obtain $||a_\eta(f,v)-a(f,v)|| \leq C\eta^{2+\gamma}$ by a simple integration.
\end{proof}

At this point, we can prove the strong well-posedness of the particle system.

\begin{proof}[Proof of Proposition \ref{wpps}.]
Let 
\begin{align*} 
b_i(v_1,\dots,v_N) & :=
b \Bigl(\frac1N \sum_{j=1}^N \delta_{v_j} \star \phi_{\eta_N}, v_i \Bigr) 
= \frac1N \sum_{j=1}^N b_{\eta_N}(v_i - v_j),  \\
\sigma_i(v_1,\dots,v_N) & := \biggl[a \Bigl( \frac1N \sum_{j=1}^N \delta_{v_j} 
\star \phi_{\eta_N},v_i \Bigr) \biggr]^{\frac12} 
= \biggl[ \frac1N \sum_{j=1}^N a_{\eta_N} (v_i - v_j) \biggr]^{\frac12}.
\end{align*}
These are the coefficients of the system of SDEs \eqref{ps}. We claim that these coefficients have 
at most linear growth and are locally Lipschitz continuous, from which strong existence and uniqueness
classically follow. First, $b_i$ is globally Lipschitz continuous by Lemma~\ref{rough}-(i).
Next, it follows from Lemma \ref{rough}-(i) that $\sigma_i$ is locally Lipschitz continuous.
Finally, recalling that $||A||=\Tr A A^*$ and that $\Tr(a(v))=2|v|^{\gamma+2}$, we find that 
\[ \|\sigma_i(v_1,\dots,v_N)\|^2 =
\frac2N \sum_{j=1}^N \intrd |v_i-v_j - u|^{\gamma+2} \phi_{\eta_N}(u)du
\le \frac2 N \sum_{j=1}^N (|v_i|+|v_j|+\eta_N)^{\gamma+2}.
\] 
Since $\gamma+2 \in (0,2]$, we conclude that $\sigma_i$ has at most linear growth.
\end{proof}

%%%%%%%%%%%%%%%%%%%%%%%%%%%%%%%%%
\subsection{Regularity estimates for the nonlinear SDE}

We now give some growth and regularity estimates on the fields $a(f)$, $\sigma(f)$ and $b(f)$ created 
by some probability $f$, that will allow us to 
study the well-posedness of the linear version of the SDE~\eqref{nsde}, as well as that of
a perturbed version of it.

\begin{lem}\label{regest}
Let $\gamma \in (-2,0)$ and $f \in \cP_2(\rr^3)$. We have the following estimates
for all $v,w \in \rd$.

\vip

(i) In any case, $\|\sigma(f,v)\|^2 \leq 4+ 4 m_2(f)+ 4|v|^{\gamma+2}$.

\vip

(ii) If $\gamma \in [-1,0)$, then $|b(f,v)| \leq 2+2|v| + 2m_1(f)$.

\vip

(iii) If $\gamma \in (-2,-1)$ and $p>3/(4+\gamma)$, then $|b(f,v)| \leq 2 +C_p ||f||_{L^p}$.

\vip

(iv) If $p>p_1(\gamma)$, then $|b(f,v)-b(f,w)|\le C_{p}(1+\|f\|_{L^p})|v-w|$.

\vip

(v) If $p>p_1(\gamma)$, then
$\|\sigma(f,v)-\sigma(f,w)\|^2  \leq  C_{p} (1+\|f\|_{L^p})|v-w|^2$.

\vip

(vi)  If $\gamma \in [-1,0)$, then
$\| a(f,v)  - a(f,w) \| \le C (1+ |v| + |w| + m_1(f))|v-w|$.

\vip

(vii) If $\gamma \in (-2,-1)$ and $p>3/(4+\gamma)$, then 
$\| a(f,v)  - a(f,w) \| \le C_{p} (1+ \|f\|_{L^p})|v-w|$.
\end{lem}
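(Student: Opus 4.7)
The plan is to reduce every estimate to Lemma~\ref{lem:Lip}, which provides pointwise growth and Lipschitz bounds on $a$, $b$, $\sigma$ away from the origin, and then integrate against $f$, using \eqref{weakb2} to absorb the negative powers of $|v-v_*|$ that appear. Parts (i)--(iv), (vi) and (vii) are routine; the only real obstruction lies in (v), because the square root does not commute with integration against $f$.

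I first dispose of the direct estimates. For (i), $\|\sigma(f,v)\|^2 = \Tr\, a(f,v) = 2\intrd |v-v_*|^{\gamma+2} f(dv_*)$, and since $\gamma+2 \in (0,2)$, splitting the integrand according to whether $|v_*|\le 1$ yields $|v-v_*|^{\gamma+2} \le C(1 + |v|^{\gamma+2} + |v_*|^2)$. For (ii)--(iii), $|b(v-v_*)|=2|v-v_*|^{\gamma+1}$ is integrated either via the elementary inequality $|y|^{\gamma+1} \le 1+|y|$ when $\gamma\ge -1$, or via \eqref{weakb2} with $\alpha = \gamma+1$ when $\gamma<-1$ (whence $p>3/(4+\gamma)$). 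For (iv), (vi), (vii) I apply the Lipschitz bound on $b$ or $a$ from Lemma~\ref{lem:Lip} and integrate; the most singular exponent appearing is $\gamma$ (in (iv)), which dictates the threshold $p>p_1(\gamma)=3/(3+\gamma)$.

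The subtle part is (v). The naive estimate from Lemma~\ref{math}, combined with (vi)--(vii), gives $\|\sigma(f,v)-\sigma(f,w)\|^2 \le C\|a(f,v)-a(f,w)\| \le C_p(1+\|f\|_{L^p})|v-w|$, which is only H\"older and therefore insufficient. To promote this to a Lipschitz bound I will use a probabilistic representation together with Lieb's concavity theorem. Let $X$ have law $f$ and let $Z$ be a standard $\R^3$-Gaussian independent of $X$; then
\[
a(f,v) = \E\bigl[\sigma(v-X) ZZ^* \sigma(v-X)^*\bigr] = \E\bigl[\sigma(v-X)\sigma(v-X)^*\bigr].
\]
Lieb's concavity asserts that $(A,B)\mapsto \Tr(A^{1/2}B^{1/2})$ is jointly concave on pairs of symmetric positive matrices, so Jensen's inequality yields
\[
\Tr\bigl(\sigma(f,v)\sigma(f,w)\bigr) = \Tr\bigl(a(f,v)^{1/2} a(f,w)^{1/2}\bigr) \ge \E\bigl[\Tr\bigl(\sigma(v-X)\sigma(w-X)\bigr)\bigr].
\]
Combined with the linearity $\Tr\, a(f,\cdot) = \E[\Tr\, a(\cdot - X)]$, the expansion $\|\sigma(f,v)-\sigma(f,w)\|^2 = \Tr\,a(f,v) + \Tr\,a(f,w) - 2\Tr\bigl(\sigma(f,v)\sigma(f,w)\bigr)$ collapses to
\[
\|\sigma(f,v)-\sigma(f,w)\|^2 \le \E\bigl[\|\sigma(v-X) - \sigma(w-X)\|^2\bigr].
\]
Bounding the integrand by $C|v-w|^2(|v-X|^\gamma + |w-X|^\gamma)$ via Lemma~\ref{lem:Lip} and applying \eqref{weakb2} with $\alpha = \gamma$ (which needs $p>p_1(\gamma)$) then delivers (v).
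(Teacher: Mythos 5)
Your treatment of (i)--(iv), (vi), (vii) is the paper's argument: integrate the pointwise bounds from Lemma~\ref{lem:Lip} against $f$ and absorb the singular factor via \eqref{weakb2}. The genuine difference is in (v). You correctly note that the naive route through Lemma~\ref{math} plus (vi)/(vii) only delivers a H\"older modulus, and instead you exploit Lieb's concavity of $(A,B)\mapsto \Tr(A^{1/2}B^{1/2})$ on PSD pairs to get, via Jensen,
$\Tr\bigl(\sigma(f,v)\sigma(f,w)\bigr)\ge \E\bigl[\Tr\bigl(\sigma(v-X)\sigma(w-X)\bigr)\bigr]$,
whence $\|\sigma(f,v)-\sigma(f,w)\|^2 \le \E\bigl[\|\sigma(v-X)-\sigma(w-X)\|^2\bigr]$, and then you integrate Lemma~\ref{lem:Lip}. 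This is correct (the Gaussian $Z$ in your probabilistic representation is decorative but harmless) and it is not what the paper does: the paper invokes Stroock--Varadhan \cite[Theorem~5.2.3]{SV}, which gives the pointwise bound $\|D(A^{1/2})\|_\infty \le C\|D^2 A\|_\infty^{1/2}$ for a smooth PSD matrix field $A$, applied to $A(v)=a(f,v)$ with $\|D^2 a(f,\cdot)\|_\infty$ controlled again by \eqref{weakb2}. Both routes land on the same threshold $p>p_1(\gamma)$. Yours is somewhat more in keeping with the rest of the paper: the inequality you use is precisely the rotation-free weakening of Lemma~\ref{tictactoc} (Givens--Shortt), so it connects naturally to the strong/weak stability machinery of Section~3; it also avoids differentiating the matrix square root. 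The paper's route is shorter to state once one accepts the cited classical result. Either proof is acceptable.
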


\begin{proof}
Recalling that $\Tr a(v)=2|v|^{\gamma+2}$,
\[
\|\sigma(f,v)\|^2=\Tr a(f,v)= 2\intrd |v-w|^{\gamma+2}f(dw).
\] 
Point (i) follows, since $|v-w|^{\gamma+2}\leq 2|v|^{\gamma+2}+2|w|^{\gamma+2}\leq 2 + 2|w|^2+2|v|^{\gamma+2}$.

\vip

Next, $|b(f,v)|\leq 2 \intrd |v-w|^{\gamma+1}\, f(dw)$. Point (ii) follows,
since for $\gamma \in [-1,0)$, $|v-w|^{\gamma+1} \leq 1+|v|+|w|$. Point (iii) also follows, using 
\eqref{weakb2} and that $p>3/(4+\gamma)$.

\vip

Point (iv) is obtained by integration of the 
Lipschitz estimate on $b$ given in Lemma~\ref{lem:Lip} and by using \eqref{weakb2} (recall that $p>p_1(\gamma)=
3/(3+\gamma)$):
\[ |b(f,v)-b(f,w)|  \le C \, |v-w| \intrd (|v-x|^\gamma + |w-x|^\gamma) f(dx)
\le C \, |v-w| \bigl( 1 + C_{p} \| f\|_{L^p} \bigr).
\]

For (v), we use a classical result, see e.g. Stroock-Varadhan~\cite[Theorem 5.2.3]{SV}, which
asserts that there is $C>0$ such that for all $A: \rr^3 \mapsto S_3^+$ (the set
of symmetric nonnegative $3\times 3$-matrices with real entries), 
$\|D(A^{1/2})\|_\infty \leq C \|D^2 A\|_{\infty}^{1/2}$.
Here we apply it to $A(v) = a(f,v)$. 
First,
\[
\|D^2 A(v) \| = \biggl\| \intrd D^2 a(v-x)\,f(dx) \biggr\| \le C \intrd  |v-x|^\gamma \,f(dx)
\le  C \bigl( 1 + C_{p} \| f\|_{L^p} \bigr),
\]
where we have used that $\|D^2 a(v)\| \le |v|^\gamma$ and~\eqref{weakb2} (since $p>3/(3+\gamma)$). 
Consequently,
\[
\|\sigma(f,v)-\sigma(f,w)\|^2  = \bigl\| (A(v))^{1/2} - (A(w))^{1/2} \bigr\|^2
\le \|D(A^{1/2})\|_\infty^2 |v-w|^2 \leq  C \bigl( 1 + C_{p} \| f\|_{L^p} \bigr)|v-w|^2 .
\]

Finally, integrating the Lipschitz estimate on $a$ given in Lemma~\ref{lem:Lip}, we find the inequality
$\| a(f,v) - a(f,w) \| \le C \, |v-w| \intrd\bigl(|v-x|^{1+\gamma} + |w-x|^{1+\gamma} \bigr)   \,f(dx)$.
Using the same arguments as in (ii) and (iii), points (vi) and (vii) immediately follow.
\end{proof}

%%%%%%%%%%%%%%%%%%%%%
\subsection{Well-posedness of the nonlinear SDE}
We now have all the ingredients to give the

\begin{proof}[Proof of Theorem \ref{nsdewp}.]
Recall that $f_0 \in \cP_2(\rd)$ is assumed to satisfy $m_q(f_0)<\infty$
for some $q>q(\gamma)$ and $H(f_0)<\infty$. We denote by 
$f \in L^\infty_{loc}([0,\infty),\cP_2(\rr^3))\cap L^1_{loc}([0,\infty),L^p(\rr^3))$ (for all $p\in (p_1(\gamma),
p_2(\gamma,q))$)
the unique weak solution to~\eqref{HL3D}, see Theorem \ref{thfg}. 
Let also $\VV(0)$ be $f_0$-distributed and consider a $3D$ Brownian motion $\BB$ independent of $\VV(0)$.

\vip

{\it Step 1: the linear SDE.} Recalling that $f$ is given, we prove here that there is strong existence and
uniqueness for the linear SDE 
\begin{equation} \label{linSDE}
\VV(t)=\VV(0) + \intot b(f_s,\VV(s))ds + \intot \sigma(f_s, \VV(s)) d\BB(s).
\end{equation}
Fix $p\in (p_1(\gamma),p_2(\gamma,q))$ and recall that $f \in L^\infty_{loc}([0,\infty),\cP_2(\rr^3))\cap
L^1_{loc}([0,\infty),L^p(\rr^3))$.
By Lemma \ref{regest}-(iv)-(v),
the coefficients of this SDE are Lipschitz continuous in $v$, with a Lipschitz constant (locally) integrable in 
time for $b$ and (locally) square integrable in time for $\sigma$. 
The conclusion follows from classical arguments, 
see for instance~\cite[Theorem 3.17]{Pardoux}.

\vip

{\it Step 2: uniqueness for the linear PDE.} Recalling that $f$ is fixed, we consider the Kolmogorov
equation associated to \eqref{linSDE}:
\begin{equation}\label{pdelin}
\partial_t g_t(v) = \frac12  \diver_v \bigl(  a(f_t,v) \nabla g_t(v) - b(f_t,v) g_t(v)  \bigr).
\end{equation}
As for the Landau equation, $g \in L^\infty_{loc}([0,\infty),\cP_2(\rd))$ is a weak solution if
for all $\varphi \in C^2_b(\rd)$,
$$
\intrd \varphi(v)g_t(dv)= \intrd \varphi(v)g_0(dv)+ \intot \intrd\intrd L\varphi(v,v_*) g_s(dv) f_s(dv_*)ds.
$$
By Lemma \ref{regest}-(i)-(ii)-(iii), we know that $||\sigma(f_t,v)||^2 \leq C(1+|v|^2)$ and $|b(f_t,v)| 
\leq C (1+||f_t||_{L^p})(1+|v|)$, with $||f_t||_{L^p} \in L^1_{loc}([0,\infty))$.
Hence Proposition \ref{pfi}, which slightly extends \cite[Theorem 2.6]{Fig}, 
tells us that any weak solution $(g_t)_{t\ge 0}$ to \eqref{linSDE}
can be represented as the family of time marginals of a solution to \eqref{linSDE}.
Consequently, Step 1 implies the uniqueness of the solution to \eqref{linSDE}, for any 
given $g_0 \in \cP_2(\rd)$.

\vip

{\it Step 3: strong existence.} From Step 1, we know that there exists a solution $\VV$ to \eqref{linSDE}.
Put $g_t=\LL(\VV(t))$. A direct application of the It\^o formula shows that $g$ is a weak solution to
\eqref{pdelin}, with $g_0=f_0$. But $f$, being a weak solution to the Landau equation \eqref{HL3D} (see also
\eqref{alt}), is also a weak solution to \eqref{pdelin}. By Step 2, we deduce that $g=f$ as desired.

\vip

{\it Step 4: strong uniqueness.} Consider another solution  
$\WW$ (with $\LL(\WW(t))=h_t$) to the nonlinear SDE \eqref{nsde} (with the same initial condition and the
same Brownian motion). Assume also that $h \in L^\infty_{loc}([0,\infty),\cP_2(\rr^3))\cap 
L^1_{loc}([0,\infty),L^p(\rr^3))$ for some $p\in (p_1(\gamma),p_2(\gamma,q))$).
A direct application of the It\^o formula shows that $h$ is a weak
solution to \eqref{HL3D}, whence $f=h$ by Theorem \ref{thfg}. Consequently, $\WW$ also solves
the linear SDE \eqref{linSDE}, so that $\VV=\WW$ by Step 2.
\end{proof}

%%%%%%%%%%%%%%%%%%%%%%%%%%%%%%%%%%%%%
\section{Fine regularity estimates and strong/weak stability principles}\label{finreg}
\setcounter{equation}{0}

The goal of this section is to prove Theorem \ref{wp}, that is the 
strong/weak stability principles.
These principles will be proved using a coupling argument between two solutions of the nonlinear SDE \eqref{nsde}.
Unfortunately, we cannot use the same Brownian motion for both solutions: this is due to the fact 
that we really need a fine estimate and that the best coupling between two $3D$ Gaussian distribution
$\NN(0,\Sigma_1)$ and $\NN(0,\Sigma_2)$ {\it does not} consist in setting $X_1=\Sigma_1^{1/2}Y$ and
$X_2=\Sigma_2^{1/2}Y$ for the same $Y$ with law $\NN(0,I)$. Actually, as shown in Givens-Shortt
\cite{gs}, the optimal  coupling
is obtained when setting $X_1=\Sigma_1^{1/2}Y$ and $X_2=\Sigma_2^{1/2}U(\Sigma_1,\Sigma_2)Y$, where the
orthogonal matrix $U(\Sigma_1,\Sigma_2)$ is given by
\begin{equation}\label{dfU}
U(\Sigma_1,\Sigma_2)=\Sigma_2^{-1/2}\Sigma_1^{-1/2}(\Sigma_1^{1/2}\Sigma_2\Sigma_1^{1/2} )^{1/2}.
\end{equation}
More precisely, we will use the following lemma.

\begin{lem} \label{tictactoc}
For any probability measure $m$ on a measurable space $F$, 
any pair of measurable families of $3\times 3$ matrices $(\sigma_1(x))_{x\in F}$ and $(\sigma_2(x))_{x\in F}$,
setting $\Sigma_1=\int_F \sigma_1(x)\sigma_1^*(x) m(dx)$ and $\Sigma_2=\int_F \sigma_2(x)\sigma_2^*(x) m(dx)$,
it holds that 
\[
\bigl\|\Sigma_1^{1/2}-\Sigma_2^{1/2} U(\Sigma_1,\Sigma_2) \bigr\|^2 \leq \int_F \|\sigma_1(x)-\sigma_2(x)\|^2 m(dx).
\]
\end{lem}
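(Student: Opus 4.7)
The strategy is to compute both sides of the claimed inequality explicitly and reduce matters to a single scalar trace inequality about a cross-covariance. First, using $\|M\|^2=\Tr(MM^*)$, the identity $UU^*=I$ (immediate from the explicit formula \eqref{dfU}), and the cyclic property of the trace, I would expand the left-hand side to get
\begin{equation*}
\bigl\|\Sigma_1^{1/2}-\Sigma_2^{1/2}U(\Sigma_1,\Sigma_2)\bigr\|^2 = \Tr(\Sigma_1)+\Tr(\Sigma_2) - 2\,\Tr\bigl((\Sigma_1^{1/2}\Sigma_2\Sigma_1^{1/2})^{1/2}\bigr),
\end{equation*}
which is precisely the Givens--Shortt formula for $W_2^2(\NN(0,\Sigma_1),\NN(0,\Sigma_2))$. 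Setting $T := \int_F \sigma_1(x)\sigma_2^*(x)\,m(dx)$, an even simpler expansion of the right-hand side yields
\begin{equation*}
\int_F \|\sigma_1(x)-\sigma_2(x)\|^2\,m(dx) = \Tr(\Sigma_1)+\Tr(\Sigma_2) - 2\,\Tr(T).
\end{equation*}
Therefore, the lemma reduces to the matrix trace inequality $\Tr(T) \leq \Tr\bigl((\Sigma_1^{1/2}\Sigma_2\Sigma_1^{1/2})^{1/2}\bigr)$.

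To prove this last inequality, I would first observe that the $6\times 6$ block matrix
\begin{equation*}
\mathbf M := \begin{pmatrix} \Sigma_1 & T \\ T^* & \Sigma_2 \end{pmatrix}
\end{equation*}
is positive semi-definite, since for any $u,v\in\rd$,
\begin{equation*}
\begin{pmatrix} u \\ v \end{pmatrix}^* \mathbf M \begin{pmatrix} u \\ v \end{pmatrix} = \int_F \bigl|\sigma_1^*(x)u + \sigma_2^*(x)v\bigr|^2\,m(dx) \geq 0.
\end{equation*}
Assuming that $\Sigma_1$ is invertible (the general case being handled by replacing $\Sigma_1$ with $\Sigma_1+\e I$ and letting $\e\downarrow 0$), the Schur complement criterion then gives $\Sigma_2 \succeq T^*\Sigma_1^{-1}T$. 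Conjugating by $\Sigma_1^{1/2}$ and setting $N := \Sigma_1^{-1/2} T \Sigma_1^{1/2}$, this becomes $\Sigma_1^{1/2}\Sigma_2\Sigma_1^{1/2} \succeq N^* N$. Operator monotonicity of the square root on positive matrices, combined with the Peierls-type bound $\Tr(N) \leq \Tr((N^*N)^{1/2})$ and the identity $\Tr(N)=\Tr(T)$ (by cyclicity), then yields $\Tr(T)\leq \Tr((\Sigma_1^{1/2}\Sigma_2\Sigma_1^{1/2})^{1/2})$, as required.

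The main obstacle is this last matrix trace inequality: although each ingredient (the Schur complement, operator monotonicity of $s\mapsto s^{1/2}$, and $\Tr N \leq \|N\|_1$) is classical, assembling them cleanly and remembering the regularization step when $\Sigma_1$ is singular requires a bit of care. Conceptually, this is the trace-duality content of the Gelbrich / Olkin--Pukelsheim inequality, asserting that among all cross-covariances $T$ compatible with marginal covariances $\Sigma_1,\Sigma_2$, the Bures--Wasserstein quantity $\Tr((\Sigma_1^{1/2}\Sigma_2\Sigma_1^{1/2})^{1/2})$ is the largest possible value of $\Tr(T)$.
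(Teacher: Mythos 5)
Your proof is correct, and it shares the key structural observation with the paper's proof: after expanding both squared norms (using $UU^*=I$ and cyclicity of the trace), the inequality reduces to $\Tr(T) \le \Tr\bigl((\Sigma_1^{1/2}\Sigma_2\Sigma_1^{1/2})^{1/2}\bigr)$ for the cross-covariance $T=\int_F\sigma_1\sigma_2^*\,dm$, and in both arguments this is extracted from the positive semi-definiteness of the $6\times 6$ block matrix $\bigl(\begin{smallmatrix}\Sigma_1 & T\\ T^* & \Sigma_2\end{smallmatrix}\bigr)$, verified via the integral representation. The difference is in how that trace inequality is finished. The paper stops at the block-PSD observation and invokes the argument of Givens--Shortt (their Proof of Proposition 7) as a black box, stating that block-PSD implies the trace bound; you instead supply a self-contained proof of that implication, via the Schur complement $\Sigma_2\succeq T^*\Sigma_1^{-1}T$, conjugation to $\Sigma_1^{1/2}\Sigma_2\Sigma_1^{1/2}\succeq N^*N$ with $N=\Sigma_1^{-1/2}T\Sigma_1^{1/2}$, operator monotonicity of the square root, the trace-norm bound $\Tr N\le\Tr((N^*N)^{1/2})$, and cyclicity giving $\Tr N = \Tr T$, followed by the $\Sigma_1\mapsto\Sigma_1+\e I$ regularization when $\Sigma_1$ is singular. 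Your route is more elementary in the sense of being self-contained (no external reference), at the modest cost of invoking the L\"owner--Heinz operator monotonicity of $t\mapsto t^{1/2}$; the paper's route is shorter but opaque to a reader without Givens--Shortt at hand. Both are valid, and both verify the same central PSD fact by the same integral computation.
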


\begin{proof}
Developing both terms and using the definition \eqref{dfU} of $U$, we realize that the issue is to prove that 
$$
\Tr \Big(\int_F \sigma_1(x)\sigma_2^*(x) m(dx) \Big) \leq \Tr \Big((\Sigma_1^{1/2}\Sigma_2\Sigma_1^{1/2} )^{1/2}\Big).
$$
Givens and Shortt \cite[Proof of Proposition 7]{gs} have checked that 
$\Tr (M) \leq \Tr ((\Sigma_1^{1/2}\Sigma_2\Sigma_1^{1/2} )^{1/2})$ for all $3\times 3$ matrix $M$ such that
$$
\begin{pmatrix} \Sigma_1 & M \\ M^* & \Sigma_2 \end{pmatrix}
$$
is nonnegative. But with $M=\int_F \sigma_1(x)\sigma_2^*(x) m(dx)$,
$$
\begin{pmatrix} \Sigma_1 & M \\ M^* & \Sigma_2 \end{pmatrix}
= \int_F \begin{pmatrix} \sigma_1(x) \\ \sigma_2(x) \end{pmatrix}
\begin{pmatrix} \sigma_1(x) \\ \sigma_2(x) \end{pmatrix}^* m(dx)
$$
is clearly nonnegative. The conclusion follows.
\end{proof}

\subsection{Main ideas of the proof of Theorem \ref{wp}}
Unfortunately, the rigorous proof is very technical. Let us explain here
the main ideas without justification. Let thus $f$ and $g$ be two weak solutions to \eqref{HL3D}.
We associate (with some work, using some regularization) 
to these solutions $(\VV(t))_{t\geq 0}$ (with $\LL(\VV(t))=f_t$) and $(\ZZ(t))_{t\geq 0}$
(with $\LL(\ZZ(t))=g_t$) solving
$$
d\VV(t)=a(f_t,\VV(t))U(t)d\BB(t) + b(f_t,\VV(t)) dt\quad\hbox{and}\quad 
d\ZZ(t)=a(g_t,\ZZ(t))d\BB(t) + b(g_t,\ZZ(t)) dt
$$
where $U(t)=U(a(g_t,\ZZ(t)),a(f_t,\VV(t))$.
Recalling the $U$ takes values in the set of orthogonal matrices, we realize that
$U(t)d\BB(t)$ is a Brownian motion.
Hence, $\VV$ and $\ZZ$ are coupled solutions, with different initial values, to the nonlinear SDE \eqref{nsde}.
We will of course bound $W_2^2(f_t,g_t)$ by $\E[|\VV(t)-\ZZ(t)|^2]$. Using the It\^o formula, we directly find that
\begin{align}\label{sss}
\frac{d}{dt}\E[|\ZZ(t)-\VV(t)|^2] = \Gamma (R_t)
\end{align}
where $R_t=\LL(\ZZ(t),\VV(t))$ and where, for $R$ a probability on $\rd\times\rd$, with marginals $\mu$
and $\nu$,
$$
\Gamma(R)= \int_{\rd\times\rd} \Big(2(z-v).(b(\mu,z)-b(\nu,v)) + \big\|\sigma(\mu,z)-\sigma(\nu,v)
U(a(\mu,z),a(\nu,v)) \big\|^2 \Big) R(dz,dv).
$$
Using Lemma \ref{tictactoc} and that $R$ has marginals $\mu$ and $\nu$, we easily bound
$$
\Gamma(R) \leq \int_{\rd\times\rd} \int_{\rd\times\rd} \Delta(z,z_*,v,v_*) R(dz_*,dv_*) R(dz,dv)
$$
where $\Delta(z,z_*,v,v_*):= 2(z-v)\cdot (b(z-z_*)-b(v-v_*)) + \|\sigma(z-z_*)-\sigma(v-v_*)\|^2$.
A simple computation, see Lemma \ref{fondam} below, shows that $\Delta=\Delta_1+\Delta_2$, where
$\Delta_1$ is antisymmetric and disappears when integrating, and where $\Delta_2$ can be controlled explicitly.
We end with 
$$
\Gamma(R) \leq 8 \int_{\rd\times\rd} \int_{\rd\times\rd} |z-v|^2 \frac{(|z-z_*|\land |v-v_*|)^{1+\gamma}}
{|z-z_*| \lor |v-v_*|}  R(dz_*,dv_*) R(dz,dv),
$$
Assume first that $\gamma \in (-1,0)$. We get
\begin{align*}
\Gamma(R) \leq& 8 \int_{\rd\times\rd} \int_{\rd\times\rd} |z-v|^2 |z-z_*|^{\gamma}
R(dz_*,dv_*) R(dz,dv) \\
=&\int_{\rd\times\rd} \intrd |z-v|^2 |z-z_*|^{\gamma} \mu(dz_*) R(dz,dv).
\end{align*}
Using finally \eqref{weakb2} with some $p>p_1(\gamma)$,
$$
\Gamma(R) \leq  C_p (1+||\mu||_{L^p}) \int_{\rd\times\rd} |z-v|^2 R(dz,dv).
$$
Coming back to \eqref{sss}, we end with 
$(d/dt)\E[|\ZZ(t)-\VV(t)|^2] \leq C_p (1+||f_t||_{L^p})\E[|\ZZ(t)-\VV(t)|^2]$,
whence, using the Gr\"onwall lemma,
$$
W^2_2(f_t,g_t) \leq W_2^2(f_0,g_0) \exp\Big(C_p \intot (1+||f_s||_{L^p})ds\Big)
$$
as desired. When now $\gamma \in (-2,-1]$, a slightly more complicated study shows that
$$
\Gamma(R) \leq  C_{p,r} (1+||\mu||_{L^p}+||\nu||_{L^r}) \int_{\rd\times\rd} |z-v|^2 R(dz,dv),
$$
for any $p>p_1(\gamma)$ and any $r>(2p-3)/[(3+\gamma)(p-1)-1]$.

\subsection{Fine regularity estimates}

The subsection is devoted to the proof of the following estimate.

\begin{prop}\label{crucru}
For $f,g \in \cP_2(\rd)$, $R\in \Pi(g,f)$ and $0\leq \eta \leq \e<1$, we set
\begin{align*}
\Gamma_{\eta,\e}(R)=& \int_{\rd\times\rd} \Big(2(z-v)\cdot(b(g^\eta,z)-b(f,v)) \\
&\hskip3cm+ \big\|\sigma(g^\eta,z)-
\sigma(f,v)U(a(g^\e,z),a(f^\e,v))
\big\|^2\Big) R(dz,dv).
\end{align*}
Assume that $f \in L^p(\rd)$ for some $p>p_1(\gamma)$.

\vip

(i) If $\gamma \in (-1,0)$, then 
$$
\Gamma_{\eta,\e}(R) \leq C \e^{2+2\gamma} + C_p(1+||f^\e||_{L^p})\int_{\rd\times\rd} |z-v|^2 R(dz,dv).
$$

(ii) If $\gamma \in (-2,-1]$, we assume also that $g\in L^r$ for some $r>(2p-3)/[(3+\gamma)(p-1)-1]$.
There is $\delta>0$ (depending only on $r,p,\gamma$) such that
$$
\Gamma_{\eta,\e}(R) \leq C_{r,p}(1+||f+g||_{L^r})(1+m_1(f+g)) \e^{\delta} 
+ C_{r,p}(1+||f^\e||_{L^p}+||g^\e||_{L^r})\int_{\rd\times\rd}\!\!\!\!\!\! |z-v|^2 R(dz,dv).
$$
\end{prop}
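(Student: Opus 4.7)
The strategy is to reduce $\Gamma_{\eta,\e}(R)$ to a double integral of an explicit kernel against $R\otimes R$, up to regularization errors in $\e$. The crucial input on the diffusion side is Lemma~\ref{tictactoc}, applied on $F=\rd\times\rd\times\rd$ with $m(dz_*,dv_*,du):=R(dz_*,dv_*)\phi_\e(u)du$, $\sigma_1(z_*,v_*,u):=\sigma(z-z_*-u)$ and $\sigma_2(z_*,v_*,u):=\sigma(v-v_*-u)$. Since $R$ has marginals $g$ and $f$, one has $\int\sigma_1\sigma_1^*\,dm=a(g^\e,z)$ and $\int\sigma_2\sigma_2^*\,dm=a(f^\e,v)$, so the lemma yields
\[
\bigl\|[a(g^\e,z)]^{1/2}-[a(f^\e,v)]^{1/2}U(a(g^\e,z),a(f^\e,v))\bigr\|^2 \leq \iint\|\sigma(z-z_*-u)-\sigma(v-v_*-u)\|^2\phi_\e(u)du\,R(dz_*,dv_*).
\]
The gap between $[a(g^\e,z)]^{1/2}$ and $\sigma(g^\eta,z)=[a(g^\eta,z)]^{1/2}$ (and between $[a(f^\e,v)]^{1/2}$ and $\sigma(f,v)$) is then controlled by Lemmas~\ref{rough}(iii), \ref{math} and \ref{ellip1}: bounding $\|a(g^\e,z)-a(g^\eta,z)\|$ and $\|a(f^\e,v)-a(f,v)\|$ by $C\e^{2+\gamma}$ and taking square roots via Lemma~\ref{math} produces errors which, after integration against $R(dz,dv)$, absorb into $\e^{2+2\gamma}$ (case (i)) or $\e^\delta$ (case (ii)). The drift $2(z-v)\cdot(b(g^\eta,z)-b(f,v))$ is expanded analogously against $R(dz_*,dv_*)$ and the $\eta$- vs.\ non-regularization is moved to matched $\e$-regularization by Lemma~\ref{rough}(iii) when $\gamma\in(-1,0)$, and by Lemma~\ref{rough}(ii) combined with the moment bound $m_1(f+g)$ when $\gamma\in(-2,-1]$.

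After these reductions, the main task is to estimate
\[
J(R):=\iint_{(\rd\times\rd)^2} \Delta(z,z_*,v,v_*)\,R(dz,dv)R(dz_*,dv_*),
\]
where $\Delta:=2(z-v)\cdot(b(z-z_*)-b(v-v_*))+\|\sigma(z-z_*)-\sigma(v-v_*)\|^2$. The algebraic heart of the argument is to split $\Delta=\Delta_1+\Delta_2$ with $\Delta_1$ antisymmetric under $(z,v)\leftrightarrow(z_*,v_*)$, so it vanishes after symmetrizing $R\otimes R$, and
\[
|\Delta_2|\leq C\,|z-v|^2\,\frac{(|z-z_*|\wedge|v-v_*|)^{1+\gamma}}{|z-z_*|\vee|v-v_*|}.
\]
This is an explicit computation from $b(y)=-2|y|^\gamma y$ and $\sigma(y)=|y|^{\gamma/2-1}(|y|^2I-y\otimes y)$, which I would isolate as a preliminary lemma. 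In case (i), $\gamma\in(-1,0)$, the kernel is bounded by $\tfrac12(|z-z_*|^\gamma+|v-v_*|^\gamma)$; integrating against the marginals $g$ and $f$ of $R$ and applying \eqref{weakb2} with exponent $p>p_1(\gamma)$ yields a bound by $C_p(1+\|f\|_{L^p}+\|g\|_{L^p})\int|z-v|^2R(dz,dv)$. Since Young's inequality gives $\|f^\e\|_{L^p}\leq\|f\|_{L^p}$ and similarly for $g$, plus (in the strong-weak use below) $\|g\|_{L^p}$ will be already controlled by $\|f\|_{L^p}$ plus $W_2^2$ via interpolation, this produces the announced estimate with remainder $\e^{2+2\gamma}$.

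Case (ii), $\gamma\in(-2,-1]$, is the principal difficulty. Since $1+\gamma\leq 0$, the ratio $(|z-z_*|\wedge|v-v_*|)^{1+\gamma}/(|z-z_*|\vee|v-v_*|)$ is not bounded by a single power $|y|^\gamma$, and one must split according to which of $|z-z_*|$, $|v-v_*|$ is smaller. In the region $|z-z_*|\leq|v-v_*|$ the kernel equals $|z-z_*|^{1+\gamma}/|v-v_*|$, reducing the analysis to integrals of the form $\iiint|z-v|^2|z-z_*|^{1+\gamma}|v-v_*|^{-1}R(dz,dv)g(dz_*)f(dv_*)$. The factor $|z-z_*|^{1+\gamma}$, with exponent in $(-1,0]$, needs \eqref{weakb2} applied to $g$ with $\alpha=1+\gamma$, i.e.\ $g\in L^r$ for $r>3/(4+\gamma)$, while $|v-v_*|^{-1}$ needs $p>3/2$, which is automatic from $p>p_1(\gamma)$. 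A H\"older interpolation, splitting $|v-v_*|^{-1}=|v-v_*|^{-\theta}\cdot|v-v_*|^{-(1-\theta)}$ and distributing the integrability cost between the factors of $f$ and $g$, pins down the precise threshold $r>(2p-3)/[(3+\gamma)(p-1)-1]$. The main obstacle I expect is precisely this bookkeeping: each singular factor must be controlled by a fraction of an available $L^p$- or $L^r$-norm, and matching the exponents optimally across the two regions of the split (and simultaneously tracking the $m_1(f+g)$ moments inherited from the drift regularization) is where all the subtleties concentrate; the exponent $\delta$ is then obtained as the minimum of the various positive powers of $\e$ produced by Lemmas~\ref{rough}(ii)-(iii) and by the above interpolation.
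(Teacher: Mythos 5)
Your overall architecture coincides with the paper's: apply Lemma~\ref{tictactoc} to bound the matched term $\bigl\|\sigma(g^\e,z)-\sigma(f^\e,v)U(a(g^\e,z),a(f^\e,v))\bigr\|^2$ by a double integral of $\|\sigma(z-z_*-u)-\sigma(v-v_*-u)\|^2$, isolate the algebraic identity $\Delta=\Delta_1+\Delta_2$ with $\Delta_1$ antisymmetric (this is exactly the paper's Lemma~\ref{fondam}), and bound $\Delta_2$ by $|z-v|^2 K(|z-z_*|,|v-v_*|)$ with $K(x,y)=(x\wedge y)^{1+\gamma}/(x\vee y)$. The rest is regularization bookkeeping, and that is where your sketch has a genuine error and a genuine gap.

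\textbf{The error (case (i)).} You bound $K(x,y)\le \tfrac12(x^\gamma+y^\gamma)$ and announce a resulting estimate with $C_p(1+\|f\|_{L^p}+\|g\|_{L^p})$. But the Proposition states $C_p(1+\|f^\e\|_{L^p})$ only --- no $\|g\|_{L^p}$. This is not cosmetic: the whole purpose of this statement is to feed the strong/weak stability estimate of Theorem~\ref{wp}-(i), where $g$ is an arbitrary measure solution in $L^\infty_{loc}([0,\infty),\cP_2)$ with no $L^p$ control. The correct move, which the paper uses, is to observe that for $\gamma\in(-1,0)$ one has $K(x,y)=(x\wedge y)^{1+\gamma}/(x\vee y)\le (x\vee y)^\gamma = \min\{x^\gamma,y^\gamma\}\le y^\gamma$, i.e.\ one may bound $K$ by the power of the variable that pairs with $f$ alone; the whole $L^p$ price is then paid on $f$. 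Your proposed patch --- that ``$\|g\|_{L^p}$ will be already controlled by $\|f\|_{L^p}$ plus $W_2^2$ via interpolation'' --- cannot work: the $L^p$ norm is not lower nor upper semicontinuous along $W_2$, and there is no interpolation between $W_2$-proximity to an $L^p$ density and $L^p$ membership (take $g$ a Dirac near $f$ smooth). This step must be replaced by the asymmetric kernel bound.

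\textbf{The gap.} Your decomposition controls $\|\sigma(g^\eta,z)-\sigma(g^\e,z)\|$ and $\|\sigma(f,v)-\sigma(f^\e,v)\|$ via Lemma~\ref{rough}-(iii) and the square-root estimate (the first inequality of Lemma~\ref{math}; Lemma~\ref{ellip1} is not needed here and is a red herring), and says these errors ``absorb into $\e^{2+2\gamma}$.'' But when you expand $\|\sigma(g^\eta,z)-\sigma(f,v)U\|^2$ around $\|\sigma(g^\e,z)-\sigma(f^\e,v)U\|^2$ you also produce a cross term, $2\Tr\bigl[(\sigma(g^\e,z)-\sigma(f^\e,v)U)(\,\cdot\,)^*\bigr]$, which does not simply vanish into the $\e$-error. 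The paper handles it with the weighted Young inequality $\Tr(AB)\le \tfrac12\e^{-\gamma}\|A\|^2+\tfrac12\e^{\gamma}\|B\|^2$, then shows the first piece (the matched term squared, with its $\e^{-\gamma}$ prefactor) gains a compensating $\e^\gamma$ from $\sup_x\intrd|x-u|^\gamma\phi_\e(u)du\le C\e^\gamma$; the net is an additional contribution $C\int|z-v|^2 R(dz,dv)$, not merely a power of $\e$. This term is present in the paper's final bound. As stated, your sketch silently omits it, so the error estimate is incomplete. The case~(ii) split of $K$ by regions is in the same spirit as the paper's $K(x,y)\le x^{1+\gamma}/y+y^\gamma$ followed by Young's inequality, and the exponent constraints you describe are plausible; but again the cross term must be tracked and produces the $\|g^\e\|_{L^r}$-weighted $\int|z-v|^2 R$ contribution that is not reachable by absorbing into $\e^\delta$.
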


We start with a simple computation

\begin{lem}\label{fondam}
Assume that $\gamma\in(-2,0)$. Then for any $v,v_*,w,w_*$ in $\rd$,
\begin{align*}
\Delta(v,v_*,w,w_*):=& 2(v-w)\cdot (b(v-v_*)-b(w-w_*)) + \|\sigma(v-v_*)-\sigma(w-w_*)\|^2\\
=&\Delta_1(v,v_*,w,w_*)+\Delta_2(v,v_*,w,w_*),
\end{align*}
where 
$$
\Delta_1(v,v_*,w,w_*)=((v-w)+(v_*-w_*))\cdot(b(v-v_*)-b(w-w_*))
$$
is antisymmetric (i.e. $\Delta_1(v,v_*,w,w_*)=-\Delta_1(v_*,v,w_*,w)$) and
$$
\Delta_2(v,v_*,w,w_*)\leq 4 (|v-w|^2+|v_*-w_*|^2) 
K(|v-v_*|,|w-w_*|),
$$
where
\begin{equation} \label{defK}
K(x,y)= \frac{(x \land y)^{1+\gamma}} {x \lor y}.
\end{equation}
\end{lem}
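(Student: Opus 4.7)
The plan is to isolate the antisymmetric part by an algebraic rearrangement, then reduce the bound on the remainder to a one-variable inequality via explicit formulas.

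Set $X:=v-v_*$, $Y:=w-w_*$, $A:=v-w$, $B:=v_*-w_*$ and observe the identity $X-Y=A-B$. Writing $2A=(A+B)+(X-Y)$ and substituting into the definition of $\Delta$ gives the decomposition $\Delta=\Delta_1+\Delta_2$ with
\[
\Delta_2=(X-Y)\cdot\bigl(b(X)-b(Y)\bigr)+\|\sigma(X)-\sigma(Y)\|^2.
\]
Antisymmetry of $\Delta_1$ under $(v,v_*,w,w_*)\mapsto(v_*,v,w_*,w)$ is immediate: $A+B$ is preserved while $b(X)-b(Y)$ flips sign by oddness of $b$.

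Next, I use $b(v)=-2|v|^\gamma v$ together with the factorization $\sigma(v)=|v|^{\gamma/2+1}P_{v^\perp}$, where $P_{v^\perp}=I-\hat v\otimes\hat v$ is the orthogonal projection onto $v^\perp$. The identity $\Tr[P_{X^\perp}P_{Y^\perp}]=1+c^2$ with $c:=\hat X\cdot\hat Y$, combined with $\Tr[a(v)]=2|v|^{\gamma+2}$, yields after direct expansion (writing $\alpha:=|X|$, $\beta:=|Y|$)
\[
\Delta_2=2c\alpha\beta(\alpha^\gamma+\beta^\gamma)-2(\alpha\beta)^{\gamma/2+1}(1+c^2).
\]
Since $|X-Y|^2=|A-B|^2\leq 2(|A|^2+|B|^2)$ and $K$ is symmetric in its arguments, it is enough to prove the scalar estimate $\Delta_2\leq 2|X-Y|^2 K(\alpha,\beta)$; by symmetry we may assume $\alpha\leq\beta$. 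Setting $t:=\beta/\alpha\geq 1$ and dividing through by $\alpha^{3+\gamma}$, this reduces to showing
\[
g(t,c):=c(t^2+t^{\gamma+2}+2t)-t^{\gamma/2+2}(1+c^2)-(1+t^2)\leq 0.
\]

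As a function of $c$, $g$ is a downward-opening parabola whose vertex sits at $c^*=\tfrac12(t^{-\gamma/2}+t^{\gamma/2}+2t^{-\gamma/2-1})\geq 1$, where the lower bound uses AM-GM $t^{-\gamma/2}+t^{\gamma/2}\geq 2$. Therefore $g(t,\cdot)$ is nondecreasing on $[-1,1]$ and the problem reduces to $f(t):=g(t,1)=t^{\gamma+2}-2t^{\gamma/2+2}+2t-1\leq 0$ for $t\geq 1$. A Taylor-type argument finishes: $f(1)=f'(1)=0$, and
\[
f''(t)=(\gamma+2)t^{\gamma/2}\bigl[(\gamma+1)t^{\gamma/2}-(\gamma+4)/2\bigr]\leq 0
\]
on $t\geq 1$ for $\gamma\in(-2,0)$, since $(\gamma+1)t^{\gamma/2}\leq\max(\gamma+1,0)\leq 1$ while $(\gamma+4)/2\geq 1$. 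Two integrations yield $f\leq 0$. The mildly delicate step is the reduction to the parallel configuration $c=1$: once the sign of $\partial_c g$ is pinned by AM-GM, the residual one-variable inequality is a short convexity exercise.
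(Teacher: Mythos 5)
Your proof is correct, and after the common preliminary steps (same decomposition $\Delta=\Delta_1+\Delta_2$, same explicit formula for $\Delta_2$ in terms of $X=v-v_*$, $Y=w-w_*$ and $c=\hat X\cdot\hat Y$), it takes a genuinely different route to bound $\Delta_2$ than the paper does. The paper drops the $c^2$ term via $1+c^2\geq 2c$, which collapses $\Delta_2$ to the perfect square $2(X\cdot Y)\bigl(|X|^{\gamma/2}-|Y|^{\gamma/2}\bigr)^2$, and then invokes the elementary mean-value bound $|x^\alpha-y^\alpha|\leq |x-y|\,(x\lor y)^{\alpha-1}$ for $\alpha\in[0,1]$ applied with $\alpha=|\gamma|/2$. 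You instead treat $\Delta_2$ as a quadratic in $c$ with negative leading coefficient, locate the vertex at $c^\ast=\tfrac12\bigl(t^{-\gamma/2}+t^{\gamma/2}+2t^{-\gamma/2-1}\bigr)\geq 1$ by AM-GM, reduce to the aligned case $c=1$, and then close the one-variable inequality $f(t)\leq 0$ on $t\geq 1$ by the clean observation $f(1)=f'(1)=0$, $f''\leq 0$. The paper's argument is shorter and spots the square; yours is a more mechanical but entirely elementary calculus verification, and it makes transparent that the extremal configuration is $\hat X=\hat Y$, a fact the paper uses only implicitly through the choice of the inequality $1+c^2\geq 2c$. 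Both are valid; neither requires more hypotheses than the other, and both ultimately rest on $\gamma\in(-2,0)$ (you via $(\gamma+1)t^{\gamma/2}\leq 1\leq(\gamma+4)/2$, the paper via $|\gamma|/2\in(0,1)$).
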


\begin{proof}
Defining $\Delta_2=\Delta-\Delta_1$, we find, with the notation $X=v-v_*$ and $Y=w-w_*$,
$$
\Delta_2=(X-Y)\cdot(b(X)-b(Y))+ \|\sigma(X)-\sigma(Y)\|^2.
$$
Recalling that $b(X)=-2X|X|^\gamma$, using that $||\sigma(X)||^2=2|X|^{2+\gamma}$ and that
\[
\lnm\sigma(X),\sigma(Y)\rnm = |X|^{1+ \gamma /2}|Y|^{1+\gamma/ 2} 
\biggl( 1 +  \frac{(X \cdot Y)^2}{|X|^2 |Y|^2}\biggr) \ge 2\, 
|X|^{\gamma/ 2}|Y|^{\gamma /2} ( X \cdot Y)
\] 
(we used that $1+x^2\geq 2x$ for the last inequality), one easily checks that
\[
\Delta_2\le2 (X\cdot Y) (|X|^{\gamma/2}-|Y|^{\gamma/2})^2 \leq 2 |X\|Y| (|X|^{\gamma/2}-|Y|^{\gamma/2})^2
=2 (|X\|Y|)^{1+\gamma} \bigl(|X|^{|\gamma|/2}- |Y|^{|\gamma|/2}\bigr)^2.
\]
Next remark that for $\alpha \in [0,1]$ and $x,y \ge 0$,
$$
|x^\alpha-y^\alpha|= (x\lor y)^{\alpha} \Big(1- \Big[\frac{x\land y}{x\lor y}\Big]^\alpha\Big) \leq 
(x\lor y)^{\alpha}  \Big(1- \frac{x\land y}{x\lor y} \Big) =|x-y| (x\lor y)^{\alpha-1},
$$
whence
$$
\Delta_2\leq 2 |X-Y|^2 (|X\|Y|)^{1+\gamma} (|X|\lor |Y|)^{|\gamma|-2} =
2 |X-Y|^2 (|X|\land |Y|)^{1+\gamma} (|X|\lor |Y|)^{-1}.
$$
One easily concludes, using that $|X-Y|^2 \leq 2 (|v-w|^2+|v_*-w_*|^2)$.
\end{proof}

We next prove an intermediate result.

\begin{lem}\label{cru} Let $\gamma\in(-2,0)$.  Consider $f,g \in \cP_2(\rd)$ and $R \in \Pi(g,f)$. 
For any $\e\in(0,1)$,
\begin{align*}
\int_{\rd\times\rd} \Big(2(z-v) & \cdot(b(g^\eps,z)-b(f^\eps,v))
+\|\sigma(g^\eps,z)-\sigma(f^\eps,v)U(a(g^\eps,z),a(f^\eps,v))\|^2  \Big) R(dz,dv)\\
& \leq 8 \int_{\rd\times\rd} \int_{\rd\times\rd} |z-v|^2 K(|z-z_*|,|v-v_*|) R^\eps(dz_*,dv_*)R(dz,dv),
\end{align*}
where $K$ was introduced in~\eqref{defK} and where
$R^\e= \int_{\rd\times\rd}\intrd \delta_{(u+z,u+v)} 
\phi_\e(u) \,du \, R(dz,dv)$ belongs to $\Pi(g^\e,f^\e)$.
\end{lem}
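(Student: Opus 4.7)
My plan is to represent both the drift difference $b(g^\e,z)-b(f^\e,v)$ and the covariance term in Frobenius norm as integrals against a common coupling $R^\e$, so that the bracket reduces pointwise to the function $\Delta$ of Lemma~\ref{fondam}; then to exploit a symmetry of the product measure $R\otimes R^\e$ to annihilate the antisymmetric piece $\Delta_1$ and to symmetrize the bound on $\Delta_2$.

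First I would check that $R^\e\in\Pi(g^\e,f^\e)$: integrating out one variable at a time in the definition of $R^\e$ yields the convolutions $g\star\phi_\e=g^\e$ and $f\star\phi_\e=f^\e$ as marginals. This gives the key representations
\[
b(g^\e,z)=\int b(z-z_*)\,R^\e(dz_*,dv_*),\qquad a(g^\e,z)=\int \sigma(z-z_*)\sigma(z-z_*)^*\,R^\e(dz_*,dv_*),
\]
(using that $\sigma$ is symmetric, so $\sigma(y)\sigma(y)^*=a(y)$), and analogously for the $v$-quantities. Applying Lemma~\ref{tictactoc} with $m=R^\e$ and matrix fields $\sigma_1(z_*,v_*)=\sigma(z-z_*)$, $\sigma_2(z_*,v_*)=\sigma(v-v_*)$ yields
\[
\bigl\|\sigma(g^\e,z)-\sigma(f^\e,v)\,U(a(g^\e,z),a(f^\e,v))\bigr\|^2\le\int\|\sigma(z-z_*)-\sigma(v-v_*)\|^2\,R^\e(dz_*,dv_*).
\]
Combined with the drift representation, the integrand on the left of Lemma~\ref{cru} is pointwise bounded by $\int \Delta(z,z_*,v,v_*)\,R^\e(dz_*,dv_*)$, so the problem reduces to estimating $\iint \Delta(z,z_*,v,v_*)\,R(dz,dv)R^\e(dz_*,dv_*)$, with $\Delta=\Delta_1+\Delta_2$ as in Lemma~\ref{fondam}.

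The only interesting step is the symmetry argument. Rewriting $R^\e(dz_*,dv_*)=\int \phi_\e(u)du\,R(d\tilde z,d\tilde v)\,\delta_{(\tilde z+u,\tilde v+u)}(dz_*,dv_*)$, the measure $R(dz,dv)R(d\tilde z,d\tilde v)\phi_\e(u)du$ on $(\rd)^4\times\rd$ is invariant under the involution
\[
T:\ (z,v,\tilde z,\tilde v,u)\longmapsto(\tilde z,\tilde v,z,v,-u),
\]
since $R\otimes R$ is symmetric in the swap and $\phi_\e$ is even. Noting that $z_*-v_*=\tilde z-\tilde v$ does not depend on $u$, and using that $b$ is odd, a direct computation shows that $T$ negates $\Delta_1(z,\tilde z+u,v,\tilde v+u)$, so its integral vanishes. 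The same involution exchanges the function $|z-v|^2\,K(|z-\tilde z-u|,|v-\tilde v-u|)$ with $|\tilde z-\tilde v|^2\,K(|z-\tilde z-u|,|v-\tilde v-u|)$ (since $K$ depends only on absolute values), so upon integration the two terms of the bound $\Delta_2\le 4(|z-v|^2+|z_*-v_*|^2)\,K(|z-z_*|,|v-v_*|)$ contribute equally, producing the factor $8$ in the announced estimate.

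The main obstacle is setting up this invariance correctly: the argument relies crucially on $R^\e$ being obtained from $R$ by a simultaneous translation of both coordinates by the same vector $u$ drawn from an even density, which makes the composition of the swap $(z,v)\leftrightarrow(\tilde z,\tilde v)$ with $u\mapsto -u$ a measure-preserving involution under which $\Delta_1$ is odd and the two halves of the bound on $\Delta_2$ are interchanged. Once that is in place, the rest is bookkeeping.
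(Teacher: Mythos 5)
Your proof is correct and follows essentially the same route as the paper's: represent the drift and the $\sigma$-discrepancy against $R^\e$ using its marginals and Lemma~\ref{tictactoc}, bound the integrand pointwise by $\int\Delta\,dR^\e$ with $\Delta=\Delta_1+\Delta_2$ from Lemma~\ref{fondam}, and kill $\Delta_1$ (and symmetrize the bound on $\Delta_2$ to get the factor $8$) by combining the swap $(z,v)\leftrightarrow(z_*,v_*)$ with the reflection $u\mapsto -u$. Packaging that combination as a single measure-preserving involution $T$ is a cleaner formulation of the same argument. One point you omit that the paper takes care of: to justify passing from $I_1=-I_1$ to $I_1=0$ (and more generally to split $\int\Delta$ into $\int\Delta_1+\int\Delta_2$), one needs $\Delta_1(z,z_*+u,v,v_*+u)$ to be absolutely integrable against $\phi_\e(u)\,du\,R(dz_*,dv_*)\,R(dz,dv)$; this is what the moment assumption $f,g\in\cP_2(\rd)$ is used for, via the pointwise bound $|b(x)|\le 2|x|^{1+\gamma}$. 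You should include that check.
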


In other words, $R^\e(A)=\int_{\rd\times\rd}\intrd \indiq_A(u+z,u+v) 
\phi_\e(u)R(dz,dv) \,du $ for all $A \in \cB(\rd\times\rd)$.

\begin{proof} Let us denote by $I$ the left hand side of the desired inequality.
We claim that
\begin{eqnarray*}
&(z-v)\cdot (b(g^\e,z)-b(f^\e,v))= \ds\int_{\rd\times\rd} (z-v)\cdot (b(z-z_*)-b(v-v_*)) R^\e(dz_*,dv_*),\\
&\|\sigma(g^\e,z)-\sigma(f^\e,v)U(a(g^\e,z),a(f^\e,v))\|^2 \leq  \ds\int_{\rd\times\rd} 
\|\sigma(z-z_*)-\sigma(v-v_*)\|^2 R^\e(dz_*,dv_*).
\end{eqnarray*}
The first point is obvious: it follows from the fact that the marginals of $R^\e$ are $g^\e$ and $f^\e$.
The second point relies on Lemma \ref{tictactoc}, with (here $z,v \in \rd$ are fixed) $F=\rd\times\rd$, 
$m=R^\e(dz_*,dv_*)$, $\sigma_1(z_*,v_*)=\sigma(z-z_*)$ and
$\sigma_2(z_*,v_*)=\sigma(v-v_*)$. Indeed, since $\sigma(x)=(a(x))^{1/2}$, we have
\begin{align*}
&\int_{\rd\times\rd} \sigma(z-z_*) \sigma^*(z-z_*) R^\e(dz_*,dv_*)=a(g^\e,z),\\
&\int_{\rd\times\rd} \sigma(v-v_*) \sigma^*(v-v_*) R^\e(dz_*,dv_*)=a(f^\e,v).
\end{align*}

Recalling the notation of Lemma \ref{fondam}, we thus deduce that 
\begin{align*}
I \leq & \int_{\rd\times\rd}  \int_{\rd\times\rd}  \Delta(z,z_*,v,v_*) R^\e(dz_*,dv_*) R(dz,dv).
\end{align*}
By Lemma \ref{fondam}, we know that $\Delta = \Delta_1+\Delta_2$, whence, with obvious notation,
$I \leq I_1+I_2$.
Using the substitution $u \mapsto -u$ and that 
$\Delta_1(z,z_*- u,v,v_*- u)=-\Delta_1(z_*,z +u,v_*, v +u)$,
\begin{align*}
I_1 =&  \int_{\rd\times\rd}  \int_{\rd\times\rd}  \intrd \Delta_1(z,z_*+ u,v,v_*+ u) \phi_\e(u)du\;
R(dz_*,dv_*) R(dz,dv) \\
=& \int_{\rd\times\rd}  \int_{\rd\times\rd}  \intrd \Delta_1(z,z_*- u,v,v_*- u)  \phi_\e(u)du\;
R(dz_*,dv_*) R(dz,dv) \\
=& -\int_{\rd\times\rd}  \int_{\rd\times\rd}  \intrd \Delta_1(z_*,z + u,v_*, v+ u) \phi_\e(u)du\; 
R(dz_*,dv_*) R(dz,dv) = - I_1,
\end{align*}
so that $I_1 =0$. 
This computation is licit: recalling the expression of $\Delta_1$ and that $|b(x)|\leq 2|x|^{\gamma+1}$,
\begin{align*}
\kappa_\e(z,z_*,v,v_*):=&\intrd |\Delta_1(z,z_*+ u,v,v_*+ u)| \phi_\e(u)du\\
\leq& 2(|z|+|v|+|z_*|+|v_*|)\intrd (|z-z_*-u|^{1+\gamma}+ |v-v_*-u|^{1+\gamma})\phi_\e(u)du\\
\leq& C_\e(1+|z|+|v|+|z_*|+|v_*|)^2.
\end{align*}
For the last inequality, separate the cases $\gamma \in [-1,0)$ and $\gamma \in (-2,-1)$. 
This last expression is integrable against $R(dz_*,dv_*) R(dz,dv)$
because $f$ and $g$ belong to $\cP_2(\rd)$ by assumption.

\vip

Finally, using again the substitution $u\mapsto -u$, it is easily checked that
\begin{align*}
I_2 = & 4  \int_{\rd\times\rd}  \int_{\rd\times\rd} \intrd (|z-v|^2+|z_*-v_*|^2)K(|z-z_*-u|,|v-v_*-u|)\\
&\hskip8cm\phi_\e(u)du\; R(dz_*,dv_*) R(dz,dv) \\
=& 8 \int_{\rd\times\rd}  \int_{\rd\times\rd} \intrd  |z-v|^2K(|z-z_*-u|,|v-v_*-u|)\phi_\e(u)du\; 
R(dz_*,dv_*) R(dz,dv),
\end{align*}
which is nothing but the RHS of the target inequality.
\end{proof}

We can finally give the

\begin{proof}[Proof of Proposition \ref{crucru}]
Let thus $\gamma\in(-2,0)$, $f,g \in \cP_2(\rd)$, $R\in\Pi(g,f)$ and $0\leq\eta<\e<1$ be fixed.
Let also $p>p_1(\gamma)$ be fixed and assume that $f\in L^p(\rd)$. If $\gamma \in (-2,-1]$,
we assume moreover that $g \in L^r(\rd)$ for some fixed $r>(2p-3)/[(3+\gamma)(p-1)-1]$ and that
$r<p$ without loss of generality.
We start with $\Gamma_{\eta,\e}(R)=I_0+I_1+I_2+I_3$, where
\begin{align*}
I_0:=&\int_{\rd\times\rd} \Big[ 2(z-v)\cdot\bigl(b(g^\eps,z)-b(f^\eps,v)\bigr) \\
& \hspace{20mm}+ \bigl\|\sigma(g^\eps, z)-\sigma(f^\eps, v)U\bigl(a(g^\eps,z),a(f^\eps,v)\bigr) 
\bigr\|^2 \Big] R(dz,dv),\\
I_1 :=& 2 \int_{\rd\times\rd} (z-v) \cdot \bigl(b(g^\eta,z) - b(g^\e,z) 
- b(f,v) + b(f^\eps,v)\bigr) \, R(dz,dv), \\
I_2 :=& \int_{\rd\times\rd} \Bigl \| (\sigma(g^\eps, z) - \sigma(g^\eta, z) )-
(\sigma(f^\eps, v) - \sigma(f, v))U\bigl(a(g^\eps,z),a(f^\eps,v)\bigr)  \Bigr\|^2 \,
R(dz,dv), \\
I_3  :=& 2 \int_{\rd\times\rd} \Tr \Bigl[\Bigl(\sigma(g^\eps, z)-\sigma(f^\eps, v)U\bigl(a(g^\eps,z),a(f^\eps,v)\bigr) 
\Bigr) \\
& \hspace{10mm} \Bigl( \bigl(\sigma(g^\eps, z) - \sigma(g^\eta, z) \bigr)-
\bigl(\sigma(f^\eps, v) - \sigma(f, v)\bigr)U\bigl(a(g^\eps,z),a(f^\eps,v)\bigr)  \Bigr)^*\Bigr]  \, 
R(dz,dv).
\end{align*}
Lemma \ref{cru} tells us that
$$
I_0\leq 8 \int_{\rd\times\rd} \int_{\rd\times\rd} 
|z-v|^2 K(|z-z_*|,|v-v_*|) R^\eps(dz_*,dv_*)R(dz,dv),
$$
with $R^\e \in \Pi(g^\e,f^\e)$ defined by $R^\e=\int_{\rd\times\rd}\intrd \delta_{(u+z,u+v)}
\phi_\e(u)R(dz,dv)$.

\vip

We now prove (i). We thus assume that $\gamma \in (-1,0)$. We start with $I_0$. 
Since $\gamma\in (-1,0)$, it holds that
$K(x,y)\leq \min\{x^\gamma,y^\gamma\} \leq y^\gamma$, whence
$$
I_0 \leq 8 \int_{\rd\times\rd}\int_{\rd\times\rd}|z-v|^2 |v-v_*|^\gamma R^\e(dz_*,dv_*)R(dz,dv).
$$
Using that $R^\e$ has $g^\e$ for second marginal, we deduce that 
$\sup_v \int_{\rd\times\rd}|v-v_*|^\gamma R^\e(dz_*,dv_*)=\sup_v \intrd |v-v_*|^\gamma f^\e(dv_*)
\leq 1+C_p ||f^\e||_{L^p}$ by \eqref{weakb2}, recall that $p>p_1(\gamma)$. Consequently,
$$
I_0 \leq C_p (1+||f^\e||_{L^p}) \int_{\rd\times\rd} |z-v|^2 R(dz,dv).
$$

Lemma~\ref{rough}-(iii) tells us that
$|b(f,v)-b(f^\e,v)| \leq C \e^{1+\gamma}$ and implies, since $\eta \in [0,\e)$, that $|b(g^\eta,z)-b(g^\e,z)| \leq 
C \e^{1+\gamma}$. Using that $x\cdot y \le |x|^2+|y|^2$, we thus find
\[
I_1 \le C \int_{\rd\times\rd}  \Big(\eps^{2+ 2\gamma} +|z-v|^2 \Big)R(dz,dv) =
C \eps^{2+ 2\gamma} + C \int_{\rd\times\rd} |z-v|^2 R(dz,dv).
\]

Using that $\| a - b\|^2 \le 2(\|a\|^2 + \|b\|^2)$, that $U$ is orthogonal and then Lemma \ref{math},
we see that 
\begin{align*}
I_2\leq&  2 \int_{\rd\times\rd} \Bigl(
\|\sigma(g^\eps, z) - \sigma(g^\eta, z) )  \|^2 + \|\sigma(f^\eps, v) - \sigma(f, v))\|^2\Bigr) \,R(dz,dv)\\
\leq & C \int_{\rd\times\rd} \Bigl(\|a(g^\eps, z) - a(g^\eta, z) )\|+\|a(f^\eps, v) - a(f, v))\|   \Bigr) \,R(dz,dv).
\end{align*}
Lemma~\ref{rough}-(iii) thus implies that $I_2 \le C \eps^{2+\gamma}$, which is smaller than $C\e^{2+2\gamma}$.

\vip

Finally, to bound $I_3$, we start from the inequality 
\[
\Tr (AB) \le \|A\|\, \|B\| \le \frac 12(
\eps^{-\gamma} \|A\|^2 + \eps^{\gamma} \|B\|^2)
\]
from which 
$$
I_3 \leq \e^{-\gamma} \int_{\rd\times\rd} \Bigl\| \sigma(g^\eps, z)-\sigma(f^\eps, v)U\bigl(a(g^\eps,z),a(f^\eps,v)\bigr) 
\Bigr\|^2 \,R(dz,dv) + \e^\gamma  I_2.
$$
First, we already know that $I_2 \leq C \e^{2+\gamma}$.
Next, using the same argument as in the very beginning of the proof of Lemma \ref{cru}, since 
$R^\e$ has $g^\e$ and $f^\e$ for marginals, we deduce that
$$
I_3 \leq  C \e^{2+2\gamma}+\e^{-\gamma} \int_{\rd\times\rd} \int_{\rd\times\rd} 
\Bigl\|\sigma(z-z_*)- \sigma(v-v_*)\Bigr\|^2 R^\e(dz_*,dv_*)\,R(dz,dv).
$$
By Lemma \ref{lem:Lip}, 
$$
I_3 \leq  C \e^{2+2\gamma}+C \e^{-\gamma} \int_{\rd\times\rd} (|z-v|^2+|z_*-v_*|^2)(|z-z_*|^{\gamma}+|v-v_*|^\gamma)
R^\e(dz_*,dv_*) \,R(dz,dv).
$$
Recalling the definition of $R^\e$ and using some symmetry arguments, we find that
\begin{align*}
I_3 \leq&  C \e^{2+2\gamma}+C \e^{-\gamma} \int_{\rd\times\rd} \int_{\rd\times\rd} \intrd |z-v|^2 (|z-z_*+u|^{\gamma} 
+|v-v_*+u|^{\gamma}) \phi_\e(u)du\; \\
&\hskip10cm 
R(dz_*,dv_*) \,R(dz,dv).
\end{align*}
But $\sup_{x\in \rd} \intrd |x-u|^\gamma \phi_\e(u)du \leq C \e^\gamma$. Finally,
$$
I_3 \leq  C \e^{2+2\gamma}+C \int_{\rd\times\rd} |z-v|^2 \,R(dz,dv).
$$

We next prove (ii) and thus assume that $\gamma \in (-2,-1]$. 
We need to improve the estimates of (i), but the additional
integrability allows us to do that. We first observe that our conditions
$p>p_1(\gamma)$ and $r> (2p-3)/[(3+\gamma)(p-1)-1]$ imply that $p>3/2$, that
$0\leq  p|1+\gamma|/(2p-3) < 1$ and that $p|1+\gamma|/(2p-3)<4+\gamma-3/r$.
Hence, we can find $\delta \in (0,1)$ such that $\delta< 4+\gamma-3/r$
and $\delta> p|1+\gamma|/(2p-3)$. We fix such a $\delta$ for the whole proof.

\vip

We first treat $I_0$. By the  Young inequality
\begin{align*}
K(x,y) & = \frac{[\min\{x,y\}]^{1+\gamma}}{\max\{x,y\}} \leq \frac{x^{1+\gamma}}y + y^{\gamma} 
\le C_{\delta} \bigl( x^{1+\gamma- \delta} +  y^{-a} \bigr) + y^\gamma,
\end{align*}
where $a = (\delta -1 -\gamma)/\delta$. But $a>|\gamma|$ (because $\delta<1$), whence finally
$$
K(x,y) \leq C_\delta(1+x^{1+\gamma-\delta}+y^{-(\delta-1-\gamma)/\delta}).
$$
Consequently, 
$$
I_0 \leq C_{\delta} \int_{\rd\times\rd}\int_{\rd\times\rd}|z-v|^2 (1+
|z-z_*|^{1+\gamma-\delta}   +  |v-v_*|^{-(\delta-1-\gamma)/\delta} )R^\e(dz_*,dv_*)R(dz,dv).
$$
Using that the marginals of $R^\e$ are $g^\e$ and $f^\e$, as well as \eqref{weakb2},
we conclude that 
$$
I_0 \leq C_{r,p,\delta} (1+ ||g^\e||_{L^r}+ ||f^\e||_{L^p})\int_{\rd\times\rd}|z-v|^2 R(dz,dv),
$$
provided that $r>3/[4+\gamma-\delta]$, i.e. $\delta < 4+\gamma-3/r$, and 
$p>3/[3-(\delta-1-\gamma)/\delta]$, i.e. $\delta > p|1+\gamma|/(2p-3)$.
Both hold true.

\vip

We next treat $I_1$. Lemma~\ref{rough}-(ii) implies that
\[
| b(x) - b_\eps(x)| \le C \, \min\{\eps,|x|\} \, |x|^\gamma \le C \eps^\delta  |x|^{1 +\gamma -\delta}.
\]
This implies 
\[
| b(f,v)  - b(f^\eps,v) |=| b(f,v)  - b_\e(f,v) | \le C\, \eps^\delta \, \intrd |v-v_*|^{1 +\gamma -\delta} \, f(dv_*) 
\le C_{r, \delta}\,  (1+\|f\|_{L^r}) \,\eps^\delta 
\]
by \eqref{weakb2} because $r > 3/(4 + \gamma - \delta)$.
Using a similar computation for $g$, we easily find
\[
I_1  \le  C_{r, \delta}  \bigl(1+ \|f\|_{L^r} + \|g\|_{L^r} \bigr)  
\bigl( m_1(f) + m_1(g) \bigr) \eps^\delta .
\]

As in point (i),
\begin{align*}
I_2 \leq  C \int_{\rd\times\rd} \Bigl(\|a(g^\eps, z) - a(g^\eta, z) )\|+\|a(f^\eps, v) - a(f, v))\|\Bigr) \,R(dz,dv).
\end{align*}
By Lemma \ref{rough}-(ii), we have $\|a(x)-a_\e(x)\|\leq C \e^2(\e+|x|)^\gamma \leq C \e^{1+\delta} |x|^{1+\gamma-\delta}$,
so that, with the same arguments as when bounding $I_1$,
$$
\| a(f^\eps,v) - a(f,v) \| \le  C \, \eps^{1+ \delta} \intrd |v-v_*|^{1+\gamma- \delta} \,f(dv_*)  
\le C_{r, \delta}  (1+\|f\|_{L^r})\, \eps^{1+ \delta},
$$
and the same holds for $g$. Consequently,
\[
I_2 \le C_{r,\delta} \bigl(1+ \|f\|_{L^r} + \|g\|_{L^r} \bigr)   \eps^{1+ \delta}.
\]

To bound $I_3$, we start as in (i): for any $\alpha>0$,
$$
I_3 \leq \e^{\alpha} \int_{\rd\times\rd} \Bigl\|\sigma(g^\eps, z)-\sigma(f^\eps, v)U\bigl(a(g^\eps,z),a(f^\eps,v)\bigr)
\Bigr\|^2 \,R(dz,dv) + \e^{-\alpha}  I_2.
$$
Exactly as in (i), we deduce that
\begin{align*}
I_3 \leq&  \e^{-\alpha}  I_2 +C \e^{\alpha} 
\int_{\rd\times\rd} \int_{\rd\times\rd} \intrd |z-v|^2 (|z-z_*+u|^{\gamma} 
+|v-v_*+u|^{\gamma}) \phi_\e(u)du\; \\
&\hskip9cm R(dz_*,dv_*) \,R(dz,dv).
\end{align*}
But for all $x\in \rd$, $\intrd |x+u|^\gamma \phi_\e(u)du \leq C(\e+|x|)^{\gamma} \leq C\e^{-1+\delta}|x|^{1+\gamma-\delta}$,
whence
\begin{align*}
I_3 \leq&  C \e^{-\alpha}  I_2 +C_{r,\delta} \e^{\alpha-1+\delta}
\int_{\rd\times\rd} \int_{\rd\times\rd} \intrd |z-v|^2 (|z-z_*|^{1+\gamma-\delta} +|v-v_*|^{1+\gamma-\delta})\\
& \hskip 9cm R(dz_*,dv_*) \,R(dz,dv).
\end{align*}
With the same arguments as in the bound of $I_1$, we conclude,
using that $R \in \Pi(g,f)$, that
\begin{align*}
I_3 \leq & C \e^{-\alpha}  I_2 +C_{r,\delta} \e^{\alpha-1+\delta} (1+||f||_{L^r}+||g||_{L^r}) 
\int_{\rd\times\rd} |z-v|^2 \, R(dz,dv) \\
\leq & C_{r,\delta} (1+||f||_{L^r}+||g||_{L^r}) \Big(\e^{1+\delta-\alpha} 
+\e^{\alpha-1+\delta} \int_{\rd\times\rd} |z-v|^2 \, R(dz,dv)\Big).
\end{align*}
Choosing $\alpha=1-\delta$, we end with
$$
I_3 \leq C_{r,\delta} (1+||f||_{L^r}+||g||_{L^r}) \Big(\e^{2\delta} 
+ \int_{\rd\times\rd} |z-v|^2 \, R(dz,dv)\Big).
$$
This completes the proof.
\end{proof}

%%%%%%%%%%%%%%%%%%%%%%%%%%
\subsection{Strong/weak stability principles}\label{secwp}

The rest of the section is devoted to the 

\begin{proof}[Proof of Theorem \ref{wp}.]
Let $\gamma \in (-2,0)$ and $f_0 \in \cP_2(\rr^3)$ satisfy also $H(f_0)<\infty$ and $m_q(f_0)<\infty$ 
for some $q >q(\gamma)$. We consider the unique weak solution $f$ to \eqref{HL3D} starting from $f_0$ built in
Theorem \ref{thfg}, as well as another weak solution $g \in L^\infty_{loc}([0,\infty),\cP_2(\rd))$
starting from $g_0 \in\cP_2(\rd)$. We fix $p\in (p_1(\gamma),p_2(\gamma,q))$. We know that
$f \in L^\infty_{loc}([0,\infty),\cP_2(\rr^3))\cap L^1_{loc}([0,\infty),L^p(\rr^3))$. If finally
$\gamma \in (-2,-1]$, we assume that $g\in L^1_{loc}([0,\infty),L^r(\rr^3))$
for some $r>(2p-3)/[(3+\gamma)(p-1)-1]$. We assume without loss of generality that $r<p$.

\vip

{\it Step 1.} By Lemma \ref{regest}-(i)-(ii)-(iii), we see that $\sigma(g_t,z)$ 
and $b(g_t,z)$ satisfy the assumptions
of Proposition \ref{pfi}. Indeed, $||\sigma(g_t,z)||^2\leq C(1+|z|^2)$ and, if $\gamma \in[-1,0)$, 
$|b(g_t,z)| \leq C(1+|z|)$.
If now $\gamma\in (-2,-1)$, since $r>3/(4+\gamma)$, $b(g_t,z)\leq C(1+||f_t||_{L^r})$, 
which belongs to $L^1_{loc}([0,\infty))$ by assumption.
Consequently, we can find, on some probability space, a Brownian motion
$(\BB(t))_{t \ge 0}$, independent of a $g_0$-distributed random variable $\ZZ(0)$
such that there is a solution $(\ZZ(t))_{t\geq 0}$ to
\begin{equation*}
\ZZ(t)=\ZZ(0) + \intot b(g_s,\ZZ(s))ds + \intot \sigma(g_s, \ZZ(s)) d\BB(s)
\end{equation*}
satisfying $\LL(\ZZ(t))=g_t$ for all $t\geq 0$.

\vip

{\it Step 2.} Let $\VV(0)$ with law $f_0$ be such that $\E[|\VV(0)-\ZZ(0)|^2]= W_2^2(f_0,g_0)$.
Recall that the function $U$ taking value in the set of orthogonal matrices was defined in \eqref{dfU}.
We claim that for any $\eps \in (0,1)$, there is a strong solution $(\VV_\e(t))_{t \ge 0}$ to
\begin{align}
\VV_\e(t)=&\VV(0) + \intot b(f_s,\VV_\e(s))ds + \intot \sigma(f_s, \VV_\e(s)) 
U\bigl(a(g_s^\eps,\ZZ(s)),a(f_s^\eps,\VV_\e(s))\bigr)  d\BB_s, \label{nsdecoup1}
\end{align}
where $f_s^\eps := f_s \star \phi_\eps$ and that furthermore, $\LL(\VV_\e(t))=f_t$ for all $t\geq 0$.

\vip

Recalling that $(\ZZ(t))_{t\geq 0}$ has already been built in Step 1,
the fact that strong uniqueness holds \eqref{nsdecoup1} classically 
follows the facts, which we will check below, that 
$b(f_s,v)$ is Lipschitz in $v$ (with a constant locally integrable in time), 
$\sigma(f_s,v)$ is Lipschitz in $v$ (with a constant locally square-integrable in time), the boundedness 
and local Lipschitz property of $v \mapsto U\bigl(a(g_s^\eps,\ZZ(s)),a(f_s^\eps,v)\bigr)$.

\vip

Next, recalling that  $U(\Sigma_1,\Sigma_2)$ is orthogonal for
all symmetric positive matrices $\Sigma_1$ and $\Sigma_2$, we deduce that 
$\beta(t)=\intot U(a(f_s,\VV_\e(s)),a(g_s,\ZZ(s)))d\BB(s)$ is also a $3D$ Brownian motion: it suffices to note
that $\beta$ is a $3D$ martingale and that its quadratic variation matrix is $I_3 t$. Hence,
$\VV_\e$ also solves the linear SDE~\eqref{linSDE}, with initial condition $\VV(0)$ and Brownian motion 
$(\beta(t))_{t\geq 0}$, associated with the 
weak solution $f$ to~\eqref{HL3D}. Consequently, one can check as in the
proof of Theorem \ref{nsdewp}-Step 2 that  $\LL(\VV_\e(t))=f_t$ for all $t\geq 0$.

\vip

As already seen in Step 1 of the proof of Theorem \ref{nsdewp}, using that
$f \in L^\infty_{loc}([0,\infty),\cP_2(\rr^3))\cap L^1_{loc}([0,\infty),L^p(\rr^3))$
and Lemma \ref{regest}-(iv)-(v), we see that $b(f_s,v)$ is Lipschitz continuous in $v$, with a Lipschitz constant 
(locally) integrable in time and that $\sigma(f_s,v)$ is Lipschitz continuous in $v$, with a Lipschitz constant 
(locally) square integrable in time as desired. It only remains to check that
$v \mapsto U\bigl(a(g_s^\eps,\ZZ(s)),a(f_s^\eps,v)\bigr)$ is locally Lipschitz continuous.
First, we obviously have, by convexity and invariance by translation of the entropy,
$H(f^\eps_s) = H(f_s \star \phi_\eps) \le H(\phi_\eps) \leq C_\e$, while 
$m_2(f^\eps_s) \leq 2 m_2(f_s) + 2 \e^2$. The same arguments apply for $g^\eps_s$, so that,
since the second moment of $f$ and $g$ are (locally) bounded in time by assumption,
we get that 
\[
\sup_{s \in [0,T]} H(f^\eps_s) + H(g^\eps_s) + m_2(f^\eps_s) + m_2(g^\eps_s) \le C_{\eps,T}.
\]
Consequently, we can apply Proposition \ref{ellip1} and deduce that, for $s\in [0,T]$ and $v \in \rd$,
\begin{align}\label{elll}
|| (a(f_s^\e,v))^{-1} || + ||(a(g_s^\e,v))^{-1}|| \leq  C_{\eps,T} (1+|v|)^{|\gamma|}.
\end{align}
By Lemma \ref{regest}-(i), we also see that
\begin{align}\label{util}
|| a(f_s^\e,v) || + ||a(g_s^\e,v)|| \leq  C (1+|v|)^{2}.
\end{align}
Using next Lemma \ref{regest}-(vi)-(vii) and that $||f_t^\e||_{L^p} \leq ||\phi_\e||_{L^p} \leq C_\e$,
\begin{align}\label{util2}
|| a(f_s^\e,v) - a(f_s^\e,v')|| \leq  C_\e (1+|v|+|v'|)|v-v'|.
\end{align}
From the definition~\eqref{dfU} and Lemma~\ref{math}, it is tedious but straightforward to see that
there is $i>0$ and $C>0$ such that for all symmetric positive matrices $\Sigma_1,\Sigma_2,\Sigma_2'$, 
\begin{align*}
&\bigl\|U(\Sigma_1,\Sigma_2)-U(\Sigma_1,\Sigma_2') \bigr\|  \\
\le& C\, \bigl( \| \Sigma_1^{-1/2}\| + \| \Sigma_1^{1/2}\| + \| \Sigma_2^{-1/2} \| + 
\| \Sigma_2^{1/2} \| +  \| (\Sigma_2')^{-1/2} \| + 
\| (\Sigma_2')^{1/2} \|  \bigr)^i  \| \Sigma_2 - \Sigma_2' \|.\nonumber
\end{align*}
Using this inequality with $\Sigma_1=a(g_s^\eps,\ZZ(s))$, $\Sigma_2=a(f_s^\eps,v)$ and  $\Sigma_2'=a(f_s^\eps,v')$,
using also \eqref{elll},  \eqref{util} and \eqref{util2}, we conclude that indeed, 
\begin{align*}
||U\bigl(a(g_s^\eps,\ZZ(s)),a(f_s^\eps,v)\bigr)- U\bigl(a(g_s^\eps,\ZZ(s)),a(f_s^\eps,v')\bigr)||
\leq C_{\e,T} (1+|v|+|v'|+|\ZZ(s)|)^j |v-v'|
\end{align*}
for some $j>0$. This completes the step.

\vip

{\it Step 3.} Recall that $\LL(\ZZ(t))=g_t$ and  $\LL(\VV_\e(t))=f_t$  (for any value of $\e\in(0,1)$).
We will bound $W_2^2(f_t,g_t)$ by $\E[|\ZZ(t)-\VV_\e(t)|^2]$.
Applying the It\^o formula, we directly find that
\begin{align*}
\frac{d}{dt} \E[|\ZZ(t)-\VV_\e(t)|^2]
=& \E\Bigl[2(\ZZ(t)- \VV_\e(t))\cdot\bigl(b(g_t,\ZZ(t))-b(f_t,\VV_\e(t))\bigr) \\
& \hspace{20mm} + \bigl\| \sigma(g_t, \ZZ(t))-\sigma(f_t, \VV_\e(t))U\bigl(a(g_t^\eps,\ZZ(t)),a(f_t^\eps,\VV_\e(t))
\bigr) \bigr\|^2 \Bigr].
\end{align*}
Setting $R_{\e,t}=\LL(\ZZ(t),\VV_\e(t))$, which belongs to $\Pi(g,f)$, 
we realize that with the notation of Lemma \ref{crucru},
\begin{align*}
\frac{d}{dt} \E[|\ZZ(t) -\VV_\e(t)|^2] =& \Gamma_{0,\e}(R_{\e,t}).
\end{align*}

{\it Step 4: proof of point (i).} We thus assume that $\gamma \in (-1,0)$. 
Applying Proposition \ref{crucru}-(i), we conclude that
$$
\frac{d}{dt} \E[|\ZZ(t) -\VV_\e(t)|^2] \leq C\e^{2+2\gamma} + C_p(1+||f_t^\e||_{L^p})\E[|\ZZ(t) -\VV_\e(t)|^2].
$$
But $\E[|\ZZ(0)-\VV(0)|^2]=W_2^2(f_0,g_0)$, so that, by the Gr\"onwall lemma,
\[
\E[|\ZZ(t)-\VV_\e(t)|^2] \leq \bigl(W_2^2(f_0,g_0) + C \, \eps^{2 + 2 \gamma} \bigr)
\exp\Big(C_{p} \intot (1+\|f_s^\eps\|_{L^p})  ds  \Big).
\]
Since finally $\LL(\VV_\e(t))=f_t$ and $\LL(\ZZ(t))=g_t$, it holds that 
$W_2^2(f_t,g_t) \leq \E[|\ZZ(t)-\VV_\e(t)|^2]$.
To conclude the proof, it suffices to let $\e$ tend to $0$, noting that
$\| f_t^\eps\|_{L^p} \leq \| f_t\|_{L^p}$ for all $\e\in(0,1)$.

\vip

{\it Step 5: proof of point (ii).} We assume here that $\gamma \in (-2,-1)$.
Applying Proposition \ref{crucru}-(ii), we conclude that there is $\delta>0$ such that
\begin{align*}
\frac{d}{dt} \E[|\ZZ(t)-\VV_\e(t)  |^2]   \le& C_{r,p}\, \bigl(1+ \|f_t^\e\|_{L^p} 
+ \|g_t^\e\|_{L^r}\bigr) \E[|\ZZ(t)-\VV_\e(t)|^2] \\
& +  C_{r,p} \,\bigl(1+ \|f_t\|_{L^r} + \|g_t\|_{L^r}\bigr)(1+ m_1(f_t) + m_1(g_t)) \, \eps^{\delta}.
\end{align*}
Using that $\E[|\ZZ(0)-\VV(0)  |^2]=W_2^2(f_0,g_0)$,
that $(1+ \|f_t\|_{L^r} + \|g_t\|_{L^r})(1+ m_1(f_t) + m_1(g_t))$ is (locally) time-integrable
because $f,g \in L^\infty_{loc}([0,\infty),\cP_2(\rd))\cap L^1_{loc}([0,\infty),L^r(\rd))$ by assumption,
the Gr\"onwall lemma, that $W_2^2(f_t,g_t)\leq \E[|\VV_\e(t)-\ZZ(t)  |^2]$ for any $\e\in(0,1)$
and letting $\e$ tend to $0$, we find that 
$$
W_2^2(f_t,g_t) \leq W_2^2(f_0,g_0) \exp\Bigl( C_{r,p} \intot \bigl(1+ \|f_s\|_{L^p} 
+ \|g_s\|_{L^r}\bigr) ds\Bigr).
$$
The proof is complete.
\end{proof}

%%%%%%%%%%%%%%%%%%%%%%%%%%%%%%%%%%%%%%%%%%%%%%%%%%%%%%%%%%%%%%%%%%%%%%%%%%%%%%%%%%%%%%%%
%
%
%
%
%
%
%
%
%%%%%%%%%%%%%%%%%%%%%%%%%%%%%%%%%%%%%%%%%%%%%%%%%%%%%%%%%%%%%%%%%%%%%%%%%%%%%%%%%%%%%

\section{Moment estimates for the particle system}\label{secmom}
\setcounter{equation}{0}

The aim of this section is to check the following propagation of moments.
When $\gamma \in [-1, 0)$, we could handle a simpler proof, but the case $\gamma \in (-2,-1)$ really requires
a tedious computation.

\begin{prop}\label{momprime}
Let $\gamma \in (-2,0)$. Let $N\geq 2$ and $\eta_N \in (0,1)$. Assume that $(\VV^N_i(0))_{i=1,\dots,N}$ is 
exchangeable and that $\E[|\VV^N_1(0)|^r]<\infty$ for some $r\geq 2$. Consider the unique solution
$(\VV^N_i)_{i=1,\dots,N}$ to \eqref{ps}. For all $T>0$, there is a finite constant $C_{T,r}$ (not depending on $N$)
such that 
$$
\sup_{[0,T]} \E [|\VV_1^N(t)|^r]\leq C_{T,r}\E[|\VV^N_1(0)|^r].
$$
\end{prop}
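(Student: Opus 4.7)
The plan is to apply It\^o's formula to $|\VV^N_i(t)|^r$, take expectations, and conclude via Gr\"onwall. By exchangeability, $\E[|\VV^N_1(t)|^r] = N^{-1}\sum_{i=1}^N \E[|\VV^N_i(t)|^r]$, and the martingale part vanishes (after a standard localization using the local Lipschitz/linear growth assertions from Proposition \ref{wpps}), so it suffices to control $\E[\LL^N|v|^r(\VV^N_1)]$, where
\[
\LL^N|v|^r = r|v|^{r-2} v\cdot b(\tilde\mu^N_t,v) + \tfrac{r}{2}|v|^{r-2}\Tr a(\tilde\mu^N_t,v) + \tfrac{r(r-2)}{2}|v|^{r-4} v^*a(\tilde\mu^N_t,v) v.
\]

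For $\gamma\in[-1,0)$, the argument is simple: Lemma~\ref{regest}(i)(ii) and the trivial bound $m_k(\tilde\mu^N_t)\le 2m_k(\mu^N_t)+2\eta_N^k$ give $|b(\tilde\mu^N_t,v)|\le C(1+|v|+m_1(\mu^N_t))$ and $\|a(\tilde\mu^N_t,v)\|\le C(1+|v|^2+m_2(\mu^N_t))$. Plugging in, using $0\le v^*a(\tilde\mu^N_t,v)v\le |v|^2\|a(\tilde\mu^N_t,v)\|$ and Young's inequality, one obtains
\[
\LL^N|v|^r\le C_r\bigl(1+|v|^r + m_1(\mu^N_t)^r + m_2(\mu^N_t)^{r/2}\bigr).
\]
By Jensen, $m_1(\mu^N_t)^r\le N^{-1}\sum_j|\VV^N_j|^r$ and similarly for $m_2^{r/2}$; taking expectations and using exchangeability yields $\frac{d}{dt}\E[|\VV^N_1|^r]\le C_r(1+\E[|\VV^N_1|^r])$, and Gr\"onwall concludes.

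The hard case is $\gamma\in(-2,-1)$: the convolution with $\phi_{\eta_N}$ only gives $|b_{\eta_N}(u)|\le C\max(|u|,\eta_N)^{\gamma+1}$, which blows up with $\eta_N$, so the pointwise approach above fails. The plan is to use a pairwise symmetrization exploiting exchangeability: since $b_{\eta_N}(0)=0$ and $b_{\eta_N}$ is odd,
\[
\E\bigl[|v_1|^{r-2} v_1\cdot b(\tilde\mu^N_t,v_1)\bigr] = \tfrac{N-1}{2N}\,\E\bigl[(|v_1|^{r-2}v_1-|v_2|^{r-2}v_2)\cdot b_{\eta_N}(v_1-v_2)\bigr],
\]
and to split $b_{\eta_N}=b+(b_{\eta_N}-b)$. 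The principal piece, $-2|v_1-v_2|^\gamma(|v_1|^{r-2}v_1-|v_2|^{r-2}v_2)\cdot(v_1-v_2)$, is \emph{non-positive} by convexity of $v\mapsto|v|^r$ for $r\ge 2$ (with the quantitative lower bound $(|v_1|^{r-2}v_1-|v_2|^{r-2}v_2)\cdot(v_1-v_2)\ge c_r(|v_1|+|v_2|)^{r-2}|v_1-v_2|^2$), and will be used to absorb error terms. The correction $b_{\eta_N}-b$ is estimated using Lemma~\ref{rough}(ii): $|b_{\eta_N}(u)-b(u)|\le C\min(\eta_N,|u|)|u|^\gamma$. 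A similar symmetrization is performed on the diffusive part, where the bound $v^*a(v-w-y)v=(w+y)^*a(v-w-y)(w+y)$ (coming from $a(u)u=0$) keeps the growth in $|v|$ moderate; since $\gamma+2>0$, each $|v-w-y|^{\gamma+2}$ is controlled by $C(1+|v|^{\gamma+2}+|w|^{\gamma+2}+|y|^{\gamma+2})$, and standard Young exponents (using that $(\gamma+2)r/2\le r$) absorb everything into $C_r(1+|v|^r+|w|^r)$.

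The main obstacle is the error term from $b_{\eta_N}-b$: after symmetrization it is bounded, in absolute value, by $C(|v_1|^{r-1}+|v_2|^{r-1})|v_1-v_2|^{\gamma+1}$, where $\gamma+1<0$ makes $|v_1-v_2|^{\gamma+1}$ singular near the diagonal. The key is that, using $\min(\eta_N,|u|)|u|^\gamma \le |u|^{\gamma+1}$, the singularity has exponent exactly $\gamma+1>-1>-3$, so one can apply the elementary bound $|u|^{\gamma+1}\le 1 + \int|u-y|^\gamma \phi_{\eta_N}(y) dy \cdot$(something) and then use the negative drift dissipation from convexity, together with the positive diffusive lower order term obtained via Young and the conservation-of-energy structure seen already at $r=2$, to close the Gr\"onwall estimate. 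The computation is tedious but routine, and yields $\frac{d}{dt}\E[|\VV^N_1|^r]\le C_r(1+\E[|\VV^N_1|^r])$, uniformly in $N$ and in $\eta_N\in(0,1)$, which gives the desired estimate on $[0,T]$ by Gr\"onwall.
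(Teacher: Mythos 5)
Your overall framework (It\^o's formula, exchangeability, symmetrization using the oddness of $b_{\eta_N}$, Gr\"onwall) is the same as the paper's, and your treatment of the case $\gamma\in[-1,0)$ is fine. However, for $\gamma\in(-2,-1)$ there is a genuine gap in the handling of the drift error term. After symmetrizing you bound the contribution of $b_{\eta_N}-b$ by $C(|v_1|^{r-1}+|v_2|^{r-1})|v_1-v_2|^{\gamma+1}$, i.e.\ you estimate $|v_1|v_1|^{r-2}-v_2|v_2|^{r-2}|$ by $|v_1|^{r-1}+|v_2|^{r-1}$ rather than using the Lipschitz estimate $\bigl|v_1|v_1|^{r-2}-v_2|v_2|^{r-2}\bigr|\le C_r|v_1-v_2|(1+|v_1|^{r-2}+|v_2|^{r-2})$ which you invoked, quantitatively, only for the principal $b$-piece. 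This leaves a singular factor $|v_1-v_2|^{\gamma+1}$ with $\gamma+1<0$, and the absorption you then propose --- into the convexity dissipation via Young --- cannot work: the dissipation is $\sim|v_1-v_2|^{\gamma+2}(|v_1|+|v_2|)^{r-2}$ and the error is $\sim|v_1-v_2|^{\gamma+1}(|v_1|+|v_2|)^{r-1}$, and since $(\gamma+1)/(\gamma+2)<0$ any Young split aimed at extracting a positive power of the dissipation produces a remainder whose exponent in $|v_1-v_2|$ is still negative. The mention of "positive diffusive lower order term" and "conservation-of-energy structure" does not help either: the diffusion contributes with a positive sign to the growth of $\E[|v|^r]$, it provides no absorbing negative term, and kinetic energy is not exactly conserved by the particle system. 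The fix is to apply the Lipschitz estimate to the \emph{full} symmetrized bracket, which yields the extra factor $|v_1-v_2|$ turning $|v_1-v_2|^{\gamma+1}$ into $|v_1-v_2|^{\gamma+2}$ with nonnegative exponent; once this is done, the split $b_{\eta_N}=b+(b_{\eta_N}-b)$ and the convexity argument become unnecessary. Indeed the paper proceeds without any split: it uses the single bound $|(b\star\phi_{\eta_N})(u)|\le C(1+|u|^{\gamma+1})$ (valid uniformly in $\eta_N\in(0,1)$, since near the origin the smoothing makes $|b_{\eta_N}|\lesssim\eta_N^{\gamma+1}\lesssim|u|^{\gamma+1}$) together with the Lipschitz estimate, getting directly
\[
(1+|v-w|^{\gamma+1})\,|v-w|\,(1+|v|^{r-2}+|w|^{r-2})\le C_r(1+|v|^r+|w|^r)
\]
for all $\gamma\in(-2,0)$ in one go, and treats the diffusion term by the elementary bound $\|a_{\eta_N}(u)\|\le C(1+|u|^{\gamma+2})$ and H\"older, with no need for the identity $a(u)u=0$.
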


\begin{proof} 
By the It\^o formula, setting $\varphi(x)=|x|^r$,
\begin{align*}
\E \bigl[|\VV_1^N(t)|^r \bigr] &= \E\bigl[ |\VV_1^N(0)|^r\bigr] 
+ r \intot \E\bigl[|\VV_1^N(s)|^{r-2} \VV_1^N(s).b(\tilde \mu^N_s,\VV^N_1(s))\bigr]ds   \\
& \hspace{20pt}  +\frac 1 2\intot 
\E\Bigl[ \, \Tr \Bigl( \nabla^2 \varphi(\VV^N_1(s)) a(\tilde \mu^N_s,\VV^N_1(s) \Bigr) \Bigr] ds\\
& =: \E\bigl[ |\VV_1^N(0)|^r\bigr] + I^N_t + J^N_t.
\end{align*}
Using that $b(\tilde \mu^N_s,\VV^N_1(s))=N^{-1}\sum_{j=1}^N(b\star \phi_{\eta_N})(\VV_1^N(s)-\VV_j^N(s))$, that
$(b\star \phi_{\eta_N})(0)=0$ and exchangeability, we find
\begin{align*}
I^N_t=& \frac{r(N-1)}N \intot \E\Big[|\VV_1^N(s)|^{r-2}\VV_1^N(s) \cdot
(b\star \phi_{\eta_N})(\VV_1^N(s)-\VV_2^N(s)) ds.
\end{align*}
Using again exchangeability (and that $(b\star \phi_{\eta_N})(-x)=-(b\star \phi_{\eta_N})(x)$),
we can symmetrize the expression, which gives
\begin{align*}
I^N_t=& \frac{r(N-1)}{2N} \intot \E\Big[(b\star \phi_{\eta_N})(\VV_1^N(s)-\VV_2^N(s))
\cdot \Big(\VV_1^N(s)|\VV_1^N(s)|^{r-2} - \VV_2^N(s)|\VV_2^N(s)|^{r-2} \Big)
\Big] ds.
\end{align*}
Finally, we observe that clearly, since $b(x)=-2|x|^\gamma x$ and since $\eta_N\in(0,1)$, 
$|(b\star \phi_{\eta_N})(x)|\leq C(1+|x|^{\gamma+1})$ for all $x\in\rd$. Furthermore, 
$\bigl| x|x|^{r-2}-y|y|^{r-2} \bigr| \leq C_r |x-y|(1+|x|^{r-2}+|y|^{r-2})$. All this implies that 
\begin{align*}
I^N_t \leq & C_r \intot \E\Big[ (1+ |\VV_1^N(s)-\VV_2^N(s) |^{\gamma+1}) |\VV_1^N(s)-\VV_2^N(s) | 
(1+ |\VV_1^N(s)|^{r-2} +|\VV_2^N(s)|^{r-2}) \Big] ds.
\end{align*}
Since $\gamma\in (-2,0)$, we easily check that $(1+|v-w|^{\gamma+1})|v-w|(1+|v|^{r-2}+|w|^{r-2})
\leq C_r(1+|v|^r+|w|^r)$, whence finally
\begin{align*}
I^N_t \leq & C_r \intot \E\Big[1+ |\VV_1^N(s)|^{r}+|\VV_2^N(s)|^{r}\Big] ds
\leq C_r  \intot \E\Big[1+ |\VV_1^N(s)|^{r} \Big] ds
\end{align*}
by exchangeability.

\vip

Using next the easy estimate $\|a(\tilde \mu^N_s,\VV^N_1(s))\|=
\|N^{-1}\sum_{j=1}^N(a\star \phi_{\eta_N})(\VV_1^N(s)-\VV_j^N(s))\|
\leq C N^{-1}\sum_{j=1}^N (1+|\VV_1^N(s)-\VV_j^N(s)|^{\gamma+2})$
(because $\eta_N \in (0,1)$), that $|D^2 \varphi(x)| \leq C_r |x|^{r-2}$, and exchangeability,
\begin{align*}
J^N_t & \leq  C_r \intot \E\Big[ |\VV_1^N(s)|^{r-2}\Big(1+ \frac 1 N \sum_{j\ne 1} |\VV^N_1(s)-\VV^N_j(s)|^{\gamma+2}  
\Big)  \Big] ds \\
& \leq   C_r \intot \E\Big[ |\VV_1^N(s)|^{r-2}\Big(1+ |\VV^N_1(s)|^{\gamma+2}  + |\VV^N_2(s)|^{\gamma+2}  
\Big)  \Big] ds.
\end{align*}
Using the H\"older inequality  and exchangeability again, we easily conclude that
$$
J^N_t \leq  C_r \intot \E\Big[1+ |\VV_1^N(s)|^{r+\gamma}\Big] ds \leq 
C_r \intot \E\Big[1+ |\VV_1^N(s)|^{r}\Big] ds.
$$
We have checked that $\E[|\VV_1^N(t)|^r] \leq \E[ |\VV_1^N(0)|^r] +C_r \intot \E[1+ |\VV_1^N(s)|^{r}] ds$.
The conclusion follows by the Gr\"onwall Lemma.
\end{proof}

%%%%%%%%%%%%%%%%%%%%%%%%%%%%%%%%%%%%%%%%%%%%%%%%%%%%%%%%%%%%%%%%%%%%%%%%%%%%%%%%%%%%%%%%
%
%
%
%
%
%
%
%
%%%%%%%%%%%%%%%%%%%%%%%%%%%%%%%%%%%%%%%%%%%%%%%%%%%%%%%%%%%%%%%%%%%%%%%%%%%%%%%%%%%%%

\section{Chaos with rate}\label{secrate}
\setcounter{equation}{0}

The aim of this section is to give the proof of Theorem \ref{mrrate}.
We first introduce a suitable coupling between our particle system $(\VV^N_i)_{i=1,\dots,N}$
and an i.i.d. family $(\WW^N_i)_{i=1,\dots,N}$ of solutions to the nonlinear SDE \eqref{nsde}.
We next prove that we can control the $L^2$-norm of a suitable blob approximation 
of the limit empirical measure (that of $\WW^N_i(t)$). This allows us to apply
our strong/weak stability principle to estimate the mean squared distance between $\VV^N_1(t)$ and  $\WW^N_1(t)$.

\vip

In the whole section, we assume that $\gamma \in (-1,0)$, that $f_0 \in \cP_2(\rd)$ satisfies $H(f_0)<\infty$
and $m_q(f_0)<\infty$ for some $q\geq 8$. Observe that $8>q(\gamma)$, recall \eqref{def:pq},
because $\gamma \in (-1,0)$.
Hence we can apply Theorem \ref{thfg} and we denote by $f$ the unique weak solution to the Landau equation 
\eqref{HL3D}, which furthermore belongs to $L^1_{loc}([0,\infty),L^p(\rd))$ for any $p \in [1,p_2(\gamma,q))$.
Actually, we will only use this estimate with $p=2$, which is indeed smaller than $p_2(\gamma,q)$,
since $q\geq 8$ and $\gamma \in(-1,0)$.

\vip

We fix $N\geq 2$, an independent family of $3D$ Brownian motions $(\BB_i)_{i=1,\dots,N}$, as well as an exchangeable
family $(\VV^N_i(0))_{i=1,\dots,N}$, with $\E[|\VV^N_1(0)|^4]$ bounded (uniformly in $N$).
We consider $\eta_N\in (0,N^{-1/3})$ as in the statement and
the unique solution $(\VV^N_i(t))_{i=1,\dots,N,t\geq 0}$ to \eqref{ps} with this $\eta_N$.
Finally, we fix $\delta\in (0,1)$ (close to $0$) and $\e_N=N^{-(1-\delta)/3}$. Observe that $\eta_N < \e_N$.

\subsection{The coupling}

We recall that for $\e>0$ and $u \in \rd$, $\phi_\e(u)= (4\pi\e^3/3)^{-1}\indiq_{\{|u|<\e\}}$,
that $\mu^N_t=N^{-1}\sum_{i=1}^N \delta_{\VV^N_i(t)}$, that $\tilde \mu^N_t = \mu^N_t \star \phi_{\eta_N}$
and we introduce $\bar\mu^{N}_t=\mu^N_t\star \phi_{\e_N}$.
For $i=1,\dots,N$, by definition
\begin{align}\label{ps2}
\VV^N_i(t) = &\VV^N_i(0)+  \intot b(\tilde \mu^N_s,\VV^N_i(s))ds+  \intot \sigma(\tilde \mu^N_s,\VV^N_i(s)) d\BB_i(s).
\end{align}
By Proposition \ref{coures}, we can introduce an i.i.d. family $(\WW^N_i(0))_{i=1,\dots,N}$ 
of $f_0$-distributed random variables such that, denoting by $F^N_0$ the law
of $(\VV^N_i(0))_{i=1,\dots,N}$, the following properties hold true:
\begin{align}
&\hbox{$\E \bigl[ \sum_1^N |\VV^N_i(0)-\WW^N_i(0)|^2 \bigr] = W_2^2\bigl(F^N_0,f_0^{\otimes N} \bigr)$, } \label{ttt0} \\
&\hbox{the family $\bigl\{(\VV^N_i(0),\WW^N_i(0)),\; i=1,\dots,N \bigr\}$ is exchangeable}, \label{ttt1}\\
&\hbox{a.s., $W_2^2\bigl(N^{-1}\sum_1^N \delta_{\VV^N_i(0)},N^{-1}\sum_1^N \delta_{\WW^N_i(0)} \bigr)=N^{-1}\sum_1^N 
|\VV^N_i(0)-\WW^N_i(0)|^2 $}.  \label{ttt2}
\end{align}
We finally introduce the system of S.D.E.s with unknown $(\WW^N_i(t))_{t\geq 0, i=1\dots N}$: for $i=1,\dots,N$,
\begin{align}\label{ls}
\WW^N_i(t) = \WW^N_i(0)+  \intot b(f_s,\WW^N_i(s))ds  
+  \intot \sigma(f_s,\WW^N_i(s) )U(a(\bar\mu^{N}_s,\VV^N_i(s)),a(\bar\nu^{N}_s,\WW^N_i(s))) d\BB_i(s)
\end{align}
with the notation $\nu^N_t=N^{-1}\sum_{i=1}^N \delta_{\WW^N_i(t)}$ and $\bar \nu^{N}_t=\nu^N_t\star \phi_{\e_N}$.

\begin{lem}\label{lswp}
The system \eqref{ls} has a unique strong solution  $(\WW^N_i(t))_{t\geq 0, i=1\dots N}$. Furthermore, the family  
$((\WW^N_i(t))_{t\geq 0})_{i=1\dots N}$ is independent and for each $i=1,\dots,N$, $(\WW^N_i(t))_{t\geq 0}$ 
has the same law as the unique solution $(\VV(t))_{t\geq 0}$ to the nonlinear SDE \eqref{nsde}. 
\end{lem}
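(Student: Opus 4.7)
The plan is to first establish strong well-posedness of \eqref{ls} as a $3N$-dimensional SDE driven by the given Brownian motions $(\BB_i)_{i=1,\dots,N}$, and then to exploit the orthogonality of the matrices $U(\cdot,\cdot)$ to reveal that each $\WW^N_i$ in fact solves an independent copy of the linear SDE \eqref{linSDE} studied in the proof of Proposition \ref{nsdewp}. For the well-posedness step, the drift in the $i$-th component is $b(f_s,\WW^N_i(s))$, which depends only on $\WW^N_i$ and is Lipschitz in this variable with an $L^1_{loc}(dt)$ constant by Lemma \ref{regest}-(iv), since $f \in L^1_{loc}([0,\infty),L^p)$ for some $p > p_1(\gamma)$. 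The diffusion coefficient $\sigma(f_s,\WW^N_i)\,U(a(\bar\mu^N_s,\VV^N_i),a(\bar\nu^N_s,\WW^N_i))$ has operator norm bounded by $\|\sigma(f_s,\WW^N_i)\|$ (using orthogonality of $U$) and hence has at most linear growth by Lemma \ref{regest}-(i). For its local Lipschitz property in $(\WW^N_1,\dots,\WW^N_N)$, the only genuinely new feature compared with Step 2 of the proof of Theorem \ref{wp} is the dependence on the full empirical measure through $\bar\nu^N_s$; but this dependence is tame since $a(\bar\nu^N_s,w) = N^{-1}\sum_j a_{\e_N}(w-\WW^N_j(s))$ with $a_{\e_N}$ smooth by Lemma \ref{rough}-(i), and the ellipticity of $a(\bar\nu^N_s,\cdot)$ needed to control $U$ follows from Lemma \ref{ellip1} applied to $\bar\nu^N_s$ (whose entropy is bounded by $H(\phi_{\e_N})$ via convexity of $x\log x$, and whose second moment is locally controlled after localizing in the norms of the $\WW^N_j$). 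A standard localization argument will then produce a unique strong solution.

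The crucial observation is then that, setting
\[
\tilde \BB_i(t) := \int_0^t U\bigl(a(\bar\mu^N_s,\VV^N_i(s)),a(\bar\nu^N_s,\WW^N_i(s))\bigr)\,d\BB_i(s),
\]
the orthogonality of $U$ forces each $\tilde \BB_i$ to be a continuous $\FF_t$-martingale with quadratic variation matrix $t\, I_3$, while for $i\neq j$ the cross-variation $[\tilde \BB_i,\tilde \BB_j]_t$ vanishes thanks to the independence of $\BB_i$ and $\BB_j$. The multidimensional Lévy characterization will then ensure that $(\tilde \BB_1,\dots,\tilde \BB_N)$ is a standard Brownian motion in $\R^{3N}$ with respect to $\FF_t$; in particular the $\tilde \BB_i$ form an i.i.d.\ family of $3D$ Brownian motions, independent of $\FF_0$ and hence of the initial data $(\WW^N_i(0))_{i=1,\dots,N}$. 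Rewriting \eqref{ls} in terms of these new Brownian motions yields
\[
\WW^N_i(t) = \WW^N_i(0) + \int_0^t b(f_s,\WW^N_i(s))\,ds + \int_0^t \sigma(f_s,\WW^N_i(s))\,d\tilde \BB_i(s),
\]
which is precisely the linear SDE \eqref{linSDE} driven by $\tilde \BB_i$ with initial datum $\WW^N_i(0) \sim f_0$.

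To conclude, I would invoke the strong uniqueness of \eqref{linSDE} proved in Step 1 of the proof of Proposition \ref{nsdewp}: each $\WW^N_i$ is then a Borel functional of the pair $(\WW^N_i(0),\tilde \BB_i)$, and since the $\WW^N_i(0)$ are i.i.d.\ and the family $(\tilde \BB_1,\dots,\tilde \BB_N)$ is i.i.d.\ and independent of $\FF_0$, the pairs $(\WW^N_i(0),\tilde \BB_i)$ are mutually independent, so the processes $(\WW^N_i(t))_{t\ge 0}$ are themselves mutually independent with common law. By Step 3 of the proof of Proposition \ref{nsdewp}, the law of the solution of \eqref{linSDE} coincides with that of the nonlinear SDE \eqref{nsde}, which is exactly the desired identification. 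I expect the main technical hurdle to lie in the well-posedness step, namely the verification that the $U$-factor depends in a locally Lipschitz way on all the $\WW^N_j$'s: this will require combining the smoothing provided by the $\phi_{\e_N}$-convolution (which yields both smoothness and uniform ellipticity of $a(\bar\nu^N_s,\cdot)$) with the matrix perturbation estimates of Lemma \ref{math}.
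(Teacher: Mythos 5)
Your proof is correct and follows essentially the same route as the paper's: the paper also reduces well-posedness to Step 2 of the proof of Theorem \ref{wp} (with the local Lipschitz property of the $U$-factor as the only new ingredient), then uses the orthogonality of $U$ together with the L\'evy characterization to see that $\beta_i^N(t)=\int_0^t U_i^N(s)\,d\BB_i(s)$ are $N$ independent Brownian motions, and concludes that the $\WW^N_i$ are i.i.d.\ solutions of the linear SDE \eqref{linSDE}. Your write-up simply fleshes out the two points the paper leaves as remarks, namely the exact source of local Lipschitzness (ellipticity via Lemma \ref{ellip1} for $\bar\nu^N_s$ plus Lemma \ref{rough}-(i) and Lemma \ref{math}) and the independence argument via measurability with respect to the i.i.d.\ pairs $(\WW^N_i(0),\tilde\BB_i)$.
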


\begin{proof}
The existence and uniqueness can be checked as in Step 2 of the proof of Theorem \ref{wp}:
the only difficulty is to verify that 
$(w_1,\dots,w_N) \mapsto U(a(\bar\mu^{N}_s,\VV^N_i(s)),a( N^{-1}\sum_{k=1}^N \delta_{w_k} \star \phi_{\e_N},w_i))$
is locally Lipschitz continuous, which is not very hard using that $\bar\mu^{N}_s$ and 
$N^{-1}\sum_{k=1}^N \delta_{w_k} \star \phi_{\e_N}$ are bounded probability density functions 
(recall that $\e_N>0$ is fixed here). 

\vip

Since for each $i=1,\dots,N$, the matrix
$U^N_i(s)=U(a(\bar\mu^{N}_s,\VV^N_i(s)),a(\bar\nu^{N}_s,\WW^N_i(s)))$ is orthogonal, it follows that
the family $\beta^N_i(t)=\intot U^N_i(s) d \BB_i(s)$ consists of $N$ independent $3D$ Brownian motion.
To get convinced, it suffices to observe that these are continuous martingales and to compute the
quadratic variation matrix. We thus can write 
\begin{align*}
\WW^N_i(t) = \WW^N_i(0)+  \intot b(f_s,\WW^N_i(s))ds +  \intot \sigma(f_s,\WW^N_i(s) ) d\beta_i^N(s),
\end{align*}
and the family $((\WW^N_i(t))_{t\geq 0})_{i=1\dots N}$  consists of $N$ i.i.d.
solutions to the nonlinear SDE \eqref{linSDE}.
\end{proof}

\begin{rk}\label{ex}
The family $\{(\VV^N_i(t),\WW^N_i(t))_{t\geq 0},\; i=1,\dots, N \}$ is exchangeable.
This follows from \eqref{ttt1} and from the symmetry and well-posedness of the systems
\eqref{ps2} and \eqref{ls}.
\end{rk}

%%%%%%%%%%%%%%
\subsection{Preliminaries}
Here we prove two easy lemmas.

\begin{lem}\label{tass} Here we only use that
$f \in L^1_{loc}([0,\infty),L^2(\rd))$ and that $m_2(f_t) = m_2(f_0)$ for all $t\geq 0$.

\vip

(i) There is a constant $c_0>0$ such that for all $t\geq 0$, $\|f_t\|_{L^2} \geq c_0$.

\vip

(ii) For any $T\geq 0$, we can find $0=t^N_0<t_1^N<\dots<t^N_{K_N} \leq T \leq t^N_{K_N+1}$,
with $K_N \leq 2T N^{1/3}$, such that $\sup_{\ell=0,\dots,K_N} (t^N_{\ell+1}-t^N_\ell)\leq N^{-1/3}$ and such that,
setting
$$
h^N(t)=\sum_{\ell=1}^{K_N+1} \|f(t^N_\ell)\|_{L^2} \indiq_{\{t\in(t^N_{\ell-1},t^N_\ell]\}},
$$
it holds that $\int_0^T h^N(t)dt \leq 2 \int_0^T \|f(t)\|_{L^2} dt$.
\end{lem}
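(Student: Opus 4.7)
The plan is elementary: part (i) is a Cauchy--Schwarz lower bound driven by mass conservation together with the fixed second moment, and part (ii) is a measure-theoretic partition construction exploiting $\|f_\cdot\|_{L^2} \in L^1_{\mathrm{loc}}([0,\infty))$.

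For (i), I would exploit the two uniform-in-$t$ constraints $\int_{\rd} f_t\,dv = 1$ and $m_2(f_t) = m_2(f_0)$. Setting $R := \sqrt{2m_2(f_0)}$, Chebyshev gives $\int_{|v|>R} f_t\,dv \leq m_2(f_0)/R^2 = 1/2$, so $\int_{|v|\leq R} f_t\,dv \geq 1/2$. Cauchy--Schwarz on the ball then yields
\[
\tfrac12 \leq |B(0,R)|^{1/2}\,\|f_t\|_{L^2},
\]
producing the uniform lower bound $c_0 := (2|B(0,R)|^{1/2})^{-1}$, depending only on $m_2(f_0)$.

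For (ii), I would build the partition on a uniform mesh of size $\delta_N := N^{-1/3}/2$. Set $s_\ell := \ell\delta_N$, $K_N := \lfloor T/\delta_N\rfloor \leq 2TN^{1/3}$, and $I_\ell := (s_{\ell-1}, s_\ell]$, so that the $I_\ell$'s for $\ell=1,\dots,K_N+1$ are disjoint and cover $(0, s_{K_N+1}]\supset (0,T]$. Each interval's average $\alpha_\ell := \delta_N^{-1}\int_{I_\ell}\|f_s\|_{L^2}\,ds$ is finite, since $\|f_\cdot\|_{L^2}\in L^1_{\mathrm{loc}}([0,\infty))$ by Theorem~\ref{thfg} applied with $p=2$ (allowed because $q\geq 8$ and $\gamma\in(-1,0)$ give $p_2(\gamma,q)>2$). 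The set $\{s\in I_\ell:\|f_s\|_{L^2}\leq \alpha_\ell\}$ has positive Lebesgue measure (otherwise the mean would strictly exceed $\alpha_\ell$), so I may select $t^N_\ell$ in it, choosing the last one additionally to satisfy $t^N_{K_N+1}\geq T$ by a Markov-type argument on the subset $[T, s_{K_N+1}]\cap I_{K_N+1}$.

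Two routine verifications then close the proof: the gap $t^N_\ell - t^N_{\ell-1}\leq 2\delta_N = N^{-1/3}$ follows since consecutive $t^N_\ell$'s lie in adjacent intervals of length $\delta_N$, and
\[
\int_0^T h^N(t)\,dt \leq \sum_{\ell=1}^{K_N+1}\|f_{t^N_\ell}\|_{L^2}(t^N_\ell - t^N_{\ell-1})\leq 2\delta_N\sum_\ell \alpha_\ell = 2\sum_\ell\int_{I_\ell}\|f_s\|_{L^2}\,ds \leq 2\int_0^T \|f_s\|_{L^2}\,ds,
\]
after truncating the contribution of the last interval to $(s_{K_N},T]$. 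The only delicate bookkeeping is arranging $t^N_{K_N+1}\geq T$ simultaneously with $\|f_{t^N_{K_N+1}}\|_{L^2}\lesssim \alpha_{K_N+1}$, which is handled by the Markov estimate on the ``bad'' subset of $I_{K_N+1}$; there is no deeper obstacle.
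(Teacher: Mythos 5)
Your proof matches the paper's argument essentially line for line: part (i) is the same Cauchy--Schwarz bound on $B(0,\sqrt{2m_2(f_0)})$ together with the Chebyshev mass estimate, and part (ii) uses the same uniform mesh of size $N^{-1/3}/2$ and a Markov-type selection of a point in each cell whose $L^2$-norm does not exceed the cell average (the paper's ``$2N^{1/3}\int_{a_{\ell-1}}^{a_\ell}\|f\|$'' is exactly $\alpha_\ell$, since $2N^{1/3}=|I_\ell|^{-1}$). One caveat worth recording, which applies equally to the paper's sketch: the chain $\sum_\ell \alpha_\ell(t^N_\ell-t^N_{\ell-1})\le 2\delta_N\sum_\ell\alpha_\ell$ actually bounds $\int_0^T h^N$ by $2\int_0^{s_{K_N+1}}\|f\|_{L^2}\,dt$ rather than $2\int_0^T\|f\|_{L^2}\,dt$, and the requirement $t^N_{K_N+1}\ge T$ is not automatically compatible with the Markov selection when $T$ sits near the right endpoint of $I_{K_N+1}$ (the good set could fail to meet $[T,s_{K_N+1}]$); neither point causes trouble downstream, since all that is used in Proposition~\ref{groscalcul} is that $\int_0^T h^N$ is bounded by a finite constant $C_T$, which $2\int_0^{T+1}\|f\|_{L^2}$ certainly gives, but the ``Markov estimate on the bad subset'' you invoke does not by itself close that last gap.
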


\begin{proof}
For point (i), we recall that $\intrd f_t(dv)=\intrd f_0(dv)=1$ and $m_2(f_t)=m_2(f_0)$. We fix
$M=\sqrt{2m_2(f_0)}$ and we note that $\|f_t\|_{L^2}^2 \geq \|f_t\indiq_{B(0,M)}\|_{L^2}^2 
\geq (3/(4\pi M^3))(f_t(B(0,M)))^2$
by the Cauchy-Schwarz inequality. Next, $f_t(B(0,M))=1-f_t(B(0,M)^c)\geq 1-m_2(f_t)/M^2=1/2$. We conclude with
$c_0=(3/(16\pi M^3))^{1/2}$.

\vip

Point (ii) is not difficult: consider $a_\ell=\ell N^{-1/3} / 2$ for $\ell=0,\dots,K_N+1$, with 
$K_N=\lfloor 2 T N^{1/3}\rfloor$. Put $t^N_0=0$ and, for each $i=1,\dots,K_N+1$, consider 
$t^N_{\ell} \in (a_{\ell-1},a_\ell]$
such that $\|f(t^N_\ell)\|_{L^2} \leq 2N^{1/3} \int_{a_{\ell-1}}^{a_\ell} \|f(t)\|_{L^2}$ (here, $2N^{1/3}$ is the length
of $(a_{\ell-1},a_\ell]$). 
One easily checks
that all the conditions are satisfied.
\end{proof}

The second lemma states a few easy and standard properties of the solution to the nonlinear SDE
(which actually hold true for any SDE of which the coefficients have at most linear growth).

\begin{lem}\label{momnsde}
Recall that $\gamma \in (-1,0)$, and that $m_q(f_0)<\infty$ for some $q > 6$.
Consider the unique solution $\VV$ to the nonlinear SDE \eqref{nsde}, see Proposition \ref{nsdewp}.
For all $T>0$,
$$
\E\Big[\sup_{t\in[0,T]} |\VV(t)|^q + \Big(\sup_{0\leq s< t \leq T} \frac{|\VV(t)-\VV(s)|}{|t-s|^{1/3}}\Big)^q \Big] 
\leq C_{T,q}.
$$
\end{lem}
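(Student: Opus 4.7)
The plan is to combine the (at most) linear growth of the coefficients of \eqref{nsde} with standard SDE tools: BDG/Grönwall for the moment bound, and Kolmogorov's Hölder continuity criterion for the modulus of continuity.

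First I would collect the uniform growth bounds for the coefficients along the solution. Since $\gamma\in(-1,0)$, Lemma \ref{regest}-(i)-(ii) gives $\|\sigma(f_s,v)\|^2\leq 4+4m_2(f_s)+4|v|^{\gamma+2}\leq C(1+|v|^2)$ and $|b(f_s,v)|\leq 2+2|v|+2m_1(f_s)\leq C(1+|v|)$, uniformly in $s\in[0,T]$ (we use that $m_2(f_s)=m_2(f_0)$ by Theorem \ref{thfg} and Cauchy--Schwarz to control $m_1$).

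Next, for the moment bound, apply Itô's formula to $|\VV(t)|^q$ (or work directly from the SDE, split into drift and martingale and take $\sup_{[0,T]}$). Bounding the drift by $C\int_0^t(1+|\VV(s)|^q)\,ds$ and estimating the martingale part with the Burkholder--Davis--Gundy inequality by $C\E\bigl[(\int_0^t(1+|\VV(s)|^2)\,ds)^{q/2}\bigr]\leq C_{T,q}\int_0^t(1+\E[\sup_{[0,s]}|\VV(r)|^q])\,ds$, Grönwall's lemma combined with $\E[|\VV(0)|^q]=m_q(f_0)<\infty$ yields
\[
\E\bigl[\sup_{t\in[0,T]}|\VV(t)|^q\bigr]\leq C_{T,q}.
\]

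For the Hölder regularity, I would prove the two-point estimate
\[
\E\bigl[|\VV(t)-\VV(s)|^q\bigr]\leq C_{T,q}\,|t-s|^{q/2},\qquad 0\leq s<t\leq T.
\]
Indeed, writing $\VV(t)-\VV(s)=\int_s^t b(f_r,\VV(r))\,dr+\int_s^t \sigma(f_r,\VV(r))\,d\BB(r)$, Jensen's inequality bounds the drift contribution by $C|t-s|^{q-1}\int_s^t(1+\E[|\VV(r)|^q])\,dr\leq C_{T,q}|t-s|^{q}\leq C_{T,q}|t-s|^{q/2}$, while BDG applied to the martingale part (together with the moment bound just obtained) controls it by $C|t-s|^{q/2-1}\int_s^t(1+\E[|\VV(r)|^q])\,dr\leq C_{T,q}|t-s|^{q/2}$.

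Finally, invoke Kolmogorov's continuity criterion: an estimate $\E[|\VV(t)-\VV(s)|^q]\leq C|t-s|^{1+\beta q}$ implies that $\E\bigl[\bigl(\sup_{0\leq s<t\leq T}|\VV(t)-\VV(s)|/|t-s|^{\alpha}\bigr)^q\bigr]\leq C_{T,q,\alpha}$ for every $\alpha<\beta$. With our estimate, $1+\beta q=q/2$, i.e.\ $\beta=1/2-1/q$; since $q>6$ we have $\beta>1/3$, so the choice $\alpha=1/3$ is admissible. Combining this with the supremum bound gives the claimed inequality. There is no real obstacle here, since $\gamma\in(-1,0)$ makes the coefficients have at most linear growth; the only delicate point is checking that $q>6$ is exactly the threshold that makes Kolmogorov's criterion yield the $1/3$-Hölder exponent.
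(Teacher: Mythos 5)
Your proposal is correct and follows essentially the same route as the paper: linear growth of the coefficients from Lemma \ref{regest}-(i)-(ii), BDG plus Gr\"onwall for the $q$-th moment bound, a two-point increment estimate $\E[|\VV(t)-\VV(s)|^q]\le C_{T,q}|t-s|^{q/2}$, and Kolmogorov's continuity criterion with the observation that $q>6$ makes the $1/3$-H\"older exponent admissible since $1/3<1/2-1/q$. No discrepancy to flag.
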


\begin{proof}
Since $\gamma \in (-1,0)$ and since $m_2(f_t)=m_2(f_0)$ for all $t\geq 0$, we know 
from Lemma \ref{regest}-(i)-(ii) that $b(f_t,\cdot)$ and $\sigma(f_t,\cdot)$ have at most
linear growth, uniformly in $t\geq 0$.
Since $\E[|\VV(0)|^q]=m_q(f_0)<\infty$, 
standard computations involving the Burkholder-Davis-Gundy inequality show that for all $T>0$,  
$\E[\sup_{t\in[0,T]} |\VV(t)|^q]\leq C_{T,q}$.
Standard computations again show that for all
$0\leq s < t \leq T$, $\E[|\VV(t)-\VV(s)|^q] \leq C_{T,q}|t-s|^{q/2}$. By the Kolmogorov criterion,
see Revuz-Yor \cite[Theorem 2.1 page 26]{ry}, we conclude that for any $\alpha \in(0,1/2-1/q)$, 
\[
\E\Big[\Big(\sup_{0\leq s< t \leq T} \frac{|\VV(t)-\VV(s)|}{|t-s|^{\alpha}}\Big)^q \Big] 
\leq C_{T,q}.
\]
The choice $\alpha=1/3$ is licit since $q>6$.
\end{proof}

\subsection{On the $L^2$ norm of the blob limit empirical measure}

The following proposition is an important step. Similar considerations were used
in~\cite{HaurayJabin}.

\begin{prop}\label{controletauN}
Recall that $\gamma \in (-1,0)$, that $f$ is the unique weak solution to \eqref{HL3D} 
starting from $f_0 \in \cP(\rd)$ with a finite entropy and a finite moment of order $q >6$.
Fix $T>0$ and consider $h^N$ built in Lemma \ref{tass}. Consider the solution $(\WW^N_i)_{i=1,\dots,N}$
to \eqref{ls} and recall that $\nu^N_t=N^{-1}\sum_1^N \delta_{\WW^N_i(t)}$ and $\bar \nu^N_t
=\nu^N_t \star \phi_{\e_N}$ with $\e_N=N^{-(1-\delta)/3}$.
We have
$$
\Pr\Big(\forall \; t\in[0,T], \; \|\bar \nu^N_t\|_{L^2} \leq 173 h^N(t) \Big) \geq 1- C_{T,q,\delta}N^{1- \delta q/3}.
$$
\end{prop}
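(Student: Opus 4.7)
The plan is to split the argument into a concentration bound at the discrete times $t^N_\ell$ of Lemma~\ref{tass} --- where the $\WW^N_i(t^N_\ell)$ are i.i.d. with law $f_{t^N_\ell}$ by Lemma~\ref{lswp} --- followed by a transfer step to arbitrary $t \in [0,T]$ using the H\"older-$1/3$ continuity of the paths $\WW^N_i$ from Lemma~\ref{momnsde}.

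For the fixed-time concentration, at each $s$ the centered empirical fluctuation
\[
\bar\nu^N_s - f_s \star \phi_{\eps_N} = \frac{1}{N} \sum_{i=1}^N \bigl(\phi_{\eps_N}(\cdot - \WW^N_i(s)) - f_s \star \phi_{\eps_N}\bigr)
\]
is a normalized sum of i.i.d., centered, $L^2(\R^3)$-valued random variables each with variance bounded by $\|\phi_{\eps_N}\|_{L^2}^2 = 3/(4\pi\eps_N^3)$. Rosenthal's inequality in Hilbert space yields, for every integer $k \ge 1$,
\[
\E\bigl[\|\bar\nu^N_s - f_s \star \phi_{\eps_N}\|_{L^2}^{2k}\bigr] \le C_k (N\eps_N^3)^{-k} = C_k N^{-k\delta}.
\]
Using $\|f_s \star \phi_{\eps_N}\|_{L^2} \le \|f_s\|_{L^2}$ and the lower bound $\|f_s\|_{L^2} \ge c_0$ from Lemma~\ref{tass}(i), Markov's inequality gives $\Pr\bigl(\|\bar\nu^N_s\|_{L^2} > A \|f_s\|_{L^2}\bigr) \le C_k N^{-k\delta}$ for some constant $A = A(c_0)$. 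Choosing $k = \lceil q/3\rceil$ and union-bounding over the $K_N + 1 \le 2TN^{1/3}+1$ discrete times, the event $\{\|\bar\nu^N_{t^N_\ell}\|_{L^2} \le A\,h^N(t^N_\ell) \text{ for all } \ell\}$ has probability at least $1 - C_{T,q,\delta}\,N^{1-q\delta/3}$.

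For arbitrary $t \in (t^N_{\ell-1}, t^N_\ell]$, writing $\eta := \max_i |\WW^N_i(t) - \WW^N_i(t^N_\ell)|$, the inclusion $B(\WW^N_i(t),\eps_N) \subset B(\WW^N_i(t^N_\ell),\eps_N+\eta)$ gives the pointwise bound
\[
\bar\nu^N_t(v) \le \frac{(\eps_N+\eta)^3}{\eps_N^3} \bigl(\nu^N_{t^N_\ell} \star \phi_{\eps_N+\eta}\bigr)(v),
\]
and the same Rosenthal/Markov concentration, applied at time $t^N_\ell$ with the enlarged blob radius $r = \eps_N+\eta \ge \eps_N$, bounds $\|\nu^N_{t^N_\ell}\star\phi_r\|_{L^2}$ by $A\,\|f_{t^N_\ell}\|_{L^2} = A\,h^N(t)$ on an event of the same probability order. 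Hence $\|\bar\nu^N_t\|_{L^2} \le A\,((\eps_N+\eta)/\eps_N)^3\,h^N(t)$ on the good event.

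The hard part will be controlling the displacement prefactor $((\eps_N+\eta)/\eps_N)^3$ by an absolute constant. Lemma~\ref{momnsde} combined with a union bound over the $N$ particles gives that the H\"older-$1/3$ seminorm of each $\WW^N_i$ is bounded by $N^{\delta/3}$ with probability $\ge 1 - CN^{1-q\delta/3}$. On intervals of length $N^{-1/3}$ this only yields $\eta \le N^{\delta/3 - 1/9}$, which exceeds $\eps_N = N^{-(1-\delta)/3}$ as soon as $\delta < 2/3$. To handle small $\delta$ I would refine the time mesh inside each $(t^N_{\ell-1},t^N_\ell]$ to scale $\Delta' = N^{-1}$: on each of the $O(N)$ sub-intervals the maximal displacement is $\le N^{\delta/3}\cdot N^{-1/3} = \eps_N$, so the prefactor stays $\le 8$. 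Chaining the pointwise comparison on each sub-interval with the big-time concentration (applied at blob radius $r \le 2\eps_N$), and taking a final union bound over the sub-discrete times (which keeps the failure probability of order $N^{1-q\delta/3}$), yields $\|\bar\nu^N_t\|_{L^2} \le 173\,h^N(t)$ uniformly in $t$ with the required probability.
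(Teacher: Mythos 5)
Your Rosenthal-in-Hilbert-space route for the fixed-time concentration is a valid and rather cleaner alternative to what the paper actually does, which is to tile space with cubes of side $\e_N$, regard the occupation numbers as Binomials, and use the elementary tail bound $\Pr(Z\geq x)\leq e^{-x/8}$ for $x\geq 2\E Z$. The trade-off: the cube/Binomial method gives an \emph{exponentially} small per-time failure $e^{-cN^{\delta/2}}$, so the union bound over anchor times is essentially free, whereas your Rosenthal/Markov bound is only polynomial, $C_kN^{-k\delta}$, and you have to take $k\gtrsim q/3$ to absorb the union bound; both give the stated $N^{1-q\delta/3}$, but in the paper that factor comes solely from the H\"older event $\Omega^1_{T,N}$, while in your version it is shared with the concentration step. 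Your pointwise comparison of blobs at two nearby times plays exactly the role of the paper's geometric Step~2 (the cube-count lemma), just without discretizing into cubes.

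You have also correctly spotted that the paper's displacement arithmetic in Step~4 drops the H\"older exponent: with the $1/3$-H\"older seminorm bounded by $N^{\delta/3}$ and a mesh of $N^{-1/3}$, the displacement is $N^{\delta/3}(N^{-1/3})^{1/3}=N^{\delta/3-1/9}$, not $\e_N=N^{\delta/3-1/3}$ as the paper writes. One small correction to your diagnosis: this always exceeds $\e_N$, for \emph{every} $\delta\in(0,1)$ since $-1/9>-1/3$, not merely for $\delta<2/3$.

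Your proposed repair, however, does not close the gap as stated, and this is the genuine issue. If you insert a sub-mesh at scale $N^{-1}$ inside $(t^N_{\ell-1},t^N_\ell]$ and apply the concentration at an intermediate sub-point $s_k$, the resulting bound is in terms of $\|f_{s_k}\|_{L^2}$, which need not be $\lesssim h^N(t)=\|f_{t^N_\ell}\|_{L^2}$: the whole point of Lemma~\ref{tass} is that the $t^N_\ell$ are \emph{chosen} so that $\|f_{t^N_\ell}\|_{L^2}$ is comparable to the local time-average of $\|f\|_{L^2}$, and at a generic intermediate $s_k$ this $L^2$ norm is uncontrolled. Chaining the geometric comparison across the $\sim N^{2/3}$ sub-intervals separating $t$ from $t^N_\ell$ is no better, since it inflates the blob radius linearly. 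The correct fix is to restate and re-prove Lemma~\ref{tass} with mesh $N^{-1}$ from the outset (i.e.\ $a_\ell=\ell N^{-1}/2$, $K_N\leq 2TN$, and the $t^N_\ell$ chosen as local ``good'' times as before); the bound $\int_0^T h^N\leq 2\int_0^T\|f\|_{L^2}$ carries over unchanged, the displacement from $t$ to the anchor $t^N_\ell$ is then $\leq N^{\delta/3}(N^{-1})^{1/3}=\e_N$, and the union bound over $O(N)$ anchors is still affordable. Finally, a secondary point: you compare $\bar\nu^N_t$ to $\nu^N_{t^N_\ell}\star\phi_{\e_N+\eta}$ with a \emph{random} radius $\e_N+\eta$; it is simpler to note that $\eta\leq\e_N$ gives the pointwise bound $\bar\nu^N_t\leq 8\,\nu^N_{t^N_\ell}\star\phi_{2\e_N}$, so that the concentration is applied at the deterministic radius $2\e_N$.
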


Of course, $173$ is not at all the optimal constant.

\begin{proof} It is quite complicated, so we break it into several steps.

\vip

{\it Step 1.} We introduce the event $\Omega^1_{T,N}$ on which
$$
\forall \; i=1,\dots,N,\quad
\sup_{[0,T+1]} |\WW^N_i(t)| + \sup_{0\leq s < t \leq T+1} (t-s)^{-1/3} |\WW^N_i(t)-\WW^N_i(s)| \leq N^{\delta/3}
$$ 
and we prove that
$$
\Pr((\Omega^1_{T,N})^c) \leq C_{T,q}N^{1- \delta q/3}.
$$
Since each $\WW^N_i$ has the same law as the solution $\VV$ to \eqref{nsde},
$$
\Pr((\Omega^1_{T,N})^c) \leq N \Pr\Big(\sup_{[0,T+1]} |\VV(t)|
+\sup_{0\leq s < t \leq T+1} (t-s)^{-1/3} |\VV(t)-\VV(s)|\geq N^{\delta/3}\Big).
$$
Thus Lemma \ref{momnsde} and the Markov inequality give 
us $\Pr((\Omega^1_{T,N})^c) \leq C_{T,q} N.N^{-\delta q/3}$.

\vip

{\it Step 2.} We consider the natural partition $\cP_N$ of $\rd$ in cubes with edge length $\e_N$
and call $\cP_N^\delta$ the subset of its elements that intercept $B(0,N^{\delta/3})$. Observe that
$\#(\cP_N^\delta) \leq (2 (N^{\delta/3}+1) \e_N^{-1})^3 \leq 64 N^\delta \e_N^{-3}=64N$.
We claim that for any  $(x_1,\dots,x_N) \in B(0,N^{\delta/3})^N$ and any $(y_1,\dots,y_N) \in B(0,N^{\delta/3})^N$
such that $\sup_{i=1,\dots,N}|x_i-y_i|\leq \e_N$, 
$$
\| (N^{-1}\sum_1^N \delta_{y_i})\star \phi_{\e_N}\|_{L^2}^2 
\leq \frac{3731}{N^2\e_N^3} \sum_{D \in \cP_N^\delta} (\#\{i \; : \; x_i\in D\})^2.
$$
We start with
$$
(N^{-1}\sum_1^N \delta_{y_i})\star \phi_{\e_N}(v)=\frac{3}{4\pi N\e_N^3}
\#\{i\; : \; y_i \in B(v,\e_N) \}\leq \frac{3}{4\pi N\e_N^3}\#\{i\; : \; x_i \in B(v,2\e_N) \},
$$
whence
$$
(N^{-1}\sum_1^N \delta_{y_i})\star \phi_{\e_N}(v)
\leq \frac{3}{4\pi N\e_N^3}\sum_{D \in \cP_N^\delta} \#\{i\; : \; x_i \in D\} \indiq_{\{D\cap B(v,2\e_N)\ne\emptyset \}}.
$$
Consequently, setting $A=\| (N^{-1}\sum_1^N \delta_{y_i})\star \phi_{\e_N}\|_{L^2}^2$,
$$
A 
\leq \frac{9}{16 \pi^2 N^2\e_N^6}\sum_{D,D' \in \cP_N^\delta} \#\{i\; : \; x_i \in D\} \#\{i\; : \; x_i \in D'\} 
\intrd \indiq_{\{D\cap B(v,2\e_N)\ne\emptyset, D'\cap B(v,2\e_N)\ne\emptyset \}} dv.
$$
Using that $2xy\leq x^2+y^2$ and a symmetry argument, we find that
$$
A \leq \frac{9}{16 \pi^2 N^2\e_N^6}\sum_{D \in \cP_N^\delta} (\#\{i\; : \; x_i \in D\})^2
\intrd \indiq_{\{D\cap B(v,2\e_N)\ne\emptyset\}} \sum_{D' \in \cP^\delta_N}\indiq_{\{D'\cap B(v,2\e_N)\ne\emptyset \}} dv.
$$
But for each $v\in\rd$,  $\sum_{D' \in \cP^\delta_N}\indiq_{\{D'\cap B(v,2\e_N)\ne\emptyset \}}=\#\{D' \in\cP^\delta_N\; : \;
D'\cap B(v,2\e_N)\ne\emptyset \} \leq 5^3$ and for each $D\in\cP_N^\delta$, we easily check that
$\intrd \indiq_{\{D\cap B(v,2\e_N)\ne\emptyset\}}dv \leq 4\pi (5\e_N)^3/3$. Finally, we have checked that 
$$
A \leq \frac{9}{16 \pi^2 N^2\e_N^6}\times \frac{5^34\pi (5\e_N)^3}{3}
\sum_{D \in \cP_N^\delta} (\#\{i\; : \; x_i \in D\})^2 \leq \frac{3731}{N^2\e_N^3}
\sum_{D \in \cP_N^\delta} (\#\{i\; : \; x_i \in D\})^2.
$$

{\it Step 3.} We now fix $t\in[0,T+1]$, we consider the event 
$$
\Omega^2_{t,N}=\Big\{N^{-2}\e_N^{-3} \sum_{D \in \cP_N^\delta} (\#\{i \; : \;
\WW_i^N(t)\in D\})^2 \leq 8 \|f_t\|_{L^2}^2 \Big\}
$$
and we prove that there are some positive constants $C$ and $c$ (depending only on $\delta$ and
$c_0$, recall that $\|f_t\|_{L^2}\geq c_0$ by Lemma \ref{tass}) such that
$$
\Pr((\Omega^2_{t,N})^c) \leq C \exp(-c N^{\delta/2}).
$$
To this end, we introduce, for $D \in  \cP_N^\delta$, $Z_D=\#\{i \; : \;\WW_i^N(t)\in D\}$. It follows a 
Binomial$(N,f_t(D))$-distribution. Next, it thus holds that 
\begin{equation}\label{devbin}
\Pr(Z_D \geq x)\leq \exp(-x/8) \quad \hbox{for all $x\geq 2Nf_t(D)$.}
\end{equation}
Indeed, $\E[\exp(Z_D)]=\exp(N\log(1+f_t(D)(e-1)))\leq \exp(N(e-1)f_t(D))$, whence
$\Pr(Z_D \geq x)\leq \exp(-x+N(e-1)f_t(D))\leq  \exp(-x+(e-1)x/2)$ if $x\geq 2Nf_t(D)$.
And $1-(e-1)/2>1/8$ holds true.
We next observe that on the one hand, by the Cauchy-Schwarz inequality,
$$
\|f_t\|_{L^2}^2 \geq \sum_{D\in\cP_N^\delta} \int_D f_t^2(v)dv \geq \e_N^{-3} \sum_{D\in\cP_N^\delta} (f_t(D))^2,
$$
and on the other hand, since $\#(\cP_N^\delta) \leq 64 N^\delta \e_N^{-3}$, 
$$
\|f_t\|_{L^2}^2 \geq 64^{-1}N^{-\delta} \e_N^{3} \sum_{D\in\cP_N^\delta} \|f_t\|_{L^2}^2.
$$
All in all,
$$
8\|f_t\|_{L^2}^2 \geq \sum_{D\in\cP_N^\delta} \Big(4\e_N^{-3}(f_t(D))^2+ 16^{-1} N^{-\delta} \e_N^{3} \|f_t\|_{L^2}^2 \Big).
$$
Consequently, on the event $(\Omega^2_{t,N})^c$, there is at least one $D \in \cP_N^\delta$ for which there holds 
$Z_D^2 \geq N^2\e_N^3[4\e_N^{-3}(f_t(D))^2+16^{-1} N^{-\delta} \e_N^{3} \|f_t\|_{L^2}^2]$, whence 
\begin{align*}
\Pr((\Omega^2_{t,N})^c  ) 
\leq &\sum_{D \in \cP_N^\delta}\Pr(Z_D \geq N\e_N^{3/2}[4\e_N^{-3}f_t(D)+16^{-1} N^{-\delta} \e_N^{3} \|f_t\|_{L^2}^2]^{1/2}).
\end{align*}
But $x_N:=N\e_N^{3/2}[4\e_N^{-3}(f_t(D))^2+16^{-1} N^{-\delta} \e_N^{3} \|f_t\|_{L^2}^2]^{1/2}
\geq N\e_N^{3/2}(4\e_N^{-3}(f_t(D))^2)^{1/2}=2Nf_t(D)$, so that we can apply \eqref{devbin}.
Since we also have $x_N \geq N\e_N^{3/2}4^{-1} N^{-\delta/2} \e_N^{3/2} \|f_t\|_{L^2}=N^{\delta/2} \|f_t\|_{L^2}/4$,
\begin{align*}
\Pr((\Omega^2_{t,N})^c ) \leq & \sum_{D \in \cP_N^\delta} \exp(-N^{\delta/2} \|f_t\|_{L^2}/32)\leq
64 N \exp( -c_0 N^{\delta/2}/32).
\end{align*}
We used that $\#(\cP_N^\delta) \leq 64N$, that $N \e_N^3 = N^\delta$ and that $\|f_t\|_{L^2}\geq c_0$.
This completes the step.

\vip

{\it Step 4.} We finally introduce the event 
$$
\Omega_{T,N}= \Omega^1_{T,N} \cap \Big(\cap_{\ell=1}^{K_N+1} \Omega^2_{t^N_\ell,N}\Big),
$$
where $0=t^N_0<t_1^N<\dots<t^N_{K_N}<T<t^N_{K_N+1}$, with $K_N \leq 2T N^{1/3}$ and 
$\sup_{\ell=0,\dots,K_N} (t^N_{\ell+1}-t^N_\ell)\leq N^{-1/3}$, were defined in Lemma \ref{tass}. Recall also that
$h^N(t)=\sum_{\ell=1}^{K_N+1} \|f(t^N_\ell)\|_{L^2} \indiq_{\{t\in(t^N_{\ell-1},t^N_\ell]\}}$.

\vip

We first claim that, $\Pr(\Omega_{T,N} )\geq 1- C_{T,q,\delta} N^{1-q \delta/3}$. 
This follows from the fact that $K_N \leq 2TN^{1/3}$ and from Steps 1 and 3: we have 
$\Pr(\Omega_{T,N}^c) \leq \Pr((\Omega^1_{T,N})^c) + \sum_{\ell=1}^{K_N+1}  \Pr((\Omega^2_{t^N_\ell,N})^c)
\leq C_{T,q} N^{1-q \delta/3} + C(K_N+1)\exp(-cN^{\delta/2})
\leq C_{T,q,\delta}N^{1-q \delta/3}$. The claim follows.

\vip

It only remains to prove that indeed, $\|\bar \nu^N_t\|_{L^2} \leq 173 h^N(t)$ for all $t\in[0,T]$
on $\Omega_{T,N}$. Recall that $\bar \nu^N_t= (N^{-1}\sum_1^N \delta_{\WW^N_i(t)})\star \phi_{\e_N}$. 
On $\Omega_{T,N}$, we know that

\vip

$\bullet$ $\WW^N_i(t)$ belongs
to $B(0,N^{\delta/3})$ for all $i=1,\dots,N$ and all $t\in[0,T+1]$ (thanks to $\Omega^1_{T,N}$);

\vip

$\bullet$ for all $\ell=1,\dots,K_N+1$, all $t\in [t^N_{\ell-1},t^N_\ell]$, all $i=1,\dots,N$,
$|\WW^N_i(t)-\WW^N_i(t^N_\ell)|\leq N^{\delta/3}|t^N_\ell-t^N_{\ell-1}|\leq N^{\delta/3-1/3}=\e_N$ (due to $\Omega^1_{T,N}$ 
again);

\vip

$\bullet$ for all $\ell=1,\dots,K_N+1$, $N^{-2}\e_N^{-3} \sum_{D \in \cP_N^\delta} (\#\{i \; : \;
\WW_i^N(t^N_\ell)\in D\})^2 \leq 8 \|f_{t^N_{\ell}}\|^2$ (due to $\Omega^2_{t^N_\ell,N}$).

\vip

Using Step 2, we conclude that indeed, on $\Omega_{T,N}$, for all $t\in[0,T]$, defining $\ell$ as the index
such that $t\in(t^N_{\ell-1},t^N_\ell]$,
$$
\|\bar \nu^N_t\|_{L^2}^2 \leq 3731 N^{-2}\e_N^{-3} \sum_{D \in \cP_N^\delta} (\#\{i \; : \;
\WW_i(t^N_\ell)\in D\})^2 \leq 3731 \times 8 \|f_{t^N_{\ell}}\|^2= 29848 (h^N(t))^2.
$$
This ends the proof, since $\sqrt {29848} < 173$.
\end{proof}

%%%%%%%%%%%%%%%%%%%%%%%%%%%
\subsection{Computation of the mean squared error}

Here is the main computation of the section. 

\begin{prop}\label{groscalcul}
Recall that $\gamma \in (-1,0)$, that $f$ is the unique weak solution to \eqref{HL3D} 
starting from $f_0 \in \cP(\rd)$ with a finite entropy and with $m_8(f_0)<\infty$.
Recall that $\eta_N\in (0,N^{-1/3})$, that $\e_N=N^{-(1-\delta)/3}$ and that $(\VV^N_i)_{i=1,\dots,N}$
and $(\WW^N_i)_{i=1,\dots,N}$ are the solutions to \eqref{ps2} and \eqref{ls}.
Recall that $\mu^N_t=N^{-1}\sum_1^N \delta_{\VV^N_i(t)}$, $\nu^N_t=N^{-1}\sum_1^N \delta_{\WW^N_i(t)}$,
$\tilde \mu^N_t=\mu^N_t \star \phi_{\eta_N}$, $\bar \mu^N_t=\mu^N_t \star \phi_{\e_N}$ and  
$\bar \nu^N_t=\nu^N_t \star \phi_{\e_N}$. 
Fix $T>0$, recall that $h^N$ was defined in Lemma \ref{tass} and consider the stopping time
$$
\tau_N=\inf\{t\geq 0 \; : \; \|\bar \nu^N_t\|_{L^2} \geq 173 h^N(t) \}.
$$
There is a constant $C_{T,\delta}>0$ such that for all $t\in[0,T]$,
\[
\E \bigl[|\VV^N_1(t\land \tau_N)-\WW^N_1(t\land \tau_N)|^2 \bigr] 
\leq  C_{T,\delta} \Bigl(N^{-(1-\delta)(2+2\gamma)/3}+N^{-1/2}
+ \E \bigl[|\VV^N_1(0)-\WW^N_1(0)|^2 \bigr]  \Bigr).
\]
\end{prop}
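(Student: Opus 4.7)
The plan is to combine a coupling argument via Ito's formula with the fine regularity estimate of Proposition~\ref{crucru}-(i) applied to the empirical joint distribution, handling the gap between the empirical $\nu^N_s$ and the target $f_s$ through concentration.

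I would first apply Ito's formula to $|\VV^N_i - \WW^N_i|^2$ using \eqref{ps2}--\eqref{ls}, average over $i$, stop at $\tau_N$, and take expectations. Setting $\phi(s) := \E[|\VV^N_1(s\land\tau_N) - \WW^N_1(s\land\tau_N)|^2]$ and introducing the empirical coupling $R^N_s := N^{-1}\sum_i \delta_{(\VV^N_i(s),\WW^N_i(s))} \in \Pi(\mu^N_s, \nu^N_s)$, exchangeability (Remark~\ref{ex}) gives $\phi'(s) = \E[\Phi^N_s \indiq_{s \leq \tau_N}]$ with
\[
\Phi^N_s = \int \Bigl\{2(z-v)\cdot(b(\tilde\mu^N_s, z) - b(f_s, v)) + \|\sigma(\tilde\mu^N_s, z) - \sigma(f_s, v)\, U_{z,v}\|^2\Bigr\} R^N_s(dz,dv),
\]
where $U_{z,v} := U(a(\bar\mu^N_s,z), a(\bar\nu^N_s,v))$ is orthogonal. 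Adding and subtracting $b(\nu^N_s, v)$ in the drift and $\sigma(\nu^N_s, v)\, U_{z,v}$ in the diffusion, then expanding the square via $\|A - \hat\sigma U\|^2 = \|A\|^2 - 2\Tr(A^* \hat\sigma U) + \|\hat\sigma\|^2$ with $A(z,v) := \sigma(\tilde\mu^N_s, z) - \sigma(\nu^N_s, v)\, U_{z,v}$, $\hat\sigma(v) := \sigma(f_s, v) - \sigma(\nu^N_s, v)$ and $\hat b(v) := b(f_s,v) - b(\nu^N_s, v)$, I would split $\Phi^N_s = M^N_s + C^N_s$, where
\[
M^N_s = \int\bigl\{2(z-v)\cdot(b(\tilde\mu^N_s, z) - b(\nu^N_s, v)) + \|A(z,v)\|^2\bigr\} R^N_s(dz,dv)
\]
is precisely $\Gamma_{\eta_N, \e_N}(R^N_s)$ in the notation of Proposition~\ref{crucru} (with $g = \mu^N_s$, $f = \nu^N_s$), and $C^N_s$ collects the residual terms in $\hat b$ and $\hat\sigma$.

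For $M^N_s$, I would invoke Proposition~\ref{crucru}-(i) with $p = 2$, admissible since $p_2(\gamma, q) > 2$ when $q \geq 8$ and $\gamma \in (-1,0)$; the proof only involves the $L^p$-norm of $f^\e = \bar\nu^N_s$, a bounded density, so the estimate transfers to empirical $f = \nu^N_s$. This yields $M^N_s \leq C\e_N^{2+2\gamma} + C(1 + \|\bar\nu^N_s\|_{L^2})\, W^N_s$ with $W^N_s := N^{-1}\sum_i|\VV^N_i - \WW^N_i|^2$, and on $\{s \leq \tau_N\}$ the $L^2$-norm is bounded by $173\, h^N(s)$. For the correction $C^N_s$, I would use that, since $a(0) = b(0) = 0$ and the $\WW^N_i$ are i.i.d.\ $\sim f_s$, conditional on $\WW^N_1$ both $\hat b(\WW^N_1)$ and $\hat a(\WW^N_1) := a(f_s,\WW^N_1) - a(\nu^N_s,\WW^N_1)$ are (up to $O(1/N)$) averages of $N-1$ centered i.i.d.\ quantities. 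Standard variance and fourth-moment computations---relying on $m_q(f_s) < \infty$ with $q \geq 8$ to bound $\E[\|a(\WW^N_1 - \WW^N_2)\|^4 \mid \WW^N_1] \leq C(1 + |\WW^N_1|^{4\gamma + 8})$---give $\E[|\hat b(\WW^N_1)|^2] \leq CN^{-1}$ and $\E[\|\hat a(\WW^N_1)\|^4] \leq CN^{-2}$.

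The main obstacle is upgrading concentration of $\hat a$ to concentration of $\hat\sigma$. The naive Holder bound $\|\hat\sigma\|^2 \leq C\|\hat a\|$ from Lemma~\ref{math} only yields $\E[\|\hat\sigma\|^2] \leq CN^{-1/2}$, which paired with the cross term $\Tr(A^*\hat\sigma U)$ would produce at best an $N^{-1/4}$ rate. Instead I would use the sharper bound of Lemma~\ref{math} together with the ellipticity estimate $\|a(f_s,v)^{-1}\| \leq C(1+|v|)^{|\gamma|}$ from Lemma~\ref{ellip1} (applicable since $f_s$ has bounded entropy and second moment), obtaining $\|\hat\sigma(v)\|^2 \leq C(1+|v|)^{|\gamma|}\|\hat a(v)\|^2$ and hence $\E[\|\hat\sigma(\WW^N_1)\|^2] \leq CN^{-1}$ after Cauchy--Schwarz and the fourth-moment bound. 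Handling the $\hat b$-term by Cauchy--Schwarz and Young, the cross term $\Tr(A^*\hat\sigma U)$ by Young with a well-chosen parameter (using $\E[\int \|A\|^2 R^N_s] \leq C$ from Proposition~\ref{momprime} and Lemma~\ref{momnsde}), and the $\|\hat\sigma\|^2$-term directly, I get $\E[|C^N_s|\, \indiq_{s\leq\tau_N}] \leq C\, \E[W^N_s\, \indiq_{s\leq\tau_N}] + CN^{-1/2}$. Finally, using $\E[W^N_s\, \indiq_{s\leq\tau_N}] \leq \phi(s)$ and $\int_0^T h^N(s)\,ds \leq 2\int_0^T \|f_s\|_{L^2}\,ds \leq C_T$ (Lemma~\ref{tass}), Gronwall's lemma applied to $\phi'(s) \leq C\e_N^{2+2\gamma} + CN^{-1/2} + C(1 + h^N(s))\phi(s)$ closes the argument, since $\e_N^{2+2\gamma} = N^{-(1-\delta)(2+2\gamma)/3}$.
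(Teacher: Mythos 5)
Your proposal is correct and follows essentially the same route as the paper's proof: the same Itô/exchangeability reduction to $\Gamma_{\eta_N,\e_N}(R^N_s)$ plus a residual, the same application of Proposition~\ref{crucru}-(i) with $p=2$ exploiting that only $\|\bar\nu^N_s\|_{L^2}$ enters the bound, the same Rosenthal-type fourth-moment concentration for $\hat a$ and $\hat b$, and the same key device of combining the sharp inequality of Lemma~\ref{math} with the ellipticity of $a(f_s,\cdot)$ from Lemma~\ref{ellip1} to transfer the $N^{-2}$ concentration of $\hat a$ to an $N^{-1}$ bound on $\|\hat\sigma\|^2$, yielding $N^{-1/2}$ after balancing the cross term against the bounded quantity $\|A\|^2$. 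The only cosmetic difference is that you expand the square $\|A-\hat\sigma U\|^2$ directly rather than using the paper's algebraic inequality $\|A-B\|^2-\|A-B'\|^2\leq \|B-B'\|^2+2\|A-B'\|\,\|B'-B\|$; the resulting estimates are identical.
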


It seems that we could remove the term $N^{-1/2}$ with some work, but this would not
improve the final result of Theorem \ref{mrrate}.

\begin{proof} We put $u_t=\E[|\VV^N_1(t\land \tau_N)-\WW^N_1(t\land \tau_N)|^2]$. To lighten notation,
we also set $U^N_i(s)=U(a(\bar\mu^{N}_s,\VV^N_i(s)),a(\bar\nu^{N}_s,\WW^N_i(s)))$. By Lemma \ref{momnsde},
$\sup_{[0,T]} m_8(f_t)<\infty$.

\vip

{\it Step 1.}
A direct application of the It\^o formula gives
\begin{align*}
u_t=&\E\Bigl[|\VV^N_1(0)-\WW^N_1(0)|^2+ 
\int_0^{t\land \tau_N} \Big(2(\VV^N_1(s)-\WW^N_1(s))\cdot(b(\tilde \mu^N_s,\VV^N_1(s))-b(f_s,\WW^N_1(s)))\\
&\hskip6cm + \|\sigma(\tilde\mu^N_s,\VV^N_1(s))-
\sigma(f_s,\WW^N_1(s) )U^N_1(s)\|^2 \Big) ds\Bigr] \\
=& \E\bigl[|\VV^N_1(0)-\WW^N_1(0)|^2\bigr] + \E\Big[\int_0^{t\land \tau_N} (I_s +J_s + K_s) ds \Big],
\end{align*}
where 
\begin{align*}
I_s :=& 2(\VV^N_1(s)-\WW^N_1(s))\cdot(b(\tilde \mu^N_s,\VV^N_1(s))-b(\nu^N_s,\WW^N_1(s)))\\
&\hskip5cm + \|\sigma(\tilde\mu^N_s,\VV^N_1(s))-
\sigma(\nu^N_s,\WW^N_1(s) )U^N_1(s)\|^2 ,\\
J_s :=& 2(\VV^N_1(s)-\WW^N_1(s))\cdot(b(\nu^N_s,\WW^N_1(s))-b(f_s,\WW^N_1(s))),\\
K_s :=& \|\sigma(\tilde\mu^N_s,\VV^N_1(s))-\sigma(f_s,\WW^N_1(s))U^N_1(s)\|^2
- \|\sigma(\tilde\mu^N_s,\VV^N_1(s))-\sigma(\nu^{N}_s,\WW^N_1(s))U^N_1(s)\|^2 .
\end{align*}
Let us notice at once that
$$
J_s \leq |\VV^N_1(s)-\WW^N_1(s)|^2 + |b(\nu^N_s,\WW^N_1(s))-b(f_s,\WW^N_1(s))|^2 =:  |\VV^N_1(s)-\WW^N_1(s)|^2 + J_s^1
$$
and, using that $||A-B||^2-||A-B'||^2 \leq ||B-B'||^2 + 2||A-B'||||B'-B||$ and that $U^N_1(s)$ is orthogonal,
$$
K_s \leq K^1_s + \sqrt{M_s K^1_s },
$$
where
\begin{align*}
K^1_s := \|\sigma(f_s,\WW^N_1(s))-\sigma(\nu^{N}_s,\WW^N_1(s))\|^2 \quad\hbox{and}\quad 
M_s :=\|\sigma(\tilde\mu^N_s,\VV^N_1(s))-\sigma(\nu^{N}_s,\WW^N_1(s))U^N_1(s)\|^2.
\end{align*}

{\it Step 2.} Using exchangeability, we realize that
\begin{align*}
\E\Big[\int_0^{t\land \tau_N} I_s \, ds \Big]=& \E\Big[\int_0^{t\land \tau_N} \frac1N \sum_{i=1}^N \Big(
2(\VV^N_i(s)-\WW^N_i(s)).(b(\tilde \mu^N_s,\VV^N_i(s))-b(\nu^N_s,\WW^N_i(s)))\\
& + \|\sigma(\tilde\mu^N_s,\VV^N_i(s))-
\sigma(\nu^N_s,\WW^N_i(s) )U(a(\bar\mu^{N}_s,\VV^N_i(s)),a(\bar\nu^{N}_s,\WW^N_i(s)))   \|^2 \Big) ds \Big].
\end{align*}
Setting $R^N_t=N^{-1}\sum_1^N \delta_{(\VV^N_i(s),\WW^N_i(s))}$, which belongs to $\Pi(\mu^N_t,\nu^N_t)$,
observing that $\tilde \mu^N_s=(\mu^N_s)^{\eta_N}$,
$\bar\mu^{N}_s=(\mu^N_s)^{\e_N}$ and $\bar\nu^{N}_s=(\nu^N_s)^{\e_N}$, and recalling 
the notation of Proposition \ref{crucru}, this 
directly gives
\begin{align*}
\E\Big[\int_0^{t\land \tau_N} I_s \, ds \Big]=& \E\Big[\int_0^{t\land \tau_N} \Gamma_{\eta_N,\e_N}(R^N_s)ds \Big].
\end{align*}
We thus use Proposition \ref{crucru}-(i) (with $p=2$ which is indeed greater than $p_1(\gamma)$)
and obtain, since $||\bar \nu^N_s||_{L^2} \leq 173 h^N(s)$
for all $s \leq \tau_N$,
\begin{align*}
\E\Big[\int_0^{t\land \tau_N} I_s \, ds \Big]\leq & C t \e_N^{2+2\gamma} + 
C \E\Big[\int_0^{t\land \tau_N} (1+h_N(s)) \frac1N \sum_{i=1}^N |\VV^N_i(s)-\WW^N_i(s)|^2    ds \Big].
\end{align*}
Using exchangeability again, we end with
\begin{align*}
\E\Big[\int_0^{t\land \tau_N} I_s \, ds \Big]\leq & C  t \e_N^{2+2\gamma} + 
C \intot (1+h_N(s)) u_s \,ds.
\end{align*}

{\it Step 3.} We now check that $\E[J^1_s + K^1_s] \leq C_T N^{-1}$ for all $s\in [0,T]$.
This will imply that 
$$\E\Big[\int_0^{t\land\tau_N} (J_s +K^1_s) ds \Big] \leq C_T N^{-1} + \intot u_s ds.$$

Recall that $m_2(f_t)= m_2(f_0)$ and $H(f_t)\leq H(f_0)$ for all $t\geq 0$ (see Theorem \ref{thfg}),
so that Lemma \ref{ellip1} ensures us that $||(a(f_t,v))^{-1}||\leq C (1+|v|)^{|\gamma|}$ for all $t\geq 0$
and all $v\in \rd$. Consequently, using Lemma \ref{math}, we deduce that
$K^1_s \leq C   (1+|\WW^N_1(s)|)^{|\gamma|} \|a(\nu^N_s,\WW^N_1(s))-a(f_s,\WW^N_1(s)) \|^2$.
By the Cauchy-Schwarz inequality, and since $\cL(\WW^N_1(s))=f_s$,
\begin{align*}
\E[K^1_s] \leq& C (1+m_{2|\gamma|}(f_s))^{1/2} \E[ \|a(\nu^N_s,\WW^N_1(s))-a(f_s,\WW^N_1(s)) \|^4 ]^{1/2}\\
\leq& C_T \E[ \|a(\nu^N_s,\WW^N_1(s))-a(f_s,\WW^N_1(s)) \|^4 ]^{1/2}.
\end{align*}
We also have $J^1_s=|b(\nu^N_s,\WW^N_1(s))-b(f_s,\WW^N_1(s))|^2$, so that, using Cauchy-Schwarz again,
$$
\E[J^1_s] \leq C \E[ \|b(\nu^N_s,\WW^N_1(s))-b(f_s,\WW^N_1(s)) \|^4 ]^{1/2}.
$$

To conclude the step, it thus suffices to prove that for $\varphi:\rd\mapsto\rr$ 
with at most quadratic growth
(which is the case of all the entries of $a$ and $b$),
with the notation $\varphi(\mu,x)=\intrd \varphi(x-y)\mu(dy)$, 
$$
\E[|\varphi(\nu^N_s,\WW^N_1(s))-\varphi(f_s,\WW^N_1(s))|^4] \leq C_{T,\varphi} N^{-2}.
$$

To this end, let us recall that there is a constant
$C>0$ such that for any sequence of i.i.d. real-valued random variables $(X_n)_{n\geq 1}$
(see Rosenthal \cite{r} for a much more general inequality),
\begin{align}\label{ros}
\E[|N^{-1}\sum_1^N (X_i-\E[X_1])|^4 ] \leq C(1+\E[X_1^4]) N^{-2}.
\end{align}

But the family $(\WW^N_i(s))_{i=1,\dots,N}$ is i.i.d. with common law $f_s$. Let us 
denote by $\E_1$ the expectation concerning only $\WW^N_1$, and by $\E_{2,N}$
the expectation concerning only $\WW_2^N,\dots,\WW^N_N$. 
We have $\varphi(\WW^N_1(s),\nu^N_s)=N^{-1}\sum_{i=1}^N\varphi(\WW^N_1(s)-\WW^N_i(s))$ and, by independence,
$\varphi(\WW^N_1(s),f_s)=(N-1)^{-1} \sum_{i=2}^N\E_{2,N}[\varphi(\WW^N_1(s)-\WW^N_i(s))]$.
As a consequence, since $(x+y)^4 \leq 8x^4+8y^4$,
\begin{align*}
\E[|\varphi(\nu^N_s,\WW^N_1(s))-\varphi(f_s,\WW^N_1(s))|^4] \leq&
8\E\Big[\Big|\Big(\frac{1}{N}-\frac{1}{N-1}\Big) \sum_1^N \varphi(\WW^N_1(s)-\WW^N_i(s)) \Big|^4\Big]\\
& \hskip-5.5cm+8\E_1\Big[
\E_{2,N}\Big[\Big|\frac{1}{N-1}\sum_2^N \varphi(\WW^N_1(s)-\WW^N_i(s))- 
\E_{2,N}\Big[\frac{1}{N-1}\sum_2^{N} \varphi(\WW^N_1(s)-\WW^N_i(s))\Big] \Big|^4 \Big]
\Big]\\
=:&8A_N+8B_N.
\end{align*}
Since $|\varphi(x)|\leq C_\varphi(1+|x|^2)$, we easily get 
$A_N \leq C_\varphi(1+m_8(f_s))N^{-4} \leq C_{\varphi,T}N^{-4}$.
Since the random 
variables $\varphi(\WW^N_1(s)-\WW^N_i(s))$ are i.i.d. under $\E_{2,N}$, we may apply \eqref{ros}, which gives
\begin{align*}
B_N \leq CN^{-2} \E_1 [1+ \E_{2,N} ( \varphi^4(\WW^N_1(s)-\WW^N_i(s)))].
\end{align*}
Using again that $|\varphi(x)|\leq C_\varphi(1+|x|^2)$, we deduce that 
$B_N \leq C_\varphi(1+m_8(f_s))N^{-2} \leq C_{\varphi,T}N^{-2}$.

\vip

{\it Step 4.} We finally show that $\sup_{[0,T]}\E[M_s] \leq C_T$.
This will imply, by Step 3 and the Cauchy-Schwarz inequality, that $\E[\sqrt{K_s^1 M_s}] \leq C N^{-1/2}$,
whence
$$\E\Big[\int_0^{t\land\tau_N} \sqrt{K_s^1 M_s} ds \Big] \leq C_T N^{-1/2}.$$

Using Lemma \ref{regest}-(i), we find that
$$
M_s \leq C (1+m_2(\tilde\mu^N_s)+m_2(\nu^N_s) + |\VV^N_1(s)|^{2+\gamma} +  |\WW^N_1(s)|^{2+\gamma}).
$$
Using exchangeability, that $\eta_N \leq 1$ and that $0\leq \gamma+2\leq2$, we see that
$$
\E[M_s] \leq C (1+ \E[ |\VV^N_1(s)|^2+|\WW^N_1(s)|^2]).
$$
We conclude the step using that $\E[|\WW^N_1(s)|^2]=m_2(f_s)=m_2(f_0)$ and that $\sup_{[0,T]} \E[ |\VV^N_1(s)|^2]$
is bounded (uniformly in $N$) by Proposition \ref{momprime}.

\vip

{\it Step 5.} We now gather everything:
$$
u_t \leq u_0 +  C \intot (1+h^N_s) u_s \, ds+ C_T( \e_N^{2\gamma+2}  + N^{-1/2}).
$$
Using the Gr\"onwall lemma, we get
$$
\sup_{[0,T]} u_t \leq C_T (u_0 + \e_N^{2\gamma+2}+N^{-1/2}) \exp\Big(C \int_0^T h^N(s)ds\Big)
\leq C_T (u_0+ \e_N^{2\gamma+2}+N^{-1/2}),
$$
since $\int_0^T h^N(s)ds \leq 2 \int_0^T \|f(s)\|_{L^2} ds$ by Lemma \ref{tass}, and since 
$f \in L^1_{loc}([0,\infty),L^2(\rd))$. This ends the proof.
\end{proof}

%%%%%%%%%%%%%%%
\subsection{Conclusion}
We finally can give the

\begin{proof}[Proof of Theorem \ref{mrrate}.]
Recall that $q\geq 8$, fix $\delta = 6 /q$ and 
recall that $\tau_N=\inf\{ t \geq 0 \; : \; \|\bar \nu^N_t\|_{L^2} \geq 173 h^N(t)\}$.
By Proposition \ref{controletauN}, we know that $\Pr(\tau_N \leq T) \leq C_{T,q,\delta}N^{1-q \delta /3} = C_{T,q}N^{-1}$.
We then write, for $t\in [0,T]$,
\begin{align*}
\E[|\VV^N_1(t)-\WW^N_1(t)|^2] & \leq \E[|\VV^N_1(t\land \tau_N)-\WW^N_1(t\land \tau_N)|^2] + 
\E[|\VV^N_1(t)-\WW^N_1(t)|^2\indiq_{\{\tau_N \leq T\}}]\\
& \leq  C_{T,\delta} \Bigl( \E[|\VV^N_1(0)-\WW^N_1(0)|^2]+ N^{-(1-\delta)(2\gamma+2)/3} + N^{-1/2} \Bigr) \\
& \hspace{1cm} + C \E[|\VV^N_1(t)|^4+|\WW^N_1(t)|^4 ]^{1/2}(\Pr(\tau_N\leq T))^{1/2}
\end{align*}
by Proposition \ref{groscalcul} and the Cauchy-Schwarz inequality. 
By assumption, $\E[|\VV^N_1(0)|^4]$ is bounded (uniformly in $N$),
so that Proposition \ref{momprime} implies that $\E[|\VV^N_1(t)|^4] \leq C_T$. 
Next, Lemma \ref{momnsde} gives us $\E[|\WW^N_1(t)|^4] \leq C_T$. As a conclusion,
\begin{align*}
\sup_{[0,T]} \E[|\VV^N_1(t)-\WW^N_1(t)|^2] \leq 
C_{T,\delta} \bigl( \E[|\VV^N_1(0)-\WW^N_1(0)|^2] +  N^{-(1-\delta)(2\gamma+2)/3} + N^{-1/2} \bigr),
\end{align*}
whence, by exchangeability,
\begin{align*}
\sup_{[0,T]} \E[W_2^2(\mu^N_t,\nu^N_t)] & \leq \sup_{[0,T]} \E\Big[\frac 1N \sum_1^N |\VV^N_i(t)-\WW^N_i(t)|^2\Big] \\
& \leq C_{T,\delta} \Big(\E\Big[\frac 1N \sum_1^N |\VV^N_i(0)-\WW^N_i(0)|^2\Big] +  N^{-(1-\delta)(2\gamma+2)/3}
+ N^{-1/2} \Big)\\
&\leq C_{T,\delta} \Big(\E\Big[W_2^2(\mu^N_0,\nu^N_0)\Big] +  N^{-(1-\delta)(2\gamma+2)/3}+ N^{-1/2} \Big)
\end{align*}
where we used \eqref{ttt2} for the last inequality.
But for each $t\in [0,T]$, $\nu^N_t$ is, by Lemma \ref{lswp}, the empirical
measure of $N$ i.i.d. $f_t$-distributed random variables. We thus infer from \cite[Theorem 1]{fgui} 
(with $d=3$, $p=2$ and $q=5$) that $\E[W_2^2(\nu^N_t,f_t)]\leq C (m_5(f_t))^{2/5} N^{-1/2} \leq C_T N^{-1/2}$ 
for any $t \in [0,T]$.
As a conclusion,
\begin{align*}
\sup_{[0,T]} \E[W_2^2(\mu^N_t,f_t)] \leq& 2 \sup_{[0,T]} \E[W_2^2(\mu^N_t,\nu^N_t)+W_2^2(\nu^N_t,f_t)] \\
\le& 2 \, C_{T,\delta} \bigl( \E[W_2^2(\mu^N_0,\nu^N_0)] + N^{-(1-\delta)(2\gamma+2)/3} + N^{-1/2}\bigr).
\end{align*}
Observing finally that $ \E[W_2^2(\mu^N_0,\nu^N_0)] \leq  
2\E[W_2^2(\mu^N_0,f_0)]+2 \E[W_2^2(\nu^N_0,f_0)]\leq 2\E[W_2^2(\mu^N_0,f_0)] + C N^{-1/2}$ ends the proof.
\end{proof}

%%%%%%%%%%%%%%%%%%%%%%%%%%%%%%%%%%%%%%%%%%%%%%%%%%%%%%%%%%%%%%%%%%%%%%%%%%%%%%%%%%%%%%%%
%
%
%
%
%
%
%
%
%%%%%%%%%%%%%%%%%%%%%%%%%%%%%%%%%%%%%%%%%%%%%%%%%%%%%%%%%%%%%%%%%%%%%%%%%%%%%%%%%%%%%

\section{More ellipticity estimates}\label{secellip}
\setcounter{equation}{0}

We now turn to the proof of our second result on the propagation of chaos,
which includes the case where $\gamma \in (-2,0)$.

\vip

To control the singularity of the coefficients, we will need
some regularity of the law of the particle system.
Such regularity will be obtained thanks to the diffusion. We thus will need to show that 
the diffusion coefficients $a(\mu^N,v)$ are sufficiently elliptic. 
Lemma \ref{ellip1}, which proves some ellipticity of $a(f,v)$,
requires the finiteness of entropy $f$ and thus cannot apply to empirical measures.
We will rather use the following lemmas. They all rely on a geometric condition on triplet of points, 
saying roughly that they are far enough from being aligned. 

\begin{defi}
Let $\delta>0$. We say that a triplet of points $(x_1,x_2,x_3)$ satisfy the $\delta$-non alignment condition if
\begin{equation} \label{cond-balls}
|x_2 -x_1| \ge 6 \,\sqrt \delta, \qquad 
\bigl|p_{(x_2-x_1)^\perp}(x_3-x_1) \bigr| \ge  24 \, \delta  + 2\, \sqrt \delta \,  |x_3-x_1|,
\end{equation}
where $p_{(x_2-x_1)^\perp}$  is the projection onto the plane orthogonal to $x_2-x_1$.
\end{defi}

This condition is not invariant by permutation of the three points. Also, a triplet
satisfying the $\delta$-non alignment condition also satisfy the $\delta'$-non alignment condition for all 
$\delta' \in [0,\delta]$.

\begin{lem}\label{ellip2}
Let $\gamma \in (-2,0)$, $\delta\in(0,1)$ and $R>1$. 
There exists a constant $\kappa>0$ depending only on $\gamma,\delta,R$ such that,
for any triplet of points $(x_1,x_2,x_3)$ belonging to $B(0,R)$ and satisfying the
$\delta$-non alignment condition, 
for  any $f\in\cP(\rr^3)$,
$$
\inf_{|\xi|=1} \xi^* a(f,v) \xi \geq \kappa \,   (1+|v|)^\gamma   \, \inf_{k=1,2,3} f(B(x_k,\delta)).
$$
\end{lem}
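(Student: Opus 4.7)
The plan is to exploit the explicit identity $\xi^* a(v-w) \xi = |v-w|^\gamma |p_{\xi^\perp}(v-w)|^2$, a direct consequence of the definition of $a$, and for each unit $\xi$ and each $v$ to isolate an index $k^*\in\{1,2,3\}$ on whose neighborhood $B(x_{k^*},\delta)$ the integrand is uniformly coercive. The goal is to show that for such $k^*$ one has $|p_{\xi^\perp}(v-w)| \ge 2\delta$ for every $w\in B(x_{k^*},\delta)$ and $|v-w|^\gamma \ge (R+1)^\gamma(1+|v|)^\gamma$ (from $\gamma<0$ and $|v-w|\le|v|+R+1$), so that restricting the integration to this single ball already produces the claimed lower bound.

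Setting $y_k := p_{\xi^\perp}(v-x_k) \in \xi^\perp$, one has $y_j-y_k = p_{\xi^\perp}(x_k-x_j)$, which does not depend on $v$. Hence the triangle inequality $\max_k|y_k| \ge \frac12\max_{j\ne k}|y_j-y_k|$ reduces everything to the $v$-free statement
\[
\max\bigl(|p_{\xi^\perp}(x_2-x_1)|,\,|p_{\xi^\perp}(x_3-x_1)|\bigr) \ge 6\delta \qquad \text{for every unit } \xi.
\]
Setting $u_1=x_2-x_1$, $u_2=x_3-x_1$ and $M := u_1u_1^* + u_2u_2^*$, one writes
\[
|p_{\xi^\perp}u_1|^2 + |p_{\xi^\perp}u_2|^2 = |u_1|^2 + |u_2|^2 - \xi^* M \xi.
\]
The minimum of the right-hand side over unit $\xi\in\rd$ equals the smaller nonzero eigenvalue $\lambda_-$ of the rank-two symmetric matrix $M$, since its spectrum is $\{0,\lambda_-,\lambda_+\}$ with $\lambda_-+\lambda_+ = \Tr M = |u_1|^2+|u_2|^2$. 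Diagonalizing $M$ in the orthonormal frame adapted to $u_1$ gives $\lambda_+\lambda_- = |u_1|^2|w|^2$, where $w = p_{(x_2-x_1)^\perp}(x_3-x_1)$, whence
\[
\lambda_- \ge \frac{|u_1|^2 |w|^2}{|u_1|^2 + |u_2|^2}.
\]
Injecting the non-alignment inequalities $|u_1|^2 \ge 36\delta$ and $|w|^2 \ge 576\delta^2 + 4\delta|u_2|^2$ and splitting into the cases $|u_2|\le|u_1|$ and $|u_2|>|u_1|$ produces $\lambda_- \ge 72\delta^2$ in both, whence $\max(|p_{\xi^\perp}u_1|^2,|p_{\xi^\perp}u_2|^2) \ge \lambda_-/2 \ge 36\delta^2$, which is the claim.

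With $k^*$ chosen so that $|y_{k^*}|\ge 3\delta$, the triangle inequality yields $|p_{\xi^\perp}(v-w)|\ge 2\delta$ for every $w\in B(x_{k^*},\delta)$, and $|v-w|^\gamma\ge(R+1)^\gamma(1+|v|)^\gamma$ follows from $\gamma<0$. Restricting the integral defining $\xi^* a(f,v)\xi$ to $B(x_{k^*},\delta)$ then gives
\[
\xi^* a(f,v)\xi \ge 4(R+1)^\gamma \delta^2 (1+|v|)^\gamma f(B(x_{k^*},\delta)) \ge 4(R+1)^\gamma \delta^2 (1+|v|)^\gamma \inf_{k=1,2,3} f(B(x_k,\delta)),
\]
proving the lemma with $\kappa = 4(R+1)^\gamma\delta^2$. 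The main obstacle is the eigenvalue estimate of the previous paragraph: both parts of the non-alignment condition must be combined with the right case split to avoid degeneration when $|u_2|$ is either very small or very large compared to $|u_1|$; the rest is elementary geometry and bookkeeping.
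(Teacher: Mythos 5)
Your proof is correct, and it takes a genuinely different route from the paper's. The paper constructs a narrow cone $C$ centered at $v$ with axis $\xi$ and $v$-dependent half-angle $\arcsin[\delta/(2+R+|v|)]$, and carries out a somewhat involved geometric argument to show that $C$ cannot meet all three balls $B(x_k,2\delta)$; the coercivity is then extracted by splitting $|v-w|^\gamma|p_{\xi^\perp}(v-w)|^2$ as $|v-w|^{\gamma+2}\cdot|p_{\xi^\perp}(v-w)|^2/|v-w|^2$ and using the angle bound from the cone for the second factor. You instead observe that the quantity $\max_k|p_{\xi^\perp}(v-x_k)|$ admits a $v$-independent lower bound, and you prove that lower bound through a clean eigenvalue computation: $\min_{|\xi|=1}\bigl(|p_{\xi^\perp}u_1|^2+|p_{\xi^\perp}u_2|^2\bigr)$ equals the smaller nonzero eigenvalue $\lambda_-$ of the rank-two Gram matrix $u_1u_1^*+u_2u_2^*$, and $\lambda_-\ge\lambda_+\lambda_-/\Tr M = |u_1|^2|w|^2/(|u_1|^2+|u_2|^2)$, which the two halves of the non-alignment condition bound below by $72\delta^2$ after the case split on $|u_2|\lessgtr|u_1|$. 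Once you know $|p_{\xi^\perp}(v-w)|\ge 2\delta$ on the good ball, the radial factor $|v-w|^\gamma$ is handled by monotonicity alone, which is simpler than the paper's manipulation of $|v-w|^{\gamma+2}$. Your route is arguably more transparent and makes the use of both halves of~\eqref{cond-balls} more explicit; it yields a different but equally admissible constant $\kappa=4(R+1)^\gamma\delta^2$.

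A small stylistic point: you could make the passage from $\max(|p_{\xi^\perp}(x_2-x_1)|,|p_{\xi^\perp}(x_3-x_1)|)\ge 6\delta$ to $\max_k|y_k|\ge 3\delta$ slightly more explicit (it rests on $|y_j-y_k|=|p_{\xi^\perp}(x_k-x_j)|$ and the elementary inequality $\max_k|y_k|\ge\frac12\max_{j\ne k}|y_j-y_k|$), and you should note that $u_1,u_2$ are linearly independent under~\eqref{cond-balls} so that $\lambda_->0$; both are minor and do not affect the correctness of the argument.
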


The  proof of this Lemma is strongly inspired by that of Desvillettes-Villani 
\cite[Proposition 4]{dv} (which is Lemma \ref{ellip1}).
In fact, \cite[Proposition 4]{dv} may be seen as a consequence of Lemma~\ref{ellip2},
thanks to the following Lemma.

\begin{lem} \label{ellip3} 
Let $\ell \in \nn^*$, $H_0>0$ and $\EE_0>0$ be fixed.
There exist some constants $\delta \in (0,1)$, $R>0$ and $\kappa>0$ such that
for any $f \in\cP(\rd)$ with $H(f)\leq H_0$ and $m_2(f)\leq \EE_0$, there
are $x_1,x_2,x_3$ belonging to $B(0,R)$, satisfying the $(\ell \delta)$-non alignment condition and
such that
\begin{equation*}
\inf_{k=1,2,3} f \bigl(B(x_k,\delta)\bigr) \ge \kappa.
\end{equation*}
\end{lem}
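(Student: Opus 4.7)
The plan is to combine three ingredients: tightness from the moment bound, an entropy-based estimate controlling the $f$-mass of sets of small Lebesgue measure, and a pigeonhole argument on a fine cubic partition of $B(0,R)$. The crux is the second ingredient, which crucially uses \emph{both} the upper bound on entropy (to penalize concentration on small sets) and the moment bound (to control the ``tail'' part via Gaussian comparison).

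First, Chebyshev's inequality and $m_2(f) \leq \EE_0$ yield $R > 0$ depending only on $\EE_0$ with $f(B(0,R)) \geq 3/4$. For the concentration estimate, given any Borel $A \subset \rd$ with $0 < f(A) < 1$, set $\beta := f(A)$ and write $f = \beta \mu + (1-\beta) \nu$ where $\mu, \nu$ are the normalized restrictions of $f$ to $A$ and $A^c$. The entropy decomposes as
\[
\int f \log f = \beta \log \beta + (1-\beta)\log(1-\beta) + \beta H(\mu) + (1-\beta) H(\nu).
\]
Three standard estimates bound the right-hand side from below: $\beta\log\beta + (1-\beta)\log(1-\beta) \geq -\log 2$; Jensen against the uniform reference on $A$ gives $H(\mu) \geq -\log|A|$; and comparing $\nu$ to a standard Gaussian while using $m_2((1-\beta)\nu) \leq \EE_0$ yields $(1-\beta) H(\nu) \geq -\tfrac{3}{2}\log(2\pi) - \EE_0/2$. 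Summing these and invoking $\int f \log f \leq H_0$ gives, for any Borel $A$ with $|A|<1$,
\[
f(A) \leq \frac{H_0 + \log 2 + \tfrac{3}{2}\log(2\pi) + \EE_0/2}{\log(1/|A|)} =: \omega(|A|),
\]
a modulus depending only on $H_0$ and $\EE_0$ and vanishing at $0$.

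Next, set $\eta := 24\ell\delta + 4R\sqrt{\ell\delta}$ and observe that the ball of radius $6\sqrt{\ell\delta}$ has volume $288\pi(\ell\delta)^{3/2}$, while the $\eta$-tube around any affine line, intersected with $B(0,R)$, has volume at most $2\pi R \eta^2$; both tend to $0$ as $\delta \to 0$. Pick $\delta \in (0,1)$ small enough (depending only on $\ell, H_0, \EE_0, R$) that $\omega$ applied to each of these volumes is at most $1/4$. Then partition $B(0,R)$ into cubes of side $\delta/\sqrt{3}$, each of diameter $\delta$, numbering $N \leq C(R/\delta)^3$. Pigeonhole yields a cube $Q_1$ with $f(Q_1) \geq 3/(4N)$, and any $x_1 \in Q_1$ satisfies $f(B(x_1,\delta)) \geq f(Q_1)$. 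By the modulus, $f(B(x_1, 6\sqrt{\ell\delta})) \leq 1/4$, so $B(0,R) \setminus B(x_1, 6\sqrt{\ell\delta})$ has $f$-mass $\geq 1/2$, and a second pigeonhole on the cubes meeting it yields $x_2$ with $|x_2-x_1| \geq 6\sqrt{\ell\delta}$ and $f(B(x_2,\delta)) \geq 1/(2N)$. Repeating the argument on $B(0,R) \setminus T_\eta(L_{12})$, where $L_{12}$ is the line through $x_1,x_2$ and $T_\eta$ its $\eta$-tube, produces $x_3$ with $f(B(x_3,\delta)) \geq 1/(2N)$ and $d(x_3, L_{12}) \geq \eta$; combined with $|x_3-x_1|\leq 2R$ and the definition of $\eta$, this yields $d(x_3, L_{12}) \geq 24\ell\delta + 2\sqrt{\ell\delta}|x_3-x_1|$, so the $(\ell\delta)$-non alignment condition holds. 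Taking $\kappa := 1/(2N)$ completes the proof.

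The main obstacle is the concentration modulus $\omega$: neither the entropy bound alone (which does not yield any direct upper bound on $\int_A f \log f$, as concentration on $A$ can be compensated by a very spread-out tail on $A^c$) nor the moment bound alone suffices; the entropy decomposition above is what couples them to produce a quantitative upper bound on $f(A)$ in terms of $|A|$. Once $\omega$ is in hand, the rest of the argument is a routine geometric pigeonhole.
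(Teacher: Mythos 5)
Your proof is correct and follows essentially the same strategy as the paper's: the concentration modulus $\omega$ you derive (by splitting $f$ into its restrictions to $A$ and $A^c$, bounding the mixture entropy, using Jensen on $A$, and Gaussian comparison on $A^c$) is exactly the content of the paper's auxiliary Lemma C.1(ii), which the paper proves by the very same decomposition and then cites here. The subsequent geometry is also the same iterated pigeonhole, with your constant-radius tube $T_\eta$ bounding the paper's growing-radius zone $D_{x_1,x_2}$ and your cubic cover playing the role of the paper's cover by $\delta$-balls.
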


We shall also need a version of Lemma~\ref{ellip3} valid uniformly on small time intervals when $f$ 
is a solution to the Landau equation~\eqref{HL3D}. 

\begin{lem} \label{ellip4}
Let $\gamma \in (-2,0)$ and $q>q(\gamma)$. Let $f_0 \in \cP_2(\rd)$ satisfy also $H(f_0)<\infty$ and 
$m_q(f_0)<\infty$. Let $T>0$, $p \in ( p_1(\gamma), p_2(\gamma,q) )$, 
and $f \in L^\infty_{loc}([0,\infty),\cP_2(\rr^3)) 
\cap  L^1_{loc}([0,\infty),L^p(\rr^3))$ the solution 
of~\eqref{HL3D} given by Theorem~\ref{thfg}. There exist
four constants $\delta_0 \in (0,1)$, $R_0>0$, $\kappa_0 >0$ and $\tau_0>0$
such that for any $t \in [0,T]$, we can find three points $x_1^t,x_2^t,x_3^t$ in $B(0,R_0)$ satisfying  
the $(4  \delta_0)$-non alignment condition and such that
\begin{equation*}
\inf_{s \in [t, t+\tau_0]} \, \inf_{k=1,2,3} f_s\bigl(B(x_k^t,\delta_0)\bigr) \ge \kappa_0.
\end{equation*}
\end{lem}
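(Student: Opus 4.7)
The plan is to apply Lemma~\ref{ellip3} pointwise at each time $t \in [0,T]$, using the fact that Theorem~\ref{thfg} provides $H(f_t) \le H(f_0)$ and $m_2(f_t) = m_2(f_0)$ uniformly in time, and then to propagate the resulting mass lower bound to a small time interval through the weak formulation~\eqref{wl}. Since the entropy and energy bounds on $f_t$ are independent of $t$, the constants produced by Lemma~\ref{ellip3} will be uniform.

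More precisely, I would apply Lemma~\ref{ellip3} with $\ell = 8$, $H_0 := H(f_0)$ and $\EE_0 := m_2(f_0)$ to obtain constants $\delta \in (0,1)$, $R > 0$, $\kappa > 0$. For each $t \in [0,T]$, applied to $f_t$, this yields $x_1^t, x_2^t, x_3^t \in B(0,R)$ satisfying the $(8\delta)$-non alignment condition with $f_t(B(x_k^t, \delta)) \ge \kappa$ for $k = 1,2,3$. I then set $\delta_0 := 2\delta$ (so that $8\delta = 4\delta_0$), $R_0 := R$, and use a smooth profile: fix once and for all $\psi \in C^2_c(\rd)$ with $\indiq_{B(0,1)} \le \psi \le \indiq_{B(0,2)}$ and set $\varphi_k^t(v) := \psi((v - x_k^t)/\delta)$. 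Each $\varphi_k^t$ then satisfies $\indiq_{B(x_k^t, \delta)} \le \varphi_k^t \le \indiq_{B(x_k^t, \delta_0)}$, with $\|\nabla \varphi_k^t\|_\infty$ and $\|\nabla^2 \varphi_k^t\|_\infty$ bounded by constants depending only on $\delta$ and $\psi$ (in particular, uniformly in $t$ and $k$).

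The main step is to control $\bigl|\int \varphi_k^t f_s - \int \varphi_k^t f_t\bigr|$ uniformly in $t$. From~\eqref{wl},
\[
\Bigl| \int \varphi_k^t\, f_s - \int \varphi_k^t\, f_t \Bigr| \le \int_t^s \int\!\!\int |L\varphi_k^t(v,v_*)| f_u(dv) f_u(dv_*)\, du,
\]
and~\eqref{weakb1} combined with the uniform $C^2$ bounds on $\varphi_k^t$ gives $|L\varphi_k^t(v,v_*)| \le C_\delta(|v-v_*|^{\gamma+1} + |v-v_*|^{\gamma+2})$. The term $|v-v_*|^{\gamma+2}$ is bounded by $C(1 + |v|^2 + |v_*|^2)$, which integrates against $f_u \otimes f_u$ to a quantity bounded by $C(1 + \EE_0)$. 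For $\gamma \in [-1,0)$, the term $|v-v_*|^{\gamma+1}$ is similarly under control. For $\gamma \in (-2,-1)$, I would apply~\eqref{weakb2} with $\alpha = \gamma+1 \in (-1,0)$ and exponent $p \in (p_1(\gamma), p_2(\gamma,q))$, noting that $p > p_1(\gamma) = 3/(3+\gamma) > 3/(4+\gamma)$, to get $\int |v-v_*|^{\gamma+1} f_u(dv_*) \le 1 + C_p \|f_u\|_{L^p}$. This $L^p$ integrability of $f_u$ is precisely what Theorem~\ref{thfg} guarantees. Altogether,
\[
\Bigl| \int \varphi_k^t\, f_s - \int \varphi_k^t\, f_t \Bigr| \le C_{\delta,p}(1+\EE_0)(s-t) + C_{\delta,p} \int_t^s \|f_u\|_{L^p}\, du.
\]

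To finish, I would observe that $t \mapsto G(t) := \int_0^t \|f_u\|_{L^p}\, du$ is continuous on $[0,T+1]$ (since $f \in L^1_{loc}([0,\infty), L^p(\rd))$), hence uniformly continuous, so there exists $\tau_0 \in (0,1)$ such that $G(t + \tau_0) - G(t) + \tau_0(1 + \EE_0) \le \kappa/(2C_{\delta,p})$ for every $t \in [0,T]$. Then for any such $t$ and any $s \in [t, t + \tau_0]$, $\int \varphi_k^t\, f_s \ge \int \varphi_k^t\, f_t - \kappa/2 \ge \kappa - \kappa/2 = \kappa/2$, and since $\varphi_k^t$ is supported in $B(x_k^t, \delta_0)$, we conclude $f_s(B(x_k^t, \delta_0)) \ge \kappa/2 =: \kappa_0$. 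The delicate point, and the only one that is not purely bookkeeping, is the uniformity in $t$ of the modulus of time continuity in the $\gamma \in (-2,-1)$ regime; this is exactly what the time-integrability of $\|f_u\|_{L^p}$ from Theorem~\ref{thfg} is designed to provide.
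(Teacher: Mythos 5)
Your proof is correct and follows essentially the same route as the paper's: apply Lemma~\ref{ellip3} with $\ell=8$ and the uniform-in-time entropy/energy bounds, mollify the indicator to a $C^2_c$ profile, propagate the mass lower bound over a short time interval via the weak formulation~\eqref{wl} and~\eqref{weakb1}--\eqref{weakb2}, and set $\delta_0=2\delta$, $\kappa_0=\kappa/2$. The only cosmetic differences are that you apply~\eqref{weakb2} with $\alpha=\gamma+1$ directly (valid since $p>p_1(\gamma)>3/(4+\gamma)$) where the paper first replaces $|v-v_*|^{\gamma+1}$ by $1+|v|^2+|v_*|^2+|v-v_*|^\gamma$ and uses a smaller exponent $p'$, and that you invoke uniform continuity of $t\mapsto\int_0^t\|f_u\|_{L^p}\,du$ where the paper derives an explicit Hölder modulus $(s-t)^{1-a}$; both are equivalent for the purpose of extracting $\tau_0$.
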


We now prove all these lemmas.

\begin{proof}[Proof of Lemma~\ref{ellip2}]
We fix $(x_1,x_2,x_3) \in B(0,R)^3$ satisfying the $\delta$-non alignment condition,
we fix  $v\in \rr^3$ and $\xi\in \rr^3$ such that $|\xi|=1$, and we 
divide the proof in two steps.

\vip

{\sl Step 1. A geometric claim.}
We introduce the cone $C$ centered at $v$, with axis $\xi$ and angle $\arcsin [\delta/(2+R+|v|)]$ 
that can also be defined by
$$
C = \biggl\{v_* \in \rr^3 \; : \; \frac{| p_{\xi^\perp}(v-v_*) |}{|v-v_*|} \le \frac \delta{2+R+|v|}  \biggr\},
$$ 
We claim that the cone $C$ can not intersect the three balls $B_k^{2\delta}:=B(x_k,2\delta)$.

\vip

We thus assume that $C$ intersects the balls $B_1^{2\delta}$ and $B_2^{2\delta}$ and show it does not intersect
$B_3^{2\delta}$. We first check that $\xi$ and $\xi_0 := (x_2-x_1)/| x_2 - x_1 |$ are almost aligned, 
in the sense that
\begin{align}
|p_{\xi^\perp} (\xi_0) | & \le \sqrt \delta \quad \hbox{and}\quad 
|p_{\xi_0^\perp} (\xi) |  \le \sqrt \delta. \label{xi//e1}
\end{align}
We may find $w_1,w_2 \in C$ such that $| w_1 - x_1 | \le 2\, \delta$ and $|w_2 -  x_2 | \le 2\, \delta$.
This implies that $|w_i| \le R + 2\delta \le R+2$ for $i=1,2$ and, starting from
$x_2- x_1 =  (x_2 - w_2)  + (w_2- v) + (v-w_1) + (w_1 - x_1)$,
\begin{align*}
|p_{\xi^\perp} (x_2-x_1) | & \le  2\, \delta + |p_{\xi^\perp}(v-w_2)| + |p_{\xi^\perp}(v- w_1)| + 2\, \delta,\\
& \le 4\,  \delta + \frac \delta {2+R+|v|} \bigl(  |v - w_2| + |v - w_1|  \bigr)  
\le 6 \, \delta.
\end{align*}
We conclude that $|p_{\xi^\perp} (\xi_0) | \le 6  \delta /| x_2 - x_1 | \le \sqrt \delta$
thanks to the first assumption in~\eqref{cond-balls}. Since finally $\xi_0$ and $\xi$ are 
unit vectors, $|p_{\xi^\perp} (\xi_0) |  \le \sqrt  \delta$ means that 
$| \xi \cdot \xi_0 |^2 \ge 1 -  \delta$, whence $|p_{\xi_0^\perp} (\xi) |  \le \sqrt  \delta$.

\vip

Next, for any $w \in C \backslash \{ v\}$, we show that there is a point $w^\ast$ on the line $(v,w)$ 
(whence $w^\ast \in C$) that is very close to $x_1$, in the sense that $|w^\ast - x_1|\leq 6\delta$. 
Let thus 
$$
w^\ast = v + \lambda^\ast (w-v) \quad \hbox{with} \quad \lambda^\ast = \frac{(w_1 -v)\cdot\xi}{(w-v)\cdot\xi},
$$ 
which satisfies $w^\ast \cdot \xi = w_1 \cdot \xi$, whence $| w_1 - w^\ast  | = | p_{\xi^\perp} (w_1- w^\ast)|$.
Then
\begin{align*}
| w_1 - w^\ast  |& \le \bigl| p_{\xi^\perp} (w_1 - v) \bigr| + \bigl| p_{\xi^\perp} (v - w^\ast) \bigr| \\
& \le \frac \delta {2+R+ |v|} \bigl( |w_1 - v| +  |w^\ast - v| \bigr) \\
& \le \frac \delta {2+R + |v|} \bigl( 2 |w_1 - v| +  |w^\ast - w_1| \bigr).
\end{align*}
We conclude that (recall that $\delta \in (0,1)$, that $R>1$ and that $|w_1|\leq R+2$)
\begin{align*}
| w^\ast - w_1 | & \le \frac{2 |w_1 - v| \delta}{ 2+R+|v|-\delta} \leq 4\delta,
\end{align*}
whence $|w^\ast - x_1| \le 6 \, \delta$ as desired.

\vip
 
This allows us to conclude that $C$ is included in a narrow  cone centered at $x_1$ and directed by $\xi_0$. 
More precisely, for any $w\in C$, 
\begin{align}\label{obnow}
\bigl|p_{\xi_0^\perp}( w - x_1) \bigr| \leq  18\,  \delta +  2\sqrt \delta \,  |w-x_1|.
\end{align}
Indeed, consider $w^\ast$ as previously, use that $x= p_{\xi^\perp} (x)+(x\cdot \xi)\xi$ for all $x$ (because
$\xi$ is unitary), recall \eqref{xi//e1} and also that $w-w^*=(1-\lambda^*)(w-v)$ whence, since $w$ belongs 
to $C$,
$|p_{\xi^\perp}(w- w^\ast)| / |w-w^\ast|\leq \delta/(2+R+|v|)$, to write
\begin{align*}
\bigl|p_{\xi_0^\perp}( w - x_1) \bigr| & =\bigl| p_{\xi_0^\perp}( w^\ast - x_1) 
+  p_{\xi_0^\perp}( w- w^\ast)  \bigr| \\
& \le 6 \, \delta + \bigl| p_{\xi_0^\perp}\bigl( p_{\xi^\perp}(w- w^\ast)  \bigr) \bigr|
+ \bigl|(w- w^\ast)\cdot \xi \bigr| \bigl| p_{\xi_0^\perp} (\xi)\bigr| \\
& \le 6 \, \delta + \frac \delta {2+R + |v|}|w-w^\ast| +  \sqrt \delta \, |w - w^\ast| \\
& \le 6 \, \delta + 2\,   \sqrt \delta\,|w-w^\ast|.
\end{align*}
Recalling that $|w-w^\ast|\leq |w-x_1|+|x_1-w^\ast| \leq |w-x_1| + 6\delta$, the conclusion follows.

\vip

Assume finally that there is $w_3 \in B_3^{2\delta} \cap C$. Then, using \eqref{obnow} with $w=w_3$,
one finds that
$$
| p_{\xi_0^\perp} (x_3- x_1) | <  | p_{\xi_0^\perp} (w_3 - x_1) | + 2 \, \delta
\le   20 \,  \delta + 2 \sqrt \delta \,  |w_3 - x_1|
\le   24 \,  \delta + 2 \sqrt \delta \,  |x_3 - x_1|.
$$
But $\xi_0=(x_2-x_1)/|x_2-x_1|$ so that $p_{\xi_0^\perp}=p_{(x_2-x_1)^\perp}$: this contradicts 
\eqref{cond-balls}. 
%(for any value of $\ell \in \nn^*$). 

\vip
{\sl Step 2. Conclusion.}
We choose $k \in \{1,2,3 \}$ such that $C$ does not intersect $B_k^{2\delta}$ and we write,
recalling that $a(f,v)= \int_{\rd}a(v-v_*)f(dv_*)$ and the expression of $a$,
\begin{align*}
\xi^* a(f,v) \xi   = \intrd |v-v_*|^{\gamma+2} \Big(1 - \frac{((v-v_*)\cdot\xi )^2}{|v-v_*|^2} \Big) 
f(dv_*)
= \intrd |v-v_*|^{\gamma+2} \frac{\bigl| p_{\xi^\perp}(v-v_*)\bigr|^2}{|v-v_*|^2}  f(dv_*).
\end{align*}
Since now  $C$ does not intersect $B(x_k,\delta)\subset B_k^{2\delta}$, we find that
\begin{align*}
\xi^* a(f,v) \xi & \ge \biggl(\frac \delta {2+R + |v|} \biggr)^2 \int_{B(x_k,\delta)} |v-v_*|^{\gamma+2} f(dv_*),\\
& \ge \biggl(\frac \delta {2+R+ |v|} \biggr)^2  f(B(x_k,\delta))  \, \inf_{v_*\in B(x_k,\delta)} |v-v_*|^{\gamma+2}.
\end{align*}
Since moreover $v \notin B_k^{2\delta}$ (because $v \in C$), 
one easily bounds the infimum by
$$
\inf_{v_*\in B(x_k,\delta)} |v-v_*|^{\gamma+2}  \geq  
\max\bigl\{ |v|- R-1, \delta \bigr\}^{\gamma+2} \ge  \biggl( \frac{(1+|v|) \, \delta}{R+3} \biggr)^{\gamma+2}.
$$ 
The last inequality is easily checked separating the cases $|v| \leq R+2$ and $|v|>R+2$.
Finally, we have checked that 
$$
\xi^* a(f,v) \xi \geq
\frac{\delta^2}{(2+R+ |v|)^2} \biggl( \frac{(1+|v|) \, \delta}{R+3} \biggr)^{\gamma+2}f(B(x_k,\delta)) 
\ge \kappa (1+ |v|)^\gamma f(B(x_k,\delta)) 
$$
with $\kappa := [\delta/(R+3)]^{4 + \gamma}$. This concludes the proof.
\end{proof}

\begin{proof}[Proof of Lemma~\ref{ellip3}] Let thus $f\in \cP(\rd)$ with
$m_2(f) \leq \EE_0$, $H(f) \leq H_0$ and let $\ell \in \nn^*$.
We set $R := 1+\sqrt{2 \EE_0}$ and observe that $f( B(0,R)) \ge 1- m_ 2(f)/R^2 \geq 1/2$.

\vip

By Lemma \ref{ieth}-(ii), there is a universal constant $C>0$ such that 
for any Borelian $A \subset \rd$ (here $|\cdot|$ stands for the Lebesgue measure)
$$
|A| \leq \exp(-4(C+H_0+\EE_0)) \quad \hbox{implies} \quad f(A) \leq 1/4.
$$

We now fix
$$
\delta= \min \Big\{\frac{\exp(-4(C+H_0+\EE_0))}{2\pi R \ell^2 (24+4R)^2} , 
\Big(\frac{\exp(-4(C+H_0+\EE_0))}{ 1000 \ell^{3/2}}\Big)^{2/3}\Big\}.
$$ 
For any couple of points $y_1,y_2$ in $B(0,R)$, we introduce the zone
$$
D_{y_1,y_2} := \bigl\{ y \in B(0,R) \; : \; |p_{(y_2-y_1)^\perp}(y-y_1)| \le  24\ell\delta + 2\sqrt{\ell\delta}|y-y_1|
\bigr\}.
$$
Using the rough upperbound $|y-y_1|\leq 2R$, we see that $D_{y_1,y_2}$ is included in a truncated cylinder
with length $2R$ and radius $24 \ell\delta + 4R \sqrt{\ell\delta}$, so that
$|D_{y_1,y_2}| \leq 2\pi R (24 \ell\delta + 4R \sqrt{\ell\delta})^2 \leq 2\pi R \ell^2(24 + 4R)^2\delta$.
By definition of $\delta$, we deduce that $|D_{y_1,y_2}|\leq \exp(-4(C+H_0+\EE_0))$,
whence $f(D_{y_1,y_2}) \leq 1/4$.
Similarly, for any $x\in \rd$, $|B(x,6\sqrt{\ell \delta})| \leq 1000 \ell^{3/2}\delta^{3/2} \leq
\exp(-4(C+H_0+\EE_0))$, so that $f(B(x,6\sqrt{\ell \delta})) \leq 1/4$.

\vip

Since we can cover $B(0,R)$ with $(10 R \delta^{-1})^3$ balls of radius $\delta$ (and centered
in $B(0,R)$) and since $f(B(0,R)) \geq 1/2$, we can find $x_1 \in B(0,R)$ such that
$$
f\bigl( B(x_1,\delta) \bigr) \ge \frac12 \biggl( \frac \delta{10 R}\biggr)^3.
$$
But we know that $f(B(x_1,6 \sqrt{ \ell\, \delta})) \leq 1/4$, so that  $f(B(0,R) \setminus 
B(x_1,6 \sqrt{ \ell\, \delta})  )\geq 1/4$. Of course, $B(0,R) \setminus 
B(x_1,6 \sqrt{ \ell\, \delta})$ can also be covered by $(10 R \delta^{-1})^3$ balls of radius $\delta$
(and centered in $B(0,R) \setminus B(x_1,6 \sqrt{ \ell\, \delta})$). We deduce
that there is $x_2 \in B(0,R)\setminus B(x_1,6 \sqrt{ \ell\, \delta})$ such that
$$
f\bigl( B(x_2,\delta) \bigr) \ge \frac14 \biggl( \frac \delta{10 R}\biggr)^3.
$$
We obviously have $|x_1-x_2| \geq 6 \sqrt{ \ell\, \delta}$.
We finally recall that $f(D_{x_1,x_2}) \leq 1/4$, whence $f(B(0,R)\setminus D_{x_1,x_2})\geq 1/4$. 
We thus we can find, as usual, $x_3 \in B(0,R)\setminus D_{x_1,x_2}$ such that
$$
f\bigl( B(x_3,\delta) \bigr) \ge \frac14 \biggl( \frac \delta{10 R}\biggr)^3.
$$
By definition of $D_{x_1,x_2}$, and since $|x_1-x_2| \geq 6 \sqrt{ \ell\, \delta}$,
the triplet $(x_1,x_2,x_3)$ satisfies the $(\ell \delta)$-non alignment condition.
This completes the proof, with $\kappa := (\delta/(10R))^3/4$.
\end{proof}

\begin{proof}[Proof of Lemma~\ref{ellip4}]
Recalling that $H(f_t)\leq H(f_0)$ and $m_2(f_t)=m_2(f_0)$ for all $t\geq 0$, we can apply Lemma \ref{ellip3},
with $\ell=8$,
with the same constants for all times: there are $\delta \in (0,1)$, $R_0>1$ and $\kappa>0$ such that for all
$t\in [0,T]$, there are three points $(x_1^t,x_2^t,x_3^t)$ in $B(0,R_0)$ satisfying the $(8\delta)$-non alignment
condition and such that $f_t (B(x_k^t,\delta)) \ge \kappa$ for $k=1,2,3$.

\vip

Next, we choose a smooth function $h:\rr^3\mapsto [0,1]$ such that 
$\indiq_{\{ |x| \le 1 \}} \le h \le \indiq_{\{ |x| \le 2 \}}$.
For $k =1,2,3$, we define, for $0 \leq t \leq s$,
$$
w_k^t(s) := \int_{\rr^3}  h \biggl( \frac{v-x_k^t}\delta \biggr) \,f_s(dv).
$$
By construction, we have $w_k^t(t)\geq f_t(B(x_k^t,\delta)) \ge \kappa$. 
Using the weak formulation~\eqref{wl} and the bound~\eqref{weakb2}  
with the smooth function $\varphi = h \bigl( \delta^{-1} (\, \cdot\, - x_k^t)\bigr)$, 
we get (recall that $p\in(p_1(\gamma),p_2(\gamma,q))$ is fixed and consider $p'\in(p_1(\gamma),p)$),
for $0\leq t\leq s \leq T+1$,
\begin{align*}
\bigl| w_k^t(s) - w_k^t(t) \bigr|  \le & 
C_\delta \int_t^s \intrd\intrd (|v-v_*|^{\gamma+1}+|v-v_*|^{\gamma+2}) f_u(dv)f_u(dv_*)du \\
\leq & C_\delta \int_t^s \intrd\intrd (1+|v|^2+|v_*|^2+|v-v_*|^{\gamma}) f_u(dv)f_u(dv_*)du \\
\leq & C_\delta \int_t^s \Bigl( 1 +m_2(f_u) + \| f_u \|_{L^{ p'}}\Bigr)\,du.
\end{align*}
The last inequality uses \eqref{weakb1}. Since $f_u$ is a probability density function and since
$p>p'$, we have $\| f_u \|_{L^{ p'}} \leq \| f_u \|_{L^{ p}}^{a}$ with $a=[p(p'-1)]/[p'(p-1)] \in (0,1)$.
Using the H\"older inequality and that $1 +m_2(f_u) + \| f_u \|_{L^p} \in L^1([0,T+1])$, we deduce that,
still for $0\leq t\leq s \leq T+1$,
\begin{align*}
\bigl| w_k^t(s) - w_k^t(t) \bigr| \leq &  C_{\delta,T} (s-t)^{1-a}.
\end{align*}
From this and since $w_k^t(t)\geq\kappa$ for all $t\in [0,T]$, 
it is clear that there exists $\tau_0$ such that, setting $\kappa_0=\kappa/2$,
$$ 
\forall \; t\in [0,T], \quad \forall  s \in [t, t+ \tau_0], \quad w_k^t(s)  \ge \kappa_0.
$$ 
The bound is valid for $k= 1,2,3$. By definition of $w_k^t$, this implies that
$$
\inf_{t\in [0,T]}\inf_{s \in [t, t+ \tau_0]} \inf_{k=1,2,3} f_s\bigl( B(x_k^t,2 \delta)\bigr) \ge \kappa_0.
$$
Setting $\delta_0=2\delta$ ends the proof: since $8 \delta = 4 \delta_0$, the points $x_k^t$ satisfy the
 $(4 \delta_0)$-non alignment condition.
\end{proof}

%%%%%%%%%%%%%%%%%%%%%%%%%%%%%%%%%%%%%%%%%%%%%%%%%%%%%%%%%%%%%%%%%%%%%%%%%%%%%%%%%
%
%
%
%
%
%
%
%
%%%%%%%%%%%%%%%%%%%%%%%%%%%%%%%%%%%%%%%%%%%%%%%%%%%%%%%%%%%%%%%%%%%%%%%%%%%%%%%%

\section{Chaos without rate} \label{sec:per}
\setcounter{equation}{0}

The aim of this section is to prove Theorem \ref{mr}.
When applying the methods of \cite{fhm1}, 
the main difficulty is that the matrix $a$ is not uniformly elliptic.
Ellipticity is important, since it provides the regularity that allows us to show that the singularity
of the coefficients is not too much visited. In the particle system \eqref{ps},
there is a lack of diffusion in some directions when the particles are almost aligned.
We thus will proceed as follows. We will first
introduce a perturbed particle system, by adding some diffusion when particles are in a bad (almost aligned)
configuration. We then will prove propagation of chaos for this perturbed system. Finally,
we will show that the artificial noise is used only with a very small probability (as $N\to \infty$).

\vip

In the rest of that section, we consider $\gamma \in (-2,0)$ and $f_0 \in \cP_2(\rd)$ satisfying $H(f_0)<\infty$
and $m_q(f_0)<\infty$ for some $q>q(\gamma)$. We consider the unique
solution $f\in L^\infty_{loc}([0,\infty),\cP_2(\rr^3)) \cap  L^1_{loc}([0,\infty),L^p(\rr^3))$ for all $p \in
(p_1(\gamma),p_2(\gamma,q))$ to~\eqref{HL3D} given by Theorem~\ref{thfg}. We recall that it satisfies 
$H(f_t) \leq H(f_0)<\infty$ and $m_2(f_t)=m_2(f_0)<\infty$. We also 
fix an arbitrary final time $T$. We finally consider, for each $N\geq 2$, an exchangeable initial condition 
$(\VV^N_1(0),\dots,\VV^N_N(0))$ satisfying the set of conditions \eqref{condichaos}.

\subsection{Definition of a perturbed system} \label{sec:defpsp}

We first introduce a few notation.

\begin{nota}\label{ddd1}
(i) Recall the constants $R_0>0$, $\delta_0>0$, $\kappa_0>0$ and $\tau_0>0$ introduced in Lemma~\ref{ellip4}.
We also denote by $n_0=\lfloor T/\tau_0 \rfloor$, so that 
$$
[0,T] \subset \bigcup_{l=0}^{n_0} \lfloor l \tau_0, (l+1) \tau_0 \rfloor.
$$
Lemma~\ref{ellip4} implies that for each $l=0,\dots,n$, we can find three points $x_1^l,x_2^l,x_3^l$ in $B(0,R_0)$
satisfying the $(4 \delta_0)$-non alignment condition~\eqref{cond-balls} and such that
\begin{equation} \label{kappa-inf}
\inf_{l=0,\dots,n_0} \; \inf_{t \in [l \tau_0, (l+1) \tau_0]} \; \inf_{k=1,2,3} f_t(B(x_k^l,\delta_0)) \ge \kappa_0.
\end{equation}

(ii) Let $h:\rr^3\mapsto [0,1]$ and $\chi: [0,\infty) \mapsto [0,1]$ 
be smooth and satisfy $\indiq_{\{|v|\leq 1\}} \leq h(v) \leq \indiq_{\{|v|\leq 2\}}$
and $\indiq_{\{r \leq 1\}} \leq \chi(r) \leq \indiq_{\{r\leq 2\}}$.
For $l=0,\dots,n_0$ and $g \in \cP(\rr^3)$, we put
$$
c_l(g) := \sum_{k=1}^3 \chi \biggl( \frac 4 {\kappa_0} \intrd h \Bigl(\frac{v-x_k^{l}}
{2\delta_0}\Bigr)g(dv)  \biggr) 
\in [0,3].
$$
\end{nota}

We also shorten the notation of the coefficients of the particle system.

\begin{nota}
For $\bm v^N=(v_1^1,\dots,v_N^N) \in (\rd)^N$, we introduce
$$
b^N_i(\bm v^N)=b\Big(\frac 1 N \sum_1^N \delta_{v_j^N} \star \phi_{\eta_N}, v_i^N\Big) \quad \hbox{and}\quad
a^N_i(\bm v^N)=a \Big(\frac 1 N \sum_1^N \delta_{v_j^N} \star \phi_{\eta_N}, v_i^N\Big),
$$
and $\sigma^N_i(\bm v^N)= (a^N_i(\bm v^N))^{1/2}$. For $l=1,\dots,n_0$, we set
$$
c^N_l(\bm v^N)=c_l\Big(\frac 1 N \sum_1^N \delta_{v_j^N}\Big).
$$
\end{nota}

We will use the following properties.

\begin{prop}\label{adnoiseprop}
Let $\gamma\in(-2,0)$.
Let $g\in \cP(\rr^3)$, $\bm v^N=(v_1,\dots,v_N)\in (\rd)^N$, and $l\in \{0,\dots,n_0\}$.

\vip

(i) We have $c_l(g)=0$ if $\inf_{k=1,2,3} g(B(x_k^l,2\delta_0)) \geq \kappa_0/2$.

\vip

(ii) We have $c_l(g)\geq 1$ as soon as $\inf_{k=1,2,3} g (B(x_k^l,4\delta_0) ) \leq \kappa_0/ 4$.

\vip

(iii) There is a  constant $C>0$ such that 
$$
|\nabla_{v_i} c_l^N(\bm v^N)| \leq \frac C{N \kappa_0 \delta_0}.
$$

(iv) There is $\kappa_1 >0 $ such that for all $\xi \in\rd$,
$$
\xi^* a_i^N(\bm v^N) \xi + |\xi|^2 c_l^N(\bm v^N) \geq  \kappa_1 (1+|v_i|)^\gamma |\xi|^2.
$$

(v) $c_l(f_t) = 0$  for all $t \in [l \tau_0, (l+1) \tau_0]$.

\vip

(vi) There is a constant $C>0$ such that for all $g_1, g_2 \in \cP(\rr^3)$,
$$
|c_l(g_1)-c_l(g_2)| \leq \frac C{\kappa_0 \delta_0} W_1(g_1,g_2) \leq \frac C{\kappa_0 \delta_0} W_2(g_1,g_2).
$$
\end{prop}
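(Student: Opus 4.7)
The plan is to dispatch (i), (ii), (iii), (v), (vi) by direct computation from the definitions of $h$ and $\chi$, and to reduce the substantive claim (iv) to Lemma~\ref{ellip2} via a case split on the size of $c_l^N(\bm v^N)$. For (i), the lower bound $h \geq \indiq_{B(0,1)}$ gives $\int h\bigl((v-x_k^{l})/(2\delta_0)\bigr) g(dv) \geq g(B(x_k^l, 2\delta_0)) \geq \kappa_0/2$, so the argument of $\chi$ is at least $2$ for each $k$ and the whole sum vanishes. Symmetrically for (ii), the upper bound $h \leq \indiq_{B(0,2)}$ gives $\int h\,dg \leq g(B(x_k^l, 4\delta_0))$; if this is $\leq \kappa_0/4$ for some $k$, the argument of $\chi$ is at most $1$ and that term equals $1$, so $c_l(g) \geq 1$. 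Point (v) combines the contrapositive of (i) with Lemma~\ref{ellip4}, using $B(x_k^l, 2\delta_0) \supset B(x_k^l, \delta_0)$. For (iii), a chain rule yields $\nabla_{v_i} c_l^N = \sum_k \chi'(\cdots) \cdot (4/\kappa_0) \cdot (2N\delta_0)^{-1} \nabla h\bigl((v_i-x_k^l)/(2\delta_0)\bigr)$, bounded in norm by $C/(N\kappa_0\delta_0)$ with $C = 6\|\chi'\|_\infty \|\nabla h\|_\infty$. For (vi), the function $v \mapsto h\bigl((v-x_k^l)/(2\delta_0)\bigr)$ is Lipschitz with constant $\|\nabla h\|_\infty/(2\delta_0)$, so Kantorovich-Rubinstein duality gives $|c_l(g_1)-c_l(g_2)| \leq 3\|\chi'\|_\infty \cdot (4/\kappa_0) \cdot (\|\nabla h\|_\infty/(2\delta_0)) \, W_1(g_1,g_2)$, and $W_1\leq W_2$ follows from Jensen applied to any coupling.

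The heart of the proposition is (iv), which I would handle by splitting on whether $c_l^N(\bm v^N) \geq 1$ or $c_l^N(\bm v^N) < 1$. In the first case, since $\gamma < 0$ we have $(1+|v_i|)^\gamma \leq 1$, so $\xi^* a_i^N(\bm v^N) \xi + |\xi|^2 c_l^N(\bm v^N) \geq |\xi|^2 \geq (1+|v_i|)^\gamma |\xi|^2$ and any $\kappa_1 \leq 1$ suffices. In the second case, the contrapositive of (ii) forces $\mu^N(B(x_k^l, 4\delta_0)) > \kappa_0/4$ for all $k=1,2,3$. The triple $(x_k^l)$ was built in Notation~\ref{ddd1} to satisfy the $(4\delta_0)$-non alignment condition, hence the $\delta$-non alignment for all $\delta \leq 4\delta_0$. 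I would then transfer the mass lower bound from $\mu^N$ to $\tilde\mu^N = \mu^N \star \phi_{\eta_N}$ and invoke Lemma~\ref{ellip2} on $a(\tilde\mu^N, v_i)$ with some $\delta \in (0, 4\delta_0]$, obtaining $\xi^* a_i^N(\bm v^N) \xi \geq \kappa\,(c\kappa_0/4)(1+|v_i|)^\gamma |\xi|^2$ and hence $\kappa_1 = \kappa c\kappa_0/4$.

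The main obstacle is precisely that mass transfer from the discrete $\mu^N$ to the smoothed $\tilde\mu^N$, because convolution by $\phi_{\eta_N}$ can bleed mass \emph{out} of $B(x_k^l, 4\delta_0)$. The cleanest route I would take exploits the observation $\mu^N(B(x_k^l, r-\eta_N)) \leq \tilde\mu^N(B(x_k^l, r))$ for $r > \eta_N$: after strengthening the non-alignment requirement in the construction of the $x_k^l$ (for instance, asking for $(8\delta_0)$-non alignment in Lemma~\ref{ellip4}, which costs nothing since the constants there are qualitative) and using the paper's standing freedom to take $\eta_N$ as small as desired so that $\eta_N \leq \delta_0$, one gets $\tilde\mu^N(B(x_k^l, 5\delta_0)) \geq \mu^N(B(x_k^l, 4\delta_0)) > \kappa_0/4$, and then applies Lemma~\ref{ellip2} with $\delta = 5\delta_0 \leq 8\delta_0$. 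An equivalent alternative would be to re-run the cone argument of Lemma~\ref{ellip2} directly on $a(\tilde\mu^N, v_i)$, observing that after a slight $\eta_N$-inflation of the cone's axial angle, the complement of the cone still contains one of the balls $B(x_k^l, 4\delta_0)$ in which the $\mu^N$-mass is controlled; this delivers the ellipticity bound in terms of $\mu^N$-mass directly.
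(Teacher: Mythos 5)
Your treatments of (i), (ii), (iii), (v), (vi) are the same as the paper's, and your case split in (iv) — between $c_l^N\geq 1$ and $c_l^N<1$, the latter forcing $\inf_k\mu^N(B(x_k^l,4\delta_0))>\kappa_0/4$ via the contrapositive of (ii) and then invoking Lemma~\ref{ellip2} — is structurally identical to the paper's (which splits on $\inf_k\mu^N(B(x_k^l,4\delta_0))\gtrless\kappa_0/4$). The genuine divergence is in how you transfer the mass lower bound from the discrete $\mu^N$ to the smoothed $\tilde\mu^N=\mu^N\star\phi_{\eta_N}$. You correctly identify this as the obstacle, and your workaround — inflate the balls by $\eta_N$, requiring $\eta_N\leq\delta_0$, and ask Lemma~\ref{ellip4} for $(8\delta_0)$-non alignment so that Lemma~\ref{ellip2} still applies with $\delta=5\delta_0$ — does work, and you're right that strengthening the non-alignment constant costs nothing. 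However, the paper sidesteps the $\eta_N\leq\delta_0$ constraint entirely with a small dedicated result, Lemma~\ref{erfc}: for any $\delta>0$ there is $c_\delta>0$ such that $(\mu\star\phi_\eta)(B(x,\delta))\geq c_\delta\,\mu(B(x,\delta))$ for \emph{all} $\eta\in(0,1)$. The proof is a short geometric argument (for each $z\in B(0,\delta)$ there is a fixed-measure cone of directions $u$ with $|z+\eta u|<\delta$ for all $\eta<1$). This lets the paper apply Lemma~\ref{ellip2} at the same radius $4\delta_0$ where the non-alignment was already arranged, with no modification to Lemma~\ref{ellip4} and no restriction on $\eta_N$, so Proposition~\ref{adnoiseprop} holds for all $N\geq 2$ and $\eta_N\in(0,1)$ rather than only for $N$ large enough that $\eta_N\leq\delta_0$. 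Your approach is valid but buys you an extra hypothesis to carry; the paper's Lemma~\ref{erfc} is the cleaner device and you should have looked for (or proved) something like it.
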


We need the following easy remark.
\begin{lem}\label{erfc}
For all $\delta>0$, there is $c_\delta>0$ such that for all $x\in\rd$, all $\mu \in \cP(\rd)$,
all $\eta \in (0,1)$, 
$$
(\mu\star \phi_\eta)(B(x,\delta)) \geq c_\delta \mu(B(x,\delta)).
$$
\end{lem}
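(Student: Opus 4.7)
I would begin with Fubini's theorem to rewrite
\[
(\mu\star\phi_\eta)(B(x,\delta)) = \int_{\rd} G(w)\, \mu(dw),\qquad G(w) := \int_{B(x,\delta)}\phi_\eta(v-w)\, dv = \frac{3}{4\pi\eta^3}\bigl|B(x,\delta)\cap B(w,\eta)\bigr|,
\]
and, using that $G\geq 0$, I would restrict the outer integral to $w\in B(x,\delta)$ and factor out $\mu(B(x,\delta))$ to reduce the lemma to the pointwise estimate $\inf_{w\in B(x,\delta)} G(w)\geq c_\delta$, uniform in $x\in\rd$ and $\eta\in(0,1)$. By translation invariance I may set $x=0$, and the task becomes the purely Euclidean statement
\[
\bigl|B(0,\delta)\cap B(w,\eta)\bigr|\geq c'_\delta\,\eta^3 \qquad\text{for all } w\in B(0,\delta) \text{ and } \eta\in(0,1).
\]

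The plan for this geometric estimate is to exhibit, in each configuration, an explicit open ball $B(p,r)$ contained in the intersection, with $r$ comparable to $\min(\delta,\eta)$, and to conclude by the volume of this sub-ball. In the regime $\eta>\delta$, the midpoint $p=w/2$ with radius $r=\delta/3$ works: for $|y-p|<\delta/3$ one has $|y|<\delta/3+|w|/2<\delta$ and $|y-w|<\delta/3+|w|/2<\delta\leq\eta$, producing volume at least $c\,\delta^3\geq c\,\delta^3\eta^3$ since $\eta<1$. In the regime $\eta\leq\delta$ with $|w|\leq\delta-\eta/2$, the ball $B(w,\eta/2)$ itself sits inside $B(0,\delta)\cap B(w,\eta)$, yielding volume at least $c\,\eta^3$. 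In the remaining regime $\eta\leq\delta$ with $|w|>\delta-\eta/2$, one has $|w|>\delta/2\geq\eta/2>0$ so that $w/|w|$ is well defined; I would then set $p=w-(\eta/3)\,w/|w|$ and check by a direct computation that $B(p,\eta/4)\subset B(0,\delta)\cap B(w,\eta)$, again with volume comparable to $\eta^3$.

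Collecting the three cases furnishes a constant $c'_\delta$ depending only on $\delta$, whence $c_\delta=(3/(4\pi))\,c'_\delta$ satisfies the conclusion of the lemma. The only mildly delicate point I anticipate is the third case, where $w$ sits close to $\partial B(0,\delta)$ and one must shift $w$ inward by a controlled amount to find the required sub-ball; otherwise the argument reduces to elementary geometry of the lens $B(0,\delta)\cap B(w,\eta)$ and presents no serious obstacle.
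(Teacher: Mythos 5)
Your proof is correct, and after the initial Fubini reduction it diverges from the paper's. Both proofs begin the same way: writing $(\mu\star\phi_\eta)(B(x,\delta))=\int G(w)\,\mu(dw)$ and reducing to a uniform lower bound on
$$
G(w)=\frac{3}{4\pi\eta^3}\,\bigl|B(x,\delta)\cap B(w,\eta)\bigr|
$$
over $w\in B(x,\delta)$ and $\eta\in(0,1)$; the paper expresses the same $G$ as $\frac{3}{4\pi}\int_{B(0,1)}\indiq_{\{|z+\eta u|<\delta\}}\,du$ after the substitution $v=z+\eta u$, so the setup is identical. Where you differ is in the geometric estimate of $G$. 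The paper exhibits, for each $z\in B(0,\delta)$, a cone of directions $C_z=\{u\in B(0,\delta/2):u\cdot z\le -|u||z|/2\}$ pointing roughly toward the origin; the key observation is that $z+\eta u\in B(0,\delta)$ for \emph{every} $u\in C_z$ and \emph{every} $\eta\in(0,1)$ simultaneously, so a single cone of fixed $u$-volume $p_\delta$ gives the lower bound uniformly in $\eta$ with no case analysis. You instead place an explicit sub-ball inside the lens $B(0,\delta)\cap B(w,\eta)$ and must distinguish three regimes according to whether $\eta>\delta$ and whether $w$ sits near $\partial B(0,\delta)$. All three of your cases check out, including the delicate third one: with $p=w-(\eta/3)\,w/|w|$ one has $|p|=|w|-\eta/3$ (valid since $|w|>\delta-\eta/2\ge\delta/2\ge\eta/2>\eta/3$), and for $|y-p|<\eta/4$ one gets $|y-w|\le\eta/4+\eta/3<\eta$ and $|y|\le|p|+\eta/4=|w|-\eta/12<\delta$, so $B(p,\eta/4)\subset B(0,\delta)\cap B(w,\eta)$. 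In short, the paper's cone buys uniformity in $\eta$ and eliminates the case split at the cost of a less immediately motivated construction; your sub-ball argument is more concrete and easier to find by inspection of the lens, but requires tracking the three regimes.
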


\begin{proof}
It of course suffices to treat the case where $x=0$. Let thus $\delta>0$ be fixed. 
For $z \in B(0,\delta)$, we introduce $C_z:=\{u \in B(0,\delta/2)\; : \; u\cdot z \leq - |u||z|/2\}$.
Clearly, the Lebesgue measure $p_\delta$ of $C_z$ does not depend on $z$ and is positive.
We next claim that for any $u\in C_z$ and $\eta \in (0,1)$, $|z+\eta u|<\delta$. Indeed, this is obvious
if $|z|\leq \delta/2$, while one easily checks that $|z+\eta u|^2 \leq |z|^2<\delta^2$ when
$|z| \in[\delta/2,\delta)$. Consequently,
$$
(\mu\star \phi_\eta)(B(0,\delta))= \frac{3}{4\pi}\intrd \mu(dz) \int_{B(0,1)} du \indiq_{\{|z+\eta u|<\delta\}}
\geq  \frac{3}{4\pi}\int_{B(0,\delta)} \mu(dz) \int_{C_z} du \geq \frac{3p_\delta}{4\pi} \mu(B(0,\delta))
$$
as desired.
\end{proof}

\begin{proof}[Proof of Proposition \ref{adnoiseprop}]
Point (i) is immediate: for example, $g(B(x_k^l,2\delta_0))\geq \kappa_0/2$ implies that 
$ (4/\kappa_0) \intrd h ((v-x_k^l)/(2\delta_0))g(dv) \geq 2$, whence 
 $\chi((4/\kappa_0) \intrd h ((v-x_k^l)/(2\delta_0))g(dv))=0$.
Point (ii) is also obvious: if there is $k$ such that $g(B(x_k^l,4\delta_0))\leq \kappa_0/4$, then 
$ (4/\kappa_0) \intrd h ((v-x_k^l)/(2\delta_0))g(dv) \leq 1$, so that
$\chi((4/\kappa_0) \intrd h ((v-x_k^l)/(2\delta_0))g(dv))=1$.
Point (iii) follows from the fact that
$h$ and $\chi$ are smooth (with bounded derivatives), since 
$$
c_l^N(\bm v^N)=  \sum_{k=1}^3 \, \chi\Biggl(\frac 4 {N \kappa_0} \sum_{i=1}^N 
h\biggl(\frac{v_i^N-x_k^l}{\delta_0} \biggr)
\Biggr).
$$
We next check (iv), recalling that $a^N_i(\bm v^N)=a(\mu^N\star \phi_{\eta_N},v_i)$ and $c^N_l(\bm v^N)=c_l(\mu^N)$, 
with $\mu^N=N^{-1}\sum_1^N \delta_{v_i}$.
Assume first that $\inf_{k=1,2,3} \mu^N(B(x_k^l,4\delta_0)) \geq \kappa_0/4$. 
Then  by Lemma \ref{erfc}, we have $\inf_{k=1,2,3} (\mu^N\star \phi_{\eta_N})(B(x_k^l,4\delta_0))
\geq c_{\delta_0} \kappa_0/4$.
Since now the triplet $(x_1^l,x_2^l,x_3^l)$ satisfy the $(4 \delta_0)$-non alignment condition, we can apply Lemma~\ref{ellip2} and obtain 
$$
\xi^* a_i^N(\bm v^N) \xi \geq \kappa (1+|v_i|)^\gamma |\xi|^2 \inf_{k=1,2,3}
(\mu^N\star \phi_{\eta_N})(B(x_k^l,4\delta_0)) \geq \frac{\kappa c_{\delta_0} \kappa_0}{4} (1+|v_i|)^\gamma |\xi|^2.
$$
Assume next that $\inf_{k=1,2,3} \mu^N \bigl(B(x_k^l,4\delta_0))\leq \kappa_0/4$. Then $c^N_l(\bm v^N) \geq 1$
by point (ii), so that
$$
|\xi|^2 c^N_l(\bm v^N) \geq |\xi|^2 \geq (1+|v_i|)^\gamma |\xi|^2
$$ 
since $\gamma \le 0$. We conclude with the choice $\kappa_1:=(\kappa c_{\delta_0} \kappa_0 /4)\land 1$.
Point (v) is a direct consequence of~\eqref{kappa-inf} and of point (i).
Finally, to prove point (vi), we consider $g_1,g_2\in\cP(\rd)$ and $R\in \Pi(g_1,g_2)$ 
such that $\int_{\rd\times\rd} |v_1-v_2| R(dv_1,dv_2)= W_1(g_1,g_2)$. 
Then we write, using that
$\chi$ and $h$ are globally Lipschitz-continuous and bounded,
\begin{align*}
|c_l(g_1)-c_l(g_2)|=&
\biggl|\chi\biggl(\frac 4 {\kappa_0}  \intrd h\biggl(\frac{v-x_k^l}{2\delta_0} \biggr)  g_1(dv) \biggr) 
-\chi\biggl(\frac 4 {\kappa_0} \intrd h\biggl(\frac{v-x_k^l}{2\delta_0} \biggr) g_2(dv) \biggr)\biggr| \\
\leq& \frac C {\kappa_0}  \Big|\intrd h\biggl(\frac{v-x_k^l}{2\delta_0} \biggr) g_1(dv)
- \intrd h\biggl(\frac{v-x_k^l}{2\delta_0} \biggr) g_2(dv)\Big|\\
= &\frac C {\kappa_0}  \Big|\int_{\rd\times \rd} \biggl[h\biggl(\frac{v_1-x_k^l}{2\delta_0} \biggr) 
- h\biggl(\frac{v_2-x_k^l}{2\delta_0} \biggr)  \biggr] R(dv_1,dv_2)\Big|\\
\leq&  \frac C {\kappa_0 \delta_0} \int_{\rd\times\rd} |v_1-v_2| R(dv_1,dv_2),
\end{align*}
from which the conclusion follows.
\end{proof}

We now introduce our perturbed particle system $\bUU^N = \bigl( \UU^N_1, \ldots \UU^N_N \bigr)$, defined on the 
time interval $[0,T]$, as the solution to
\begin{align}\label{psp}
\UU^N_i(t) =&\VV^N_i(0)+  \intot b^N_i(\bUU^N_s)ds 
+  \intot \sigma^N_i(\bUU^N_s) d\BB^i(s)  \\
&+ \intot c_{\lfloor s / \tau_0\rfloor}^N(\bUU^N_s) d\WW_i(s)  + \intot c_{\lfloor s / \tau_0\rfloor}^N(\bUU^N_s)  
\nabla_{v_i} c_{\lfloor s / \tau_0 \rfloor}^N(\bUU^N_s) ds, \nonumber
\end{align}
for all  $i=1,\ldots,N$. Here $((\WW_i(t))_{t \in [0,T]})_{i=1,\ldots,N}$ is an independent family of $3D$ 
standard Brownian motions independent of $(\VV_i^N(0),(\BB_i(t))_{t\in [0,T]})_{i=1,\ldots,N}$.

\begin{prop}\label{pspwp}
Recall that $\gamma \in (-2,0)$, that $\eta_N \in (0,1)$ is fixed and that $(\VV^N_i(0))_{i=1,\dots,N}$ is 
exchangeable. There is strong existence and uniqueness for \eqref{psp}. Furthermore, if 
$\E[|\VV^N_1(0)|^r]<\infty$ for some $r\geq 2$, then 
$$
\sup_{N\geq 2} \sup_{[0,T]} \E\Big[|\UU_1^N(t) |^r\Big] <C_{T,r},
$$
for some constant $C_{T,r}$ not depending on $N$.
\end{prop}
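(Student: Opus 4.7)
The strategy is that the added drift and diffusion coming from the perturbation are uniformly bounded and smooth, so strong well-posedness reduces to the same argument as in Proposition \ref{wpps}, and the moment estimate reduces to that of Proposition \ref{momprime} up to a harmless additive correction.

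First I would prove strong existence and pathwise uniqueness by working separately on each sub-interval $[l\tau_0,(l+1)\tau_0]$ for $l=0,1,\ldots,n_0$ and concatenating. On each such interval the system \eqref{psp} is a time-homogeneous SDE whose coefficients are: $b^N_i$, which is globally Lipschitz by Lemma \ref{rough}-(i); $\sigma^N_i$, which is locally Lipschitz with at most linear growth (this is exactly the computation done in the proof of Proposition \ref{wpps}); and the perturbation terms $c^N_l(\cdot)$ (multiplying the identity in front of $d\WW_i$) together with $c^N_l\nabla_{v_i}c^N_l = \tfrac12 \nabla_{v_i}(c^N_l)^2$, both of which are $C^\infty$, globally bounded and globally Lipschitz (with Lipschitz constants depending only on $\kappa_0,\delta_0$), thanks to the smoothness of $h,\chi$ and to the uniform bound $|\nabla_{v_i}c^N_l|\le C/(N\kappa_0\delta_0)$ from Proposition \ref{adnoiseprop}-(iii). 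Classical SDE theory then provides a unique strong solution on $[l\tau_0,(l+1)\tau_0]$ starting from $\bUU^N(l\tau_0)$, and gluing these solutions for $l=0,\ldots,n_0$ produces the desired strong solution on $[0,T]$.

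For the moment estimate, I would fix $r\ge 2$ and apply It\^o's formula to $\varphi(v)=|v|^r$ along $(\UU^N_1(t))_{t\in[0,T]}$, using that the two families $(\BB_i)_i$ and $(\WW_i)_i$ are independent so that no cross term appears. Up to the standard localization by $\tau_M:=\inf\{t\ge 0:|\UU^N_1(t)|\ge M\}$ (and letting $M\to\infty$ via Fatou), this produces
\begin{align*}
\E\bigl[|\UU^N_1(t)|^r\bigr] = \E\bigl[|\VV^N_1(0)|^r\bigr] + \E\Bigl[\intot \bigl(I_s+J_s+K_s\bigr)ds\Bigr],
\end{align*}
where $I_s$ and $J_s$ are exactly the drift and diffusion terms involving $b^N_i,\sigma^N_i$ already handled in the proof of Proposition \ref{momprime}, satisfying $\E[I_s+J_s]\le C_r(1+\E[|\UU^N_1(s)|^r])$, and
\begin{align*}
K_s := r\,|\UU^N_1(s)|^{r-2}\,\UU^N_1(s)\cdot \bigl(c^N_{\lfloor s/\tau_0\rfloor}\,\nabla_{v_1}c^N_{\lfloor s/\tau_0\rfloor}\bigr)(\bUU^N_s) + \frac{r(r+1)}{2}\,|\UU^N_1(s)|^{r-2}\bigl(c^N_{\lfloor s/\tau_0\rfloor}(\bUU^N_s)\bigr)^2
\end{align*}
comes from the perturbation. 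The uniform bounds $|c^N_l|\le 3$ and $|\nabla_{v_1}c^N_l|\le C_{\kappa_0,\delta_0}$ (Proposition \ref{adnoiseprop}-(iii)), together with $|x|^{r-1}+|x|^{r-2}\le C_r(1+|x|^r)$, immediately give $\E[K_s]\le C_{r,\kappa_0,\delta_0}(1+\E[|\UU^N_1(s)|^r])$. Gr\"onwall's lemma then yields $\sup_{[0,T]}\E[|\UU^N_1(t)|^r]\le C_{T,r}$, with a constant independent of $N$ since all the bounds above are $N$-uniform.

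There is essentially no obstacle here: the perturbation was designed precisely so that it contributes only bounded, $N$-uniform corrections, hence can neither break well-posedness nor spoil the moment control already obtained for the unperturbed system. The only mildly non-trivial point is the time-piecewise dependence of $c^N_{\lfloor s/\tau_0\rfloor}$, which forces the interval-by-interval construction above, but since there are only $n_0+1=\lfloor T/\tau_0\rfloor+1$ intervals this is harmless.
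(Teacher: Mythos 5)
Your proof is correct and follows essentially the same approach as the paper: reduce to Propositions \ref{wpps} and \ref{momprime}, observing that the extra perturbation coefficients $c^N_l$ and $c^N_l\nabla_{v_i}c^N_l$ are smooth, globally bounded, and Lipschitz (by Proposition \ref{adnoiseprop}-(iii)), hence contribute only bounded, $N$-uniform corrections both to the well-posedness argument and to the It\^o/Gr\"onwall moment estimate. The paper is terser (it does not spell out the interval-by-interval gluing needed to handle the piecewise-in-time dependence of $c^N_{\lfloor s/\tau_0\rfloor}$, nor the explicit expression of the extra term $K_s$), but the underlying argument is identical.
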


\begin{proof}
The strong existence and uniqueness is clear, since all the coefficients are locally Lipschitz continuous
and have at most linear growth: this has already been seen in the proof of Proposition \ref{wpps} for
$b_i^N$ and $\sigma_i^N$, and $c_{\lfloor s / \tau_0\rfloor}^N$ and $c_{\lfloor s / \tau_0\rfloor}^N   
\nabla_{v_i} c_{\lfloor s / \tau_0 \rfloor}^N$ are obviously bounded and smooth in $x$ (recall that here, $N$ is fixed).
Concerning the moment estimates, it suffices to handle the same proof as that of Proposition \ref{momprime}.
The additional terms cause no difficulty (and are much easier to treat) because  $c_{\lfloor s / \tau_0\rfloor}^N$
is uniformly bounded (by $3$) and $c_{\lfloor s / \tau_0\rfloor}^N   
\nabla_{v_i} c_{\lfloor s / \tau_0 \rfloor}^N$ is uniformly bounded (by $3C/(N\kappa_0\delta_0) \leq C$) thanks to
Proposition \ref{adnoiseprop}-(iii).
\end{proof}

We will first study propagation of chaos for the perturbed particle system. We thus introduce
the corresponding nonlinear process $(\UU(t))_{t\in [0,T]}$, solution to 
\beqn\label{nsdep}
\UU(t)= \VV(0) + \intot b(g_s,\UU(s))ds + \intot \sigma(g_s,\UU(s))d\BB(s)+ 
\intot c_{\lfloor s / \tau_0\rfloor}(g_s) 
d\WW(s),
\eeqn
where $\VV(0)$ is $f_0$-distributed, independent of the (independent) $3D$-Brownian motions 
$(\BB(t))_{t \in [0,T]}$ and $(\WW(t))_{t\in [0,T]}$, and where $g_t \in \cP(\rr^3)$ is the law of $\UU(t)$.
Observe that for $(\UU(t))_{t\in [0,T]}$ a solution to \eqref{nsdep}, its family $(g_t)_{t\in [0,T]}$ of time marginals 
is a weak solution to
\begin{align}\label{edpp}
\partial_t g_t(v) = \frac 1 2 \diver_v\bigl(a(g_t,v)\nabla g_t(v) - b(g_t,v)g_t(v) + c^2_{\lfloor t/\tau_0\rfloor}(g_t)
\nabla g_t(v)   \bigr).
\end{align}

\begin{prop}\label{nsdepwp} Recall that $\gamma \in (-2,0)$ and that $f_0 \in \cP_2(\rd)$ satisfies 
$H(f_0)<\infty$ and $m_q(f_0)<\infty$ for some $q>q(\gamma)$.

\vip

(i) There exists a unique weak solution $(g_t)_{t\in [0,T]}$ to \eqref{edpp} such that $g_0=f_0$ and such that
$(g_t)_{t\in [0,T]} \in L^\infty([0,T],\cP_2(\rd)) \cap L^1([0,T],L^p(\rr^3))$ for some 
$p\in(p_1(\gamma),p_2(\gamma,q))$. Furthermore, it holds that $(g_t)_{t\in[0,T]}=(f_t)_{t\in[0,T]}$,
where $(f_t)_{t\geq 0}$ is the unique weak solution to the Landau equation \eqref{HL3D} built in
Theorem \ref{thfg}.

\vip

(ii) There is a pathwise unique continuous adapted
solution $(\UU(t))_{t\in [0,T]}$ to~\eqref{nsdep}
such that, setting $g_t=\LL(\UU(t))$,  
$(g_t)_{t\in [0,T]} \in L^\infty([0,T],\cP_2(\rd)) \cap L^1([0,T],L^p(\rr^3))$ for some $p\in(p_1(\gamma),
p_2(\gamma,q))$. Furthermore, $(g_t)_{t\in [0,T]}=(f_t)_{t\in[0,T]}$.

\vip

(iii) Finally $(\UU(t))_{t\in [0, T]}$ equals $(\VV(t))_{t\in [0, T]}$, the unique
solution to \eqref{nsde} (see Theorem \ref{nsdewp}).
\end{prop}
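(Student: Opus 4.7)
The central observation is Proposition \ref{adnoiseprop}(v): $c_l(f_t)=0$ for every $l$ and every $t \in [l\tau_0,(l+1)\tau_0]$. Consequently the Landau solution $(f_t)_{t\in[0,T]}$ from Theorem \ref{thfg} is already a weak solution of the perturbed equation \eqref{edpp}, because the added diffusion $c^2_{\lfloor t/\tau_0\rfloor}(f_t)\Delta f_t$ vanishes identically; and the Landau SDE-solution $(\VV(t))_{t\in[0,T]}$ from Proposition \ref{nsdewp} already solves the perturbed nonlinear SDE \eqref{nsdep}, since the stochastic integral against $\WW$ then vanishes. This takes care of existence in (i) and (ii), and reduces everything to the uniqueness claim in (i).

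To show that any weak solution $g$ of \eqref{edpp} in the stated class coincides with $f$, my plan is to adapt the weak/strong coupling argument of Theorem \ref{wp}. Using Proposition \ref{pfi} I would first construct, on a probability space carrying two independent $3D$ Brownian motions $\BB$ and $\WW$, a process $\UU$ with $\cL(\UU(t))=g_t$ solving
\[
\UU(t)=\VV(0)+\int_0^t b(g_s,\UU(s))\,ds+\int_0^t \sigma(g_s,\UU(s))\,d\BB(s)+\int_0^t c_{\lfloor s/\tau_0\rfloor}(g_s)\,d\WW(s).
\]
On the same space I would place a solution $\widetilde\VV$ to \eqref{nsde} with the same initial datum $\VV(0)$ and with its driving Brownian motion coupled to $\BB$ via the orthogonal matrix $U(\cdot,\cdot)$ from \eqref{dfU}, exactly as in Step 2 of the proof of Theorem \ref{wp}. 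Applying It\^o's formula to $|\UU(t)-\widetilde\VV(t)|^2$ yields the Landau-type quantity $\Gamma_{0,\eps}(R_{\eps,t})$ of Proposition \ref{crucru} (with $R_{\eps,t}=\cL(\UU(t),\widetilde\VV(t))$), plus an extra contribution $3\,c^2_{\lfloor s/\tau_0\rfloor}(g_s)$ coming from the independent noise $\WW$ (no cross term appears since $\BB\perp\WW$). Using that $c_l(f_t)\equiv 0$ together with the $W_2$-Lipschitz bound from Proposition \ref{adnoiseprop}(vi),
\[
c^2_{\lfloor s/\tau_0\rfloor}(g_s)=\bigl(c_{\lfloor s/\tau_0\rfloor}(g_s)-c_{\lfloor s/\tau_0\rfloor}(f_s)\bigr)^2\leq C\,W_2^2(g_s,f_s)\leq C\,\E\bigl[|\UU(s)-\widetilde\VV(s)|^2\bigr].
\]
Combining this with the bound on $\Gamma_{0,\eps}$, letting $\eps\to 0$ as in Steps 4--5 of the proof of Theorem \ref{wp}, and applying Gronwall's lemma (legitimate because $f,g\in L^1([0,T],L^p)$) forces $\E[|\UU(t)-\widetilde\VV(t)|^2]=0$ for all $t\in[0,T]$, hence $g=f$.

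Once (i) is in hand, (ii) follows easily: existence is provided by $\UU=\VV$, and for pathwise uniqueness, any solution $\UU$ to \eqref{nsdep} in the class has marginals $g$ satisfying \eqref{edpp} by It\^o, so $g=f$ by (i); but then $c_{\lfloor s/\tau_0\rfloor}(g_s)\equiv 0$, so $\UU$ solves the linear SDE \eqref{linSDE} driven by $\BB$ with coefficients built from the fixed $f$, for which strong pathwise uniqueness was established in Step 1 of the proof of Theorem \ref{nsdewp}. Point (iii) is then immediate: $\VV$ itself solves \eqref{nsdep}, and uniqueness in (ii) forces $\UU=\VV$. The main obstacle is in step (i), namely running the weak/strong coupling for \eqref{edpp} without wrecking the already delicate estimates of Proposition \ref{crucru}; the reconciliation comes precisely from the fact that the extra quadratic error $3\,c^2_{\lfloor s/\tau_0\rfloor}(g_s)$ is absorbable into $\E[|\UU-\widetilde\VV|^2]$ thanks to $c_l(f_t)\equiv 0$ and the $W_2$-Lipschitz continuity of $c_l$.
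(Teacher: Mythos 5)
Your proof is correct and follows essentially the same route as the paper's: use Proposition \ref{adnoiseprop}(v) to see that $f$ itself already solves \eqref{edpp} and $\VV$ already solves \eqref{nsdep}, then establish uniqueness of the weak PDE solution by the coupling argument of Theorem \ref{wp}, with the extra It\^o contribution from the $\WW$-noise absorbed via the $W_2$-Lipschitz continuity of $c_l$ (Proposition \ref{adnoiseprop}(vi)), and finally deduce (ii) and (iii) by the scheme of Theorem \ref{nsdewp}. The only (harmless) variation is that you couple a generic weak solution $g$ directly against the known solution $f$, producing the extra term $3\,c_{\lfloor s/\tau_0\rfloor}(g_s)^2$ which you handle by inserting $c_{\lfloor s/\tau_0\rfloor}(f_s)=0$; the paper instead couples two arbitrary solutions $g,k$ of \eqref{edpp}, both driven by the same $\WW$, so the extra term is $3\bigl(c_{\lfloor s/\tau_0\rfloor}(g_s)-c_{\lfloor s/\tau_0\rfloor}(k_s)\bigr)^2$ — both are bounded by $C\,W_2^2$ via \ref{adnoiseprop}(vi), so the two variants are equivalent.
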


\begin{proof} 
Consider the unique solution $(\VV(t))_{t\geq 0}$ to the nonlinear SDE \eqref{nsde},
the unique weak solution $f \in L^\infty_{loc}([0,\infty),\cP_2(\rd)) \cap L^1_{loc}([0,\infty),L^p(\rr^3))$
to \eqref{HL3D}, and recall that $f_t=\cL(\VV(t))$.
Then we know from Proposition \ref{adnoiseprop}-(v) that $c_{\lfloor t/\tau_0\rfloor}(f_t)=0$ for all $t\in [0, T]$. 
Consequently, $(\VV(t))_{t\geq 0}$ also solves \eqref{nsdep} and $(f_t)_{t\in[0,T]}$ also solves \eqref{edpp}
(compare with \eqref{alt}). Next we claim that once the uniqueness of the weak solution to
\eqref{edpp} is established, the pathwise uniqueness of the solution $(\UU(t))_{t\in [0,T]}$ to~\eqref{nsdep}
can be checked exactly as in the proof of Theorem \ref{nsdewp}. We thus only have to prove the
uniqueness part in point (i).

\vip

Let thus $p\in (p_1(\gamma),p_2(\gamma,q))$ be fixed, and consider two solutions
$(g_t)_{t\in[0,T]}, (k_t)_{t\in [0,T]}$ to \eqref{edpp}, both lying in 
$L^\infty_{loc}([0,\infty),\cP_2(\rd)) \cap L^1_{loc}([0,\infty),L^p(\rr^3))$.
Exactly as in the proof of Theorem \ref{wp}-Steps 1-2, we can build, for each $\e>0$,
two processes $(\ZZ(t))_{t\in [0,T]}$ and $(\VV_\e(t))_{t\in [0,T]}$ with respective families
of time marginals $(g_t)_{t\in [0,T]}$ and $(k_t)_{t\in [0,T]}$, solving
\begin{align*}
\ZZ(t)=& \VV(0) + \intot b(g_s,\ZZ(s))ds + \intot \sigma(g_s, \ZZ(s)) d\BB(s) + \intot 
c_{\lfloor s / \tau_0\rfloor}(g_s) d\WW(s),\\
\VV_\e(t)=& \VV(0) + \intot b(k_s,\VV_\e(s))ds + \intot \sigma(k_s, \VV_\e(s)) 
U\bigl(a(g_s^\eps,\ZZ(s)),a(k_s^\eps,\VV_\e(s))\bigr)  d\BB_s\\
& \hskip8cm + \intot c_{\lfloor s / \tau_0\rfloor}(k_s) d\WW(s).
\end{align*}
We can then reproduce exactly the same computations as in Theorem \ref{wp}-Steps 3-4-5
to prove uniqueness in the class $L^\infty_{loc}([0,\infty),\cP_2(\rd)) \cap L^1_{loc}([0,\infty),L^p(\rr^3))$
(some strong/weak stability estimates could also be checked here).
When computing $(d/dt)\E[|\ZZ(t)-\VV_\e(t)|^2]$, there is the additional term 
$3(c_{\lfloor t / \tau_0\rfloor}(g_t)- c_{\lfloor t / \tau_0\rfloor}(k_t))^2$. But this is controlled,
using Proposition \ref{adnoiseprop}, by $CW_2^2(g_t,k_t)$ which is itself bounded by $C\E[|\ZZ(t)-\VV_\e(t)|^2]$.
This term thus causes no difficulty.
\end{proof}

%%%%%%%%%%%%%%%%%%%%%%%%%%%%%
\subsection{Regularity estimate for the perturbed particle system}

We now need to introduce the entropy and weighted Fisher information. These functionals are studied
in details in the appendix.

\begin{nota}\label{HetI}
For $\bVV^N$ a random variable with law $F \in\cP \bigl((\rr^3)^N \bigr)$ with a density, we define
\begin{align*} 
H(\bVV^N) = H(F) & := \frac1N\int_{(\rr^3)^N} F(\bm v^N)\log(F(\bm v^N)) \, d \bm v^N ,\\
I_\gamma(\bVV^N)= I_\gamma(F) &:= \frac1N\int_{(\rr^3)^N} \frac{| \nabla_\gamma F(\bm v^N )|^2}{F(\bm v^N)} \,
d \bm v^N
\end{align*}
where the differential operator $\nabla_\gamma$ is the weighted gradient
$$
\nabla_\gamma F := \Bigl(  (1+|v_1|^2)^{\frac \gamma 4} \nabla_{v_1} F, \ldots, 
(1+|v_N|^2)^{\frac \gamma 4} \nabla_{v_n} F  \Bigr).
$$ 
If $ F$ has no density, we simply state $H(\bVV^N)= H(F)=\infty$ and $I_\gamma(\bVV^N)=I_\gamma(F)=\infty$.
\end{nota}

The following lemma uses strongly the perturbation.

\begin{prop} \label{HIbound}
For each $N\geq 2$, we consider the solution $(\UU^N_1(t),\dots,\UU^N_N(t))_{t\in[0,T]}$ to
\eqref{psp}. For each $t \in [0,T]$, we denote by $G^N_t$ the law of  
$\bUU^N(t) =  \bigl(\UU_1^N(t),\dots,\UU^N_N(t) \bigr)$. 
Recall that by \eqref{condichaos}, $\sup_{N\geq 2} H(G^N_0) <\infty$ and $\sup_{N\geq 2} \E[|\UU_1^N(0)|^q]<\infty$
for some $q>q(\gamma)$. It holds that
$$
\sup_{N\geq 2} \, \sup_{ t \in [0,T]} H(G^N_t)<\infty \quad \hbox{and} \quad
\sup_{N\geq 2} \, \int_0^T I_\gamma(G^N_t) dt <\infty.
$$
\end{prop}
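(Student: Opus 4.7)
The plan is to derive an entropy/Fisher-information dissipation inequality for the $N$-particle density $G^N_t$ of the perturbed system~\eqref{psp}, and close it using the appendix machinery on weighted Fisher information together with the uniform moment bounds of Proposition~\ref{pspwp}. The Stratonovich-style correction $c_l^N \nabla_{v_i} c_l^N$ in~\eqref{psp} was engineered precisely so that, after combining with the second-order term coming from $c_l^N\, d\WW_i$, the perturbation enters the Fokker--Planck equation in self-adjoint form. Using moreover $\diver a = b$, one obtains, with $l=\lfloor t/\tau_0\rfloor$,
\[
\partial_t G^N_t = \frac{1}{2}\sum_{i=1}^N \diver_{v_i}\!\Big(a_i^N\nabla_{v_i}G^N_t - b_i^N G^N_t + (c_l^N)^2\nabla_{v_i}G^N_t\Big).
\]

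Multiplying by $\log G^N_t$ and integrating by parts yields
\[
\frac{d}{dt}H(G^N_t) = -\frac{1}{2N}\sum_i\E\bigl[(\nabla_{v_i}\!\log G^N_t)^*(a_i^N+(c_l^N)^2 I)(\nabla_{v_i}\!\log G^N_t)\bigr] - \frac{1}{2N}\sum_i\E\bigl[\diver_{v_i}b_i^N\bigr].
\]
The $j=i$ summand in $\diver_{v_i}b_i^N$ vanishes identically since $b_{\eta_N}(v_i-v_i)\equiv 0$ as a function of $\bm v$, leaving only off-diagonal regularized interactions. To coerce the first RHS term into an $I_\gamma$ dissipation, I would apply Proposition~\ref{adnoiseprop}(iv) via a dichotomy: if $c_l^N\geq 1$ then $(c_l^N)^2\geq 1\geq(1+|v_i|)^\gamma$ (as $\gamma<0$); if $c_l^N<1$ then each of the three $\chi$-factors in the definition of $c_l^N$ is strictly less than $1$, forcing $\mu^N(B(x_k^l,4\delta_0))>\kappa_0/4$ for all $k$, and Lemmas~\ref{ellip2} and~\ref{erfc} deliver full ellipticity $\xi^* a_i^N\xi\geq\kappa'(1+|v_i|)^\gamma|\xi|^2$. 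In both regimes $a_i^N+(c_l^N)^2 I\geq\kappa(1+|v_i|)^\gamma I$, so, using also exchangeability and the equivalence $(1+|v|)^\gamma\asymp(1+|v|^2)^{\gamma/2}$,
\[
\frac{d}{dt}H(G^N_t) + \frac{\kappa}{2}I_\gamma(G^N_t) \leq C\,\E\bigl[(|\cdot|^\gamma\star\phi_{\eta_N})(\UU_1^N(t)-\UU_2^N(t))\bigr].
\]

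The remaining task is to absorb the RHS. I would aim to show, for every $\varepsilon>0$,
\[
\E\bigl[(|\cdot|^\gamma\star\phi_{\eta_N})(\UU_1^N(t)-\UU_2^N(t))\bigr] \leq \varepsilon\,I_\gamma(G^N_t) + C_\varepsilon
\]
uniformly in $N$ and $\eta_N$. By convexity, the two-particle marginal $G^{N,2}_t$ inherits a weighted Fisher information dominated by $I_\gamma(G^N_t)$, while its moments are uniformly controlled by Proposition~\ref{pspwp}; feeding these into the appendix analog of Lemma~\ref{lem:FishInteg1} would yield an $L^p$-bound on $G^{N,2}_t$ for some $p$ large enough, which in turn tames the singular integral by a H\"older-type argument in the spirit of~\eqref{weakb2}. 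Absorbing $\varepsilon I_\gamma(G^N_t)$ on the left and integrating from $0$ to $T$ — using $H(G^N_0)$ uniformly bounded by~\eqref{condichaos} together with the moment-based lower bound on $H$ provided by Lemma~\ref{ieth} — would then yield simultaneously $\sup_{t\in[0,T]}H(G^N_t)\leq C$ and $\int_0^T I_\gamma(G^N_t)\,dt\leq C$, both uniformly in $N$.

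The hard part is this last step. The Fisher information $I_\gamma(G^N_t)$ is an $N$-particle object, weighted variable-by-variable, and extracting from it enough regularity on the two-particle marginal to control the singular kernel $|v_1-v_2|^\gamma$ — supported on a $3$-dimensional submanifold of $\rd\times\rd$ — is exactly what the third appendix is designed for. A naive marginalization argument would produce an $N$-dependent constant and fail to close the bound uniformly; the whole point of the perturbation and of the case dichotomy above is to guarantee that the coercive Fisher term is genuine (not degenerate on aligned configurations), so that the appendix lemmas can be invoked irrespective of how small $\eta_N$ is.
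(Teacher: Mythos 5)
Your argument follows the paper's proof essentially step for step: derive the Fokker--Planck equation for $G^N_t$, compute the entropy dissipation, use the perturbed ellipticity of Proposition~\ref{adnoiseprop}-(iv) (which your dichotomy correctly reconstructs from Lemmas~\ref{ellip2} and~\ref{erfc}) to get a genuine $I_\gamma$ term, bound the drift term by $C\,\E[|\UU_1^N(t)-\UU_2^N(t)|^\gamma]$ using $\diver b(v)=-(6+2\gamma)|v|^\gamma$ and exchangeability, and then absorb this via Young's inequality and integrate. The only place where your write-up stays at the level of intent is the absorption estimate $\E[|\UU_1^N-\UU_2^N|^\gamma]\leq \varepsilon I_\gamma(G^N_t)+C_\varepsilon$; this is precisely Lemma~\ref{lem:FishInteg} (combined with super-additivity, Lemma~\ref{lem:IrIw}-(i), and the moment bound of Proposition~\ref{pspwp}), and be aware that its proof does \emph{not} go through an $L^p(\rr^6)$ bound on $G^{N,2}_t$ but rather rotates to the orthogonal coordinates $2^{-1/2}(x_1-x_2,\,x_1+x_2)$ so that Lemma~\ref{lem:FishInteg1} applies to the three-dimensional law of $(\UU_1^N-\UU_2^N)/\sqrt 2$ --- an $L^p(\rr^6)$ bound by itself would not tame the singularity, which sits on the codimension-$3$ diagonal.
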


\begin{proof}
The first step is to derive the Master (or Kolmogorov) equation for the time marginals $G^N_t$.
Applying the It\^o formula to compute the expectation of $\varphi(\bUU^N_t)$, 
we deduce that for all $\varphi \in C^2_b((\rd)^N)$,
\begin{align*}
\int_{(\rr^3)^N}  \varphi & (\bm v^N)G^N_t(d \bm v^N) 
= \int_{(\rr^3)^N} \varphi(\bm v^N)G^N_0(d\bm v^N) \\
&\;\;+ \intot \int_{(\rr^3)^N} \sum_{i=1}^N \nabla_{v_i} \varphi(\bm v^N) 
\cdot  [b^N_i(\bm v^N)+ c^N_{\lfloor s/\tau_0\rfloor}(\bm v^N) \nabla_{v_i}c^N_{\lfloor s/\tau_0\rfloor}(\bm v^N)]  
G^N_s(d\bm v^N) ds \\
&\;\;+\frac 12 \intot \int_{(\rr^3)^N} \sum_{i=1}^N \sum_{k,l=1}^3  \partial_{v_{ik} v_{il}} \varphi(\bm v^N) 
\Big((a^N_i(\bm v^N))_{kl}+ (c^N_{\lfloor s/\tau_0 \rfloor}(\bm v^N))^2 \delta_{kl}\Big)  G^N_s(d\bm v^N) ds.
\end{align*}
Recall now that $b_k=\sum_{l=1}^3\partial_{l}a_{lk}$. Hence, 
we see that $G^N$ is a weak solution to 
\begin{align*}
\partial_t G^N_t(\bm v^N)
 =& \frac 1 2 \sum_{i=1}^N \sum_{k,l=1}^3  \partial_{v_{ik}} 
\bigl[  \{(a^N_i(\bm v^N))_{kl}+ (c^N_{\lfloor t/\tau_0\rfloor}(\bm v^N))^2\delta_{kl})\} 
\partial_{v_{il}} G^N_t(\bm v^N) \bigr] \\
 & - \frac 1 2 \sum_{i=1}^N\sum_{k=1}^3 \partial_{v_{ik}} \bigl[(b^N_i(\bm v^N))_k  \, G^N_t(\bm v^N) \bigr] ,
\end{align*}
or equivalently
$$
\partial_t G^N_t 
= \frac 1 2 \sum_{i=1}^N  \diver_{v_i} \bigl[  \{ a^N_i+ (c^N_{\lfloor t/\tau_0\rfloor})^2 I\} 
\nabla_{v_i} G^N_t \bigr]  
- \frac 1 2 \sum_{i=1}^N \diver_{v_i}  \bigl[b^N_i  \, G^N_t\bigr]. 
$$
We thus have, performing some integrations by parts,
\begin{align*}
\frac{d}{dt} H(G^N_t) = &\frac 1 N \int_{(\rr^3)^N} (1+\log G^N_t(\bm v^N))\partial_t G^N_t(\bm v^N)d\bm v^N \\
=& - \frac 1 {2N} \sum_{i=1}^N \int_{(\rr^3)^N} \nabla_{v_i} \log G^N_t  \cdot \Big[a^N_i(\bm v^N) + 
(c^N_{\lfloor t/\tau_0\rfloor}(\bm v^N))^2  I \Big] 
\nabla_{v_i} \log  G^N_t(\bm v^N) G^N_t(d \bm v^N)  \\
&  - \frac1{2N}  \sum_{i=1}^N \int_{(\rr^3)^N} \diver b_i^N(\bm v^N) G^N_t(d \bm v^N).
\end{align*}
We next use Proposition \ref{adnoiseprop}-(iv) for the diffusion term and, for the drift term, 
that $\diver b(v) = -(6+ 2 \gamma) |v|^\gamma$,
whence $-\diver (b\star \phi_{\eta_N})(v)=(6+ 2 \gamma) \intrd |v-z|^\gamma\phi_{\eta_N}(z)dz \leq C |v|^\gamma$,
and exchangeability. We find that
\begin{align*}
\frac{d}{dt} H(G^N_t) & \le  - 
\frac {\kappa_1} {2N} \sum_{i=1}^N \int_{(\rr^3)^N} \frac{(1+|v_i|)^\gamma |\nabla_{v_i} G^N_t(\bm v^N)|^2}
{G^N_t(\bm v^N)} d\bm v^N  + C  \int_{(\rr^3)^N}  |v_1-v_2|^\gamma G^N_t(d \bm v^N) \\
& \leq 
-  2^{\gamma/2-1}\kappa_1 I_\gamma(G^N_t) + C  \E \Bigl[  \bigl| \UU^N_1(t) - \UU^N_2(t) \bigr|^\gamma \Bigr].
\end{align*}
We used that $(1+x)^\gamma \geq 2^{\gamma/2}(1+x^2)^{\gamma/2}$.
Denote by $G^N_{t,2}$ the law of $(\UU^N_1(t),\UU^N_2(t))$.
We use Lemma~\ref{lem:FishInteg} with $\kappa=-\gamma$. Since 
$q > q(\gamma)= \gamma^2/(2+\gamma)$, we have $\alpha:=\max\{2,q\}>|\gamma|\max\{1,\kappa/(2-\kappa)\}$,
whence, with $r:=\alpha/(\alpha-\gamma)$,
\begin{align*}
\frac{d}{dt} H(G^N_t) \le& 
- 2^{\gamma/2-1}\kappa_1  I_\gamma(G^N_t) + C_{\gamma,q}(1 +  I_\gamma(G^N_{t,2})^r (1+m_\alpha(G^N_{2,t})^{1-r}))\\
\leq& - 2^{\gamma/2-1}\kappa_1  I_\gamma(G^N_t) + C_{T,\gamma,q}(1+ I_\gamma(G^N_{t})^r).
\end{align*}
We finally used the moment bound proved in Proposition \ref{pspwp} (and the moment assumption in 
\eqref{condichaos}) and Lemma~\ref{lem:IrIw}-(i), from which $I_\gamma(G^N_{t,2})\leq I_\gamma(G^N_t)$.
Since $r \in (0,1)$, we easily conclude,
using the Young inequality, that
\begin{align*}
\frac{d}{dt} H(G^N_t) \le&  - 2^{\gamma/2-2}\kappa_1I_\gamma(G^N_t) + C_{T,\gamma,q}.
\end{align*}
Integrating this inequality and using again \eqref{condichaos}, we get, for any $t\in [0,T]$, 
$$
H(G^N_t) + \frac{\kappa_1}8 \int_0^t I_\gamma(G^N_s) ds \leq H(F^N_0) + C_{T,\gamma,q} \, t  \leq C_{T,\gamma,q}.
$$
We immediately deduce that $H(F^N_t)$ is bounded uniformly in $N$ and $t \in [0,T]$. 
Finally, it follows from the fact that $\sup_{N\geq 2}\sup_{[0,T]}\E[|\UU^N_1(t)|^2]<\infty$ 
(by \eqref{condichaos} and Proposition \ref{pspwp}) and Lemma~\ref{ieth}-(i) 
that $\inf_{N\geq 2}  H(F^N_T) > - \infty$. Consequently, $\int_0^T I_\gamma(F^N_s) ds$ is also uniformly bounded.
This completes the proof.
\end{proof}

%%%%%%%%%%%%%%%%%
\subsection{Some more estimates}

We will of course need, in several steps, to control the singularity of $b$.
Also, to verify that the limit points of the empirical measure of the particle system 
belong, in some sense, to $L^\infty([0,T],\cP_2(\rd))$,
we will need to control $\E[\sup_{[0,T]}|\UU^N_1(s)|^2]$, with the supremum inside the expectation.
All this is more or less obvious when $\gamma \in (-1,0)$, but requires
a little work when $\gamma\in (-2,-1]$, based on the regularity estimate checked in the previous
subsection.

\begin{lem}\label{singetmom}
For each $N\geq 2$, we consider the solution $(\UU^N_1(t),\dots,\UU^N_N(t))_{t\in[0,T]}$ to
\eqref{psp}. Recall that by \eqref{condichaos}, $\sup_{N\geq 2} \E[|\UU_1^N(0)|^q]<\infty$
for some $q>q(\gamma)$. It holds that

\vip

(i) $\sup_{N\geq 2} \E[ \sup_{ t \in [0,T]} |\UU^N_1(s)|^2 ] <\infty$;

\vip

(ii) $\sup_{N\geq 2}\int_0^T \E[|\UU^N_1(s)-\UU^N_2(s)|^{\gamma}] ds <\infty$.
\end{lem}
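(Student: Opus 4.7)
The two statements are of very different flavors: (i) is a moment bound that follows from It\^o's formula, BDG and a mild extension of the symmetrisation trick already used in Proposition \ref{momprime}, while (ii) is the place where the regularity estimate of Proposition \ref{HIbound} is cashed in.

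For (i), I apply the It\^o formula to $|\UU^N_1(t)|^2$, which produces: the initial term $|\VV^N_1(0)|^2$; a drift term involving $\UU^N_1(s)\cdot b^N_1(\bUU^N_s)$; an It\^o correction $\Tr a^N_1(\bUU^N_s)+3(c^N_{\lfloor s/\tau_0\rfloor}(\bUU^N_s))^2$; a Girsanov-type drift $2\UU^N_1\cdot c^N_{\lfloor s/\tau_0\rfloor}\nabla_{v_1}c^N_{\lfloor s/\tau_0\rfloor}$; and a continuous martingale coming from both Brownian motions. The $c$-terms are uniformly bounded thanks to Proposition \ref{adnoiseprop}-(iii) (which gives $|c^N_l\nabla_{v_1}c^N_l|\le 3C/(N\kappa_0\delta_0)$) and to $c^N_l\in[0,3]$, so they contribute only $C_T(1+|\UU^N_1|)$. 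The drift term $\UU^N_1\cdot b^N_1$ and the trace term $\Tr a^N_1\le CN^{-1}\sum_j(1+|\UU^N_1-\UU^N_j|^{\gamma+2})$ are handled exactly as in the proof of Proposition \ref{momprime} (with $r=2$), using exchangeability and the elementary inequality $(1+|v-w|^{\gamma+1})|v-w|(1+|v|^{0}+|w|^0)\le C(1+|v|^2+|w|^2)$ after symmetrisation. The only new ingredient is a BDG step for the martingale part: with $\langle M\rangle_T\le C\int_0^T |\UU^N_1(s)|^2(\Tr a^N_1+ (c^N)^2)\,ds$, one gets $\E[\langle M\rangle_T^{1/2}]\le\tfrac14\E[\sup_{[0,T]}|\UU^N_1|^2]+C_T$ after using $|v|^2|v|^{\gamma+2}=|v|^{\gamma+4}\le C(1+|v|^4)$ (legitimate since $\gamma+4\le 4$) and the uniform fourth-moment bound of Proposition \ref{pspwp} (itself applicable thanks to \eqref{condichaos}). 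Absorbing the half and invoking Gr\"onwall closes the estimate, uniformly in $N$.

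For (ii), I use the regularity produced by the added noise. By Proposition \ref{HIbound}, we have $\sup_N\int_0^T I_\gamma(G^N_t)\,dt<\infty$, where $G^N_t$ is the law of $\bUU^N(t)$. Let $G^N_{t,2}$ denote the two-particle marginal. Applying Lemma \ref{lem:FishInteg} with $\kappa=-\gamma\in(0,2)$, and noting that $\alpha:=\max\{2,q\}$ satisfies $\alpha>|\gamma|\max\{1,\kappa/(2-\kappa)\}$ thanks to $q>q(\gamma)=\gamma^2/(2+\gamma)$, we obtain, with $r:=\alpha/(\alpha+|\gamma|)\in(0,1)$,
\[
\E\bigl[|\UU^N_1(s)-\UU^N_2(s)|^{\gamma}\bigr]
= \int|v_1-v_2|^{-\kappa}G^N_{s,2}(dv_1,dv_2)
\le C_{\gamma,q}\bigl(1+I_\gamma(G^N_{s,2})^r\bigl(1+m_\alpha(G^N_{s,2})\bigr)^{1-r}\bigr).
\]
The moment $m_\alpha(G^N_{s,2})$ is bounded uniformly in $s\in[0,T]$ and $N$ by Proposition \ref{pspwp} (combined with \eqref{condichaos}, which supplies the initial $\alpha$-th moment), and Lemma \ref{lem:IrIw}-(i) gives $I_\gamma(G^N_{s,2})\le I_\gamma(G^N_s)$. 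Integrating in time and using H\"older's inequality (since $r<1$),
\[
\int_0^T\!\E\bigl[|\UU^N_1(s)-\UU^N_2(s)|^{\gamma}\bigr]\,ds
\le C_T+C_T\int_0^T\! I_\gamma(G^N_s)^r ds
\le C_T+C_T\,T^{1-r}\Bigl(\int_0^T\! I_\gamma(G^N_s)\,ds\Bigr)^{\!r},
\]
and the right-hand side is bounded uniformly in $N$ by Proposition \ref{HIbound}.

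The main obstacle is really the bookkeeping in (i): the drift $b^N_1$ carries a singularity (visible through the factor $|\UU^N_1-\UU^N_j|^{\gamma+1}$ with $\gamma+1$ possibly negative), and this must be absorbed by the symmetrisation $(\UU^N_1-\UU^N_2)\cdot(b\star\phi_{\eta_N})(\UU^N_1-\UU^N_2)$ which improves the exponent to $\gamma+2>0$, exactly as in Proposition \ref{momprime}. Part (ii), by contrast, is a direct transcription of the recipe already used inside the proof of Proposition \ref{HIbound}.
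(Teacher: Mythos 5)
Your proof of part (ii) is correct and follows the same route as the paper's: Lemma \ref{lem:FishInteg} applied to the two-particle marginal, Lemma \ref{lem:IrIw}-(i) to pass from $G^N_{s,2}$ to $G^N_s$, and the integrated Fisher bound from Proposition \ref{HIbound}. The only cosmetic difference is that you close with H\"older in time, whereas the paper just uses $x^r\le 1+x$ for $r\in(0,1)$ before integrating.

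Your proof of part (i) has a genuine gap, and the order in which you present the two parts matters. The symmetrisation trick of Proposition \ref{momprime} consists in replacing $\E\bigl[\UU^N_1(s)\cdot(b\star\phi_{\eta_N})(\UU^N_1(s)-\UU^N_2(s))\bigr]$ by $\tfrac12\E\bigl[(\UU^N_1(s)-\UU^N_2(s))\cdot(b\star\phi_{\eta_N})(\UU^N_1(s)-\UU^N_2(s))\bigr]$, and this replacement is an identity of \emph{expectations}: it uses exchangeability and the oddness of $b\star\phi_{\eta_N}$. It gives no pathwise control of $\UU^N_1(s)\cdot b^N_1(\bUU^N_s)$, hence no control of $\sup_{t\in[0,T]}\bigl|\int_0^t 2\,\UU^N_1\cdot b^N_1\,ds\bigr|$ \emph{inside} the expectation. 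Once you bound that supremum by $\int_0^T 2\,|\UU^N_1|\,|b^N_1|\,ds$, the singular factor $|\UU^N_1(s)-\UU^N_j(s)|^{\gamma+1}$ (negative exponent when $\gamma<-1$) survives, and symmetrisation no longer helps because the absolute values have already been taken. The paper avoids this by proving (ii) first: the drift contribution $\sup_{[0,T]}\bigl(\int_0^t b^N_1\,ds\bigr)^2$ is bounded, via H\"older in time and exchangeability, by $C_T\int_0^T\E\bigl[1+|\UU^N_1(s)-\UU^N_2(s)|^{2\gamma+2}\bigr]\,ds$, and when $\gamma\in(-2,-1)$ the chain $\gamma<2\gamma+2<0$ lets one dominate $|x|^{2\gamma+2}$ by $1+|x|^\gamma$ and invoke (ii). So for the soft-potential range $\gamma\in(-2,-1)$ the singularity of $b$ must be fed through the regularity supplied by Proposition \ref{HIbound}, not through the symmetrisation argument; your proof order (i) before (ii) blocks this route, and the reference to Proposition \ref{momprime} does not supply a substitute.
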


Here we prove point (i) using point (ii), but it seems that a refinement of the proof
of Proposition \ref{momprime} could also work.

\begin{proof}
We put $\alpha=2\lor q$ and recall that $\sup_{N\geq 2} \sup_{ t \in [0,T]} \E[|\UU^N_1(s)|^\alpha ] <\infty$ 
by Proposition \ref{pspwp}. We first prove (ii). Denote by $G^N_{s,2}$ the two-marginal of $G^N_s$, which is
the law of $(\UU^N_1(s),\UU^N_2(s))$. Observe that 
$\alpha >|\gamma|\max\{1,|\gamma|/(2+\gamma)\}$ 
(because $\alpha \geq 2$ and $\alpha\geq q>q(\gamma)=\gamma^2/(2+\gamma)$).
By Lemma \ref{lem:FishInteg}, we know that, with $r=\alpha/(\alpha-\gamma)$, 
$\E[|\UU^N_1(s)-\UU^N_2(s)|^{\gamma}] \leq C_{\kappa,\alpha} (1+ (I_\gamma(G^N_{s,2}))^r (1+m_\alpha(G^N_{s,2}))^{1-r})
\leq C_{\kappa,\alpha,T} (1+I_\gamma(G^N_{s,2}))$ because $r \in (0,1)$ and $m_\alpha(G^N_{s,2})=\E[|\UU^N_1(s)|^\alpha]$
is controlled uniformly in $N\geq 2$ and $s\in [0,T]$. Finally, we know from Lemma \ref{lem:IrIw}-(i) that
$I_\gamma(G^N_{s,2})\leq I_\gamma(G^N_{s})$, whence $\E[|\UU^N_1(s)-\UU^N_2(s)|^{\gamma}] 
\leq C_{\kappa,\alpha} (1+ (I_\gamma(G^N_{s}))$. Integrating in time and using Proposition \ref{HIbound} 
completes the proof of (ii). 

\vip

To prove (i), we start from $\sup_{[0,T]} |\UU^N_1(t)|^2 \leq  C [|\UU^N_1(0)|^2 + I^N+J^N+K^N+L^N]$,
where
\begin{align*}
I^N:=& \sup_{[0,T]} \Big(\intot b^N_1(\bm \UU^N(s))ds \Big)^2, \\
J^N:=& \sup_{[0,T]} \Big(\intot \sigma^N_1(\bm \UU^N(s))d\BB_1(s) \Big)^2\\
K^N:=& \sup_{[0,T]} \Big(\intot c^N_{\lfloor s/\tau_0\rfloor}(\bm \UU^N(s))d\WW_1(s) \Big)^2\\
L^N=&\sup_{[0,T]} \Big(\intot c^N_{\lfloor s/\tau_0\rfloor}(\bm \UU^N(s)) \nabla_{v_1} 
c^N_{\lfloor s/\tau_0\rfloor}(\bm \UU^N(s)) ds \Big)^2.
\end{align*}
First, $\sup_N \E[|\UU^N_1(0)|^2]<\infty$ by assumption \eqref{condichaos} and 
$\E[K^N+L^N]$ is uniformly bounded because $c^N_l$ and $\nabla_{v_1} c^N_l$ are uniformly bounded,
see Proposition \ref{adnoiseprop}.
By Doob's inequality,
\begin{align*}
\E[J^N] \leq C \int_0^T \E[\|\sigma^N_1(\bm \UU^N(s))\|^2] ds \leq C \int_0^T 
\E\Big[\frac 1 N \sum_{j=1}^N\|(a\star \phi_{\eta_N}(\UU^N_1-\UU^N_j(s))\|\Big] ds.
\end{align*}
Using exchangeability and that $\|(a\star\phi_{\eta_N})(v)\|\leq C(1+|v|^{\gamma+2})\leq C(1+|v|^2)$,
\begin{align*}
\E[J^N]\leq C \int_0^T \E[1+|\UU^N_1(s)-\UU^N_2(s)|^2]ds
\leq C \int_0^T \E[1+|\UU^N_1(s)|^2]ds,
\end{align*}
which is also uniformly bounded. Finally, H\"older's inequality, exchangeability,
and the inequality $|(b\star \phi_{\eta_N})(v)|\leq C (1+|v|^{\gamma+1})$ lead us to
\begin{align*}
\E[I^N] \leq C_T \int_0^T \E[|(b\star \phi_{\eta_N})(\UU^N_1(s)-\UU^N_2(s))|^2] ds 
\leq C_T\int_0^T \E[1+|\UU^N_1(s)-\UU^N_2(s)|^{2\gamma+2}]ds.
\end{align*}
If $\gamma \in [-1,0)$, we bound $\E[|\UU^N_1(s)-\UU^N_2(s)|^{2\gamma+2}]$
by  $C E[1+|\UU^N_1(s)|^2+|\UU^N_2(s)|^2]\leq C(1+\E[|\UU^N_1(s)|^2])$ and immediately deduce that $\E[I^N]$
is uniformly bounded.
If now $\gamma \in (-2,-1)$, then it holds true that $\gamma<2\gamma+2<0$, so that also we conclude,
using point (ii), that
$\E[I^N]$ is uniformly bounded.
\end{proof}

%%%%%%%%%%%%%%%%%%%%%%%%%%%
\subsection{Tightness for the perturbed particle system}

We can now prove the tightness of our perturbed particle system.

\begin{prop}\label{tight}
We still assume \eqref{condichaos} and consider, for each $N\geq 2$, 
the unique  solution $(\UU^N_1(t),\dots,\UU^N_N(t))_{t\in[0,T]}$ to \eqref{psp}.
We also set 
$$
\QQ^{N}:=\frac1N \sum_{i=1}^N\delta_{(\UU^{N}_i(t))_{t\in [0,T]}}.
$$

(i) The family $\bigl\{\LL((\UU^{N}_1(t))_{t\in [0, T]}), N\geq 2 \bigr\}$ is tight in $\PP(C([0, T],\rr^3))$.

(ii) The family $\bigl\{\LL(\QQ^N), N\geq 2 \bigr\}$ is tight in $\PP(\PP(\rr\times C([0, T],\rr^3)))$.
\end{prop}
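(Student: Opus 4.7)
The plan is to deduce (i) from the Aldous tightness criterion applied to the SDE \eqref{psp}, and to obtain (ii) from (i) via the classical result of Sznitman \cite{SSF} that, for an exchangeable family taking values in a Polish space, tightness of the laws of the empirical measures is equivalent to tightness of the one-particle marginals. The family $(\UU^N_1,\dots,\UU^N_N)$ is exchangeable: pathwise uniqueness for \eqref{psp} (Proposition \ref{pspwp}), the symmetric structure of the coefficients, and exchangeability of the initial data force the joint law to be invariant under permutations. The key inputs beyond this are the moment and singularity estimates of Lemma \ref{singetmom}.

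For the Aldous criterion in (i), compact containment is immediate from Lemma \ref{singetmom}-(i) and Markov's inequality. For the oscillation condition, I would fix $\delta>0$ and bounded stopping times $S\le S'\le (S+\delta)\wedge T$ and bound $\E[|\UU^N_1(S')-\UU^N_1(S)|^2]$ by $C\delta$ uniformly in $N$. The four contributions from \eqref{psp} split naturally. The perturbation drift contributes $O(\delta^2)$ since $c^N_{\lfloor \cdot/\tau_0\rfloor}$ and $\nabla_{v_i}c^N_{\lfloor\cdot/\tau_0\rfloor}$ are bounded by Proposition \ref{adnoiseprop}-(iii); the perturbation Brownian integral contributes $O(\delta)$ by Itô's isometry applied to bounded stopping times; and for the Landau diffusion term, Itô's isometry gives
$$\E\Bigl[\Bigl|\int_S^{S'}\sigma^N_1\,d\BB_1\Bigr|^2\Bigr]=\E\Bigl[\int_S^{S'}\Tr a^N_1(u)\,du\Bigr]\le \delta\sup_{u\in[0,T]}\E[\Tr a^N_1(u)],$$
with $\E[\Tr a^N_1(u)]\le C$ following from $|v-v_\ast|^{\gamma+2}\le C(|v|^{\gamma+2}+|v_\ast|^{\gamma+2})$ (valid since $\gamma+2\in(0,2)$), exchangeability, and the second-moment bound of Proposition \ref{pspwp}.

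The main obstacle is the Landau drift. Starting from the Cauchy--Schwarz bound
$$\E\Bigl[\Bigl|\int_S^{S'}b^N_1(u)\,du\Bigr|^2\Bigr]\le \delta\int_0^T\E[|b^N_1(u)|^2]\,du,$$
I would show the right-hand integral is uniformly bounded in $N$. Using $|b_{\eta_N}(v)|^2\le(|b|^2\star\phi_{\eta_N})(v)$ (Jensen on the mollifier), $|b(v)|^2=4|v|^{2\gamma+2}$, and exchangeability, the question reduces to bounding $\int_0^T\E\bigl[\int|\UU^N_1(u)-\UU^N_2(u)-z|^{2\gamma+2}\phi_{\eta_N}(z)\,dz\bigr]\,du$ uniformly in $N$. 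For $\gamma\in[-1,0)$ the exponent $2\gamma+2$ is nonnegative and this is controlled by the second moments from Proposition \ref{pspwp}. For $\gamma\in(-2,-1)$, I would apply the elementary inequality $|v|^{2\gamma+2}\le 1+|v|^\gamma$ (valid since $2\gamma+2\ge\gamma$ when $\gamma\ge-2$) and then invoke the same Fisher-information argument as in Lemma \ref{singetmom}-(ii): translating one coordinate of a joint density preserves the weighted Fisher information $I_\gamma$ (Notation \ref{HetI}), so Lemma \ref{lem:FishInteg} can be applied uniformly in the shift $z$, giving $\int_0^T\E[|\UU^N_1-\UU^N_2-z|^\gamma]\,du\le C$ uniformly in $|z|<\eta_N$. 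Combining the four estimates produces the Aldous oscillation bound $\E[|\UU^N_1(S')-\UU^N_1(S)|^2]\le C\delta$, yielding (i); (ii) then follows from Sznitman's equivalence applied with the Polish space $C([0,T],\rr^3)$.
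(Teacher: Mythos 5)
Your overall plan (Aldous criterion for (i), Sznitman's equivalence for (ii)) is a legitimate alternative to the paper's route, which instead proves tightness of each term of the It\^o decomposition of $\UU^N_1$ separately, via the Kolmogorov criterion for the stochastic integral and an Ascoli/H\"older-in-time argument for the drift. However, two steps in your argument have real gaps.

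For the Landau diffusion term, the bound
$\E\bigl[\int_S^{S'}\Tr a^N_1(u)\,du\bigr]\le \delta\sup_{u\in[0,T]}\E[\Tr a^N_1(u)]$
does not hold for \emph{random} stopping times $S$: the supremum cannot simply be pulled out of a random-interval integral of a process that is not independent of $S$. One would need $\E[\sup_u\Tr a^N_1(u)]<\infty$, or a Cauchy--Schwarz in time (which costs fourth moments on $|\UU^N_1|$, not always available under \eqref{condichaos}), or a truncation/uniform-integrability argument. The paper circumvents the issue by taking $p=4/(2+\gamma)>2$ and applying BDG and H\"older between deterministic times $s<t$, so that $(2+\gamma)p/2=2$ reduces everything to second moments and gives $\E[|J^N(t)-J^N(s)|^p]\le C|t-s|^{p/2}$ with $p/2>1$.

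For the Landau drift, the assertion that ``translating one coordinate of a joint density preserves the weighted Fisher information $I_\gamma$'' is incorrect: the weight $(1+|v|^2)^{\gamma/4}$ in Notation~\ref{HetI} is not translation-invariant, so $I_\gamma$ changes under a translation (one only gets comparability up to a constant when the shift is bounded, which would need to be stated and proved). More to the point, the whole detour through $|b_{\eta_N}|^2\le |b|^2\star\phi_{\eta_N}$ and the resulting shift by $z$ is unnecessary: one has the direct pointwise bound $|b_{\eta_N}(x)|\le C(1+|x|^{\gamma+1})$ for $x\neq0$ (as used in the paper's Step for $I^N$), after which $|x|^{2\gamma+2}\le 1+|x|^\gamma$ and Lemma~\ref{singetmom}-(ii) give the uniform bound on $\int_0^T\E[|b^N_1(u)|^2]\,du$ with no shift appearing at all.
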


\begin{proof}
As is well-known, point (ii) is implied by point (i) and the exchangeability of the system, 
see Sznitman \cite[I-Proposition 2.2]{SSF}.
To prove point (i), we use the shortened notation 
$b^N_i(s)=b^N_i(\bUU^N_s)$, $\sigma^N_i(s)=\sigma^N_i(\bUU^N_s) $ and 
$c^N(s)=c_{\lfloor s/\tau_0 \rfloor}^N(\bUU^N_s)$ and we write
\begin{align*}
\UU^{N}_1(t)  & =
\VV^N_1(0) + \intot b_1^N(s)ds + \intot \sigma_1^N(s)d\BB_1(s) + \intot c^N(s)d\WW_1(s) 
+ \intot c^N(s)\nabla_{v_1} c^N(s)ds \\
& = \VV^N_1(0) + \hspace{13pt}  I^N(t) \hspace{12pt} + \hspace{23pt} J^N(t) \hspace{22pt} + \hspace{22pt} K^N(t)
 \hspace{21pt} +  \hspace{30pt} L^N(t)
\end{align*} 
and prove separately that each term is tight.

\vip

First, $\{\VV^N_1(0)\}_{N\geq 2}$ is tight by \eqref{condichaos}-(iii). 
Next, $\{(K^N(t))_{t\in[0,T]}\}_{N\geq 2}$
and $\{(L^N(t))_{t\in [0,T]}\}_{N\geq 2}$ are obviously tight (use the Kolmogorov criteria for $K^N$), because $c^N$ and $\nabla_{v_1} c^N(s)$ are 
uniformly bounded, see Proposition \ref{adnoiseprop}-(iii). 

\vip

To prove that $\{(J^N(t))_{t\in[0,T]}\}_{N\geq 2}$ is tight, we use the Kolmogorov criterion,
see e.g. Stroock-Varadhan \cite[Corollary 2.1.4]{SV}: it suffices to show that  there are some constants 
$p>0$, $\beta>1$ and $C$ such that for all $N\geq 2$, all $0\leq s \leq t\leq T$,
$$ 
\E \bigl[|J^N(t)-J^N(s)|^p \bigr] \leq C|t-s|^\beta.
$$
We consider $p=4/(2+\gamma)>2$. By the Burkholder-Davis-Gundy inequality and since
$[\sigma^N_1(s)]^2=a^N_1(s)$, we have, for $0\leq s \leq t \leq T$, 
\begin{align*}
\E \bigl[|J^N(t)-J^N(s)|^p \bigr] \leq& C \, \E\Big[ \Big(\int_s^t \|a^N_1(u)\| du \Big)^{p/2}\Big] \\
\leq &  C \,  \E\biggl[ \biggl(\int_s^t \frac 1N \sum_{j \ne 1} \|(a \star \phi_{\eta_N})(\UU^N_1(u)-\UU^N_j(u))\| 
du \biggr)^{p/2}\biggr].
\end{align*}
Recalling next that $\|(a \star \phi_{\eta_N})(v)\|\leq C(1+|v|^{\gamma+2})$ and using the H\"older inequality, 
\begin{align*}
\E \bigl[|J^N(t)-J^N(s)|^p \bigr] 
\leq &  C (t-s)^{p/2 -1}  \int_s^t \E\biggl[\Big(1+\frac1N \sum_{j \ne 1} |\UU^N_1(u)-\UU^N_j(u)|^{2+\gamma}\Big)^{p/2} 
\biggr]du\\
\leq &  C (t-s)^{p/2} \sup_{[0,T]} \E\biggl[1+\frac1N \sum_{j \ne 1} |\UU^N_1(u)-\UU^N_j(u)|^{2} \biggr]
\end{align*}
since $p/2>1$ and $(2+\gamma)p/2=2$. Using finally exchangeability and the moment bound proved in Lemma \ref{pspwp}
(together with \eqref{condichaos}), we deduce that
$\E \bigl[|J^N(t)-J^N(s)|^p \bigr] \leq C_p (t-s)^{p/2}$. Since $p/2>1$, we conclude the tightness of 
$\{(J^N(t))_{t\geq 0}\}_{N\geq 2}$.

\vip
The remaining term $I^N(t)$ is the more difficult, since it is singular (when $\gamma \in (-2,-1)$). 
For some $p>1$ to be chosen later, we write
$$
\bigl|  I^N(t) - I^N(s) \bigr|  = \biggl|  \int_s^t  b^N_1(s) \,ds  \biggr| 
\le   (t-s)^{1-1/p} \biggl( \int_s^t  |b^N_1(s)|^p \,ds \biggl)^{1/p}.
$$
Thus with $\alpha=1-1/p>0$, 
$$
\sup_{0 \le s < t \le T} \frac{\bigl|  I^N(t) - I^N(s) \bigr|}{|t-s|^\alpha}  \le 
\biggl( \int_0^T  \bigl[  b^N_1(s) \bigr]^p \,ds \biggl)^{1/p}.
$$
Taking expectations and using the H\"older inequality, we get
$$
\E \biggl[ \sup_{0 \le s < t \le T} \frac{\bigl|  I^N(t) - I^N(s) \bigr|}{|t-s|^\alpha} \biggr] \le 
\biggl(\int_0^T  \E   \Bigr[  \bigl(  b^N_1(s) \bigr)^p  \Bigr] \,ds \biggl)^{1/p}.
$$
Using the expression of $b^N_1(s) = N^{-1} \sum_{j=2}^N (b\star \phi_{\eta_N})(\UU^N_1(s) - \UU^N_j(s))$, 
the H\"older inequality and exchangeability, we find
$$
\E \biggl[ \sup_{0 \le s < t \le T} \frac{\bigl|  I^N(t) - I^N(s) \bigr|}{|t-s|^\alpha} \biggr]^p \le 
C \int_0^T  \E   \Bigr[1+  \bigl|\UU^N_1(s) - \UU^N_2(s)\bigr|^{(\gamma+1)p} \Bigr] \,ds .
$$
If $\gamma \in (-1,0)$, we choose $p = 2/(1+\gamma)$.
By the boundedness of the moment of order two obtained in Proposition~\ref{pspwp}, we get,
with $\alpha = (1-\gamma)/2$,
$$
\E \biggl[ \sup_{0 \le s < t \le T} \frac{\bigl|  I^N(t) - I^N(s) \bigr|}{|t-s|^\alpha} \biggr]^p \le  C.
$$
If $\gamma=-1$, we choose $p=2$ (or any other value) and deduce that, with $\alpha=1/2$,
$$
\E \biggl[ \sup_{0 \le s < t \le T} \frac{\bigl|  I^N(t) - I^N(s) \bigr|}{|t-s|^\alpha} \biggr]^2 \le  C,
$$
We now consider the case where $\gamma \in (-2,-1)$. Since $\gamma<2\gamma+2<0$, we deduce from
Lemma \ref{singetmom}-(ii) that $\sup_{N \geq 2} \int_0^T \E[|\UU^N_1(s) - \UU^N_2(s)\bigr|^{2(\gamma+1)}]ds <\infty$.
Consequently, we may choose $p=2$ and get, with $\alpha=1/2$,
$$
\E \biggl[ \sup_{0 \le s < t \le T} \frac{\bigl|  I^N(t) - I^N(s) \bigr|}{|t-s|^\alpha} \biggr]^2 \le  C.
$$
In any case, we conclude the tightness of the family $\{ (I^N(t))_{t \in [0,T]}\}$ by the Ascoli theorem.
\end{proof}

%%%%%%%%%%%%%%%%%%%%%%%%%%%%%
\subsection{Propagation of chaos for the perturbed system}

With the several estimates obtained in the previous section, we are now in position to prove the 
propagation of chaos 
for the perturbed system. The proof uses some martingale problem,
as was initiated by Sznitman \cite{SSF}.
The only difficulty in the present case is to control the singularity of $b$ (when $\gamma \in (-2,-1]$).

\begin{prop}\label{cvgcep}
Assume \eqref{condichaos} and consider, for each $N\geq 2$, the unique
solution to \eqref{psp} $(\UU^N_1(t),\dots,\UU^N_N(t))_{t\in [0,T]}$.
Let also $(\UU(t))_{t\in [0,T]}$ be the unique solution to the perturbed nonlinear SDE \eqref{nsdep}, given by Proposition~\ref{nsdepwp}.
The sequence $((\UU^N_1(t))_{t\in[0,T]},\dots,(\UU^N_N(t))_{t\in [0,T]})$ is $(\UU(t))_{t \in [0,T]}$-chaotic.
\end{prop}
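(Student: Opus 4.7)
The plan is to apply the classical tightness–uniqueness martingale method of McKean and Sznitman, adapted to singular kernels in the spirit of \cite{fhm1}. By Proposition \ref{tight}-(ii), the laws of $\QQ^N$ are tight in $\cP(\cP(C([0,T],\rd)))$; using the pathwise uniqueness for the perturbed nonlinear SDE (Proposition \ref{nsdepwp}) together with a standard exchangeability argument, it suffices to show that every subsequential limit in law $\QQ$ equals a.s.\ the deterministic measure $P:=\cL((\UU(t))_{t\in[0,T]})$. Since $P=\cL((\VV(t))_{t\in[0,T]})$ by Proposition \ref{nsdepwp}-(iii), this yields the claimed $(\VV(t))_{t\in[0,T]}$-chaos.

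I characterize $P$ through a nonlinear martingale problem. For $\pi\in\cP(C([0,T],\rd))$ with time marginals $(\pi_u)_{u\in[0,T]}$ and $\varphi\in C^2_b(\rd)$, set
\[
L^\pi_u\varphi(v):=\tfrac12\Tr\Bigl(\bigl[a(\pi_u,v)+c^2_{\lfloor u/\tau_0\rfloor}(\pi_u)\,I\bigr]\nabla^2\varphi(v)\Bigr)+b(\pi_u,v)\cdot\nabla\varphi(v).
\]
Say that $\pi$ solves (MP) if (a) $\pi_0=f_0$; (b) $(\pi_u)\in L^\infty([0,T],\cP_2(\rd))\cap L^1([0,T],L^p(\rd))$ for some $p\in(p_1(\gamma),p_2(\gamma,q))$; and (c) for every $\varphi\in C^2_b(\rd)$ the process $\varphi(\gamma_t)-\varphi(\gamma_0)-\intot L^\pi_u\varphi(\gamma_u)\,du$ is a $\pi$-martingale. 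Testing (c) with $\varphi\in C^2_c$ forces $(\pi_u)$ to be a weak solution to \eqref{edpp}; Proposition \ref{nsdepwp}-(i) then gives $\pi_u=f_u$, the coefficients of $L^\pi_u$ are frozen to those of \eqref{nsdep}, and the linear-SDE uniqueness used inside the proof of Proposition \ref{nsdepwp} gives $\pi=P$.

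It remains to check that any limit point $\QQ$ a.s.\ solves (MP). Condition (a) is immediate from $f_0$-chaoticity. Condition (b) is obtained by transferring Proposition \ref{HIbound} and Lemma \ref{singetmom}-(i) to $\QQ$ through the lower semi-continuity of $H$ and $I_\gamma$ along weak convergence, Fatou's lemma, and Lemma \ref{lem:FishInteg1} to convert the weighted Fisher bound into an $L^p$ bound. The heart of the proof is (c): for $0\le s<t\le T$, $\varphi\in C^2_b(\rd)$ and $G$ bounded continuous depending only on $(\gamma_u)_{u\le s}$, define
\[
\Psi(\pi):=\Bigl\langle \pi,\Bigl[\varphi(\gamma_t)-\varphi(\gamma_s)-\int_s^t L^\pi_u\varphi(\gamma_u)\,du\Bigr]\,G\Bigr\rangle,
\]
so that it is enough to prove $\E[\Psi(\QQ)^2]=0$. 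Applying Itô's formula to $\varphi(\UU^N_i)$ and averaging in $i$ decomposes $\Psi(\QQ^N)=A^N+B^N+C^N$, where $A^N$ is a martingale remainder of $L^2$-size $O(N^{-1/2})$ by Proposition \ref{pspwp} and exchangeability, $B^N$ collects the $\eta_N$-blob discrepancy between $(a_{\eta_N},b_{\eta_N})$ and $(a,b)$ together with the $O(N^{-1})$ drift $c^N\nabla_{v_i}c^N$ (Proposition \ref{adnoiseprop}-(iii)), and $C^N$ is the essential residue carrying the singularities of $a$ and $b$.

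The main obstacle is passing to the limit in $C^N$: the map $\pi\mapsto\Psi(\pi)$ is not weakly continuous because $\pi\mapsto(a(\pi_u,v),b(\pi_u,v))$ involves the singular kernel $|v-v_*|^\gamma$. I handle this by a truncation. For $M\ge 1$, replace $a,b$ by bounded continuous kernels $a_M,b_M$ obtained by smoothly cutting off the regions $|v-v_*|<1/M$ and $|v-v_*|>M$, yielding a weakly continuous truncated functional $\Psi_M$. Then $\Psi_M(\QQ^N)\to\Psi_M(\QQ)$ in law, while the errors $\sup_N \E[|\Psi(\QQ^N)-\Psi_M(\QQ^N)|]$ and $\E[|\Psi(\QQ)-\Psi_M(\QQ)|]$ (the latter through the lower semi-continuity transfer of the bounds of Proposition \ref{HIbound} to $\QQ$) are both controlled, uniformly in $N$ and in $\QQ$, by the singular-integral estimate $\sup_N \int_0^T \E[|\UU^N_1-\UU^N_2|^{\gamma+1-\delta}]\,du<\infty$ for some $\delta>0$, obtained from the weighted Fisher bound of Proposition \ref{HIbound} via Lemma \ref{lem:FishInteg} (exactly as in Lemma \ref{singetmom}-(ii)); both errors tend to $0$ as $M\to\infty$. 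The same singular-integral estimate, combined with Lemma \ref{rough}-(ii)-(iii), yields $\E[|B^N|]\to 0$. Letting $N\to\infty$ and then $M\to\infty$ gives $\E[\Psi(\QQ)]=0$; running the same truncation and limiting scheme on the double-integral representation of $\Psi(\pi)^2$ (expressed against $\pi\otimes\pi$) yields $\E[\Psi(\QQ)^2]=0$. Property (c) follows, so $\QQ=P$ a.s., which proves the desired $(\UU(t))_{t\in[0,T]}$-chaos.
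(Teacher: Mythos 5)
Your plan is essentially the paper's proof: characterize the unique solution via the martingale problem for the operator built from $a$, $b$, $c^2_{\lfloor \cdot/\tau_0\rfloor}$, use the tightness of $\QQ^N$, transfer the integrability condition (b) to the limit via lower semicontinuity of $I_\gamma$ (the paper's Corollary \ref{tbu}) together with Lemma \ref{lem:FishInteg1}, and control the singular part of the martingale-problem functional by truncation. Two remarks on where you deviate from the paper, neither of which is fatal. First, you truncate both $a$ and $b$ and cut off both small and large separations; the paper only replaces $b$ by a bounded $b_\eps$ near the origin and leaves $a$ unchanged, because $a(z) = |z|^{2+\gamma}(I - z\otimes z/|z|^2)$ is already continuous at $z=0$ (recall $\gamma+2>0$), while its quadratic growth at infinity is absorbed by showing that $\cF_\eps$ is continuous and bounded on sets $\{g : \int \sup_{[0,T]}|\beta_s|^2\,g(d\beta) \le M\}$ and invoking $\sup_N \E[\sup_{[0,T]}|\UU^N_1|^2] < \infty$ from Lemma \ref{singetmom}-(i). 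Your extra cut-off at $|v-v_*|>M$ is not wrong, only unnecessary. Second, your final target $\E[\Psi(\QQ)^2]=0$ via a ``double-integral representation of $\Psi^2$'' is more fragile than needed: $\Psi(\QQ)^2$ is a degree-four functional of $\QQ$, and bounding the truncation error $\E[|\Psi(\QQ)^2-\Psi_M(\QQ)^2|]$ requires a priori square-integrability of $\Psi(\QQ)$ as well as $L^2$-type control on the truncation remainder, which you do not have from Lemma \ref{singetmom}-(ii) (that lemma provides $L^1$-type singular integral bounds). The paper sidesteps this cleanly by proving $\E[|\cF(\QQ)|\wedge 1]=0$, which is automatically finite and only needs the $L^1$ estimates on $\E[|\cF(\QQ^N)|]$, $\E[|\cF - \cF_\eps|(\QQ^N)]$ and $\E[|\cF - \cF_\eps|(\QQ)|]$ that the Fisher-information bound delivers. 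You should replace your $L^2$ target by the same $\E[|\Psi(\QQ)|\wedge 1]=0$. Apart from those two points, and the slightly muddled description of the Itô decomposition $A^N+B^N+C^N$ (after Itô, $\Psi(\QQ^N)$ is exactly the martingale part plus the $\eta_N$-blob and $c\nabla c$ discrepancies; there is no further ``essential residue''), the argument is the paper's.
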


\begin{proof}
We define $\cS$ as the set of all probability measures 
$g\in\PP(C([0,T],\rr^3))$ such that $g$ is the law of
$(\UU(t))_{t \in [0,T]}$ solution to the perturbed nonlinear SDE \eqref{nsdep} associated 
with $f_0$ and satisfying, for $g_t\in \PP(\rd)$ the law of $\UU(t)$,
\begin{equation} \label{tif}
(g_t)_{t \in [0,T]} \in L^1([0,T],L^p(\rd)) \quad \hbox{and}\quad (g_t)_{t \in [0,T]} \in L^\infty([0,T],\cP_2(\rd))
\end{equation}
for some $p>p_1(\gamma)$. 
The uniqueness result 
shown in Proposition~\ref{nsdepwp} implies that the set $\cS$ contains only one element.

\vip

We recall that $\QQ^N=N^{-1}\sum_{i=1}^N \delta_{(\UU^N_i(t))_{t\in[0,T]}}$ stands for the empirical distribution 
of trajectories. By Proposition~\ref{tight}, this sequence is tight, we thus can consider  a (not relabeled) 
subsequence of $\QQ^N$ going in law 
to some $\QQ$. We will show that $\QQ$ almost surely belongs to $\cS$. This will conclude the proof, since
$\cS$ contains only one element. The definition of propagation of
chaos was recalled in Definition \ref{dfc}.

\vip

{\it Step 1.} Consider the identity map $ \beta: C([0,T],\rr^3) 
\mapsto C([0,T],\rr^3)$.
Using the classical theory of martingale problems, we realize that $g$ belongs to $\cS$ as soon as

(a) $g\circ \beta_0^{-1}=f_0$;

(b) setting $g_t= g\circ \beta_t^{-1}$, (\ref{tif}) holds true; 

(c) for all $0<t_1<\dots <t_k<s< t \le T$, all
$\varphi_1,\dots,\varphi_k \in C_b(\rr^3)$, all $\varphi \in C^2_b(\rr^3)$,
\begin{multline*}
\cF(g):=\int\!\!\int g(d\beta)g(d\tg)\varphi_1(\beta_{t_1})\dots\varphi_k(\beta_{t_k})\\
 \biggl[
\varphi(\beta_t)-\varphi(\beta_s)-\int_s^t  \indiq_{\{\beta_u\neq\tg_u\}}b(\beta_u-\tg_u)\cdot \nabla \varphi(\beta_u)du 
- \frac12 \int_s^t  [a(\beta_u-\tg_u) \colon \nabla^2 \varphi(\beta_u)]du   \\
- \frac12 \int_s^t c^2_{\lfloor u/ \tau_0\rfloor} (  g_u)  \Delta\varphi(\beta_u)du
\biggr]=0. 
\end{multline*} 
Here and below, $A\colon B= \sum_{k,l=1}^3 A_{kl}B_{kl}$ for two $3\times 3$-matrices $A$ and $B$.

\vip

Indeed, let $(\UU(t))_{t\geq 0}$ be $g$-distributed. Then (a) implies that $\UU(0)$ is
$f_0$-distributed and (b) says that the requirement (\ref{tif}) is fulfilled. 
Finally, point (c) tells us that for all $\varphi \in C^2_b(\rr^2)$,
\begin{multline*}
\varphi(\UU(t))-\varphi(\UU(0))-\intot \int \indiq_{\{\UU(s)\neq\tg_s\}}b(\UU(s)-\tg_s) 
\cdot \nabla\varphi(\UU(s))\, g(d\tg)ds  \\
- \frac12 \intot  \int [a(\UU(s)-\tg_s) \colon \nabla^2 \varphi(\UU(s))] \, g(d\tg)ds    
- \frac12 \intot c^2_{\lfloor s/ \tau_0\rfloor}\bigl(g_s\bigr) \Delta\varphi(\UU(s))ds
\end{multline*}
is a martingale.
Observe that $\int \indiq_{\{\UU(s)\neq\tg_s\}}b(\UU(s)-\tg_s) g(d\tg) ds=
\intrd \indiq_{\{\UU(s)\neq x\}}b(\UU(s)-x) g_s(dx)=\intrd b(\UU(s)-x) g_s(dx)
=b(g_s,\UU(s))$.
We used here that $g_s$ does not weight points, since it has a density by point (b).
Similarly, $\int  a(\UU(s)-\tg_s)  g(d\tg)= a(g_s,\UU(s))$.
All this classically implies 
the existence of two independent $3D$-Brownian motions
$(\BB(t))_{t\geq 0}$ and $(\WW(t))_{t\geq 0}$ such that 
$$
\UU(t)=\UU(0) +\intot b(g_s,\UU(s)) ds + \intot \sigma(g_s,\UU(s)) d\BB(s) +
\intot c_{\lfloor s/ \tau_0\rfloor}(g_s) d\WW(s).
$$
Hence $(\UU(t))_{t\in[0,T]}$ solves~\eqref{nsdep} as desired.

\vip

We thus only have to prove that $\QQ$ a.s. satisfies points (a), (b) and (c). For each 
$t\in[0,T]$,
we set $\QQ_t= \QQ \circ \beta_t^{-1}$ and $\QQ_t^N= \QQ^N \circ \beta_t^{-1}=N^{-1}\sum_1^N \delta_{\UU^N_i(t)}$.

\vip

{\it Step 2.} 
We know from (\ref{condichaos}) that the sequence $G^N_0=F^N_0$ is $f_0$-chaotic, which implies that
$\QQ^N_0= \QQ^N\circ \beta_0^{-1}$ (this is nothing but the law of $N^{-1}\sum_1^N \delta_{\VV^N_i(0)}$) 
goes weakly to $f_0$ in law (and thus in probability
since $f_0$ is deterministic), whence
$\QQ_0=f_0$ a.s. Hence $\QQ$ satisfies (a).

\vip

{\it Step 3.} Point (b) follows from Lemma \ref{singetmom}, Proposition \ref{pspwp} 
and Corollary \ref{tbu}. 
First, Lemma \ref{singetmom}-(i) tells us that $\sup_N \E[\sup_{[0,T]} |\UU^N_1(s)|^2]<\infty$,
which implies that $\sup_N \E[\sup_{[0,T]} m_2(\QQ^N_t)] <\infty$ by exchangeability.
Since $\QQ$ is a weak limit of $\QQ^N$, we easily conclude that $\E[\sup_{[0,T]} m_2(\QQ_t)] <\infty$,
whence $(\QQ_t)_{t\in[0,T]} \in L^\infty([0,T],\cP_2(\rd))$ a.s.

\vip

The integrability condition is slightly more complicated. Since $\QQ^N_t$ goes in law to $\QQ_t$ 
for each $t\in [0,T]$, we may apply Corollary \ref{tbu} for each $t\in [0, T]$ and deduce that
$\E \bigl[ I_\gamma(\QQ_t) \bigr] \le \liminf_N I_\gamma(G^N_t)$. By the Fatou Lemma, this yields
$$
\int_0^T \E[I_\gamma(\QQ_s)]ds
\leq  \int_0^T \liminf_N I_\gamma(G^N_t)dt \leq \liminf_N \int_0^T I_\gamma(G^N_t)dt < \infty
$$
by Proposition~\ref{HIbound}. Next, we know from Proposition \ref{pspwp} and condition \eqref{condichaos}
that, for $\alpha=\max\{2,q\}$ (with $q> q(\gamma)$), $\sup_{N} \sup_{[0,T]} \E[|\UU^N_1(s)|^\alpha] <\infty$.
Using exchangeability, this gives $\sup_{N} \sup_{[0,T]} \E[m_\alpha(\QQ^N_t)] <\infty$,
from which we easily deduce that $\sup_{[0,T]} \E[m_\alpha(\QQ_t)] <\infty$.
Consider now $p \in (\max\{3/2,p_1(\gamma)\},\max\{(6+3|\gamma|)/(2+3|\gamma|),p_2(\gamma,q)\})$ 
(observe that $(6+3|\gamma|)/(2+3|\gamma|)>3/2$ because $|\gamma|<2$). We always have $p\in (3/2,3)$
(because $p_2(\gamma,q)<3$). Set $r=(3p-3)/(2p)$, which belongs to
$(1/2,1)$ and $a=|\gamma|r/(1-r)=|\gamma|(3p-3)/(3-p)$.
Then, applying Lemma \ref{lem:FishInteg1}, we get
$$
\|\QQ_t\|_{L^p} \leq C (I_\gamma(\QQ_t))^r (1+m_a(\QQ_t))^{1-r}
$$
whence, by the H\"older inequality,
$$
\E[\|\QQ_t\|_{L^p}] \leq C \E[I_\gamma(\QQ_t)]^r \E[1+m_a(\QQ_t)]^{1-r}.
$$
But $a \leq \alpha$. Indeed, just use that $p< [(6+3|\gamma|)/(2+3|\gamma|)]\lor p_2(\gamma,q)$.
But $p\leq (6+3|\gamma|)/(2+3|\gamma|)$ implies 
that $a \leq 2 \leq \alpha$ and $p \leq p_2(\gamma,q)=(3q+3|\gamma|)/(q+3|\gamma|)$ gives us
$a \leq q \leq \alpha$.
Consequently, recalling that $\sup_{[0,T]} \E[m_\alpha(\QQ_t)] <\infty$, we get
$$
\E[\|\QQ_t\|_{L^p}] \leq C \E[I_\gamma(\QQ_t)]^r \leq C(1+\E[I_\gamma(\QQ_t)]).
$$
Integrating in time, we deduce that $\int_0^T \E[\|\QQ_t\|_{L^p}] dt <\infty$, whence finally,
$\int_0^T \|\QQ_t\|_{L^p} dt <\infty$ a.s. We have checked that $\QQ$ satisfies point (b).

\vip

{\it Step 4.} From now on, we consider some fixed $\cF: \PP(C([0,T],\rr^3))\mapsto \rr$ 
as in point (c). We will check that $\cF(\QQ)=0$ a.s.
and this will end the proof.

\vip

{\it Step 4.1.} Here we prove that for all $N\geq 2$,
\begin{equation*}
\E [ |\cF(\QQ^N)| ] \leq C_\cF (N^{-1/2} + \eta_N).
\end{equation*}
To this end, we recall that $\varphi \in C^2_b(\rr^3)$ is fixed
and we define, for $i=1,\dots,N$,
\begin{align*}
O^N_i(t) &:=\varphi(\UU^N_i(t))
-\frac 1N \sum_{j=1}^N \intot \indiq_{\{\UU^N_i(s)\ne \UU^N_j(s)\}}b(\UU^N_i(s)-\UU^N_j(s)) \cdot \nabla\varphi(\UU^N_i(s))
 ds \\
&  - \frac1{2N} \sum_{j=1}^N \intot [a(\UU^N_i(s)-\UU^N_j(s)) \colon \nabla^2 \varphi(\UU^N_i(s))]  ds
- \frac 12 \intot [c^N_{\lfloor s/\tau_0\rfloor}( \bUU^N(s) )]^2 \Delta \varphi(\UU^N_i(s))ds.
\end{align*}
By definition of $\cF$, we have
\begin{align*}
\cF(\QQ^N)=& \frac 1 N \sum_{i=1}^N \varphi_1(\UU^N_i(t_1))\dots \varphi_k(\UU^N_i(t_k)) 
[O^N_i(t)-O^N_i(s)].
\end{align*}
Applying the It\^o formula to compute $\varphi(\UU^N_i(t))$, we realize that, for $i=1,\dots,N$,
$O^N_i(t)=\varphi(\UU^N_i(0))+M^N_i(t)+\Delta^N_i(t)$, where 
\begin{align*}
M^N_i(t)=&  \intot \nabla \varphi(\UU^N_i(s)) \cdot 
\bigl[ \sigma_i^N(\bUU^N(s)) d\BB^i(s) + c_{\lfloor s/\tau_0\rfloor}^N(\bUU^N(s)) d\WW^i(s)\bigr],\\
\Delta^N_i(t)=& \intot \nabla \varphi(\UU^N_i(s)) \cdot 
\bigl[ c_{\lfloor s/\tau_0\rfloor}^N(\bUU^N(s))\nabla_{v_i}c_{\lfloor s/\tau_0\rfloor}^N(\bUU^N(s))\\
&\hskip4cm+ \frac 1N \sum_{j=1}^N \indiq_{\{\UU^N_i(s)\ne\UU^N_j(s) \}}
(b\star \phi_{\eta_N} -b)(\UU^N_i(s)-\UU^N_j(s))\bigr] ds\\
&+ \intot 
\frac 1 {2N} \sum_{j=1}^N [(a\star \phi_{\eta_N} - a)(\UU^N_i(s)-\UU^N_j(s)) \colon \nabla^2 \varphi(\UU^N_i(s))]  ds.
\end{align*}
We used here that $(b\star \phi_{\eta_N})(x)=\indiq_{\{x\ne 0\}}(b\star \phi_{\eta_N})(x)$ since
$(b\star \phi_{\eta_N})(0)=0$ by symmetry.
Performing now some classical stochastic calculus, using that $0<t_1<\dots<t_k<s<t$, that
$\varphi_1,\dots,\varphi_k,\nabla \varphi$ and $c_l$ are uniformly bounded and
that the Brownian motions $\BB^1,\ldots,\BB^N$ and $\WW^1,\ldots \WW^N$ are independent, we easily obtain
\begin{align*}
&\E \Big[ \Big(\frac 1 N \sum_{i=1}^N \varphi_1(\UU^N_i(t_1))\dots \varphi_k(\UU^N_i(t_k))[M^N_i(t)-M^N_i(s)]\Big)^2
\Big] \\
\le& \frac {C_{\cF}}{N^2} \sum_{i=1}^N \int_s^t \E \Bigl[ 1+ \|\sigma^N_i (\bUU^N(s))\|^2 \Bigr] \,ds
\le  \frac {C_{\cF}}{N},
\end{align*}
where we have used the fact that 
$\|\sigma^N_i (\bUU^N(s))\|^2 \leq  C N^{-1} \sum_{j=1}^N (1+|\UU^N_i(s) - \UU^N_j(s)|^{2+\gamma})$,
exchangeability, and the moment estimate of Proposition~\ref{pspwp} (since $\gamma+2 \in (0,2]$).

\vip

Next, we use exchangeability and the boundedness of $\varphi_1,\dots,\varphi_k,\nabla \varphi,\nabla^2\varphi$ 
to write
\begin{align*}
\E \Big[ \Big|\frac 1 N \sum_{i=1}^N \varphi_1(\UU^N_i(t_1))\dots \varphi_k(\UU^N_i(t_k))
[\Delta^N_i(t)-\Delta^N_i(s)]\Big|\Big] \leq & C_{\cF}\E[I^N+J^N+K^N],
\end{align*}
where
\begin{align*}
I^N:=& \intot \Big|c_{\lfloor s/\tau_0\rfloor}^N(\bUU^N(s))\nabla_{v_1}c_{\lfloor s/\tau_0\rfloor}^N(\bUU^N(s))\Big| ds,\\
J^N:=&\intot\Big|(b\star \phi_{\eta_N} -b)(\UU^N_1(s)-\UU^N_2(s))\Big|ds,\\
K^N:=& \intot\Big\|(a\star \phi_{\eta_N} -a)(\UU^N_1(s)-\UU^N_2(s))\Big\|ds.
\end{align*}
Since $c^N_l \nabla_{v_1}c_l$ is bounded by $C/N$ by Proposition \ref{adnoiseprop}-(iii), we immediately
find that $\E[I^N] \leq C/N$. Next, using Lemma \ref{rough}-(ii), 
$$
\E[J^N] \leq C \eta_N \intot \E[|\UU^N_s(s)-\UU^N_2(s)|^\gamma] ds \leq C_t \eta_N
$$
by Lemma \ref{singetmom}-(ii). Finally, using again Lemma \ref{rough}-(ii), 
$$
\E[K^N] \leq C \eta_N^2 \intot \E[|\UU^N_s(s)-\UU^N_2(s)|^{\gamma}] ds \leq C_t \eta_N.
$$

{\it Step 4.2.} We introduce, for $\e\in (0,1)$, a smooth and bounded 
approximation $b_\e:\rr^3\to\rr^3$  satisfying $b_\e (x) = b(x)$ for
$|x| \ge \e$ and $|b_\e(x)|\le |b(x)|= 2|x|^{1+\gamma}$ for all $x$. This smoothing is useless if 
$\gamma \in [-1,0)$,  but we treat all the cases similarly to avoid repetitions.
We also introduce $\cF_\e$ defined as $\cF$ with $b$ replaced by $b_\e$.
The diffusion coefficient $a$ is continuous and so remains unchanged.
For each fixed $\e \in (0,1)$, for every $M>0$,
the map $g\mapsto \cF_\e(g)$ is continuous
and bounded on the set $g\in \PP(C([0,T],\rr^3))$, $ \int (\sup_{[0,T]} |\beta_s|^2) g(d\beta)\leq M$.
This is not hard to check, using that $\varphi_1,\dots,\varphi_k,\varphi,\nabla\varphi,\nabla^2\varphi$
are continuous and bounded, that $b_\e(z)$ and $a(z)$ are continuous and bounded by $C_\e(1+z^2)$, and finally that
$c_l$ is bounded (by $3$) and Lipschitz continuous for the $W_2$ topology by Proposition
\ref{adnoiseprop}-(vi).
Since $\QQ^N$ goes in law to $\QQ$ and since $\sup_N \E[\int (\sup_{[0,T]} |\beta_s|^2) \QQ^N(d\beta)]
=\sup_N \E[\sup_{[0,T]}|\UU^N_1(s)|^2] <\infty$ by exchangeability and Lemma \ref{singetmom}-(i),
we deduce that for any $\e\in(0,1)$,
$$
\E[|\cF_\e(\QQ)|]=\lim_{N} \E[|\cF_\e(\QQ^N)|].
$$

{\it Step 4.3.}  We now prove that for all $N\geq 2$, all $\e\in(0,1)$,
$$
\E \Bigl[ \bigl|\cF(\QQ^N)-\cF_\e(\QQ^N) \bigr| \Bigr] \leq C_\cF  \, \e.
$$
Using that all the functions (including the derivatives) involved in $\cF$ are bounded and that
$|b_\e(x)-b(x)|\leq |x|^{1+ \gamma}\indiq_{\{|x|<\e\}}$, we get
\begin{align}   \nonumber
\bigl|\cF(g)-\cF_\e(g) \bigr| \leq& C_\cF  \iint   \int_0^T 
\indiq_{\{0<|\beta_t -\tg_t|<\e\}} |\beta_t-\tg_t|^{1 + \gamma}
%\indiq_{\{|\beta_t -\tg_t|<\e\}} 
dt g(d\tg) g(d\beta) \\
\leq & C_\cF \,  \e \iint \int_0^T \indiq_{\{\beta_t \ne\tg_t\}} |\beta_t -\tg_t|^{\gamma}
dt g(d\tg) g(d\beta).  \label{tas}
\end{align}
Thus
$$
\bigl|\cF(\QQ^N)-\cF_\e(\QQ^N) \bigr| \leq C_\cF 
\frac {\e} {N^2} \sum_{i\ne j}\int_0^T \bigl|\UU^N_i(t)- \UU^N_j(t) \bigr|^\gamma dt.
$$
It suffices to take expectations, to use exchangeability and then Lemma \ref{singetmom}-(ii).

\vip
{\it Step 4.4.} We next check that a.s.,
$$
\lim_{\e \to 0} \bigl|\cF(\QQ)-\cF_\e(\QQ) \bigr| = 0.
$$
Starting from \eqref{tas}, using \eqref{weakb2} and that $(\QQ_t)_{t\in[0,T]} \in L^1([0,T],L^p(\rd))$ a.s.
for some $p>p_1(\gamma)$ by Step 3, we get
\begin{align*}
\bigl|\cF(\QQ)-\cF_\e(\QQ) \bigr| \leq& C_\cF  \, \e \int_0^T \intrd\intrd |x-y|^{\gamma} \QQ_t(dx)\QQ_t(dy)
\leq C_\cF \,\e \int_0^T (1+\|\QQ_t\|_{L^p})dt,
\end{align*}
whence the conclusion.

\vip

{\it Step 4.5.} We finally conclude: for any $\e \in (0,1)$, we write, using Steps 4.1, 4.2 and 4.3,
\begin{align*}
\E \bigl[|\cF(\QQ)|\land 1 \bigr] \leq & \E \bigl[|\cF_\e(\QQ)| \bigr] + \E \bigl[|\cF(\QQ)-\cF_\e(\QQ)|\land 1 
\bigr] \\
= & \lim_{N \to +\infty}  \E\bigl [|\cF_\e(\QQ^N)| \bigr] + \E \bigl[|\cF(\QQ)-\cF_\e(\QQ)|\land 1 \bigr] \\
\leq & \limsup_{N \to +\infty} \E \bigl[|\cF(\QQ^N)| \bigr] + \limsup_{N \to +\infty} \E\bigl[|\cF(\QQ^N)-\cF_\e(\QQ^N)| 
\bigr] 
+ \E \bigl[|\cF(\QQ)-\cF_\e(\QQ)|\land 1 \bigr] \\
\leq & C_\cF  \, \e +  \E \bigl[|\cF(\QQ)-\cF_\e(\QQ)|\land 1 \bigr].
\end{align*}
We now make tend $\e\to 0$ and use that $\lim_\e \E[|\cF(\QQ)-\cF_\e(\QQ)|\land 1]=0$
thanks to Step 4.4 by dominated convergence. Consequently, 
$\E[|\cF(\QQ)|\land 1]=0$, whence $\cF(\QQ)=0$ a.s. as desired.
\end{proof}

\subsection{Asymptotic annihilation of the perturbation}

We now show the asymptotic equivalence of the perturbed and unperturbed particle systems.
This uses the propagation of chaos for the perturbed system.

\begin{prop}\label{pegalpp}
Recall that $\gamma \in (-2,0)$, assume \eqref{condichaos} and  consider, for each $N\geq 2$, the 
solution $(\bVV^N(t))_{t\geq 0}$ to \eqref{ps} and the solution 
$(\bUU^N(t))_{t\in[0,T]}$ to \eqref{psp},
with the same initial conditions $\UU^N_i(0)=\VV^N_i(0)$, $i=1,\dots N$ and same Brownian motions 
$\BB^i$, $i=1,\dots N$. Then
$$
\lim_{N} \Pr \Big[(\bVV^N(t))_{t\in[0,T]}=(\bUU^N(t))_{t\in[0,T]}  \Big] = 1.
$$
\end{prop}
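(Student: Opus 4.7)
The plan is to reduce the identification of the two systems to the event that the perturbation in~\eqref{psp} is never activated, and to control this event via the propagation of chaos already established for the perturbed system. First I would introduce the stopping time $\tau_N := \inf\{s \in [0,T] : c^N_{\lfloor s/\tau_0\rfloor}(\bUU^N(s)) > 0\}$ (with $\inf\emptyset = +\infty$). On $[0, \tau_N \wedge T]$ the last two terms in~\eqref{psp} vanish identically, so $(\bUU^N(t))_{t\in[0, \tau_N\wedge T]}$ is an adapted solution of the unperturbed system~\eqref{ps} driven by the same initial condition $(\VV^N_i(0))$ and the same Brownian motions $\BB^i$ as $\bVV^N$. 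The pathwise uniqueness of Proposition~\ref{wpps} yields $\bUU^N \equiv \bVV^N$ on $[0, \tau_N\wedge T]$, whence $\Pr(\bUU^N = \bVV^N\text{ on }[0,T]) \geq \Pr(\tau_N > T)$. It thus suffices to prove $\Pr(\tau_N > T) \to 1$.

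For each $l \in \{0,\dots,n_0\}$ and $k \in \{1,2,3\}$, fix a Lipschitz function $\psi_k^l:\rd \to [0,1]$ with $\indiq_{B(x_k^l,\delta_0)} \leq \psi_k^l \leq \indiq_{B(x_k^l,2\delta_0)}$. By~\eqref{kappa-inf} one has $f_s(\psi_k^l) \geq \kappa_0$ for all $s \in [l\tau_0, (l+1)\tau_0]$ and all $k$. Setting $\mu^{\UU,N}_s := N^{-1}\sum_i \delta_{\UU^N_i(s)}$, on the event
$$
B_N := \bigcap_{l=0}^{n_0}\bigcap_{k=1}^{3} \Bigl\{ \sup_{s \in [l\tau_0, (l+1)\tau_0]\cap[0,T]} \bigl| \mu^{\UU,N}_s(\psi_k^l) - f_s(\psi_k^l) \bigr| < \kappa_0/2 \Bigr\},
$$
we would have $\mu^{\UU,N}_s(B(x_k^l, 2\delta_0)) \geq \mu^{\UU,N}_s(\psi_k^l) > \kappa_0/2$ for every relevant $(k,l,s)$, so by Proposition~\ref{adnoiseprop}-(i), $c^N_{\lfloor s/\tau_0\rfloor}(\bUU^N(s)) = 0$ throughout $[0,T]$; hence $B_N \subset \{\tau_N > T\}$. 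Since there are only finitely many pairs $(k,l)$, it suffices to show that for each such pair, the process $X^N(s) := \mu^{\UU,N}_s(\psi_k^l)$ converges in probability to the deterministic continuous function $s \mapsto f_s(\psi_k^l)$ in the uniform topology of $C([0,T], \R)$.

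I would establish this convergence by combining three ingredients. (a) \emph{Pointwise convergence in probability}: for each fixed $s$, the map $\mu \mapsto \int \psi_k^l(\beta_s)\,\mu(d\beta)$ is continuous and bounded on $\PP(C([0,T], \rd))$, so Proposition~\ref{cvgcep} (which states that $\QQ^N$ tends in probability to the law $g$ of $\UU$ in that space) immediately yields $X^N(s) \to g\circ\beta_s^{-1}(\psi_k^l) = f_s(\psi_k^l)$ in probability. (b) \emph{Tightness of $\{X^N\}$ in $C([0,T], \R)$}: since $\psi_k^l$ is Lipschitz with constant bounded by $C/\delta_0$ and takes values in $[0,1]$, one has $|X^N(t) - X^N(s)| \leq (C/\delta_0)\, N^{-1}\sum_i |\UU^N_i(t) - \UU^N_i(s)|$, so by exchangeability the modulus of continuity of $X^N$ is controlled in expectation by that of $\UU^N_1$; the H\"older-type bounds on each of $I^N, J^N, K^N, L^N$ already derived in the proof of Proposition~\ref{tight} supply the desired uniform-in-$N$ modulus estimate. (c) \emph{Uniqueness of the limit}: by (a), any subsequential weak limit of $X^N$ in $C([0,T], \R)$ has finite-dimensional distributions equal to those of the deterministic function $s \mapsto f_s(\psi_k^l)$ and must therefore coincide with it almost surely, so tightness upgrades to convergence in probability.

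The main obstacle is step~(b): extracting a uniform-in-$N$ modulus of continuity for $\UU^N_1$ despite the singular drift $b^N_1$ when $\gamma \in (-2,-1)$ and the degenerate diffusion $\sigma^N_1$. These are exactly the delicate points already handled in the proof of Proposition~\ref{tight}, via the moment propagation of Proposition~\ref{pspwp} and the singularity control of Lemma~\ref{singetmom}-(ii); hence the work here reduces to reassembling these estimates into the Kolmogorov-type criterion applied to $X^N$.
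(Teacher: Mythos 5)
Your proof is correct and shares the same backbone as the paper's: reduce, via pathwise uniqueness of~\eqref{ps}, to showing that the artificial perturbation is never triggered on $[0,T]$, then exploit the chaos established in Proposition~\ref{cvgcep}, the lower bound~\eqref{kappa-inf}, and the ``switch-off'' criterion in Proposition~\ref{adnoiseprop}-(i). The difference lies in how you upgrade the convergence to hold uniformly in time. The paper introduces the single functional
$$
d(g):=\inf_{l,k}\;\inf_{t\in[l\tau_0,(l+1)\tau_0]}\intrd h\Bigl(\frac{v-x_k^l}{\delta_0}\Bigr)\,g_t(dv),
$$
observes (using that $h$ is continuous bounded, the time intervals are compact, there are finitely many $(l,k)$, and tightness of any weakly convergent sequence) that $d$ is continuous and bounded on $\PP(C([0,T],\rd))$, and applies the continuous-mapping theorem to $\QQ^N\to g$ once. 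You instead prove convergence of the scalar processes $X^N(s)=\mu^{\UU,N}_s(\psi_k^l)$ in $C([0,T],\R)$ by combining one-dimensional convergence (from Proposition~\ref{cvgcep}) with a tightness argument obtained by pushing the Lipschitz bound on $\psi_k^l$ through exchangeability and then re-using the modulus-of-continuity estimates derived inside the proof of Proposition~\ref{tight}. Both routes are valid; the paper's is shorter and avoids re-assembling the tightness bounds, while yours makes explicit the compactness-and-equicontinuity argument that is implicit behind the paper's claim that $d$ is continuous. Two minor details you implicitly use and should note: convergence in probability at each fixed $s$ to a constant already gives joint convergence of finite-dimensional distributions; and the deterministic limit $s\mapsto f_s(\psi_k^l)$ is continuous (which follows from the weak formulation~\eqref{wl}), so the limit identification on $C([0,T],\R)$ is legitimate.
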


\begin{proof}
By the strong uniqueness for the particle system \eqref{ps}, see Proposition \ref{wpps},
we see that $(\bVV^N(t))_{t\in[0,T]}=(\bUU^N(t))_{t\in[0,T]}$ as soon as 
$c^N_{\lfloor t/\tau_0\rfloor}(\bUU^N(t)) = 0$ for all $t\in[0,T]$.
Recalling the Definition of $c_l$, see Notation \ref{ddd1}, it thus suffices to prove that 
\begin{equation}\label{obbb}
\lim_{N} \Pr \Big[  \forall \; l=0,\dots,n_0,\;\forall \, t \in [l\tau_0,(l+1)\tau_0], 
\; c^N_{l}(\bUU^N(t)) = 0   \Big] = 1.
\end{equation}
We denote by $d : \PP(C([0, T],\rr^3))  \mapsto \rr^+$ the function defined by
$$
d(g) := \inf_{l=0,\dots,n_0} \; \inf_{t \in [l \tau_0, (l+1) \tau_0]} \;  \inf_{k=1,2,3}  
\intrd h \biggl(\frac{v-x_k^{l}}{\delta_0} \biggr)g_t(dv),
$$ 
where $h : \rr^3 \to [0,1]$ (a smooth function satisfying $\indiq_{\{|v| \le 1\}} \le h \le \indiq_{\{|v| \le 2\}}$), 
the $x_k^l$'s and $\delta_0$ have been introduced in Notation \ref{ddd1}.
Remark first that $d$ is continuous (and bounded) with respect to the weak topology of measure on 
$\PP(C([0, T],\rr^3))$. Consequently, we know from Proposition \ref{cvgcep} that $d(\QQ^N)$
goes in law to $d(g)$, where $\QQ^N=N^{-1}\sum_1^N \delta_{(\UU^N_i(t))_{t\in[0,T]}}$ and where $g =\LL((\UU(t))_{t\in[0,T]})$,
where $(\UU(t))_{t\in[0,T]}$ is the unique solution to the perturbed nonlinear SDE \eqref{nsdep}.
But we also know, from Proposition \ref{nsdepwp} that $g_t=f_t$ for all $t\in[0,T]$, where
$f$ is the unique weak solution to the Landau equation \eqref{HL3D}. We finally recall \eqref{kappa-inf}
$$
\inf_{l=0,\dots,n_0} \; \inf_{t \in [l \tau_0, (l+1) \tau_0]} \; \inf_{k=1,2,3} f_t(B(x_k^l,\delta_0)) \ge \kappa_0,
$$
which implies that $d(g)\geq \kappa_0$. Consequently, it holds that
$$
\lim_{N \rightarrow + \infty} \Pr \Big[  d(\QQ^N) \ge \frac {\kappa_0}2   \Big] = 1,
$$
whence
$$
\lim_{N \rightarrow + \infty} \Pr \Big[  \inf_{l=0,\dots,n_0} \; \inf_{t \in [l \tau_0, (l+1) \tau_0]} \;  \inf_{k=1,2,3} 
\QQ^N_t(B(x_k^{l},2\delta_0)) \ge \frac {\kappa_0}2   \Big] = 1.
$$
Using Proposition \ref{adnoiseprop}-(i) (which implies that $c_l(\mu)=0$ as soon as
$\inf_{k=1,2,3} \mu(B(x_k^l,2\delta_0)) \ge \kappa_0/2$) and that 
$c_{l}^N(\bUU^N_t)=c_l(\QQ^N_t)$ by definition (see Notation \ref{ddd1} again), we conclude that indeed,
\eqref{obbb} holds true.
\end{proof}

\subsection{Conclusion}

We now have all the weapons in hand to give the

\begin{proof}[Proof of Theorem \ref{mr}] 
We know from Proposition \ref{cvgcep} that the perturbed system $(\bm \UU^N(t))_{t\in[0,T]}$ is
$(\UU(t))_{t\in[0,T]}$-chaotic, where $(\UU(t))_{t\in[0,T]}$ is the unique solution to the perturbed
nonlinear SDE \eqref{nsdep}. But Proposition \ref{nsdepwp} 
tells us that  $(\UU(t))_{t\in[0,T]}= (\VV(t))_{t\in[0,T]}$,
where $(\VV(t))_{t\in[0,T]}$ is the unique solution to the (non perturbed) nonlinear SDE \eqref{nsdep},
while Proposition \ref{pegalpp} tells us that $\lim_N \Pr((\bm \VV^N(t))_{t\in[0,T]}=(\bm \UU^N(t))_{t\in[0,T]})=1$.
We immediately conclude that $(\bm \VV^N(t))_{t\in[0,T]}$ is $(\VV(t))_{t\in[0,T]}$-chaotic.
Recalling that $T>0$, which has been fixed at the beginning of the section, can be chosen arbitrarily large,
we deduce that the sequence  $(\bm \VV^N(t))_{t\geq 0}$ is $(\VV(t))_{t\geq 0}$-chaotic.
As already mentioned, see Definition \ref{dfc}, this implies $\QQ^N:=N^{-1}\sum_1^N \delta_{(\VV^N_i(t))_{t\geq 0}}$
goes in probability to $\LL((\VV(t))_{t\geq 0})$ as $N\to \infty$ in $\cP(C([0,\infty),\rd)))$.
But $\LL(V(t))=f_t$ for all $t\geq 0$. It is not hard to conclude, that
for $\mu^N_t:= N^{-1}\sum_1^N \delta_{\VV^N_i(t)}$, the sequence $(\mu^N_t)_{t\geq 0}$ goes in probability
to $(f_t)_{t\geq 0}$ in $C([0,\infty),\cP(\rd) )$.
\end{proof}

%%%%%%%%%%%%%%%%%%%%%%%%%%%%%%%%%%%%%%%%%%%%%%%%%%%%%%%%%%%%%%%%%%%%%%%%%%%%%%%%%%%%%%%%
%
%
%
%
%
%
%
%
%%%%%%%%%%%%%%%%%%%%%%%%%%%%%%%%%%%%%%%%%%%%%%%%%%%%%%%%%%%%%%%%%%%%%%%%%%%%%%%%%%%%%

\appendix

\section{A coupling result}\label{sec:a0}
\renewcommand{\theequation}{\Alph{section}.\arabic{equation}}
\setcounter{equation}{0}

We reformulate and extend a result found in \cite[Proposition 2.4]{hm} for the distance $W_1$.
Here $|\,\cdot\,|$ is any fixed norm in $\rr^d$.

\begin{prop}\label{coures}
Let $p>0$ and $d\geq 1$.
For $N\geq 2$, let $F_N$ and $G_N$ be two symmetric probability measures in $\cP_p((\rr^d)^N)$.
There exists $(X_1,\dots,X_N)$ with law $F^N$ and $(Y_1,\dots,Y_N)$ with law $G^N$ enjoying
the following properties.

\vip

(a) The coupling is optimal in the sense that $\E[\sum_1^N |X_i-Y_i|^p]=\inf\{\E[\sum_1^N |U_i-V_i|^p]\}$,
the infimum being taken over all random
vectors $(U_1,\dots,U_N)$ and $(V_1,\dots,V_N)$ with laws $F^N$ and  $G^N$.

\vip

(b) The family $\{(X_i,Y_i)$, $i=1,\dots,N\}$ is exchangeable.

\vip

(c) Almost surely, $W_p^p(N^{-1}\sum_1^N \delta_{X_i},N^{-1}\sum_1^N \delta_{Y_i})=N^{-1} \sum_1^N |X_i-Y_i|^p$.
\end{prop}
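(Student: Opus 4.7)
The plan is to extend the argument of \cite[Proposition 2.4]{hm} (which handles $p=1$) to general $p>0$, by reducing the problem on $(\rr^d)^N$ to a problem on $\cP(\rr^d)$ via the empirical measure map $\Pi:(x_1,\dots,x_N)\mapsto N^{-1}\sum_1^N \delta_{x_i}$. Writing $\hat F^N := \Pi_\ast F^N$ and $\hat G^N := \Pi_\ast G^N$, both probability measures on $\cP(\rr^d)$, the crux of the matter is the identity
\[
W_p^p(F^N,G^N) \;=\; N\,\inf_S \E_S\bigl[W_p^p(\mu,\nu)\bigr],
\]
the infimum running over couplings $S$ of $\hat F^N$ and $\hat G^N$. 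The inequality $\geq$ is a pointwise Birkhoff/assignment bound: for any coupling $(X,Y)$ of $F^N$ and $G^N$, $\sum_1^N |X_i-Y_i|^p \geq N W_p^p(\Pi(X),\Pi(Y))$. The reverse inequality, as well as (a), will fall out of the explicit construction below.

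I would first fix a Borel measurable section $s:\Pi((\rr^d)^N) \to (\rr^d)^N$ of $\Pi$, providing for each empirical measure $\mu$ a canonical labeling $s(\mu) = (x^0_1(\mu),\dots,x^0_N(\mu))$ of its atoms counted with multiplicity, and likewise a labeling $y^0_1(\nu),\dots,y^0_N(\nu)$ of atoms of $\nu$. Symmetry of $F^N$ yields the disintegration
\[
F^N(d\bm x) = \int \hat F^N(d\mu)\,\frac{1}{N!}\sum_{\sigma\in\SN} \delta_{(x^0_{\sigma(1)}(\mu),\dots,x^0_{\sigma(N)}(\mu))}(d\bm x),
\]
and similarly for $G^N$. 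I would then pick an optimal coupling $S$ of $\hat F^N$ and $\hat G^N$ for the distance $W_p^p$ on $\cP(\rr^d)$, together with a Borel map $\tau:(\mu,\nu) \mapsto \tau(\mu,\nu) \in \SN$ realizing the optimal assignment, i.e.\ $\sum_i |x^0_i(\mu) - y^0_{\tau(\mu,\nu)(i)}(\nu)|^p = N W_p^p(\mu,\nu)$. Finally, I draw $(\mu,\nu) \sim S$ and independently $\sigma$ uniform on $\SN$, and set
\[
X_i \;:=\; x^0_{\sigma(i)}(\mu), \qquad Y_i \;:=\; y^0_{\tau(\mu,\nu)\sigma(i)}(\nu).
\]

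The verification of the three properties is then routine. For the marginals, conditionally on $\mu$, $X$ is a uniformly random labeling of $\mu$, so $X \sim F^N$; conditionally on $(\mu,\nu)$, $\tau(\mu,\nu)\sigma$ is still uniform on $\SN$ by left-invariance, so $Y \sim G^N$. Property (c) is built in: $\sum_i |X_i-Y_i|^p = \sum_j |x^0_j(\mu) - y^0_{\tau(\mu,\nu)(j)}(\nu)|^p = N W_p^p(\mu_X,\mu_Y)$ almost surely. Taking expectations gives $\E[\sum_i |X_i-Y_i|^p] = N\,\E_S[W_p^p(\mu,\nu)]$; combined with the trivial lower bound above, this both confirms the announced identity and yields the optimality (a). Exchangeability (b) is immediate because permuting indices by $\rho \in \SN$ amounts to replacing $\sigma$ by $\sigma\rho$, whose law is unchanged.

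The main obstacle is the measurable-selection step: producing the Borel section $s$ (ordering the atoms of a generic empirical measure with possible coincidences) and the Borel optimal-assignment map $\tau$. The latter is harmless since $\SN$ is finite (take for instance the lex-smallest minimizer). The former is classical (e.g.\ sort atoms lexicographically on $\rr^d$, breaking ties by multiplicity) but is where all the measure-theoretic care lies; once it is settled, the rest of the argument is bookkeeping.
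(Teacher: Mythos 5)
Your proof is correct, but it takes a genuinely different route from the paper's. The paper works entirely at the level of $(\rr^d)^N$: it starts from an optimal coupling $(\tilde X, \tilde Y)$ of $F^N$ and $G^N$ (giving (a)), symmetrizes it by a common independent uniform permutation of both coordinates (giving (b)), and finally re-indexes the $Y$-coordinate by a permutation drawn uniformly, conditionally on the state, from the finite set $S_{\hat X,\hat Y}$ of assignments realizing the optimal matching (giving (c) while only decreasing the cost, hence preserving (a)); the verification of (b) and of the $G^N$-marginal at that last stage rests on the conjugation identity $S_{x_\sigma,y_\sigma}=\sigma^{-1}S_{x,y}\sigma$ and a symmetrization argument. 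You instead push everything down to $\cP(\rr^d)$ through the empirical-measure map, couple the pushforwards $\hat F^N,\hat G^N$ optimally for the cost $W_p^p$, and lift back via a Borel section $s$, an independent uniform $\sigma\in\SN$, and a Borel optimal-assignment selector $\tau$. That is a valid and arguably more conceptual argument, and it yields as a by-product the identity $W_p^p(F^N,G^N)=N\inf_S\E_S\bigl[W_p^p(\mu,\nu)\bigr]$, which the paper never isolates. The trade-off is that you must justify the measurable selections: the section $s$ (your lexicographic-sort idea works, e.g.\ because the empirical-measure map restricted to the lexicographically ordered closed simplex is a continuous bijection onto its image, hence a Borel isomorphism by Lusin--Souslin) and the assignment selector $\tau$ (harmless, as you note, since $\SN$ is finite and the cost is a Borel function of $(\mu,\nu,\tau)$), together with the existence of an optimal $S$ (standard, since $W_p^p$ is lower semi-continuous on the Polish space $\cP(\rr^d)^2$ and the cost is integrable under $\hat F^N\otimes\hat G^N$). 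The paper avoids all of this selection machinery but pays with a more delicate re-indexing argument; both proofs are complete once their respective technical steps are filled in.
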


\begin{proof} 

We only sketch the proof, since it is very similar to \cite[Proposition 2.4]{hm}.

\vip

We start with a coupling $\tX=(\tX_1,\dots,\tX_N)$, $\tY=(\tY_1,\dots,\tY_N)$ of $F^N$ and $G^N$
satisfying only point (a).
Such an optimal coupling is well-known to exist, see e.g. Villani \cite{v:t}.

\vip

Next, we consider a (uniform) random $\sigma \in \mathfrak{S}_N$, the set of permutations of $\{1,\dots,N\}$,
independent of $\tX, \tY$, and we put 
$\bX_i=\tX_{\sigma(i)}$ and $\bY_i=\tY_{\sigma(i)}$. 
It is straightforward to check that $\bX=(\bX_1,\dots,\bX_N)$, $\bY=(\bY_1,\dots,\bY_N)$ is still a coupling
between $F^N$ and $G^N$ (because these distributions are symmetric), 
still satisfies (a), and now satisfies (b).

\vip

We finally introduce, for $x=(x_1,\dots,x_N)$ and $y=(y_1,\dots,y_N)$ in $(\rr^d)^N$,
$$
S_{x,y}=\Big\{\tau \in \mathfrak{S}_N
\; : \; W_p^p\Big(N^{-1}\sum_1^N \delta_{x_i},N^{-1}\sum_1^N \delta_{y_i}\Big)=N^{-1} \sum_1^N |x_i-y_{\tau(i)}|^p
\Big\}.
$$
Conditionally on $\bX$ and $\bY$, we consider a random permutation $\tau$ uniformly 
chosen in $S_{\bX,\bY}$ and we set $X_i=\bX_i$ and $Y_i=\bY_{\tau(i)}$.
It remains to prove that $X=(X_1,\dots,X_N)$, $Y=(Y_1,\dots,Y_N)$ is a coupling between $F^N$ and
$G^N$ satisfying points (a), (b) and (c).
Point (c) is satisfied because $\tau \in S_{\bX,\bY}$ a.s.
By definition of $\tau$, we see that $\sum_1^N |X_i-Y_i|^p \leq \sum_1^N |\bX_i-\bY_i|^p$ a.s.,
so that (a) is satisfied. And of course, $X$ is $F^N$-distributed, since $X=\bX$. 

\vip

To check point (b), let $\sigma \in \mathfrak{S}_N$ be fixed. 
For $x\in(\rr^d)^N$, we introduce $x_\sigma=(x_{\sigma(1)},\dots,x_{\sigma(N)})$.
We observe that for any $x,y\in(\rr^d)^N$, it holds that
$S_{x_\sigma, y_\sigma} = \sigma^{-1} S_{x,y} \sigma$. Thus conditionally on $\bX$ and $\bY$,
$\sigma^{-1} \circ \tau \circ \sigma$ is uniformly distributed on $S_{\bX_\sigma, \bY_\sigma}$.
Hence by exchangeability of $(\bX,\bY)$, the triple $(\bX,\bY,\tau)$ has the same law as the triple 
$(\bX_{\sigma},\bY_{\sigma},\sigma^{-1} \circ \tau \circ \sigma)$. Thus 
$(\bX,\bY_\tau) \stackrel d = (\bX_{\sigma},\bY_{\tau\circ \sigma})$.
In other words, $(X,Y)$ has the same law as $(X_\sigma,Y_\sigma)$.

\vip

We finally check that  $Y$ is $G^N$-distributed. Consider a bounded measurable
$\varphi: (\rd)^N \mapsto \rr$ and its symmetrization $\tilde\varphi(y_1,\dots,y_N)=(N !)^{-1}
\sum_{\sigma \in \mathfrak{S}_N} \varphi(y_{\sigma(1)},\dots,y_{\sigma(n)})$. Using the exchangeability of 
$(Y_1,\dots,Y_N)$, we can write $\E[\varphi(Y_1,\dots,Y_N)]=\E[\tilde \varphi(Y_1,\dots,Y_N)]$.
But $\tilde \varphi$ being symmetric, we a.s. have that $\tilde \varphi(Y_1,\dots,Y_N)=
\tilde \varphi(\hat Y_1,\dots,\hat Y_N)$, whence 
$\E[\varphi(Y_1,\dots,Y_N)]=\E[\tilde \varphi(\hat Y_1,\dots,\hat Y_N)]$. Using finally the exchangeability
of $(\hat Y_1,\dots,\hat Y_N)$, we conclude that 
$\E[\varphi(Y_1,\dots,Y_N)]=\E[\varphi(\hat Y_1,\dots,\hat Y_N)]$. This ends the proof.
\end{proof}

\section{Solutions to SDEs associated to weak solutions of PDEs.} \label{sec:a1}

\renewcommand{\theequation}{\Alph{section}.\arabic{equation}}
\setcounter{equation}{0}

Here we extend a result of Figalli~\cite[Theorem 2.6]{Fig}.

\begin{prop}\label{pfi}
Let $\alpha:[0,\infty)\times \rd\mapsto \cM_{3\times3}(\rr)$ and $\beta:[0,\infty)\times\rd\mapsto \rd$
be measurable and satisfy $||\alpha(t,x)||\leq \rho(t)(1+|x|^2)$ and $|\beta(t,x)|\leq \rho(t)(1+|x|)$
for some $\rho \in L^1_{loc}([0,\infty))$.
Consider $(\mu_t)_{t\geq 0} \in L^\infty_{loc}([0,\infty),\cP_2(\rd))$,
weak solution to
\begin{equation}\label{edpgen}
\partial_t \mu_t = -\sum_{k=1}^3 \partial_{x_k}(\beta(t) \mu_t) 
+ \frac 12 \sum_{k,l=1}^3 \partial_{x_kx_l}(\alpha_{kl}(t)\mu_t).
\end{equation}
There exists, on some probability space, a $\mu_0$-distributed random variable $X_0$, independent 
of a $d$-dimensional Brownian motion
$(B_t)_{t\geq 0}$, and a solution $(X_t)_{t\geq 0}$ to
\begin{equation} \label{SDEgen}
X_t=X_0+ \intot \beta(s,X_s)ds + \intot (\alpha(s,X_s))^{1/2} d B_s
\end{equation}
which furthermore satisfies that $\LL(X_t)=\mu_t$ for all $t\geq 0$.
\end{prop}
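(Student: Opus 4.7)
My plan follows the strategy of Figalli~\cite[Theorem 2.6]{Fig} --- regularize $\mu_t$ in space, solve a smoothed SDE by classical means, and pass to the limit --- with a Gr\"onwall localization to accommodate the linear and quadratic growth of $\beta,\alpha$ in place of Figalli's $L^\infty$ bounds.

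I would first fix a smooth radial mollifier $\phi_\eps$ and set $\mu_t^\eps := \mu_t \star \phi_\eps$. A direct integration by parts on the weak formulation of \eqref{edpgen} shows that $\mu_t^\eps$ is a classical solution to
\[
\partial_t \mu_t^\eps = - \diver(\beta^\eps_t\,\mu_t^\eps) + \frac12 \sum_{k,l=1}^3 \partial^2_{x_k x_l}(\alpha^\eps_{kl,t}\,\mu_t^\eps),
\]
where $\beta^\eps_t(x) := [(\beta_t\,\mu_t)\star\phi_\eps](x)/\mu_t^\eps(x)$ and $\alpha^\eps_t(x) := [(\alpha_t\,\mu_t)\star\phi_\eps](x)/\mu_t^\eps(x)$ are smooth in $x$ and still satisfy $|\beta^\eps_t(x)| \leq C\rho(t)(1+|x|)$ and $\|\alpha^\eps_t(x)\| \leq C\rho(t)(1+|x|^2)$ uniformly in $\eps$. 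On an auxiliary probability space supporting a Brownian motion $B$, I would then construct, via Skorokhod's theorem and the smoothness of the coefficients, a weak solution $(X^\eps_t)$ of $dX^\eps_t = \beta^\eps_t(X^\eps_t)\,dt + (\alpha^\eps_t(X^\eps_t))^{1/2}\,dB_t$ with $X_0^\eps$ of law $\mu_0^\eps$, coupled so that $X_0^\eps \to X_0 \sim \mu_0$ in $L^2$ as $\eps \to 0$. By It\^o's formula, $\LL(X^\eps_t)$ solves the same smoothed PDE, and uniqueness of the latter --- valid since the coefficients are smooth with the stated growth --- yields $\LL(X^\eps_t) = \mu_t^\eps$.

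Finally I would extract a limit. Uniform moment bounds $\E[\sup_{[0,T]}|X^\eps_t|^2] \leq C_T$ and H\"older modulus estimates --- obtained from the growth condition, $\rho \in L^1_{loc}$, Gr\"onwall and the Burkholder-Davis-Gundy inequality --- give tightness of $(X^\eps)$ on $C([0,\infty),\rd)$. Any subsequential weak limit $X$ has marginals $\mu_t$ since $\mu_t^\eps \rightharpoonup \mu_t$. The main obstacle will be to verify that $X$ solves the martingale problem associated to the original $(\beta,\alpha)$: since these coefficients are only measurable in $x$, pointwise convergence of $\beta^\eps_s(X^\eps_s)$ along trajectories is out of reach. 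The key trick will be to recast expectations of the form $\E[\int_0^t \nabla\varphi(X^\eps_s)\cdot\beta^\eps_s(X^\eps_s)\,ds]$ as pairings $\int_0^t ds \int \nabla\varphi(y)\cdot\beta^\eps_s(y)\,\mu_s^\eps(y)\,dy$ via $\LL(X^\eps_s) = \mu_s^\eps$, then exploit the self-adjointness of convolution by $\phi_\eps$ to rewrite them as $\int_0^t ds \int (\nabla\varphi\star\phi_\eps)(y)\cdot\beta_s(y)\,\mu_s(dy)$, which converges to $\int_0^t ds \int \nabla\varphi(y)\cdot\beta_s(y)\,\mu_s(dy)$ by dominated convergence (thanks to the growth control on $\beta$ and $(\mu_t) \in L^\infty_{loc}([0,\infty),\cP_2(\rd))$). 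The diffusion term is treated analogously, and existence of the Brownian motion driving the limiting $X$ on a possibly enlarged probability space is a standard application of the martingale representation theorem.
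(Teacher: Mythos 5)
Your overall blueprint (spatial mollification, classical SDE with smooth coefficients, identification of the marginals, compactness, limit) matches Figalli's approach and the paper's. However, the final step — passing to the limit in the martingale problem — has a genuine gap, and you have in fact flagged the difficulty yourself without resolving it.

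The issue is this. The martingale-problem functionals have the form
\[
\cF(\gamma)=\varphi_1(\gamma_{s_1})\cdots\varphi_k(\gamma_{s_k})
\Big(\varphi(\gamma_t)-\varphi(\gamma_s)-\int_s^t L_u\varphi(\gamma_u)\,du\Big),
\]
so verifying $\E[\cF(X)]=0$ requires controlling expectations of products like $\E[\varphi_1(X_{s_1}^\eps)\cdots\varphi_k(X_{s_k}^\eps)\,\nabla\varphi(X_u^\eps)\cdot\beta^\eps_u(X_u^\eps)]$, which depend on the joint law of $(X^\eps_{s_1},\ldots,X^\eps_{s_k},X^\eps_u)$, not just on the one-time marginal $\mu_u^\eps$. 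Your key trick — rewrite $\E[\int_0^t\nabla\varphi(X^\eps_u)\cdot\beta^\eps_u(X^\eps_u)\,du]$ as $\int_0^t\int(\nabla\varphi\star\phi_\eps)(y)\cdot\beta_u(y)\,\mu_u(dy)\,du$ — only works when the prefix $\varphi_1\cdots\varphi_k$ is dropped, i.e. it identifies the Kolmogorov/Fokker--Planck equation for the limit marginal, which you already know. It does not prove the martingale property.

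The missing ingredient is an intermediate continuous coefficient: pick $\tilde\alpha,\tilde\beta$ continuous and close to $\alpha,\beta$ in $L^1([0,T]\times\rd,\mu_u(dx)\,du)$, define $\tilde\cF$ and the mollified versions $\tilde\cF_\eps$, and decompose
\[
|\E[\cF(X)]|\leq|\E[\cF(X)]-\E[\tilde\cF(X)]|+|\E[\tilde\cF(X)]-\E[\tilde\cF(X^\eps)]|+|\E[\tilde\cF(X^\eps)]-\E[\tilde\cF_\eps(X^\eps)]|+|\E[\tilde\cF_\eps(X^\eps)]-\E[\cF_\eps(X^\eps)]|.
\]
The second term goes to $0$ because $\tilde\cF$ is continuous and bounded on the tight set of paths and $X^\eps\Rightarrow X$; the other three are \emph{difference} terms in which the bounded prefix $\varphi_1\cdots\varphi_k$ can be thrown away with a supremum, so they \emph{do} reduce to one-time marginal pairings, where your self-adjointness/mass-one observation is exactly what is used to bound $|\E[\tilde\cF_\eps(X^\eps)]-\E[\cF_\eps(X^\eps)]|$ by $\int_0^t\int(\|\alpha-\tilde\alpha\|+|\beta-\tilde\beta|)\,\mu_u(dx)\,du$. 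Letting $\tilde\alpha,\tilde\beta\to\alpha,\beta$ in $L^1(\mu_u\,dx\,du)$ finishes. Without this continuous intermediary, the argument does not close.

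Two smaller points worth fixing: the paper adds $\eps I$ to $\alpha^\eps$ so that $(\alpha^\eps)^{1/2}$ is locally Lipschitz and the regularized SDE/PDE is well posed — your $\alpha^\eps$ may degenerate; and since $\mu_0$ is only assumed to have a second moment, a Kolmogorov-type tightness criterion on the full process fails at the second-moment level, which is why the paper localizes the initial condition to $\{|X_0^\eps|\leq R\}$, proves tightness of the truncated solutions, and then lets $R\to\infty$ using $\sup_\eps\Pr(|X_0^\eps|>R)\leq C/R^2$.
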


We follow the proof of \cite{Fig}, which concerns the case where $\alpha$ and $\beta$ are bounded.

\begin{proof}
It suffices to prove the result when $\rho\equiv 1$. Indeed, 
consider, in the general case, the time change $h(t)=\intot (1+\rho(s))ds$, its inverse function 
$g(t)=h^{-1}(t)$, and set $\bar \mu_t:= \mu_{g(t)}$.
Then $(\bar \mu_t)_{t\geq 0}$ still belongs to $L^\infty_{loc}([0,\infty),\cP_2(\rd))$
and solves \eqref{edpgen} with $\alpha$ and $\beta$ replaced by 
$$
\bar \alpha(t,x)=(1+\rho(g(t)))^{-1}\alpha(g(t),x) \quad \hbox{and} \quad
\bar \beta(t,x)=(1+\rho(g(t)))^{-1}\beta(g(t),x).
$$
These functions satisfy $||\bar \alpha(t,x)||\leq 1+|x|^2$ and $|\bar \beta(t,x)|\leq 1+|x|$.
Thus if we have proved the result when $\rho\equiv 1$, we can find a solution $(\bar X_t)_{t\geq 0}$ to
$\bar X_t=\bar X_0 + \intot \bar \beta(s,\bar X_s)ds + \intot (\bar \alpha(s,\bar X_s))^{1/2} d B_s$
and such that $\cL(\bar X_t)=\bar \mu_t$. It is then not hard to see that $X_t:=\bar X_{h(t)}$ satisfies
$\cL(X_t)=\mu_t$ and solves 
$X_t= X_0 + \intot \beta(s,X_s)ds + \intot (\alpha(s,X_s))^{1/2} d W_s$, where
$W_t=\int_0^{h(t)}(1+\rho(g(s)))^{-1/2} dB_s$ is still a Brownian motion.

\vip

From now on, we thus assume that $||\alpha(t,x)||\leq 1+|x|^2$ and $|\beta(t,x)|\leq 1+|x|$ and we consider
$(\mu_t)_{t\geq 0}$ as in the statement. We divide the proof in several steps.

\vip

{\sl Step 1.}
We introduce $\phi_t(x) = (2 \pi t)^{-d/2} \, e^{- |x|^2/(2t)}$ and 
$\mu_t^\eps := \mu_t \star \phi_{\eps (1+t)}$. For each $t\geq 0$, $\mu_t^\e$ is a positive smooth function.
Then $(\mu_t^\e)_{t\geq 0}$ solves \eqref{edpgen} with $\alpha$ and $\beta$
replaced by
\begin{align}
\alpha^\eps(t) := \frac{\bigl( \alpha(t) \mu_t  \bigr) \star \phi_{\eps (1+t)}}{\mu^\eps_t} + \eps \, Id
\quad \hbox{and}\quad
\beta^\eps(t) := \frac{\bigl( \beta(t) \mu_t  \bigr) \star \phi_{\eps (1+t)}}{\mu^\eps_t}. \label{eq:FPeps} 
\end{align}
We now check that there is a constant $C$ (not depending on $\e \in (0,1)$) such that
\begin{align*} 
\bigl| \beta^\eps(t,x) \bigr|  \le C \bigl(1+ \sqrt{m_2(\mu_t)} + |x| \bigr) \quad\hbox{and}\quad
\bigl\|  \alpha^\eps(t,x) \bigr\| \le C  \bigl(1+ m_2(\mu_t) + |x|^2 \bigr).
\end{align*}
First, since $\beta(t,x)\leq 1+|x|$,
$$
\bigl| \beta^\eps(t,x) \bigr| \leq  \frac{\int_\rd |\beta(t,y)| \phi_{\eps (1+t)}(x-y)  \mu_t(dy)}
{\int_\rd  \phi_{\eps (1+t)}(x-y)  \mu_t(dy)}
\le 1  +  \frac{\int_\rd |y| \phi_{\eps (1+t)}(x-y)  \mu_t(dy)}{\int_\rd  \phi_{\eps (1+t)}(x-y)  
\mu_t(dy)}.
$$
We next introduce $R := \sqrt{2 m_2(\mu_t)}$, for which $\mu_t \bigl(B(0,R)\bigr) \ge 1/2$,
and write
$$
\bigl| \beta^\eps(t,x) \bigr| \leq   1 + 2  |x| + R  + 
\frac{\int_{|y-x| \ge  |x| + R} |y| \phi_{\eps (1+t)}(x-y)\mu_t(dy)}{\int_{|y-x| \le |x| + R}  \phi_{\eps (1+t)}(x-y)  \mu_t(dy)}.
$$
But $\phi_\eps$ is radial and decreasing, so that
$$
\int_{|y-x| \ge  |x| + R} |y| \phi_{\eps (1+t)}(x-y)\mu_t(dy) \leq  \phi_{\eps (1+t)}(|x| +R)  \sqrt{m_2(\mu_t)}
$$
and (observe that $B(0,R)\subset B(x,|x|+R)$)
$$
\int_{|y-x| \le |x| + R}  \phi_{\eps (1+t)}(x-y)  \mu_t(dy) \geq \phi_{\eps (1+t)}(|x| +R) \mu_t(B(x,|x|+R))
\geq \frac{\phi_{\eps (1+t)}(|x| +R)}2.
$$
This gives
$$
\bigl| \beta^\eps(t,x) \bigr| \leq  1 + 2  |x| + R + 2\sqrt{m_2(\mu_t)} \leq 1+2|x| + 4 \sqrt{m_2(\mu_t)}
$$
as desired. The very same arguments show that
$$
\bigl\| \alpha^\eps(t,x) \bigr\| \leq  1 + (2  |x| + R)^2 + 2m_2(\mu_t) \leq 1+8|x|^2 + 6 m_2(\mu_t).
$$
Finally, it is easy to check that for all $\e \in(0,1)$, all $T>0$, all $R>0$,
$$
\sup_{t \in [0,T]} \sup_{|x| \leq R} \big(|D \alpha^\e(t,x)| + |D \beta^\e(t,x)|  \big)<\infty.
$$

{\it Step 2.} The coefficient $\beta^\e(t,.)$ is locally Lipschitz continuous 
(locally uniformly in time) and has 
at most linear growth, while $\alpha^\e(t,.)$ is locally Lipschitz continuous (locally uniformly in time)
and has at most quadratic growth. Since furthermore $\alpha^\e(t,.)$ is uniformly elliptic (for $\e$ fixed),
we can apply Lemma~\ref{math} and obtain that $(\alpha^\eps(t,.))^{1/2}$ is also locally Lipschitz.
It is thus classical that for a given initial condition 
$X_0^\eps$ with law $\mu_0^\eps$ and a given Brownian motion $(B_t)_{t \ge 0}$, there exists a pathwise 
unique solution to
\begin{equation} \label{eq:Xeps}
X_t^\eps=X_0^\eps+ \intot \beta^\eps(s,X_s^\eps)ds + \intot \bigl(\alpha^\eps(s,X_s^\eps)\bigr)^{1/2} d B_s,
\end{equation}
which furthermore satisfies $\LL (X^\eps_t) = \mu^\eps_t$, by uniqueness of the weak solution
to the PDE \eqref{edpgen} with $\alpha$ and $\beta$ replaced by $\alpha^\e$ and $\beta^\e$.

\vip

{\it Step 3.} Here we check that the family $\{(X^\e_t)_{t\geq 0},\; \e>0\}$ is tight.
Since $\mu_0$ has only a moment of order two, we cannot directly apply the Kolmogorov criterion
and have to use another approximation procedure.
For $R>0$, we consider $(X^{R,\e}_t)_{t\geq 0}$ the pathwise unique solution
to $X_t^{R,\e}=X_0^\e\indiq_{\{|X_0^\e|\leq R\}}
+ \intot \beta^\eps(s,X_s^{R,\eps})ds + \intot \bigl(\alpha^\eps(s,X_s^{R,\eps})\bigr)^{1/2} d B_s$.

\vip

Recall that $\sup_{s\in[0,T],\e\in(0,1)} (|\beta^\e(s,x)|^2+||\alpha^\e(s,x)||) \leq C_T(1+|x|^2)$ by Step 1.
It is thus completely standard to check, using the Kolmogorov criterion, that for each $R>0$, the family
$\{(X^{R,\e}_t)_{t\geq 0},\; \e>0\}$ is tight: for all $A>0$, we can find a compact subset $\cK(R,A)$
of $C([0,\infty),\rd)$ such that $\sup_{\e\in(0,1)} \Pr((X^{R,\e}_t)_{t\geq 0} \notin \cK(R,A)) \leq 1/A$.

\vip

But by pathwise uniqueness, we have that $(X^\e_t)_{t\geq 0}=(X_t^{R,\e})_{t\geq 0}$ on the event $\{|X_0^\e|\leq R\}$.
And it holds that $\Pr(|X_0^\e|> R) \leq m_2(\mu_0^\e)/R^2 \leq (1+m_2(\mu_0))/R^2$. Consequently,
$$
\sup_{\e\in(0,1)} \Pr((X^{\e}_t)_{t\geq 0} \notin \cK(R,A)) \leq \frac 1A + \frac{1+m_2(\mu_0)}{R^{2}}.
$$
Choosing $\cK_B=\cK(\sqrt{2B(1+m_2(\mu_0))},2B)$, we conclude that
$\sup_{\e\in(0,1)} \Pr((X^{\e}_t)_{t\geq 0} \notin \cK_B) \leq 1/B$,
which ends the step.

\vip

{\it Step 4.} We thus can find a sequence $\e_n \searrow 0$ such that 
$(X^{\e_n}_t)_{t\geq 0}$ goes in law (for the uniform topology on compact time intervals) to some $(X_t)_{t\geq 0}$.
For all $t\geq 0$, we have $\cL(X_t)=\mu_t$ because $\cL(X^\en_t)=\mu^\en_t\to \mu_t$. 
It thus only remains to prove that $(X_t)_{t\geq 0}$ solves \eqref{SDEgen}.
By the theory of martingale problems, it suffices to prove that for any 
$0<s_1<...<s_k<s<t$, any $\varphi_1,\dots,\varphi_k \in C_b(\rd)$, any $\varphi \in C^2_c(\rd)$,
we have $\E[\cF(X)]=0$, where $\cF:C([0,\infty),\rd))\mapsto \rr$ is defined by
$$
\cF(\gamma)=\varphi_1(\gamma_{s_1})\dots\varphi_k(\gamma_{s_k}) \Big(\varphi(\gamma_t)-\varphi(\gamma_s)-
\int_s^t \Big[\nabla \varphi (\gamma_u) \cdot \beta(u,\gamma_u) + \frac12\sum_{k,l=1}^3 
\partial_{x_k x_l}\varphi(\gamma_u)\alpha_{kl}(u,\gamma_u) \Big]du\Big).
$$
We also introduce some continuous functions $\tilde \alpha:[0,\infty)\times \rd\mapsto \cM_{3\times3}(\rr)$ and 
$\tilde \beta:[0,\infty)\times\rd\mapsto \rd$ (that will be chosen later very close to $\alpha$ and $\beta$).
We define $\tilde \alpha^\eps$ and $\tilde \beta^\eps$ defined exactly as in~\eqref{eq:FPeps},
but with $\tilde \alpha$ and $\tilde \beta$. 
We finally define $\tilde \cF$ (resp. $\cF_\e$, resp. $\tilde \cF_\e$) as $\cF$ but with $\tilde \alpha$ and 
$\tilde \beta$ (resp. $\alpha^\e$ and $\beta^\e$, resp.  $\tilde\alpha^\e$ and $\tilde\beta^\e$) 
instead of $\alpha$ and $\beta$. 

\vip

We of course start from the identity $\E[\cF_{\e_n}(X^{\e_n})]=0$, and write
\begin{align*}
|\E[\cF(X)]|\leq& |\E[\cF(X)]-\E[\tilde \cF(X)]| +  |\E[\tilde \cF(X)]-\E[\tilde \cF(X^{\e_n})]|\\
& + |\E[\tilde \cF(X^{\e_n})]-\E[\tilde \cF_{\e_n}(X^{\e_n})]|+ |\E[\tilde \cF_{\en}(X^{\e_n})]-\E[\cF_{\e_n}(X^{\e_n})]|.
\end{align*}
First, since $\tilde \alpha$ and $\tilde \beta$ are continuous, it is not hard to deduce that
$\tilde \cF :C([0,\infty),\rd))\mapsto \rr$ is continuous (for the topology of uniform convergence
on compact time intervals) and bounded (recall that $\varphi$ is compactly supported).
Consequently, $\lim_n |\E[\tilde \cF(X)]-\E[\tilde \cF(X^{\e_n})]|=0$. 
Next, there is a constant $C$ such that
\begin{align*}
|\E[\cF(X)]-\E[\tilde \cF(X)]| \leq& C \intot 
\E\big[||\alpha(u,X_u)-\tilde\alpha(u,X_u)|| +|\beta(u,X_u)-\tilde\beta(u,X_u)|\big] du \\
=& C \intot \intrd \big[||\alpha(u,x)-\tilde\alpha(u,x)|| 
+ |\beta(u,x)-\tilde\beta(u,x)|\big] \mu_u(dx)du.
\end{align*}
Similarly,
\begin{align*}
&|\E[\tilde \cF_{\e_n}(X^{\e_n})]-\E[\cF_{\e_n}(X^{\e_n})]|  \\
\leq& C \intot \intrd \big(||\alpha^\en(u,x)-\tilde\alpha^\en(u,x)|| 
+|\beta^{\en}(u,x)-\tilde\beta^\en(u,x)|\big)
\mu_u^\en(dx)du\\
\leq & C \intot \intrd \Big(\frac{[(||\alpha(u) - \tilde\alpha(u)||\mu_u+|\beta(u) - \tilde\beta(u)|\mu_u)\star 
\phi_{\e_n(1+u)}](x)}{\mu^\en_u(x)} + \e_n\Big) \mu_u^\en(dx)du.\\
\leq & C \intot \intrd \intrd (||\alpha(u,x) - \tilde\alpha(u,x)||+|\beta(u,x) - \tilde\beta(u,x)|)
\phi_{\e_n(1+u)}(y-x) \mu_u(dx) dy du + C t \e_n.
\end{align*}
Using that $\phi_{\e_n(1+u)}$ has mass $1$, we conclude that
\begin{align*}
\limsup_n|\E[\tilde \cF_{\e_n}(X^\en)]-\E[\cF_\en(X^\en)]| \leq&
C\!\! \intot \!\!\intrd \!\!\big[||\alpha(u,x)-\tilde\alpha(u,x)||+|\beta(u,x)-\tilde\beta(u,x)|\big] \mu_u(dx)du.
\end{align*}
Finally,
\begin{align*}
&|\E[\tilde \cF(X^{\e_n})]-\E[\tilde \cF_{\e_n}(X^{\e_n})]|  \\
\leq& C \intot \intrd \big(||\tilde \alpha(u,x)-\tilde\alpha^\en(u,x)|| 
+ |\tilde \beta(u,x)-\tilde\beta^\en(u,x)|\big) \mu_u^\en(dx)du\\
\leq& C \intot \intrd \big(||\tilde \alpha(u,x)-\tilde\alpha(u,y)||
+ |\tilde \beta(u,x)-\tilde\beta(u,y)|\big) \phi_{\en(1+u)}(y-x)dy \mu_u(dx)du +C\e_n t.
\end{align*}
Since $\tilde \alpha$ and $\tilde \beta$ are continuous, this clearly tends to $0$ as $n\to \infty$.
All in all, we have checked that 
$$
|\E[\cF(X)]| \leq C \intot \intrd \big[||\alpha(u,x)-\tilde\alpha(u,x)|| 
+ |\beta(u,x)-\tilde\beta(u,x)|\big] \mu_u(dx)du.
$$
But this holds true for any choice of continuous $\tilde \alpha$ and $\tilde \beta$.
And since $\mu_s(dx) ds$ is a radon measure, we can find $\tilde \alpha$ and $\tilde \beta$ continuous and
arbitrarily close to $\alpha$ and $\beta$ in $L^1([0,T]\times\rd, \mu_s(dx) ds)$.
We conclude that $|\E[\cF(X)]|=0$ as desired.
\end{proof}

\section{Entropy and Fisher information} \label{sec:prel}

\renewcommand{\theequation}{\Alph{section}.\arabic{equation}}
\setcounter{equation}{0}

In this section, we present a series of results involving the Boltzmann
entropy $H$ and some fractional (or weighted) Fisher information $I^r$ and $I_\gamma$.
They provide key estimates in order to
exploit the regularity of the objects we deal with.

\subsection{Notation}
For $F \in \PP((\R^3)^N)$ with a density (and a finite moment of positive
order for the entropy), the Boltzmann entropy $H$, the weighted Fisher
information $I_\gamma$ (for $\gamma<0$) and the fractional Fisher
information $I^r$ (for $r \in (1/2,1))$ of $F$ are defined as
\begin{align*} 
H(F) & := \frac1N\int_{(\rr^3)^N} F(\bm v^N)\log F(\bm v^N) \, d\bm v^N,\\
I_\gamma(F)&:= \frac1N\int_{(\rr^3)^N} \frac{| \nabla_\gamma F(\bm v^N)|^2}{F(\bm v^N)} \,
d \bm v^N = \frac1N \int_{(\rr^3)^N} |\nabla_\gamma \log F(\bm v^N) |^2 \,F(d\bm v^N),\\
I^r(F^N) & := \frac1N  \int_{(\rr^3)^N} \frac{|\nabla F(\bm v^N)|^{2r}_{2r}}{F(\bm v^N)^{2r-1}}\,d\bm v^N 
= \frac1N \int_{(\rr^3)^N} |\nabla \log F(\bm v^N) |^{2r}_{2r} \,F(d\bm v^N),
\end{align*}
with the notation $\bm v^N=(v_1,\dots,v_N)$, 
where the differential operator $\nabla_\gamma$ is the {\it weighted} gradient
$$
\nabla_\gamma F(\bm v^N) := \bigl(  \langle v_1 \rangle^{\gamma / 2} \nabla_{v_1} F(\bm v^N), \ldots, 
 \langle v_n \rangle^{\gamma /2} \nabla_{v_n} F(\bm v^N)  \bigr),
$$ 
where $\langle v \rangle = (1+|v|^2)^{1/2}$
and where the norm $|\cdot |_{2r}$ is the $\ell^{2r}$ norm on $(\R^3)^N$ defined by
$$
| \bm v^N |_{2r}^{2r} := \sum_{i=1}^N |v_i|^{2r}.
$$
If $F \in \PP((\R^3)^N)$ has no density, we simply put $H(F) = +\infty$. If $F \in \PP((\R^3)^N)$ has no density
or if its density has no gradient, we put $I_\gamma(F)= I^r(F) = +\infty$. 
The somewhat unusual normalization by $1/N$ is made in order that for any $f \in \PP(\rr^3)$,
$$
H(f^{\otimes N}) = H(f),\quad 
I_\gamma(f^{\otimes N}) = I_\gamma(f)
\quad \text{and}\quad
I^r(f^{\otimes N}) = I^r(f)
$$
Recall finally that the moment of order $q$ is defined, for any $F \in \PP_{sym}\bigl((\rr^{3})^N\bigr)$, by
$$
m_q(F) := \int_{(\rr^3)^N} |v_1|^q  F(d \bm v^N).
$$

\subsection{First properties and estimates}

We make use of the two following properties.

\begin{lem}\label{ieth}

(i) For any $q,\lambda \in (0,\infty)$, there is a constant $C_{q,\lambda} \in \R$ such that 
for any $N\geq 1$, any $F \in \PP_q\bigl((\rr^{3})^N\bigr)$
$$
H (F) \geq  - C_{q,\lambda} - \lambda \, m_q(F).
$$
(ii) Consider the constant $C=1+|C_{2,1}|$ (with $C_{2,1}$ introduced in (i)). For any
$f \in \mathcal P_2(\rr^3)$, any measurable $A\subset \rd$ with Lebesgue measure $|A| < 1$,
\begin{equation*}
f(A) \le \frac{C + H(f) +m_2(f)}{- \log |A|}.
\end{equation*}
In particular, $|A| \le \exp(-4(C+H(f)+m_2(f)))$ implies that $f(A) \le 1/4$ 
\end{lem}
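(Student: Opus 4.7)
The plan is to apply the Gibbs inequality $\int F \log F \geq \int F \log G$ (valid for any probability density $G$ on $(\rr^3)^N$) with a tensor-product reference measure. Concretely, I take $G(\bm v^N) = \prod_{i=1}^N g(v_i)$ with $g(v) = c_{q,\lambda}\exp(-\lambda |v|^q)$ and $c_{q,\lambda}^{-1} := \int_{\rr^3} e^{-\lambda |w|^q}\,dw$, a finite positive number since $q > 0$. Then $\log G(\bm v^N) = \sum_{i=1}^N[\log c_{q,\lambda} - \lambda |v_i|^q]$, and the exchangeability built into the definition of $\cP_q((\rr^3)^N)$ converts $N^{-1}\sum_i \int |v_i|^q F$ into $m_q(F)$. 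Setting $C_{q,\lambda} := -\log c_{q,\lambda}$ delivers the claim.

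\textbf{Plan for (ii).} I apply (i) with $N=1$, $q=2$, $\lambda=1$ and split $H(f) = \int_A f \log f \,dv + \int_{A^c} f \log f \,dv$. For the integral over $A$, non-negativity of the Kullback--Leibler divergence of $f \indiq_A/f(A)$ relative to the uniform law $\indiq_A/|A|$ on $A$ rearranges directly to $\int_A f \log f \geq f(A)\log f(A) - f(A)\log|A|$. For the integral over $A^c$, assuming $f(A) < 1$, I apply (i) to the renormalized density $f_{A^c} := f\indiq_{A^c}/(1-f(A))$, whose second moment is at most $m_2(f)/(1-f(A))$; combined with the identity $\int_{A^c} f \log f = (1-f(A))[H(f_{A^c}) + \log(1-f(A))]$ and the universal bound $x \log x \geq -1/e$ on $[0,1]$, this produces $\int_{A^c} f\log f \geq -|C_{2,1}| - m_2(f) - 1/e$.

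\textbf{Combining and main difficulty.} Summing the two bounds, using once more $f(A)\log f(A) \geq -1/e$ to discard that term, and rearranging gives $-f(A)\log|A| \leq H(f)+m_2(f)+|C_{2,1}|+2/e$. Since $2/e < 1$, this is subsumed by $C := 1 + |C_{2,1}|$; dividing by $-\log|A| > 0$ (recall $|A|<1$) yields the first inequality. The corner case $f(A)=1$ is handled separately via the classical lower bound $H(f) \geq -\log|A|$ for densities supported in a set of measure $<1$ (obtained from $H(f \mid \indiq_A/|A|) \geq 0$). The final claim $f(A) \leq 1/4$ whenever $|A| \leq \exp(-4(C+H(f)+m_2(f)))$ is then immediate once (i) is used to check $C+H(f)+m_2(f) \geq 1 > 0$. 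No real obstacle arises: the argument is standard bookkeeping around the Gibbs inequality, the only delicate aspect being the tracking of universal constants so that the prescribed $C = 1+|C_{2,1}|$ indeed suffices.
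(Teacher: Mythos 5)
Your proof is correct and follows essentially the same route as the paper: decompose the entropy into its contributions over $A$ and $A^c$, apply (i) to the renormalized restriction to $A^c$, use the Gibbs/Jensen bound for the part supported in $A$, and absorb the $x\log x$ terms by a universal lower bound (you use $-1/e$ where the paper uses the weaker $-1/2$; both suffice for $C=1+|C_{2,1}|$). The only addition is that you supply a Gibbs-inequality proof of (i), which the paper just cites as classical.
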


\begin{proof}
The first estimate is classical. See the comments before \cite[Lemma 3.1]{hm} for a proof. 
To prove the second one, we decompose $f$ as
\[
f= f(A) f_1 + (1-f(A)) f_2 \quad \hbox{where} \quad f_1 =\frac {\indiq_A}{f(A)} f \quad \text{and} \quad
 f_2 =\frac {\indiq_{A^c}}{1-f(A)} f.
\]
The entropy of $f$ may be rewritten as
\[
H(f)  = f(A) \log f(A) +(1-f(A)) \log(1-f(A))+ f(A) H(f_1) +  (1- f(A)) \, H(f_2).
\]
Since $m_2(f_2) \le (1-f(A))^{-1} m_2(f)$, the application of (i) (with $q=2$ and $\lambda =1$) to 
$f_2$ leads us to $H(f_2) \ge -C_{2,1} - (1-f(A))^{-1} m_2(f)$. 
Since $x \log x \ge -1/2$ for $x \in [0,1]$, we find
\[
H(f) \ge -1 -(1-f(A)) C_{2,1} - m_2(f) + f(A) H(f_1) \geq -C - m_2(f) + f(A)H(f_1).
\]
But Supp $f_1\subset A$ classically implies, by the Jensen inequality, that $H(f_1) \geq - \log |A|$.
Hence
\[
H(f) \ge  -C - m_2(f) - f(A) \log |A|.
\]
The conclusion follows.
\end{proof}

We now state two useful properties of the fractional and weighted Fisher informations.

\begin{lem} \label{lem:IrIw} Let $\gamma<0$ and $r\in(1/2,1)$.

\vip

(i) The weighted and fractional Fisher informations are super-additive: for all $N \geq 1$, all 
$F \in \PP_{sym}((\rd)^N)$, all $k=1,\dots, N$, denoting by $F_k\in\PP_{sym}((\rd)^k)$ the $k$-marginal of $F$,
$$
I^r(F_k) \le I^r(F) \quad \text{and} \quad 
I_\gamma(F_k) \le I_\gamma(F).
$$

(ii) The fractional Fisher information can be controlled by weighted Fisher information: 
for all $N\geq 1$, all $F \in \PP_{sym}(\rd^N)$,
$$
I^r(F) \le C_q (I_\gamma(F))^r \, (1+m_q(F))^{1-r} \quad 
\text{with } q := \frac{|\gamma| r}{1- r}.
$$
\end{lem}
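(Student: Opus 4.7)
The plan is to reduce both statements to elementary Jensen/Hölder arguments, combined with the symmetry of $F$. The key observation throughout is that, by exchangeability, the quantities $\int \langle v_i\rangle^\gamma |\nabla_{v_i}\log F|^2 F\,d\bm v^N$ and $\int |\nabla_{v_i}\log F|^{2r} F\,d\bm v^N$ are all independent of $i \in \{1,\dots,N\}$, so each of them equals $I_\gamma(F)$ or $I^r(F)$, respectively. This turns every component-wise estimate into a global one once we sum in $i$ and divide by $N$. If either $I_\gamma(F)$ or $m_q(F)$ is infinite, both inequalities become trivial, so throughout we may assume the right-hand sides are finite.

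For part (i), I would start from the projection identity for marginals: writing $\tilde{\bm v}=(v_{k+1},\dots,v_N)$, one has $F_k(v_1,\dots,v_k)=\int F\,d\tilde{\bm v}$, so for $i\leq k$,
\[
\nabla_{v_i} F_k = \int \nabla_{v_i} F\,d\tilde{\bm v},\qquad \nabla_{v_i}\log F_k = \E\bigl[\nabla_{v_i}\log F\mid v_1,\dots,v_k\bigr],
\]
the conditional expectation being taken with respect to the probability density $F/F_k\,d\tilde{\bm v}$. Cauchy--Schwarz gives $|\nabla_{v_i} F_k|^2/F_k \leq \int |\nabla_{v_i} F|^2/F\,d\tilde{\bm v}$, while Jensen applied to the convex map $x\mapsto |x|^{2r}$ (valid since $2r>1$) gives $|\nabla_{v_i}\log F_k|^{2r}\leq \E[|\nabla_{v_i}\log F|^{2r}\mid v_1,\dots,v_k]$. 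Multiplying the first by $\langle v_i\rangle^\gamma$ (which depends only on $v_i$ and passes freely through the $\tilde{\bm v}$-integral), integrating in $(v_1,\dots,v_k)$, summing over $i=1,\dots,k$, and using symmetry of $F$ and of $F_k$ on both sides yields $kI_\gamma(F_k)\leq kI_\gamma(F)$. The analogous chain for the fractional case yields $kI^r(F_k)\leq kI^r(F)$, and dividing by $k$ gives (i).

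For part (ii), the idea is to manufacture the weight $\langle v_i\rangle^\gamma$ inside $|\nabla_{v_i}\log F|^{2r}$ through the pointwise identity
\[
|\nabla_{v_i}\log F|^{2r}\,F \;=\; \bigl(\langle v_i\rangle^\gamma\,|\nabla_{v_i}\log F|^2\,F\bigr)^{r}\cdot \bigl(\langle v_i\rangle^{-\gamma r/(1-r)}\,F\bigr)^{1-r}.
\]
Hölder's inequality with conjugate exponents $1/r$ and $1/(1-r)$ then gives
\[
\int |\nabla_{v_i}\log F|^{2r}\,F\,d\bm v^N \;\leq\; \Bigl(\int \langle v_i\rangle^\gamma|\nabla_{v_i}\log F|^2\,F\,d\bm v^N\Bigr)^{r}\Bigl(\int \langle v_i\rangle^{q}\,F\,d\bm v^N\Bigr)^{1-r},
\]
with $q=|\gamma|r/(1-r)$, since $\gamma<0$. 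By symmetry, the first factor equals $I_\gamma(F)^r$, and the elementary bound $\langle v\rangle^q=(1+|v|^2)^{q/2}\leq 2^{q/2}(1+|v|^q)$ combined with symmetry controls the second by $C_q(1+m_q(F))^{1-r}$. Summing in $i=1,\dots,N$ and dividing by $N$ produces the claimed inequality.

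I do not expect any genuine obstacle here: both arguments rest on classical convexity estimates (Cauchy--Schwarz/Jensen for the projection property, Hölder for the interpolation), with the symmetry of $F$ doing the combinatorial bookkeeping. The only step worth a line of justification is that $\nabla_{v_i}$ may be exchanged with the $\tilde{\bm v}$-integration defining $F_k$, which is legitimate as soon as the right-hand sides are finite, the only regime in which the inequalities carry content.
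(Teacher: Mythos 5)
Your proof is correct, and for part (ii) it coincides with the paper's Hölder argument almost verbatim. For part (i), however, you take a genuinely different (and arguably cleaner) route. The paper establishes super-additivity via the joint convexity of the perspective function $\Psi_r(a,b)=|b|^{2r}/a^{2r-1}$, which it verifies by computing the Hessian matrix and checking positive semi-definiteness, then applies Jensen with this two-variable function (using its $1$-homogeneity to handle the non-normalized Lebesgue integration $\int\cdot\,d\bm v^N_{N-k}$). You instead observe that $\nabla_{v_i}\log F_k=\E\bigl[\nabla_{v_i}\log F\mid v_1,\dots,v_k\bigr]$, which turns the problem into ordinary Jensen on a bona fide conditional probability measure: Cauchy--Schwarz (i.e.\ Jensen for $|x|^2$, with the weight $\langle v_i\rangle^\gamma$ passing through the $\tilde{\bm v}$-integral) gives the $I_\gamma$ case, and Jensen for $|x|^{2r}$ gives the $I^r$ case. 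This sidesteps the Hessian computation and the homogeneity trick entirely, at the cost of having to note that the conditional-expectation identity is legitimate (which it is, as you say, whenever the right-hand sides are finite, the only regime where the inequality is informative). Both approaches buy the same result; yours is a bit more self-contained, the paper's makes the joint-convexity structure explicit, which it reuses when proving lower semi-continuity of $I_\gamma$ in Lemma~\ref{lem:propHIJ}.
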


\begin{proof}
Since $F$ is symmetric, we can write
$$ 
I^r(F) =  \int_{(\rr^3)^N} |\nabla_{v_1} \log F(\bm v^N) |^{2r}\,F(d\bm v^N)
\quad \hbox{and} \quad
I_\gamma(F) =  \int_{(\rr^3)^N} \langle v_1 \rangle^\gamma |\nabla_{v_1} \log F(\bm v^N) |^2\,F(d\bm v^N).
$$
The super-additivity of the fractional Fisher information is a consequence 
of the convexity of the fonction $\Psi_r:\R^+_\ast \times \R^3 \mapsto \R^+$ defined by
$$
\Psi_r(a,b) := \frac{|b|^{2r}}{a^{2r-1}}.
$$
Computing its Hessian matrix, we find
\begin{align*}
\nabla^2 \Psi_r (a,b) =& \frac{2r(2r-1)|b|^{2r-2}}{a^{2r+1}} 
\begin{pmatrix} |b|^2 & -ab^* \\
- ab & \frac{a^2}{2r-1}\big(I_3+(2r-2)\frac{bb^*}{|b|^2}\big)
\end{pmatrix}\\
\geq& \frac{2r(2r-1)|b|^{2r-2}}{a^{2r+1}} 
\begin{pmatrix} |b|^2 & -ab^* \\
- ab & a^2 \frac{bb^*}{|b|^2}
\end{pmatrix}
\end{align*}
which is always non-negative if $2r-1 \ge 0$. 
Then, we have, for $1 \le k \le N$, setting $\bm v^N_k= (v_1, \ldots v_k)$ and $\bm v^N_{N-k}= (v_{k+1}, \ldots v_N)$,
by the Jensen inequality,
\begin{align*}
\frac{|\nabla_{v_1} F_k(\bm v^N_k)|^{2r}}{F_k(\bm v^N_k)^{2r-1}} 
& =   \Psi_r \Bigl(F_k(\bm v^N_k), \nabla_{v_1} F_k(\bm v^N_k)\Bigr) \\
& = \Psi_r \Big(  \int_{(\rr^3)^{N-k}} F(\bm v^N_k, \bm v^N_{N-k}) \,d\bm v^N_{N-k},
\int_{(\rr^3)^N} \nabla_{v_1} F(\bm v^N_k, \bm v^N_{N-k}) \,d\bm v^N_{N-k} \Big) \\
& \le \int_{(\rr^3)^{N-k}} \Psi_r \Big(   F(\bm v^N_k, \bm v^N_{N-k}), \nabla_{v_1} F(\bm v^N_k,
\bm v^N_{N-k}) \Big) \,d\bm v^N_{N-k}\\
& =  \int_{(\rr^3)^{N-k}} \frac{|\nabla_{v_1} F(\bm v^N_k,
\bm v^N_{N-k})|^{2r}}{F(\bm v^N_k, \bm v^N_{N-k})^{2r-1}} \,d\bm v^N_{N-k}.
\end{align*}
Integrating this inequality on $\bm v^N_k$ we obtain $I^r(F_k)\leq I^r(F)$.
Choosing next $r=1$, multiplying the inequality by $\langle v_1 \rangle^\gamma $ and integrating 
on $\bm v^N_k$ we obtain $I_\gamma(F_k)\leq I_\gamma(F)$.

\vip

For the second point, we use the H\"older inequality:
\begin{align*}
I^r(F) & = \int_{(\rd)^N} \Bigl( | \nabla_{v_1} \log F(\bm v^N) |^{2r} F^r(\bm v^N) \langle v_1 \rangle^{\gamma r}\Bigr)  
\Big(F^{1-r}(\bm v^N) \langle v_1 \rangle^{- \gamma r}\Big)d\bm v^N \\
& \le  \bigl[ I_\gamma(F) \bigr] ^r \Big( \int_{(\rd)^N}  \langle v_1
\rangle^{- \gamma r/(1-r)} \,F(d\bm v^N) \Big)^{1-r}.
\end{align*}
Recalling that $q=|\gamma| r/(1-r)$, the conclusion follows.
\end{proof}

We next establish some kind of Gagliardo-Nirenberg-Sobolev inequality in $\rr^3$ involving 
fractional and weighted Fisher informations.

\begin{lem} \label{lem:FishInteg1}
Let $\gamma<0$. For any $r \in (1/2 , 1)$,
for  $p := 3/(3-2r)$ and  $q:= | \gamma| r/(1-r)$, for any $f \in \PP(\R^3)$,
\begin{align*}
\| f \|_{L^p}  \leq C_r\,   I^r(f) \leq C_{r,\gamma} \,   (I_\gamma(f))^r \, (1+m_q(f))^{1-r}
\end{align*}
\end{lem}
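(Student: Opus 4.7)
The plan is to treat the two inequalities separately. The second one, $I^r(f) \le C_{r,\gamma}(I_\gamma(f))^r(1+m_q(f))^{1-r}$, will be obtained by a direct application of Lemma~\ref{lem:IrIw}(ii) in the special case $N=1$, for which the definitions of $I^r$ and $I_\gamma$ on $\PP((\rd)^N)$ reduce to the ones on $\PP(\rd)$.

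For the first inequality $\|f\|_{L^p} \le C_r I^r(f)$, I will reduce it to a classical Sobolev embedding via a power substitution. Concretely, I will set $g := f^{1/(2r)}$, so that
\[
\nabla g = \frac{1}{2r}\, f^{1/(2r)-1}\, \nabla f, \qquad \text{hence} \qquad |\nabla g|^{2r} = (2r)^{-2r}\, \frac{|\nabla f|^{2r}}{f^{2r-1}}.
\]
Integrating this identity over $\rd$ and using the equivalence of the $\ell^{2r}$ and $\ell^{2}$ norms on $\rd$ (with constants depending only on $r$), I will obtain
\[
\|\nabla g\|_{L^{2r}(\rd)}^{2r} \le C_r\, I^r(f).
\]
Since $r \in (1/2,1)$ forces $2r \in (1,3)$, the homogeneous Sobolev embedding $\dot W^{1,2r}(\rd) \hookrightarrow L^{p^*}(\rd)$ with $p^* := 6r/(3-2r)$ applies and yields $\|g\|_{L^{p^*}} \le C_r\, \|\nabla g\|_{L^{2r}}$. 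Raising to the $2r$-th power gives
\[
\|f\|_{L^{p^*/(2r)}} = \|g\|_{L^{p^*}}^{2r} \le C_r\, I^r(f),
\]
and the arithmetic identity $p^*/(2r) = 3/(3-2r) = p$ finishes the argument.

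The only subtle point will be the rigorous justification of the pointwise identity for $\nabla g$ when $f$ has no a priori regularity (and where $f^{1/(2r)-1}$ is singular on the zero set of $f$); this will be handled by a routine approximation argument, for instance by working with $g_\varepsilon := (f+\varepsilon)^{1/(2r)} - \varepsilon^{1/(2r)}$ and a mollification of $f$, then passing to the limit $\varepsilon\to 0$ using Fatou's lemma on the left-hand side and monotone/dominated convergence on the right. If $I^r(f) = +\infty$ the inequality is trivial, so one may assume the right-hand side is finite, which provides the necessary integrability to carry out the limit. Apart from this standard technicality, the computation is elementary.
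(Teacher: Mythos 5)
Your proof is correct, and it takes a genuinely different route from the paper for the first inequality. The paper bounds $\|\nabla f\|_{L^{p'}}$ via H\"older (splitting the integrand into $(|\nabla f|^{2r}/f^{2r-1})^{p'/(2r)}$ times a power of $f$), applies Gagliardo--Nirenberg--Sobolev to pass to $\|f\|_{L^p}$, and then chooses $p'$ so that the same $L^p$-norm of $f$ appears on both sides, closing a self-improvement loop to isolate $\|f\|_{L^p} \leq C_r I^r(f)$. You instead observe the pointwise identity $|\nabla(f^{1/(2r)})|^{2r} = (2r)^{-2r}|\nabla f|^{2r}/f^{2r-1}$, so that $\|\nabla g\|_{L^{2r}}^{2r}$ with $g=f^{1/(2r)}$ is (up to constant) exactly $I^r(f)$, and then the single Sobolev inequality $\|g\|_{L^{6r/(3-2r)}} \leq C_r\|\nabla g\|_{L^{2r}}$ (valid since $2r\in(1,3)$) closes the argument after raising to the $2r$-th power, because $\|g\|_{L^{p^*}}^{2r} = \|f\|_{L^{p^*/(2r)}}$ and $p^*/(2r)=3/(3-2r)=p$. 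Your version is arguably cleaner: it avoids the bootstrap where $\|f\|_{L^p}$ must appear on both sides and then be cancelled (which in the paper implicitly requires knowing a priori that this norm is finite, or a separate truncation argument). Both approaches require the same kind of regularization to justify the computation for general $f$, which you correctly flag. The second inequality is handled identically in both proofs, via Lemma~\ref{lem:IrIw}(ii) with $N=1$.
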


\begin{proof} 
Using the H\"older inequality, we can write, for any $p' \in (0,2r)$
$$
\|\nabla f\|_{L^{p'}}^{p'}  = \intrd \Big(\frac{|\nabla f|^{2r}}{ f^{2r-1}}\Big)^{\frac {p'}{2r}} f^{\frac {(2r-1)p'}{2r}}
\leq \Big( \intrd \frac{|\nabla f|^{2r}}{ f^{2r-1}} \Big)^{\frac {p'}{2r}} 
\Big( \intrd  f^{\frac {(2r-1)p'}{2r-p'}} \Big)^{\frac{2r-p'}{2r}},
$$
whence 
$$
\|\nabla f\|_{L^{p'}}  \le ( I^r(f))^{1/2r}  \| f\|_{L^{(2r-1)p'/(2r-p')}}^{1-1/2r}.
$$
Together with the Gagliardo-Nirenberg-Sobolev inequality, it comes, with $1/p = 1/p'-1/3$ 
(which is well-defined since $p'<2$)
$$
\| f \|_{L^{p}} \leq C_{p'} \|\nabla f\|_{L^{p'}} \leq C_{p'} (I^r(f))^{1/2r}  
\| f\|_{L^{(2r-1)p'/(2r-q)}}^{1- 1/2r}.
$$
This inequality becomes really interesting when $p  = (2r-1)p'/(2r-p')$, which leads to the choice 
$p' = 3/[2(2-r)]$. It that case $p = 3/(3-2r)$ and we find
$$
\| f\|_{L^{p}} \leq C_{r} I^r(f).
$$
Using finally Lemma \ref{lem:IrIw}-(ii) completes the proof.
\end{proof}

We deduce that pairs of particles of which the law has a finite
weighted Fisher information are not too close.

\begin{lem} \label{lem:FishInteg}
Let $\gamma<0$. Consider $F \in \PP(\rr^3 \times \rr^3)$ and 
$(\XX_1,\XX_2)$ a $F$-distributed random variable.
For any $\kappa \in (0,2)$ and any $q > |\gamma| \max \{ 1, \kappa/(2-\kappa)\}$, 
there exists  $C_{\kappa,q}$ such that, with $r:= q/(q - \gamma) \in (1/2,1)$,
$$
\E (|\XX_1 - \XX_2|^{-\kappa}) = \int_{\R^3 \times \R^3}\frac{F(dx_1,dx_2)}{|x_1-x_2|^\kappa} \le
C_{\kappa,q} \, [(I_\gamma (F))^r (1+m_q(F))^{1-r} + 1].
$$
\end{lem}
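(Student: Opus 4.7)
The plan is to reduce the claim to a conditional application of Lemma~\ref{lem:FishInteg1}. Let $F_1(x_1):=\int F(x_1,x_2)\,dx_2$ denote the first marginal and $F^{x_1}(x_2):=F(x_1,x_2)/F_1(x_1)$ the conditional density of $\XX_2$ given $\XX_1=x_1$. Fix $r:=q/(q-\gamma)$ and $p:=3/(3-2r)$. A direct computation shows that the hypothesis $q>|\gamma|\max\{1,\kappa/(2-\kappa)\}$ is equivalent to the pair of conditions $r\in(1/2,1)$ and $p>3/(3-\kappa)$, and moreover forces $|\gamma|r/(1-r)=q$, so that the moment exponent appearing in Lemma~\ref{lem:FishInteg1} is precisely the one in the hypothesis.

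Applying \eqref{weakb2} with $\alpha=-\kappa$ to the probability density $F^{x_1}$ and then Lemma~\ref{lem:FishInteg1} to the same probability on $\rd$, I obtain for each $x_1$
\[
\intrd\frac{F^{x_1}(x_2)}{|x_1-x_2|^\kappa}\,dx_2\leq 1+C\,\|F^{x_1}\|_{L^p}\leq 1+C\,\bigl(I_\gamma(F^{x_1})\bigr)^r\bigl(1+m_q(F^{x_1})\bigr)^{1-r}.
\]
Multiplying by $F_1(x_1)$, integrating in $x_1$, and applying H\"older's inequality with conjugate exponents $1/r$ and $1/(1-r)$ (both $>1$) yields
\[
\E\bigl[|\XX_1-\XX_2|^{-\kappa}\bigr]\leq 1+C\Bigl(\intrd F_1\,I_\gamma(F^{x_1})\,dx_1\Bigr)^{r}\Bigl(\intrd F_1\bigl(1+m_q(F^{x_1})\bigr)\,dx_1\Bigr)^{1-r}.
\]
The two factors are bounded by unfolding the conditional: since $\nabla_{x_2}\log F^{x_1}=\nabla_{x_2}\log F$, the first equals $\intrd\intrd\langle x_2\rangle^{\gamma}|\nabla_{x_2}F|^2/F\,dx_1\,dx_2\leq 2\,I_\gamma(F)$ directly from the $N=2$ definition of $I_\gamma$, while the second equals $1+\E[|\XX_2|^q]$, which coincides with $1+m_q(F)$ when $F$ is symmetric---the only case needed in the applications to the two-marginal of an exchangeable distribution.

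The main subtlety is the book-keeping of exponents. Three constraints must be simultaneously reconciled: $r>1/2$ (needed by Lemma~\ref{lem:FishInteg1}), $r>\kappa/2$ (so that $p=3/(3-2r)>3/(3-\kappa)$ makes \eqref{weakb2} applicable), and $|\gamma|r/(1-r)=q$ (matching the moment arising from Lemma~\ref{lem:FishInteg1}). The explicit choice $r=q/(q-\gamma)$ solves the moment equation, and the stated lower bound on $q$ is precisely the range that makes this $r$ satisfy the two inequalities. The conceptual point is that conditioning on $\XX_1$ reduces the problem to three-dimensional estimates, obviating any six-dimensional Sobolev analysis on $F$ itself.
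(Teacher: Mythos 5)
Your proof is correct, but it takes a genuinely different route from the paper's. The paper proves this lemma via the unitary change of variables $\Phi(x_1,x_2)=2^{-1/2}(x_1-x_2,\,x_1+x_2)$: pushing $F$ forward to $\tilde F=F\circ\Phi^{-1}$ and projecting onto the first coordinate gives a single density $\tilde f$ (the law of $2^{-1/2}(\XX_1-\XX_2)$), to which \eqref{weakb2} and Lemma~\ref{lem:FishInteg1} apply directly; the key technical point is $I^r(\tilde f)\le 2\,I^r(\tilde F)\le C_r\,I^r(F)$, which requires observing that the super-additivity of $I^r$ (whose statement in Lemma~\ref{lem:IrIw}-(i) is for symmetric measures) in fact holds for the non-symmetric $\tilde F$ as well, because only the convexity argument is used. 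You instead condition on $\XX_1$, apply the one-particle estimate to each conditional density $F^{x_1}$, and recombine with H\"older; the identity $\nabla_{x_2}\log F^{x_1}=\nabla_{x_2}\log F$ gives $\int F_1\,I_\gamma(F^{x_1})\,dx_1\le 2\,I_\gamma(F)$ directly from the two-particle definition. Both approaches hinge on the same exponent bookkeeping (forcing $|\gamma|r/(1-r)=q$ and $p=3/(3-2r)>3/(3-\kappa)$), and you verify that correctly. What your conditional approach buys is that it stays in dimension three throughout, avoids any change of variables, and bypasses the need to discuss super-additivity for non-symmetric laws; what it costs is an extra H\"older step in $x_1$. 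Your caveat about using $m_q(F)=\E[|\XX_2|^q]$ only in the symmetric case is fair, but this is also an implicit assumption in the paper's proof (it invokes Lemma~\ref{lem:IrIw}-(ii), which is stated for $\PP_{sym}$), and the only application is to symmetric two-marginals.
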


\begin{proof}  We introduce the unitary linear transformation $\Phi:\rd\times\rd \mapsto \rd\times \rd$ defined by
$$
\Phi  (x_1,x_2) =  \frac1{\sqrt 2}  
\bigl(x_1- x_2,x_1+x_2\bigr) =:   (y_1,y_2).
$$
Let $\tilde F := F \circ \Phi^{-1}$ and 
$\tilde f$ the first marginal of $\tilde F$ ($\tilde f$ is the law of $2^{-1/2}(\XX_1- \XX_2)$). 
A simple substitution shows that  $I^r(\tilde F) \le C_r I^r(F)$ for some constant $C_r$.
Next, we cannot apply  Lemma \ref{lem:IrIw}-(i) because $\tilde F$ is not symmetric,
but we obviously have $I^r (\tilde f) \le 2 \,  I^r (\tilde F)$.
We now write
\begin{align*}
\int_{\R^3 \times \R^3}\frac{F(x_1,x_2)}{|x_1-x_2|^\kappa} \, dx_1dx_2
=   2^{\kappa /2} \int_{\R^3 \times \R^3}\frac{\tilde
F(y_1,y_2)}{|y_1|^\kappa} \, dy_1dy_2
= &2^{\kappa /2}  \int_{\R^3}\frac{\tilde f (y)}{|y|^\kappa} \, dy \leq C_{\kappa,p}(1+\|\tilde f\|_{L^p})
\end{align*}
by \eqref{weakb2}, with $p := 3(q-\gamma)/(q-3\gamma)>3/(3-\kappa)$ (thanks to our condition on $q$).
We then use Lemma \ref{lem:FishInteg1} (we have  $p=3/(3-2r)$) to get
$$
\int_{\R^3 \times \R^3}\frac{F(x_1,x_2)}{|x_1-x_2|^\kappa} \, dx_1dx_2
\leq C_{\kappa,q} (1+I^r( \tilde f)) \leq C_{\kappa,q} (1+I^r(F)).
$$
Lemma \ref{lem:IrIw}-(ii) (observe that $q = |\gamma| r/(1-r)$) allows us to conclude.
\end{proof}

\subsection{Many-particle weighted Fisher information} \label{sec:prel2}

We finally need a result showing that if the particle distribution of the $N$-particle system has 
a uniformly bounded weighted Fisher information, then any limit point of the associated empirical
measure has finite expected weighted Fisher information. Such a result is a consequence of some representation 
identities for {\it level-3 functionals} as first observed by Robinson-Ruelle in \cite{RR} for the entropy in a 
somewhat different setting. 
Recently in \cite{hm}, this kind of representation identity has been extended to the 
Fisher information. The proof is mainly based on the
De Finetti-Hewitt-Savage representation theorem~\cite{Hewitt-Savage,deFinetti1937}
(see also \cite{hm}).
Unfortunately, we cannot apply directly the results of \cite{hm}.

\begin{thm}\label{th:levl3tH&tI} 
Let $\gamma<0$.
Consider, for each $N\geq 2$,  a probability measure $F^N \in \Psym((\R^3)^N)$. 
For $j\geq 1$, denote by $F^N_j\in \PP((\R^3)^j)$ the $j$-th marginal of $F^N$.
Assume that there exists a compatible sequence $(\pi_j)$ of symmetric probability
measures on $(\rr^3)^j$ so that $F^N_j \to \pi_j$ in the weak sense of measures in 
$\PP((\rr^3)^j)$. 
Denoting by $\pi \in \PP(\PP(\R^3))$ the probability measure associated to the 
sequence $(\pi_j)$ thanks
to the De Finetti-Hewitt-Savage theorem,  there holds 
$$
\int_{\PP(\R^3)} I_\gamma (f) \, \pi(df)   = \sup_{j \ge 1} I_\gamma(\pi_j)
 \leq \liminf_{N \to \infty} I_\gamma (F^N).
$$
\end{thm}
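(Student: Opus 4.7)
The plan is to split the statement into two parts: (a) the upper bound $\sup_j I_\gamma(\pi_j)\le\liminf_N I_\gamma(F^N)$, and (b) the De Finetti representation identity $\sup_j I_\gamma(\pi_j)=\int_{\cP(\rd)}I_\gamma(f)\,\pi(df)$.

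For (a), the main ingredient is the weak lower semi-continuity of $I_\gamma$ on each $\cP((\rd)^k)$. I would derive it from the variational formula
\[
I_\gamma(F)=\sup_{\bm\phi\in C_c^\infty((\rd)^k;(\rd)^k)}\Big\{-\frac{2}{k}\sum_{i=1}^k\!\int\!\diver_{v_i}\phi_i\,dF-\frac{1}{k}\sum_{i=1}^k\!\int\!\langle v_i\rangle^{-\gamma}|\phi_i|^2\,dF\Big\},
\]
obtained from the pointwise identity $\langle v\rangle^\gamma|a|^2=\sup_\phi\{2\,\phi\cdot a-\langle v\rangle^{-\gamma}|\phi|^2\}$ (applied at $a=\nabla_{v_i}\log F$), followed by an integration by parts. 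Each term inside the braces is weakly continuous in $F$ since $\phi_i$ is compactly supported (so $\langle v_i\rangle^{-\gamma}|\phi_i|^2$ is bounded continuous), hence the supremum is weakly lower semi-continuous. Combined with the monotonicity $I_\gamma(F^N_k)\le I_\gamma(F^N)$ from Lemma~\ref{lem:IrIw}-(i) and the hypothesis $F^N_k\to\pi_k$, this yields $I_\gamma(\pi_k)\le\liminf_N I_\gamma(F^N_k)\le\liminf_N I_\gamma(F^N)$; taking the supremum over $k$ gives (a).

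For the easy half of (b), the bound $\sup_j I_\gamma(\pi_j)\le\int I_\gamma(f)\pi(df)$ is a Jensen-type argument. From the De Finetti identity $\pi_j=\int f^{\otimes j}\pi(df)$ and the joint convexity of $(a,b)\mapsto|b|^2/a$ on $(0,\infty)\times\rd$, Jensen's inequality applied pointwise in $\bm v^j$ gives, for each $i$,
\[
\frac{|\nabla_{v_i}\pi_j(\bm v^j)|^2}{\pi_j(\bm v^j)}\le\int\frac{|\nabla_{v_i}f^{\otimes j}(\bm v^j)|^2}{f^{\otimes j}(\bm v^j)}\,\pi(df).
\]
Multiplying by $\langle v_i\rangle^\gamma$, summing over $i$, dividing by $j$, integrating over $\bm v^j$ and applying Fubini yields $I_\gamma(\pi_j)\le\int I_\gamma(f^{\otimes j})\pi(df)=\int I_\gamma(f)\pi(df)$, the last equality because $I_\gamma(f^{\otimes j})=I_\gamma(f)$ thanks to the $1/N$ normalization.

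The main obstacle will be the reverse inequality $\int I_\gamma(f)\pi(df)\le\sup_j I_\gamma(\pi_j)$, which is where the level-3 nature of the statement really intervenes: a naive choice of tensorized test fields $\phi_i(\bm v^j)=\phi(v_i)$ only recovers $I_\gamma(\pi_1)$ by exchangeability, which is in general strictly smaller than the target. I would follow the strategy of~\cite{hm} (developed there in the unweighted case) and plug into the variational formula test fields of the form $\phi_i(\bm v^j)=\Phi(v_i,L_{j,i}^\perp)$, where $L_{j,i}^\perp:=(j-1)^{-1}\sum_{k\ne i}\delta_{v_k}$ is the empirical measure of the other coordinates; the crucial point is that $L_{j,i}^\perp$ does not depend on $v_i$, so $\diver_{v_i}\phi_i$ only acts on the first slot of $\Phi$. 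By the law of large numbers for the canonical exchangeable sequence with directing measure $\pi$, $L_{j,i}^\perp\to\mathfrak{f}$ almost surely; exchangeability and dominated convergence then let one pass the variational lower bound to the limit
\[
\sup_j I_\gamma(\pi_j)\ge -2\!\int_{\cP(\rd)}\!\!\int_{\rd}\!\diver_v\Phi(v,f)\,f(dv)\,\pi(df)-\int_{\cP(\rd)}\!\!\int_{\rd}\!\langle v\rangle^{-\gamma}|\Phi(v,f)|^2\,f(dv)\,\pi(df),
\]
and a measurable-selection argument (or a countable-dense-family optimization) then allows one to take the supremum over $\Phi(\cdot,f)$ pointwise in $f$, producing exactly $\int I_\gamma(f)\pi(df)$. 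The real technical difficulty will be the uniform integrability needed to pass to the limit in $j$ (the weight $\langle v\rangle^{-\gamma}=\langle v\rangle^{|\gamma|}$ is unbounded); however, one may always reduce to the case $\int I_\gamma(f)\pi(df)<\infty$, under which the required moment control follows from Fisher-information-based estimates of the type proved in Lemma~\ref{lem:FishInteg1}.
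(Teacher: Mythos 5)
Your proposal is correct and its mathematical content matches the paper's, but the organization differs. The paper reduces the theorem to a single citation of the abstract level-3 representation result \cite[Lemma 5.6]{hm}, together with the verification in Lemma~\ref{lem:propHIJ} of its hypotheses on $I_\gamma$: lower semi-continuity and convexity (via exactly the variational formula you write down), tensorization $I_\gamma(f^{\otimes j})=I_\gamma(f)$, super-additivity over marginals, and the affine character of the level-3 functional $\cI_\gamma$. Your proof unpacks that black box: part (a) and the Jensen half of (b) reproduce the first three of these properties directly, while your de Finetti/empirical-measure sketch of the reverse inequality $\int_{\cP(\rd)} I_\gamma(f)\,\pi(df)\le\sup_j I_\gamma(\pi_j)$ is precisely the content of \cite[Lemma 5.6]{hm}, with the affine/partition property of Lemma~\ref{lem:propHIJ}-(iv) doing the measurable-selection work you allude to. One small correction: the difficulty you flag at the end (unboundedness of the weight $\langle v\rangle^{|\gamma|}$) is actually not an obstacle here, since the test fields $\Phi(\cdot,\mu)$ in the variational formula have compact support in $v$, so $\langle v\rangle^{-\gamma}|\Phi(v,\mu)|^2$ is bounded uniformly; the genuine technical issue in the reverse inequality is rather the selection/localization step --- approximating the $f$-dependent optimizer by finitely many test fields on a suitable partition of $\cP(\rd)$ --- and this is exactly what the affine property (iv) is designed to encode.
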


The De Finetti-Hewitt-Savage theorem asserts that for a 
sequence $(\pi_j)$ of symmetric probability on $E^j$ (for some measurable space $E$), 
compatible in the sense that 
the $k$-marginal of $\pi_j$ is $\pi_k$ for all $1\leq k \leq j$, there exists a unique probability measure
$\pi \in \PP(\PP(E))$ such that $\pi_j= \int_{\PP(E)} f^{\otimes j} \pi(df)$ for all $j\geq 1$. See for instance 
\cite[Theorem 5.1]{hm}.

\begin{cor}\label{tbu}
Let $\gamma<0$. Consider, for each $N\geq 2$,  a probability measure $F^N \in \Psym((\R^3)^N)$,
and $(X_1^N,\dots,X_N^N)$ with law $F^N$. Assume that $\mu_N := N^{-1}\sum_{1}^N \delta_{X_i^N}$
goes in law to some (possibly random) $\mu \in \cP(\rd)$. Then
$$
\E[I_\gamma(\mu)] \leq \liminf_{N \to \infty} I_\gamma (F^N).
$$
\end{cor}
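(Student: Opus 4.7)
The plan is to reduce the corollary to Theorem \ref{th:levl3tH&tI} by identifying the weak limit of the marginals of $F^N$ with the De Finetti measure of $\mathcal{L}(\mu)$. We may assume $\liminf_N I_\gamma(F^N) < \infty$, otherwise the result is trivial. First I would extract a subsequence $(N_k)$ such that $\lim_k I_\gamma(F^{N_k}) = \liminf_N I_\gamma(F^N)$, and work along this subsequence from now on; observe that $\mu_{N_k}$ still converges in law to $\mu$.

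Next, for each fixed $j \ge 1$, I would show that the $j$-marginal $F^{N_k}_j$ converges weakly to $\pi_j := \E[\mu^{\otimes j}]$. Since $\mu_{N_k} \to \mu$ in law in $\cP(\rd)$, the continuous mapping theorem gives that $\mu_{N_k}^{\otimes j} \to \mu^{\otimes j}$ in law in $\cP((\rd)^j)$, hence by uniform integrability on bounded test functions $\E[\mu_{N_k}^{\otimes j}] \to \E[\mu^{\otimes j}] = \pi_j$ weakly. On the other hand, exploiting the symmetry of $F^{N_k}$ one has the standard combinatorial identity
\[
\E[\mu_{N_k}^{\otimes j}(\varphi)] = \frac{(N_k)!/(N_k-j)!}{N_k^j}\, F^{N_k}_j(\varphi) + \frac{R_{N_k,j}(\varphi)}{N_k^j},
\]
where $R_{N_k,j}$ collects the diagonal terms (indices with at least one coincidence) and satisfies $|R_{N_k,j}(\varphi)|\le C_j N_k^{j-1}\|\varphi\|_\infty$ for $\varphi\in C_b((\rd)^j)$. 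Hence $\bigl|\E[\mu_{N_k}^{\otimes j}(\varphi)] - F^{N_k}_j(\varphi)\bigr| \le C_j \|\varphi\|_\infty / N_k$, and $F^{N_k}_j \to \pi_j$ weakly as desired.

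Third, the sequence $(\pi_j)_{j\ge 1}$ is then compatible and symmetric, so the De Finetti-Hewitt-Savage theorem associates to it a unique $\pi \in \cP(\cP(\rd))$ with $\pi_j = \int_{\cP(\rd)} f^{\otimes j}\,\pi(df)$ for every $j$. But the identity $\pi_j = \E[\mu^{\otimes j}]$ can also be read as saying that $\pi = \mathcal{L}(\mu)$ works; uniqueness of the De Finetti representation therefore forces $\pi = \mathcal{L}(\mu)$.

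Finally, applying Theorem \ref{th:levl3tH&tI} to the subsequence $(F^{N_k})$ yields
\[
\int_{\cP(\rd)} I_\gamma(f)\,\pi(df) \le \liminf_{k\to\infty} I_\gamma(F^{N_k}) = \liminf_{N\to\infty} I_\gamma(F^N),
\]
and the left-hand side equals $\E[I_\gamma(\mu)]$ since $\pi = \mathcal{L}(\mu)$. The only genuine step is the identification of the weak limit of the marginals $F^{N_k}_j$, which is the standard link between propagation of chaos and convergence of empirical measures; I do not expect any real obstacle, since Theorem \ref{th:levl3tH&tI} already does the heavy lifting.
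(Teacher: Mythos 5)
Your proposal is correct and follows the same route as the paper: identify $\pi_j=\E[\mu^{\otimes j}]$ as the weak limit of the marginals $F^N_j$, note that $\pi=\LL(\mu)$ is the corresponding De Finetti measure, and apply Theorem~\ref{th:levl3tH&tI}. The only difference is cosmetic: you spell out the standard combinatorial estimate linking $\E[\mu_N^{\otimes j}]$ and $F^N_j$ (which the paper delegates to Sznitman and \cite{hm}), and you extract a subsequence achieving the $\liminf$, which is harmless but unnecessary since the marginal convergence already holds along the full sequence.
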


\begin{proof}
Denote by $\pi \in \cP(\cP(\rd))$ the law of $\mu$ and, for $j\geq 1$, by $\pi_j= \int_{\PP(E)} f^{\otimes j} \pi(df)$.
The corollary immediately follows from Theorem \ref{th:levl3tH&tI} once we have checked that 
for all $j\geq 1$, $F^N_j$ goes weakly to $\pi_j$. But this is an easy and classical consequence
of the fact that  $\mu_N$ goes in law to $\mu$, see e.g. Sznitman \cite[I-Proposition 2.2 and Remark 2.3]{SSF}
or \cite[Lemma 2.8]{hm}.
\end{proof}

\vip
Theorem \ref{th:levl3tH&tI} is a consequence of \cite[Lemma 5.6]{hm} and of the 
following series of properties.

\begin{lem}\label{lem:propHIJ} Let $\gamma<0$. The weighted Fisher information
satisfies the following properties.

\vip

(i) For any $j\geq 1$, $I_\gamma : \PP ( (\R^3)^j) \to  \R \cup
\{+\infty \}$ is non-negative,
convex, proper and lower semi-continuous for the weak convergence.

\vip

(ii) For all $j\geq 1$, all $f \in \PP(\R^3)$, $I_\gamma (f^{\otimes j}) = I_\gamma(f) $.

\vip

(iii) For all $F \in \Psym( (\R^3)^j)$, all $\ell,n \geq 1$ with 
$j = \ell + n$ , there holds
$j \, I_\gamma (F) \ge \ell \, I_\gamma(F_\ell) + n \, I_\gamma (F_n)$, where $F_\ell\in \PP( (\R^3)^\ell)$ 
stands for the $\ell$-marginal of $F$.

\vip

(iv) 
The functional $\cI_\gamma : \PP(\PP(\rr^3)) \to \R \cup \{ \infty \}$ 
defined by 
$$
\cI_\gamma (\pi) := \sup_{j \ge 1} I_\gamma(\pi_j) \quad \hbox{where} \quad \pi_j:=\int_{\PP(\rr^3)}f^{\otimes j} \pi(df) \in
\cP((\rd)^j)
$$
is affine in the following sense. For any  $\pi \in
\PP(\PP(\rr^3))$
and any partition of $\PP(\rr^3)$ by some sets $\omega_i$, $1\le i \le M$, such that
$\omega_i$ is an open set in $\rr^3 \backslash (\omega_1 \cup \ldots \cup
\omega_{i-1})$ for any $1\le i \le M-1$ and $\pi(\omega_i) > 0$ for any 
$1\le i \le M$, defining 
$$
\alpha_i := \pi(\omega_i) \quad \hbox{and} \quad \gamma^i := \frac{1}{\alpha_i} \, 
{\bf 1}_{\omega_i} \, \pi \in \PP(\PP(\rr^3))
$$
so that 
$$
\pi = \alpha_1 \, \gamma^1 + ... + \alpha_M \, \gamma^M
\quad \hbox{and} \quad \alpha_1   + ... + \alpha_M =1,
$$
there  holds
$$
\cI_\gamma(\pi) = \alpha_1 \, \cI_\gamma(\gamma^1) + \ldots + \alpha_M \, \cI_\gamma(\gamma^M).
$$
\end{lem}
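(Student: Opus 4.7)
The plan is to establish the four properties in order, each being standard for the usual Fisher information and requiring only that the weight $\langle v \rangle^{\gamma/2}$ behaves benignly under the operations involved (marginalization, tensorization, convex combinations).

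For (i), non-negativity is immediate since the integrand is $\geq 0$, and properness is obtained by noting that $I_\gamma(f) < \infty$ for any Gaussian $f$ (the weight only helps in the integration at infinity). Convexity reduces to the joint convexity on $(0,\infty) \times \R^3$ of the map $\Psi(a,b) := |b|^2/a$, which was verified in the proof of Lemma~\ref{lem:IrIw}-(i) (its Hessian is non-negative); pointwise convexity of $\sum_i \langle v_i\rangle^\gamma \Psi(F, \nabla_{v_i} F)$ then integrates to convexity of $I_\gamma$. Lower semi-continuity follows from the variational representation
\[
I_\gamma(F) = \frac{1}{N} \sup_{\bm\varphi \in C^\infty_c((\R^3)^N;(\R^3)^N)} \int_{(\R^3)^N}\Bigl[ 2\sum_{i=1}^N \langle v_i\rangle^{\gamma/2} \varphi_i \cdot \nabla_{v_i} F - \sum_{i=1}^N |\varphi_i|^2 F \Bigr]d\bm v^N,
\]
each linear-minus-quadratic functional being continuous under weak convergence (the weights $\langle v_i\rangle^{\gamma/2}$ are bounded for $\gamma<0$), so the supremum is l.s.c. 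For (ii), simply compute $\nabla_{v_i} \log f^{\otimes j}(\bm v^j) = \nabla \log f(v_i)$ and integrate termwise.

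For (iii), write $j\,I_\gamma(F) = \sum_{i=1}^j \int \langle v_i\rangle^\gamma \Psi(F, \nabla_{v_i} F)\, d\bm v^j$. For each $1 \le i \le \ell$, marginalize out $(v_{\ell+1}, \ldots, v_j)$ and apply Jensen's inequality to the convex function $\Psi$ (as in the proof of Lemma~\ref{lem:IrIw}-(i)) to get $\int \langle v_i\rangle^\gamma \Psi(F, \nabla_{v_i} F)\, d\bm v^j \geq \int \langle v_i\rangle^\gamma \Psi(F_\ell, \nabla_{v_i} F_\ell)\, d\bm v^\ell$; summing and using that $F$ is symmetric yields $\sum_{i=1}^\ell (\cdots) \geq \ell\, I_\gamma(F_\ell)$. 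The same argument for $i = \ell+1, \ldots, j$ (using symmetry to reindex) gives $\sum_{i=\ell+1}^j (\cdots) \geq n\, I_\gamma(F_n)$, whence the claim.

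The main obstacle is (iv). The upper bound $\mathcal{I}_\gamma(\pi) \leq \sum_i \alpha_i \mathcal{I}_\gamma(\gamma^i)$ is immediate: since $\pi_j = \sum_i \alpha_i (\gamma^i)_j$, convexity (i) gives $I_\gamma(\pi_j) \leq \sum_i \alpha_i I_\gamma((\gamma^i)_j) \leq \sum_i \alpha_i \mathcal{I}_\gamma(\gamma^i)$, and taking the supremum over $j$ concludes. For the reverse inequality, I would argue as follows. By definition $\mathcal{I}_\gamma(\gamma^i) = \sup_k I_\gamma((\gamma^i)_k)$; fix $k$ large and, using de Finetti, write $(\gamma^i)_k = \int f^{\otimes k}\, \gamma^i(df)$. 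For $j = Mk$ with $M$ large, by the de Finetti decomposition and the partition structure, one can construct symmetric $j$-particle marginals of $\pi$ that, after regrouping particles into $M$ blocks of size $k$ according to which $\omega_i$ the underlying $f$ belongs to, approximate a measure whose block marginals are (up to exchangeability) the $(\gamma^i)_k$'s in proportion $\alpha_i$. Applying super-additivity (iii) inductively to split $j\, I_\gamma(\pi_j)$ into block contributions yields $j\, I_\gamma(\pi_j) \geq \sum_i \lfloor \alpha_i M \rfloor \cdot k \cdot I_\gamma((\gamma^i)_k) + o(j)$; dividing by $j = Mk$ and letting first $M \to \infty$ and then $k \to \infty$ gives $\mathcal{I}_\gamma(\pi) \geq \sum_i \alpha_i \mathcal{I}_\gamma(\gamma^i)$. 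The technical difficulty is that the symmetry of $\pi_j$ does not respect the partition, so the regrouping argument must be done via symmetrization combined with a careful use of the de Finetti representation; this is exactly the abstract content of \cite[Lemma~5.6]{hm}, whose hypotheses reduce to (i)-(iii) above, and I would invoke it directly rather than reproving the measure-theoretic machinery.
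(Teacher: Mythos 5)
Your proofs of parts (i)--(iii) are correct, and the skeleton matches the paper's. For (i) you use the same variational representation the paper writes (yours with the weight on the test function, theirs with the weight absorbed into it; these are equivalent under $\psi_i = 2\langle v_i\rangle^{\gamma/2}\varphi_i$), and you should make explicit the integration by parts that converts $\int \langle v_i\rangle^{\gamma/2}\varphi_i\cdot\nabla_{v_i}F$ into $-\int \diver_{v_i}(\langle v_i\rangle^{\gamma/2}\varphi_i)\,F$, since only the latter form makes the linear term visibly continuous under weak convergence. For (iii) you take a genuinely different route: the paper proves super-additivity from the variational representation by restricting to separable test functions (as in \cite[Lemma 3.7]{hm}), whereas you marginalize and apply Jensen to the jointly convex, 1-homogeneous function $\Psi(a,b)=|b|^2/a$. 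Your Jensen argument is exactly the mechanism used elsewhere in the paper's appendix (the proof of Lemma~\ref{lem:IrIw}-(i)), so both methods are natural and fully legitimate; the Jensen version is perhaps slightly more self-contained.

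There is, however, a genuine logical slip in your treatment of (iv). You correctly obtain the easy inequality $\cI_\gamma(\pi)\le\sum_i\alpha_i\cI_\gamma(\gamma^i)$ from convexity, and your sketch of the reverse inequality (regroup particles into blocks via de Finetti, apply super-additivity (iii) block by block, pass to the limit) is the right intuition. But you then claim this is ``exactly the abstract content of \cite[Lemma~5.6]{hm}, whose hypotheses reduce to (i)--(iii) above.'' That is not how the logic goes: \cite[Lemma~5.6]{hm} is the abstract theorem that deduces Theorem~\ref{th:levl3tH&tI} from the \emph{full} list (i)--(iv), and it requires the affine property (iv) as an input rather than delivering it as an output. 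In particular, (iv) does not follow from (i)--(iii) via that lemma. The paper instead proves (iv) by invoking \cite[Lemma~5.10]{hm}, which establishes the same affine property for the unweighted Fisher information by precisely the kind of regrouping argument you sketch, and notes that the bounded weight $\langle v\rangle^\gamma$ causes no additional difficulty. So your sketch of the hard direction is on target, but the citation and the stated logical dependence are wrong; you should either cite \cite[Lemma~5.10]{hm} and adapt it, or flesh out your own regrouping argument, rather than appeal to \cite[Lemma~5.6]{hm}.
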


\begin{proof}[Proof of Lemma~\ref{lem:propHIJ}.] We only sketch the proof,
which is roughly an adaptation to the weighted case of the proof of~\cite[Lemma 5.10]{hm}.

\vip

{\it Step 1. } We first prove point (i).
Let us present an alternative expression of the weighted Fisher information: 
for $F \in \PP(( \R^3)^j)$, it holds that 
$$
I_\gamma (G)  :=  \frac1j\sup_{\psi   \in C_c^1( (\R^3)^j)^{3j}} \Big\{
 - \int_{(\R^3)^j} \diver_V \psi \,F(dV)
 - \sum_{i=1}^j \int \frac{|\psi_i|^2}4 \langle v_i \rangle^{-\gamma} \,F(dV^j) \Big\}.
$$
Again, the RHS term is well defined in  $\R$ because the function $| \psi |^2 / 4  - \hbox{div}_X\, 
\psi $ is continuous and
bounded for any $\psi   \in C_c^1( (\R^3)^j)^{3j}$. 
We immediately  
deduce that $I_\gamma$ is convex, lower semi-continuous 
and proper so that point (i) holds.

\vip

{\it Step 2.} Point (ii) is obvious from the Definition of $I_\gamma$.

\vip

{\it Step 3. } We now prove (iii), reproducing the proof of the same super-additivity property 
established for the usual Fisher information in \cite[Lemma~3.7]{hm}. We define for any $i \le j$
$$
\iota_i  := i \,  I_\gamma (F_i) =    \sup_{\psi   \in C_c^1( (\R^3)^i)^{3i}} 
\Big\{\int_{(\R^{3})^i }   \Bigl( \nabla_V \, F_i \cdot \psi  -
F_i \sum_{j \le i}  \, \frac{|\psi_j|^2}4 \langle v_j \rangle^{-\gamma}  \Bigr)\Big\}
$$
where the sup is taken on all $\psi =(\psi_1,\ldots,\psi_i)$, with all 
$\psi_\ell :  (\R^3)^i \to \R^3$. 
We then write the previous equality for $\iota_j$ and restrict the supremum over all $\psi$ such that for 
some $i \le j$:

$\bullet$ the $i$ first $\psi_\ell$ depend only on $(v_1,\ldots,v_i)$, with the
notation $\psi^i = (\psi_1, \ldots,\psi_i)$,

$\bullet$ the $j-i$ last $\psi_\ell$ depend only on $(v_{i+1},\ldots,v_j)$,
with the notation $\psi^{j-i} = (\psi_{i+1}, \ldots,\psi_j)$.

\noindent
We then have the inequality
\bean
\iota_j& \geq &  
\sup_{\psi^i ,\, \psi^{j-i} } 
\int_{(\R^3)^j }   \Big[\nabla_i F  \cdot \psi^i + \nabla_{j-i} F \cdot
\psi^{j-i} - F \, \frac14 \Big(\sum_{\ell \le i}|\psi_\ell|^2 \langle v_\ell \rangle^{-\gamma}
+ \sum_{\ell >i} |\psi_\ell|^2 \langle v_\ell \rangle^{-\gamma} \Big) \Big] \\
& = &  \sup_{\psi^i   \in C_c^1( ( \R^3)^i)^{3i}} 
\int_{(\R^3)^i }   \Big[\nabla_i F_i  \cdot \psi^i  
-  \frac{F_i}4 \sum_{\ell \le i}|\psi_\ell|^2 \langle v_\ell \rangle^{-\gamma} \Big]
\\ 
&& + 
\sup_{\psi^{j-i}   \in C_c^1( (\R^3)^{j-i})^{3(j-i)}} 
\int_{( \R^3)^{j-i} }   \Big[\nabla_{j-i} G_{j-i}  \cdot \psi^{j-i}  - \frac{F_{j-i}}4  \, 
\sum_{\ell >i} |\psi_\ell|^2 \langle v_\ell \rangle^{-\gamma}  \Big]
\\
& = & \iota_i + \iota_{j-i}.
\eean

\vip

{\it Step 3. } 
We do not prove here the affine caracter of $\cI_\gamma$ and refer to  \cite[Lemma 5.10]{hm} where the 
same property for the usual Fisher information is checked:
the presence of the bounded weight does not raise any difficulty. 
\end{proof}

\end{document}